%
%
%
%

\documentclass[a4paper,10pt]{article}
\usepackage{amsmath, amssymb, amsfonts, amscd}
\usepackage{amsthm}
\usepackage{eucal, xspace}
\usepackage{epic}
\usepackage{eepic}
\usepackage[matrix,line,arrow]{xy}


\theoremstyle{definition}
\newcounter{foo}[section]

\newtheorem{defi}[foo]{Definition}
\newtheorem{rem}[foo]{Remark}
\newtheorem{ex}[foo]{Example}
\newtheorem{thm}[foo]{Theorem}
\newtheorem{lem}[foo]{Lemma}
\newtheorem{prop}[foo]{Proposition}

\newtheorem{cor}[foo]{Corollary}

\newtheorem*{thm*}{Theorem}


\newcommand{\rel}{rel.\@\xspace}

\newcommand{\ie}{i.e.\@\xspace}

\newcommand{\etc}{etc.\@\xspace}

\newcommand{\fg}{f.g.\@\xspace}
\newcommand{\Wlog}{W.l.o.g.\@\xspace}
\newcommand{\co}{\colon}
\newcommand{\fat}{
}


\newcommand{\ol}[1]{l_{2q+1}(#1)}

\newcommand{\oL}[1]{L^{}_{2q+1}(#1)}
\newcommand{\ols}[1]{l_{2q+1}(#1)}
\newcommand{\oels}[1]{\mathcal{E}\ols{#1}}
\newcommand{\oLs}[1]{L^s_{2q+1}(#1)}

\newcommand{\eL}[1]{L^{}_{2q}(#1)}

\newcommand{\bN}{\mathbb N}
\newcommand{\bZ}{\mathbb Z}

\newcommand{\svec}[1]{\left( \begin{smallmatrix} #1 \end{smallmatrix} \right)}
\newcommand{\mat}[1]{\begin{pmatrix} #1 \end{pmatrix}}

\newcommand{\mt}{\longmapsto}
\newcommand{\ra}{\rightarrow}
\newcommand{\lra}{\longrightarrow}
\newcommand{\ras}{\rightarrow}
\newcommand{\isora}{\stackrel{\cong}{\lra}}

\newcommand{\dlra}[1]{\stackrel{#1}{\longrightarrow}}

\newcommand{\hQ}{\widehat{Q}}
\newcommand{\del}{\partial}
\newcommand{\hra}{\hookrightarrow}


\providecommand{\id}{\mathop{\rm id}\nolimits}
\providecommand{\im}{\mathop{\rm Im}\nolimits}
\providecommand{\rk}{\mathop{\rm rk}\nolimits}

\providecommand{\cha}{\mathop{\rm char}\nolimits}
\providecommand{\coker}{\mathop{\rm cok}\nolimits}
\providecommand{\Hom}{\mathop{\rm Hom}\nolimits}

\providecommand{\Rad}{\mathop{\rm Rad}\nolimits}

\providecommand{\asr}{\mathop{\rm asr}\nolimits}
\providecommand{\ind}{\mathop{\rm ind}\nolimits}

\providecommand{\Kdim}{\mathop{\rm Kdim}\nolimits}
\newcommand{\Wh}{\mathop{\rm Wh}\nolimits}

\providecommand{\Iso}{\mathop{\rm Iso}\nolimits}
\providecommand{\bIso}{\mathop{\rm bIso}\nolimits}
\providecommand{\sbIso}{\mathop{\rm sbIso}\nolimits}
\providecommand{\Aut}{\mathop{\rm Aut}\nolimits}
\providecommand{\bAut}{\mathop{\rm bAut}\nolimits}

\providecommand{\sbAut}{\mathop{\rm sbAut}\nolimits}

\hyphenation{sym-met-ric}
\hyphenation{qua-dra-tic}
\hyphenation{dim-en-sio-nal}
\hyphenation{flip-iso-morph-ism}
\hyphenation{flip-iso-morph-isms}
\hyphenation{smooth-ings}
\hyphenation{co-bord-ism}
\hyphenation{iso-morph-isms}

\setlength{\itemsep}{0pt}
\setlength{\itemindent}{0pt}

\newcommand{\eq}{$\epsilon$-quadratic quasi-form\-ation}
\newcommand{\eqs}{$\epsilon$-quadratic quasi-form\-ations}
\newcommand{\eqf}{$\epsilon$-quadratic form\-ation}
\newcommand{\eqfs}{$\epsilon$-quadratic form\-ations}

\newcommand{\on}[2]{\mathop{\null#2}\limits^{#1}}
\newcommand{\oover}[1]{\on{\circ}{#1}}


\title{Stably diffeomorphic manifolds and $\ol{\bZ[\pi]}$}

\author{Diarmuid Crowley and J\"org Sixt}

\begin{document}

\maketitle


\begin{abstract}
The Kreck monoids $l_{2q+1}(\bZ[\pi])$ detect $s$-cobordisms amongst
certain bordisms between stably diffeomorphic $2q$-dimensional manifolds and
generalise the Wall surgery obstruction groups,
$L_{2q+1}^s(\bZ[\pi]) \subset \ol{\bZ[\pi]}$.   In this paper we
identify $\ol{\bZ[\pi]}$ as the edge set of a directed graph with
vertices a set of equivalence classes of quadratic forms on
finitely generated free $\bZ[\pi]$ modules.  Our main theorem
computes the set of edges $l_{2q+1}(v, v') \subset \ol{\bZ[\pi]}$
between the classes of the forms $v$ and $v'$ via an exact
sequence
\[
\oLs{\bZ[\pi]}\stackrel{\rho}{\lra} l_{2q+1}(v, v') \stackrel{\delta}{\lra} \sbIso(v, v') \stackrel{{\kappa }}{\lra} L^s_{2q}(\bZ[\pi]).
\]
Here ${\rm sbIso}(v, v')$ denotes the set of ``stable boundary isomorphisms" between the algebraic boundaries of $v$ and $v'$.  As a consequence we deduce new classification results for stably diffeomorphic manifolds.
\end{abstract}

\section{Introduction} \label{Intro}
Let $M_0$ and $M_1$ be connected, compact $2q$-dimensional, smooth manifolds ($q \geq 2$) with (possibly empty) boundary and the same Euler characteristic.   A stable diffeomorphism from $M_0$ to $M_1$ is a diffeomorphism
\[h \co M_0 \sharp_k(S^q \times S^q) \cong M_1 \sharp_k(S^q \times S^q).\]
The cancellation problem is to classify stably diffeomorphic manifolds and we briefly mention its history in section \ref{sdmsec}.  The most systematic approach to date is via the surgery obstruction monoids $\ol{\bZ[\pi]}$ \cite{Kre99} which depend upon the twisted group ring $\bZ[\pi]$ defined by the orientation character of the fundamental group of $M_0$ and the parity of $q$.  Henceforth we assume that $q \neq 3, 7$ (see Remark \ref{6and14rem} for these dimensions).  A stable diffeomorphism $h$ gives rise to an element $\Theta(h)$ in $l_{2q+1}(\bZ[\pi])$ (see Lemma \ref{SDcondlem}) and a fundamental theorem of \cite{Kre99} states that there is a submonoid $\mathcal{E}\ol{\bZ[\pi]} \subset \ol{\bZ[\pi]}$ such that:
\[\text{if $\Theta(h) \in \mathcal{E}\ol{\bZ[\pi]}$ then there is an $s$-cobordism between $M_0$ and $M_1$.}\]
Despite its topological significance, no computation of $\ol{\bZ[\pi]}$ appears in the literature for any group $\pi$.  In this paper we give exact sequences which allow us to compute $\ol{\bZ[\pi]}$ as a set and also its Grothendieck group.  First we give some topological applications.

Recall that a finitely presented group is polycyclic-by-finite if it has a subnormal series where the quotients are either cyclic or finite (see Definition \ref{polydefi}).  The number of infinite cyclic quotients is an invariant of $\pi$ called the Hirsch number $h(\pi)$.  We define $h'(\pi,q)$ to be $0$ ({\em resp.} 1) if $\pi$ is trivial and $q$ is odd ({\em resp.} even), $2$ if $\pi$ is finite but non-trivial and $h(\pi)+3$ if $\pi$ is infinite.

\begin{thm}\label{thm1}
Suppose that the fundamental group $\pi$ of $M$ is polycyclic-by-finite and that $M \cong N \sharp_k(S^q \times S^q)$ where $k \geq h'(\pi, q)$.  Then every manifold stably diffeomorphic to $M$ is $s$-cobordant to $M$.
\end{thm}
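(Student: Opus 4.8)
The plan is to combine the theorem of Kreck quoted above with our computation of $l_{2q+1}(v,v')$, reducing the claim to a cancellation property of $\bZ[\pi]$. Fix a manifold $M_1$ stably diffeomorphic to $M$ and a stable diffeomorphism $h\co M \to M_1$; it is enough to show $\Theta(h) \in \mathcal{E}\ol{\bZ[\pi]}$, since then $M=M_0$ and $M_1$ are $s$-cobordant and $M_1$ was arbitrary. Let $v$ and $v'$ be the vertices of our graph carrying the Kreck forms of $M$ and $M_1$, so $\Theta(h) \in l_{2q+1}(v,v')$. Because $M \cong N\sharp_k(S^q\times S^q)$, the form underlying $v$ is $v_N \perp H_\epsilon^{\,k}$, where $v_N$ is the Kreck form of $N$ and $H_\epsilon$ the $\epsilon$-quadratic hyperbolic plane; and because a stable diffeomorphism makes $M$ and $M_1$ diffeomorphic after connected sum with finitely many copies of $S^q\times S^q$, the forms of $M$ and $M_1$ become isometric after orthogonal sum with the same power of $H_\epsilon$.

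The key input is a cancellation-and-realisation lemma that I would prove first: for $\pi$ polycyclic-by-finite and $\epsilon$-quadratic forms $w,w'$ over $\bZ[\pi]$ each carrying at least $h'(\pi,q)$ orthogonal copies of $H_\epsilon$, the natural map $\bIso(w,w') \ra \sbIso(w,w')$ is surjective, so that every stable boundary isomorphism is induced by an isometry $w\to w'$; moreover such forms satisfy hyperbolic cancellation. For $\pi$ infinite this should follow from Bass's stable range theorem applied to the right Noetherian ring $\bZ[\pi]$, whose Krull dimension is $h(\pi)+1$, with the bound $h(\pi)+3$ emerging after the passage to the unitary group and the realisation of boundaries; for $\pi$ finite non-trivial the bound is $2$, and for $\pi=1$ it is $1$ when $q$ is even and $0$ when $q$ is odd — in the last case the $\epsilon$-quadratic forms over $\bZ$ are symplectic, so no stabilisation is needed. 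The cancellation half then settles the bookkeeping of the previous paragraph: from $v'\perp H_\epsilon^{\,j} \cong v_N \perp H_\epsilon^{\,k+j}$ one cancels $H_\epsilon^{\,j}$ to obtain $v' \cong v_N \perp H_\epsilon^{\,k}$, so $v$ and $v'$ each carry $\geq k \geq h'(\pi,q)$ hyperbolic summands.

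Granting the lemma, I would finish as follows. Alongside the main exact sequence one should identify the elementary submonoid in this component, namely $\mathcal{E}\ol{\bZ[\pi]} \cap l_{2q+1}(v,v') = \delta^{-1}\bigl(\im(\bIso(v,v')\ra\sbIso(v,v'))\bigr)$ — an element of $l_{2q+1}$ being elementary precisely when the bordism representing it can be chosen with a genuine, rather than merely stable, boundary identification. By the lemma that image exhausts $\sbIso(v,v')$, whence $\mathcal{E}\ol{\bZ[\pi]} \cap l_{2q+1}(v,v') = l_{2q+1}(v,v')$; in particular $\Theta(h) \in \mathcal{E}\ol{\bZ[\pi]}$, and Kreck's theorem produces an $s$-cobordism between $M$ and $M_1$. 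Since $M_1$ was an arbitrary manifold stably diffeomorphic to $M$, the theorem follows.

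The hard part will be the lemma, and within it the realisation clause: one must pass from Bass's stable range bound for $\bZ[\pi]$ to a bound for its unitary group and then to the realisation of boundary isomorphisms of $\epsilon$-quadratic formations, not merely of automorphisms of forms, and one must pin down the sharp constant $h'(\pi,q)$ in each of the three cases — the trivial and finite cases calling for arguments tailored to $\bZ$ and to $\bZ[\pi]$ with $\pi$ finite rather than the generic stable range estimate. A subsidiary but genuine point is the identification of the Kreck form of $N\sharp_k(S^q\times S^q)$ with $v_N \perp H_\epsilon^{\,k}$, together with the compatible behaviour of algebraic boundaries under connected sum, which underpins both the reduction to $l_{2q+1}(v,v')$ and the cancellation step.
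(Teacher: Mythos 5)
Your overall skeleton (reduce via Lemma \ref{SDcondlem} and Kreck's theorem to showing $\Theta(h)\in\mathcal{E}\ol{\bZ[\pi]}$, then use the exact sequence plus stable-rank cancellation over $\bZ[\pi]$ with $\asr(\bZ[\pi])\le h(\pi)+2$) is the paper's route, but the step you use to finish is wrong. You claim $\mathcal{E}\ol{\bZ[\pi]}\cap l_{2q+1}(v,v')=\delta^{-1}\bigl(\im(\bIso(v,v')\ra\sbIso(v,v'))\bigr)$, glossing surjectivity of $\bIso\ra\sbIso$ as ``every stable boundary isomorphism is induced by an isometry $w\ra w'$''. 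These are different statements: destabilising a boundary isomorphism (removing the added zero forms) is much weaker than realising it as $\del$ of an isometry of the forms, and elementariness forces $\delta([x])=1$ (Example \ref{delex}), not merely $\delta([x])$ in the unstable image. Concretely, by Lemma \ref{bijbisolem} the map $\bIso\ra\sbIso$ is already a bijection whenever the symmetrisations are injective, e.g.\ for every nondegenerate form over $\bZ$; yet for $v=(\bZ,p_1p_2)$ one has $l_1(v,v)\cong\sbAut(v)\cong\bZ/2$ while the elementary element with $b$-image $([v],[v])$ is unique (Corollary \ref{bcor}(ii)). So your criterion would declare non-elementary elements elementary, i.e.\ it proves too much. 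What is actually needed is: $\delta([x])$ is represented by $f_j$, which by Proposition \ref{bdryextprp} is the boundary of a \emph{stable} isometry $v\oplus H_\epsilon(\Lambda^r)\cong v^\perp\oplus H_\epsilon(\Lambda^r)$, and one must cancel the hyperbolics from this \emph{isometry} while preserving its boundary up to stable homotopy (the MKV-based Corollary \ref{wittcanccor}, requiring Witt index $\ge\asr(\Lambda)+1$, whence the bound $h(\pi)+3$). That is a statement about destabilising isometries rel boundary, not about $\bIso\ra\sbIso$, and it is the heart of Theorem \ref{cancpolthm}; your proposed lemma does not supply it.

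A second essential piece is missing even granting $\delta([x])=1$: Corollary \ref{maincor} then gives only that $[x]$ is elementary \emph{modulo} the action of $L^s_{2q+1}(\bZ[\pi])$, which is generally non-trivial, and adding an element of $L^s_{2q+1}$ corresponds topologically to changing the target manifold within its homotopy type (compare Theorem \ref{linsimthm}); to conclude an $s$-cobordism with $M$ itself you must show the action is trivial on $[x]$. The paper does this with the transvection-transitivity argument (Corollary \ref{rutranscor}, following Kreck's Theorem 5), again using the hyperbolic summands of $v$; nothing in your plan addresses it. Finally, the exceptional values of $h'(\pi,q)$ are not just ``tailored arguments'': for $\pi=1$, $q$ odd the naive symplectic cancellation ignores the quadratic refinement, and the Arf-invariant repair of Proposition \ref{leml_3Zelem} (or its analogue for finite $\pi$) is needed to get the stated constants.
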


\begin{rem}
For finite $\pi$ Theorem \ref{thm1} is \cite{Kre99}[Corollary 4] which is based on algebraic results of Bass \cite{Bas73}.  Bass's results were first applied to the topological cancellation problem for $4$-manifolds by Hambleton and Kreck \cite{HaKr88} who later showed that the above theorem holds in the topological category for closed $4$-dimensional manifolds with finite fundamental group when $M \cong N \sharp(S^q \times S^q)$ \cite{HaKr93b}.

Recently Khan \cite{Kh04} has also proven cancellation results for closed topological $4$-dimensional manifolds with infinite fundamental group.  While Khan's bound is sometimes one better than ours, his methods do not apply for all polycyclic-by-finite groups: for example certain semi-direct products $(\bZ \times \bZ) \times_\alpha \bZ$ where $\alpha \in GL_2(\bZ)$ has infinite order.
\end{rem}

Let $B\pi$ be an aspherical space with fundamental group $\pi$.
Our next theorem concerns the representation of elements in the
$(2q+1)$-dimensional oriented bordism group of $B\pi$ via mapping
tori.

\begin{thm} \label{thm2}
Suppose that $M$ is an oriented manifold with polycyclic-by-finite fundamental group
$\pi$, that $M = N \sharp_k(S^q \times S^q)$ for $k \geq h'(\pi,
q)$ and that the canonical map $M \ra BSO \times B\pi$ classifying
the tangent bundle of $M$ and the universal cover of $M$ is a
$q$-equivalence.  Then every element of the oriented bordism
group, $\Omega_{2q+1}^{SO}(B\pi)$, can be represented by the
mapping torus of an orientation preserving diffeomorphism $f\co M
\cong M$ which induces the identity on $\pi$.
\end{thm}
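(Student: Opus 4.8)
The plan is to combine Theorem \ref{thm1} with an analysis of the map from stable self-diffeomorphisms of $M$ to the bordism group $\Omega_{2q+1}^{SO}(B\pi)$ realised by mapping tori. First I would fix a reference map $c\co M \ra B\pi$ classifying the universal cover and observe that, since $M \ra BSO \times B\pi$ is a $q$-equivalence and $\dim M = 2q$, the pair $(M, c)$ represents (after the obvious modifications) a suitably connected object so that the normal $1$-type considerations of \cite{Kre99} apply. Given a class $x \in \Omega_{2q+1}^{SO}(B\pi)$, I would choose a closed $(2q+1)$-manifold $W$ with a map $g\co W \ra B\pi$ representing $x$. The goal is to show $W$ is oriented bordant over $B\pi$ to a mapping torus $T_f$ of a diffeomorphism $f\co M \cong M$ inducing the identity on $\pi$; equivalently, to find such an $f$ with $[T_f] = x$.

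The key steps are as follows. (1) Using surgery below the middle dimension on $W$ (relative to $B\pi$, using that $\pi$ is finitely presented), reduce to the case where $g\co W \ra B\pi$ is $q$-connected, so that $W$ looks "$q$-equivalent" to $M \times S^1$ in an appropriate sense; in particular the infinite cyclic cover is, after further surgery, a manifold stably diffeomorphic to $M$. (2) Present $W$, up to bordism over $B\pi \times S^1$, as the mapping torus of a stable self-diffeomorphism: split $S^1$ as two arcs, cut $W$ along a codimension-one submanifold Poincaré dual to the generator of $H^1(W;\bZ)$ pulled back from $S^1$, and after surgery arrange the two halves to be $h$-cobordisms, yielding a diffeomorphism $h\co M' \sharp_k(S^q\times S^q) \cong M'\sharp_k(S^q\times S^q)$ for some $M'$ stably diffeomorphic to $M$, with $W$ bordant to $T_h$. (3) Apply Theorem \ref{thm1}: since $M = N\sharp_k(S^q\times S^q)$ with $k \geq h'(\pi,q)$ and $M'$ is stably diffeomorphic to $M$, the Kreck obstruction $\Theta$ of any stable diffeomorphism lies in $\mathcal{E}\ol{\bZ[\pi]}$, so $M'$ is $s$-cobordant, hence (by the $s$-cobordism theorem) diffeomorphic, to $M$; transport $h$ across this diffeomorphism to obtain $f\co M\cong M$ with $T_f$ bordant to $T_h$, hence to $W$. (4) Finally, correct the induced automorphism on $\pi$ and the value of $[T_f]$ in the bordism group: composing $f$ with self-diffeomorphisms supported in a ball, or with "Dehn twist"-type diffeomorphisms along embedded $S^q\times S^q$ summands, changes $[T_f]$ by controlled amounts, and using the freedom in $k$ one arranges both that $f_\ast = \id$ on $\pi$ and that $[T_f] = x$ exactly.

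The main obstacle I expect is step (2) together with the bookkeeping in step (4): turning an abstract bordism class into an honest mapping torio requires controlling the effect of the surgeries on the codimension-one splitting submanifold and ensuring the two resulting cobordisms can simultaneously be made into $h$-cobordisms after stabilisation — this is exactly where the hypothesis $k \geq h'(\pi,q)$ and the stable-diffeomorphism machinery of \cite{Kre99} must be invoked carefully — and then showing that the residual indeterminacy in $[T_f]$, coming from automorphisms of $M$ not inducing the identity on $\pi$ and from the choice of splitting, can be absorbed. The orientation-preserving and "identity on $\pi$" normalisations should follow formally once enough stabilisations are allowed, but verifying that these normalisations do not destroy the equality $[T_f] = x$ will need a short computation with the Wang sequence of the mapping torus.
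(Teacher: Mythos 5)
There is a genuine gap, and it sits exactly at your step (2). An arbitrary class $x \in \Omega_{2q+1}^{SO}(B\pi)$ is represented by a closed manifold $W$ equipped only with a map to $B\pi$; there is no map to $S^1$, and no class of $H^1(W;\bZ)$ ``pulled back from $S^1$'' along which to cut (for $\pi$ finite one can even have $H^1(W;\bZ)=0$). So the fibering-style splitting you propose cannot get started, and even if some codimension-one splitting were chosen ad hoc, nothing forces the cut-open $2q$-manifold $M'$ to be stably diffeomorphic to $M$: it need not even be $B$-bordant to $M$ or have the right Euler characteristic, so step (3) (Theorem \ref{thm1} applied to $M'$) has no foothold. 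Making $W \ra B\pi$ highly connected in step (1) does not make $W$ resemble $M \times S^1$. Step (4) is also unsupported: you would need to show that composing with diffeomorphisms supported in balls or Dehn twists along $S^q\times S^q$ summands realises precisely the required corrections to $f_*$ and to $[T_f]$, and no such computation is indicated.

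The paper's proof supplies the missing mechanism with a short trick that your outline never produces. The $q$-equivalence hypothesis identifies $B:=BSO\times B\pi$ (fibred over $BO$) with the normal $(q-1)$-type of $M$, so $\Omega_{2q+1}(B)=\Omega_{2q+1}^{SO}(B\pi)$; and Corollary \ref{strcancor}~(i) gives strict cancellation for $(M,\bar\nu)$ since $M\cong N\sharp_k(S^q\times S^q)$ with $k\geq h'(\pi,q)$. Then (Theorem \ref{maptorthm}) one takes the \emph{disjoint union} of a representative $(Y,\bar\nu_Y)$ of $x$ with the trivial cylinder $(M\times[0,1],\bar\nu\times\mathrm{Id})$; this disjoint union is a modified surgery problem from $(M,\bar\nu)$ to itself, so by strict cancellation it is $B$-bordant rel boundary to an $s$-cobordism from $M$ to $M$. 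The $s$-cobordism determines (up to pseudo-isotopy) a $B$-diffeomorphism $g$ of $(M,\bar\nu)$, and the rel-boundary bordism, after gluing, exhibits $Y$ as $B$-bordant to the mapping torus $T_g$; compatibility with the $B$-structure over $BSO\times B\pi$ already forces $g$ to be orientation preserving and to induce the identity on $\pi$, so no subsequent normalisation of $f_*$ or of $[T_f]$ is needed. Your appeal to Theorem \ref{thm1} is in the right spirit, but without the disjoint-union device (or an equivalent way of tying an arbitrary representative of $x$ to $M$) the proposed argument does not go through.
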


Whereas the above theorems concern $2q$-dimensional manifolds with appropriately
large intersection forms our next theorem considers $2q$-dimensional manifolds
with small intersection forms.  Let $K \subset H_q(M)$ (where
$\bZ[\pi]$ coefficients are understood) be the submodule of elements
which evaluate to zero when paired with all decomposable elements of
$H^q(M)$.  Further let $\lambda_M|_K$ be the restriction of the
equivariant intersection form of $M$, $\lambda_M \co  H_q(M) \times
H_q(M) \ra \bZ[\pi]$ to $K \times K$.  Let also $\pi = \pi_1(M)$ and let ${\rm UWh}(\pi)$ ({\em resp}. ${\rm U'Wh}(\pi))$ be the subgroup of the Whitehead group of $\pi$ given by torsions arising from automorphisms of quadratic ({\em resp}. symmetric) hyperbolic forms (see subsection \ref{linearandsimplesubsec}).

\begin{thm}\label{thm3}
Suppose that $\lambda_{M_0}|_K$ is identically zero and that ${\rm UWh}(\pi) = {\rm U'Wh}(\pi)$ for $\pi = \pi_1(M_0)$.   Then any manifold $M_1$ which is stably diffeomorphic to $M_0$ is homotopy equivalent to $M_0$.
\end{thm}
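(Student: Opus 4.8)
The plan is to feed the hypotheses into the main exact sequence
\[
\oLs{\bZ[\pi]}\stackrel{\rho}{\lra} l_{2q+1}(v, v') \stackrel{\delta}{\lra} \sbIso(v, v') \stackrel{{\kappa }}{\lra} L^s_{2q}(\bZ[\pi])
\]
with $v,v'$ the classes of the quadratic forms attached to $M_0$ and $M_1$, and then to reinterpret triviality of the relevant obstruction geometrically.  First I would record that since $M_1$ is stably diffeomorphic to $M_0$, Lemma \ref{SDcondlem} produces a class $\Theta(h) \in l_{2q+1}(\bZ[\pi])$ lying in the edge set $l_{2q+1}(v,v')$ for the two vertices $v = [\lambda_{M_0}]$, $v' = [\lambda_{M_1}]$ determined by the intersection forms; by Kreck's theorem, if $\Theta(h)$ lies in the submonoid $\mathcal{E}\ol{\bZ[\pi]}$ then $M_0$ and $M_1$ are $s$-cobordant, hence (in these dimensions) diffeomorphic, which is stronger than homotopy equivalent.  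So the task reduces to showing that the hypotheses force $\Theta(h)$ into $\mathcal{E}\ol{\bZ[\pi]}$, or at least into a part of $l_{2q+1}(v,v')$ that is realised by a homotopy equivalence.

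The key algebraic input is the hypothesis $\lambda_{M_0}|_K \equiv 0$.  I would argue that this makes the quadratic form $v$ attached to $M_0$ (on the kernel submodule $K$) the zero form, so that $v$ and $v'$ are, after stabilisation, both hyperbolic, and the algebraic boundary of $v$ is the ``standard'' one.  Consequently the set of stable boundary isomorphisms $\sbIso(v,v')$ is controlled by automorphisms of hyperbolic forms: an element $\delta(\Theta(h)) \in \sbIso(v,v')$ is represented by such an automorphism, whose Whitehead torsion therefore lies in ${\rm U'Wh}(\pi)$ (symmetric, because a boundary isomorphism only sees the symmetric structure).  Here the hypothesis ${\rm UWh}(\pi) = {\rm U'Wh}(\pi)$ enters: it allows us to replace the chosen boundary isomorphism by one whose torsion is that of an automorphism of a \emph{quadratic} hyperbolic form, i.e. to trivialise the relevant simple-homotopy obstruction and promote $\delta(\Theta(h))$ to a \emph{simple} stable isomorphism.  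Combined with $\kappa \circ \delta = 0$ from exactness, this places $\delta(\Theta(h))$ in a subset of $\sbIso(v,v')$ that lifts, via $\rho$ and the image of $\oLs{\bZ[\pi]}$, to an element that can be killed by surgery below the middle dimension — equivalently $\Theta(h)$ differs from an element of $\mathcal{E}\ol{\bZ[\pi]}$, or directly realises a homotopy equivalence $M_1 \simeq M_0$ by Kreck's normal smoothing machinery (a normal bordism with vanishing surgery obstruction over $M_0$ yields a homotopy equivalence when the forms match up simply).

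I would then assemble these pieces: $\lambda_{M_0}|_K \equiv 0$ trivialises the ``quadratic'' part $v$ and identifies $\sbIso(v,v')$ with a torsor over the automorphisms of a hyperbolic form; ${\rm UWh}(\pi)={\rm U'Wh}(\pi)$ upgrades the symmetric boundary datum to a simple quadratic one; exactness of the main sequence then shows $\delta(\Theta(h))$ is hit by $\rho$, so $\Theta(h) \in \im(\rho)$ and Kreck's surgery-theoretic interpretation of $\rho$ (realisation by normal bordisms over $M_0$) yields a homotopy equivalence $M_1 \simeq M_0$.  The main obstacle I anticipate is the bookkeeping between the quadratic and symmetric worlds — precisely tracking which Whitehead torsions are symmetric versus quadratic and checking that the hypothesis ${\rm UWh}(\pi)={\rm U'Wh}(\pi)$ is exactly what is needed to identify the image of $\delta$ with the image one gets from quadratic automorphisms — together with verifying that the resulting class in $l_{2q+1}(v,v')$, while perhaps not in $\mathcal{E}\ol{\bZ[\pi]}$ itself, is still realised by a homotopy (as opposed to merely tangential) equivalence.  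This last point is where I expect to invoke the detailed structure of $l_{2q+1}(v,v')$ established earlier rather than Kreck's $s$-cobordism criterion verbatim.
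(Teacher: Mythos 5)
There is a genuine gap at the heart of your argument: you claim that $\lambda_{M_0}|_K \equiv 0$ makes the quadratic form $v$ attached to $M_0$ the zero form, so that $v$ and $v'$ are stably hyperbolic and $\sbIso(v,v')$ is controlled by automorphisms of hyperbolic forms. But the hypothesis only kills the \emph{symmetrisation}: the quadratic refinement $\mu$ on $K$ may be nonzero (the remark following the theorem in the introduction stresses that this is exactly the novel case, the $\mu \equiv 0$ case already following from \cite{Kre99}). Thus $v$ is a \emph{linear} form, which is neither zero nor stably hyperbolic (its symmetrisation is zero, hence not nonsingular), and the identification of $\sbIso(v,v')$ with (a torsor over) hyperbolic automorphisms — which is what Lemma \ref{bzerolem} provides for the zero form — is not available. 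What is actually needed is the analysis of sums of linear and simple forms in Proposition \ref{simplelinearprop}, resting on Lemma \ref{surnonslem}: for $v$ linear the obstruction $\delta([x])$ is detected by a torsion lying in ${\rm U'Wh}(\pi)$, and the hypothesis ${\rm UWh}(\pi) = {\rm U'Wh}(\pi)$ lets one kill it by an element of the non-simple group $L_{2q+1}(\bZ[\pi])$ (via Lemma \ref{bzerolem}), after which Corollary \ref{maincor} finishes. Your sketch gestures at this torsion bookkeeping but supplies no argument in the linear case, and the step ``exactness shows $\delta(\Theta(h))$ is hit by $\rho$, so $\Theta(h)\in\im(\rho)$'' misreads the sequence: $\rho$ is an action whose orbits are the fibres of $\delta$, so one must show that $\delta$ of the (suitably corrected) class equals $1$, not that it lies in some image.

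A second, smaller gap is the passage from the algebra to the topological conclusion. One must first note that linearity of $[v(\bar \nu_0)]$ forces $[v(\bar \nu_0)] = [-v(\bar \nu_0)^{\perp}] = [v(\bar \nu_1)]$, so that $\Theta(W,\bar\nu)$ lies in $l_{2q+1}(v,v)$ with $v$ linear — a point you do not address and which is needed before Proposition \ref{simplelinearprop} applies. Then, because the correcting element $[z]$ lies in $L_{2q+1}(\bZ[\pi])$ rather than $L_{2q+1}^s(\bZ[\pi])$, one realises $[z]$ by a bordism from $M_1$ to a manifold $M_2$ homotopy equivalent (not necessarily simple homotopy equivalent) to $M_1$; the glued surgery problem has elementary obstruction, giving an $s$-cobordism from $M_0$ to $M_2$ and hence only a homotopy equivalence $M_0 \simeq M_1$, exactly as in Theorem \ref{linsimthm}. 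Your final sentence anticipates this difficulty but does not resolve it.
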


\begin{rem}
In fact more is true.  For example if $M_0$ is simply connected we may conclude that $M_1$ is $h$-cobordant to $M_0$.  We refer to Theorem \ref{linsimthm} for a more general statement.
\end{rem}

\begin{rem}
The intersection form $\lambda_{M_0}|_K$ has a quadratic refinement $\mu$ and if $\mu$ is identically zero then the above theorem follows easily from results in \cite{Kre99} so the novelty lies  in covering the case where $\mu$ is nonzero.
\end{rem}


\subsection{The structure of $\ol{\bZ[\pi]}$} \label{olsubsec}
We start by giving the topological context for our algebraic results and quickly recall the modified surgery setting in which $\ol{\bZ[\pi]}$ arises.  For details we refer the reader to section \ref{sdmsec} and \cite{Kre99}[\S 2].

Let $B = \gamma\co  B \ra BO$ be a fibration where $B$ has the homotopy type of a finite type $CW$-complex and let $\pi = \pi_1(B)$.  We work in the category of $B$-manifolds $(M, \bar \nu)$ which are compact, smooth manifolds $M$ together with an equivalence class of maps $\bar \nu \co  M \ra B$ which factors the stable normal bundle $\nu \co  M \ra BO$ up to homotopy.  A $B$-manifold is called a $(k-1)$-smoothing if $\bar \nu$ is $k$-connected.

We consider the directed graph $G^B_{2q}$ whose vertices,
$V^B_{2q}$, are the set of $B$-diffeomorphism classes of closed
$2q$-dimensional $(q-1)$-smoothings in $B$ and whose edges,
$E^B_{2q}$, are the set of \rel boundary $B$-bordism classes of
$B$-bordisms between closed $(q-1)$-smoothings with the same Euler
characteristic.  An edge in $E^B_{2q}$ is represented by a
$B$-bordism $(W, \bar \nu; M_0, M_1)$ from $(M_0, \bar
\nu|_{M_0})$ to $(M_1, \bar \nu|_{M_1})$ and if such a bordism
exists \cite{Kre99}[Theorem 2] states that $M_0$ and $M_1$ are
stably diffeomorphic.  If $B = B^{q-1}(M_0)$ then the converse
holds by Lemma \ref{SDcondlem}.

We now write $\Lambda$ for any weakly finite\footnote
{That is, the rank of a free \fg $\Lambda$-modules is well-defined.
See also \cite {Coh89}[p.~143f].},
unital ring with involution, for example $\Lambda = \bZ[\pi]$, and let $\epsilon = (-1)^q$.  The graph $G^B_{2q}$ has an algebraic analogue $G^{\Lambda}_{2q}$ whose edge set is the monoid $l_{2q+1}(\Lambda)$.  The vertex set of $G^{\Lambda}_{2q}$ is $\mathcal{F}^{\rm zs}_{2q}(\Lambda)$, the unital abelian monoid of $0$-stabilised $\epsilon$-quadratic forms.  These are equivalence classes of $\epsilon$-quadratic forms  $v = (V, \theta)$, defined on finitely generated, free, based $\Lambda$-modules $V$ (see Definition \ref{formdefi}) where two forms are equivalent if they become isometric after the addition of zero forms on such modules.  We write $[v] \in \mathcal{F}^{\rm zs}_{2q}(\Lambda)$ for the $0$-stabilised form defined by $v$.  A $(q-1)$-smoothing $(M, \bar \nu)$ defines a zero stabilised form $[v(\bar \nu)]$ (see Example \ref{0-stabformdefi}).

The elements of $\ol{\Lambda}$ are algebraic models of bordisms $(W, \bar \nu, M_0, M_1)$.  They are defined as equivalence classes $[x]$ of {\bf quasi-formations} which are triples
\[ x = (H,\psi; L,V)\]
consisting of a quadratic form $(H, \psi)$ 
together with a simple Lagrangian $L$ (see
Definition \ref{formdefi}) and some other half-rank, based direct
summand $V \subset H$.  For the present we omit the precise
details of the equivalence relation on quasi-formations which
defines $\ol{\Lambda}$ but refer the reader to subsection
\ref{olquasidef}.  Addition in $\ol{\Lambda}$ is the operation
induced by the direct sum of quasi-formations.  The
quasi-formation $x$ defines induced quadratic forms $v$ and
$v^{\perp}$ on $V$ and its annihilator, $V^{\perp}$.  It turns out
that we obtain a monoid map
\[ b\co  \ol{\Lambda} \ra \mathcal{F}^{\rm zs}_{2q}(\Lambda) \times \mathcal{F}^{\rm zs}_{2q}(\Lambda),~~~
[x] \mapsto ([v], [-v^{\perp}])
 \]
and we view $[x]$ as an algebraic bordism from $[v]$ to $[-v^{\perp}]$.

A $(2q+1)$-dimensional bordism $W = (W, \bar \nu; M_0, M_1)$ defines an element $\Theta(W, \bar \nu) \in \ol{\bZ[\pi}$ (see the proof of Lemma \ref{algtopstrictcanlem}) such that $b(\Theta(W, \bar \nu)) = ([v(\bar \nu_0)], [v(\bar \nu_1)])$.  We wish to know when $W$ is bordant \rel boundary to an $s$-cobordism and $\Theta(W, \bar \nu)$ tells us: elements of $\ol{\bZ[\pi]}$ which are represented by a quasi-formation $(H, \psi; L, V)$ for which $H = L \oplus V$ are called elementary and $\Theta(W, \bar \nu)$ is elementary if and only if $W$ is bordant \rel boundary to an s-cobordism.

The elementary elements of $\ol{\Lambda}$ play the role of
algebraic bordism classes of $s$-cobordisms and form a submonoid
$\mathcal{E}\ol{\Lambda}$.  Writing $b_{\mathcal{E}}$ for
$b|_{\mathcal{E}\ol{\Lambda}}$ it is easy to see that
$b_{\mathcal{E}}([x]) = ([v], [v])$ lies on the diagonal, $\Delta(
\mathcal{F}_{2q}^{\rm zs}(\Lambda))$, for every $[x] \in
\mathcal{E}\ol{\Lambda}$ and we prove

\begin{thm*}[Corollary \ref{bcor} (ii)]
For each $0$-stabilised form $[v]$ there is a unique elementary
element, denoted $e([v])$, with $b_{\mathcal{E}}(e([v])) = ([v],
[v])$.  There are thus monoid isomorphisms
\[  \mathcal{E}\ol{\Lambda} \dlra{b_{\mathcal{E}}} \Delta( \mathcal{F}_{2q}^{\rm zs}(\Lambda)) \dlra{\cong} \mathcal{F}_{2q}^{\rm zs}(\Lambda).\]
\end{thm*}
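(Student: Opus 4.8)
The content of the statement is that the monoid homomorphism $b_{\mathcal{E}}\co \mathcal{E}\ol{\Lambda}\ra \Delta(\mathcal{F}_{2q}^{\rm zs}(\Lambda))$ is a bijection; everything else is then formal. Indeed $b$ is a monoid map and $\mathcal{E}\ol{\Lambda}$ a submonoid, so $b_{\mathcal{E}}$ is a monoid map, it already takes values in the diagonal, and the diagonal map $[v]\mapsto ([v],[v])$ is patently a monoid isomorphism $\mathcal{F}_{2q}^{\rm zs}(\Lambda)\ra\Delta(\mathcal{F}_{2q}^{\rm zs}(\Lambda))$ with inverse either projection. So I would prove surjectivity (existence of $e([v])$) and injectivity (uniqueness) of $b_{\mathcal{E}}$, and then $e$ is defined as the composite $\mathcal{F}_{2q}^{\rm zs}(\Lambda)\cong\Delta(\mathcal{F}_{2q}^{\rm zs}(\Lambda))\cong\mathcal{E}\ol{\Lambda}$, automatically a monoid isomorphism as the inverse of monoid isomorphisms.

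For surjectivity I would exhibit an explicit elementary quasi-formation attached to a based quadratic form $v=(V,\theta)$. Choose a sesquilinear lift $g\co V\ra V^*$ of $\theta$, put $\tilde V=\{(x,g(x)):x\in V\}\subset H_\epsilon(V)$, a based half-rank direct summand, and let $L=0\oplus V^*\subset H_\epsilon(V)$, a simple Lagrangian. Since the change of basis from the standard basis of $H_\epsilon(V)$ to a basis of $L$ together with one of $\tilde V$ is unipotent, $H_\epsilon(V)=L\oplus \tilde V$ as based modules, so $e(v):=(H_\epsilon(V);L,\tilde V)$ is elementary. A direct calculation with the hyperbolic quadratic refinement shows that the form induced on $\tilde V$ is $v$, that the annihilator $\tilde V^{\perp}$ is the graph of $-\epsilon g^{*}$, and (using $[\bar\mu]=[\epsilon\mu]$ in $Q_{\epsilon}(\Lambda)$) that the induced form on $\tilde V^{\perp}$ is $-v$; hence $b_{\mathcal{E}}([e(v)])=([v],[-v^{\perp}])=([v],[v])$. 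It remains to check that $[e(v)]\in\ol{\Lambda}$ depends only on $[v]\in\mathcal{F}_{2q}^{\rm zs}(\Lambda)$: a different lift $g$, a different basis of $V$, or replacing $v$ by an isometric form or a $0$-stabilisation alters $e(v)$ only by an isomorphism of quasi-formations or by a stabilisation, which are among the moves of subsection \ref{olquasidef}.

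For injectivity, let $x=(H,\psi;L,V)$ be any elementary quasi-formation, so $H=L\oplus V$. Since $L$ is a Lagrangian the intersection pairing identifies $V\cong L^{*}$, and then $\psi$ is pinned down by this identification together with the form $v=(V,\psi|_V)$ it induces on $V$; choosing a basis of $L$ produces a simple isomorphism of quasi-formations $x\cong e(v)$. Therefore any two elementary quasi-formations whose induced forms are $0$-stably isometric become isomorphic after the stabilisation corresponding to the added zero forms, hence represent the same class of $\ol{\Lambda}$. This gives injectivity of $b_{\mathcal{E}}$, completing the argument.

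I expect the main obstacle to be the bookkeeping in the injectivity step: one must be sure that the identification $x\cong e(v)$ is realised by a \emph{simple} isomorphism, so that the based/torsion (``$s$") data is respected, and that passing to $0$-stable isometry classes of forms corresponds exactly to the stabilisation move on quasi-formations --- in other words that the equivalence relation defining $\ol{\Lambda}$ is neither too coarse nor too fine on the elementary locus. The computation of the induced form on $\tilde V^{\perp}$ in the surjectivity step, though routine, likewise needs care with the conventions for $Q_{\epsilon}(\Lambda)$ and with the sign $\epsilon=(-1)^q$.
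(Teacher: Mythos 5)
Your proposal is correct and takes essentially the same route as the paper: your model $e(v)$ is (after exchanging the Lagrangian, cf.\ Lemma \ref{elemeqlem}) just the boundary $\delta(V,\rho)$ of Definition \ref{quasiformdef} used in the paper's proof via Proposition \ref{prepprp} (iii), and your well-definedness of $e$ under stable isometry of forms is precisely the paper's displayed isomorphism $\svec{h&0\\0&h^{-*}}\svec{1&0\\\chi-\epsilon\chi^*&1}$ together with the vanishing of boundaries of zero forms. The torsion checks you flag do go through, using that $L$ is a simple Lagrangian and that $H=L\oplus V$ is a decomposition of based modules.
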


Given quadratic forms $v$ and $v'$ of equal rank we define
\[l_{2q+1}(v, v') := b^{-1}([v], [v']) \subset \ol{\Lambda}\]
to be the set of edges between fixed vertices in
$G^{\Lambda}_{2q}$.  The algebraic analogue of the fact that edges
of $G^B_{2q}$ occur only between stably diffeomorphic manifolds is
that $l_{2q+1}(v, v')$ is empty unless $[v] \oplus [H_\epsilon(\Lambda^k)] = [v']
\oplus [H_\epsilon(\Lambda^k)]$ for some $k$.  In this case $[v]$ and $[v']$ are
called stably equivalent and we write $[v] \sim [v']$ and $v \sim
v'$.  To begin describing $l_{2q+1}(v, v')$, let $0$ be the zero
form.  Kreck \cite{Kre99}[\S 6] proved that $L_{2q+1}(\Lambda) :=
l_{2q+1}(0, 0)$ is the group of units of $\ol{\Lambda}$ and that
$L_{2q+1}^s(\Lambda)$ can be identified with a subgroup of
$L_{2q+1}(\Lambda)$ (see Remarks \ref{Lunitrem} and
\ref{Ltorsionrem} for more details).

As a subgroup of the units $L_{2q+1}^s(\Lambda)$ acts on $\ol{\Lambda}$ and one easily sees that this action restricts to $l_{2q+1}(v, v')$ and that the orbits of this action are appropriate equivalence classes of embeddings of $v = (V, \theta) \hra (H, \psi) = H_\epsilon(\Lambda^r)$.  The central idea of this paper is to use the theory of algebraic surgery to define a complete invariant of these embeddings.

Algebraic surgery allows us to treat an embedding $j\co  v = (V, \theta) \hra (H, \psi) = H_\epsilon(\Lambda^k)$ like an embedding of a co-dimension zero manifold with boundary into a closed manifold.  Specifically, the quadratic forms $v$ and $v^{\perp}$ have algebraic boundaries $\del v$ and $\del v^{\perp}$ which are generalisations of boundary quadratic linking forms.   The embedding $j$ defines an isomorphism $f_j\co  \del v \cong -\del v'$ such that $H_\epsilon(\Lambda^k) \cong v \cup_{f_j} - v'$ where we have glued $v$ to $-v'$ along $f_j$, a procedure defined in algebraic surgery.  Indeed for any $f \in {\rm Iso}(\del v, \del v')$, the set of isomorphisms from $\del v$ to $\del v'$, we may construct the nonsingular form $\kappa(f) := v \cup_f -v'$ and so obtain an embedding of $v$ into $\kappa(f)$.  Defining
\[{\rm bIso}(v, v') := {\rm Iso}(\del v, \del v')/({\rm Aut}(v) \times {\rm Aut}(v')),\]
where ${\rm Aut}(v)$ and ${\rm Aut}(v')$ are the groups of
isometries of $v$ and $v'$ which act respectively by pre and post
composition with the induced isometry of the boundary, one shows
that two embeddings $j_0, j_1\co  v \ra h$ are equivalent if and only
if $[f_{j_0}] = [f_{j_1}] \in {\rm bIso}(v, v')$ (see Proposition
\ref{orthoglueprp}).

For quadratic forms $v \sim v'$, we define a `$0$-stabilised boundary isomorphism set' ${\rm sbIso}(v, v')$ (see Definition \ref{sbIsodefi}) and a map
\[
\delta \co  l_{2q+1}(v, v') \ra {\rm sbIso}(v, v'), \quad [H,\psi;L,V] \mapsto [f_j]
\]
where $f_j \co \partial v \stackrel{}{\rightarrow}  \partial v'$ is induced by $j\co v =  (V,\theta) \hra (H,\psi)$.  Not every form $\kappa(f)$ above is hyperbolic and indeed there is a further map
\[
\kappa\co  {\rm sbIso}(v, v') \ra L_{2q}^s(\Lambda),\quad [f]\mt [v\cup_f-v']
\]
where $L_{2q}^s(\Lambda)$ is the usual even dimensional Wall group.  The maps $\kappa$ and $\delta$ and the action $\rho$ of $L_{2q+1}^s(\Lambda)$ on $l_{2q+1}(v, v')$ are related in our main theorem.

\begin{thm*}[Theorem \ref{generalmainthm}]
Let $v$ and $v'$ be $\epsilon$-quadratic forms with $v \sim v'$.  There is an ``exact'' sequence of sets
\[
\oLs{\Lambda}\stackrel{\rho}{\lra} l_{2q+1}(v, v') \stackrel{\delta}{\lra} \sbIso(v, v') \stackrel{{\kappa }}{\lra} L^s_{2q}(\Lambda)
\]
by which we mean that the orbits of $\rho$ are the fibres of $\delta$ and $\im(\delta)={\kappa}^{-1}(0)$.
\end{thm*}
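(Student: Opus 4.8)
The plan is to establish the two assertions inside ``exactness'' separately: that the $\rho$-orbits in $l_{2q+1}(v,v')$ are precisely the fibres of $\delta$, and that $\im(\delta)=\kappa^{-1}(0)$. I work throughout with the description of $l_{2q+1}(v,v')$ recalled in subsection~\ref{olsubsec}: a class is represented by a quasi-formation $(H,\psi;L,V)$ with $(H,\psi)$ hyperbolic, $L$ a simple Lagrangian and $V\hra(H,\psi)$ a based half-rank summand whose induced form is $v$ up to $0$-stabilisation and whose annihilator has form $-v'$ up to $0$-stabilisation; for such a class $\delta[H,\psi;L,V]=[f_j]$, where $f_j\co\del v\ra\del v'$ is the boundary isomorphism of $j\co v\hra(H,\psi)$, and one has the glueing identity $(H,\psi)\cong v\cup_{f_j}-v'$.

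\emph{Orbits $=$ fibres.} Since the monoid operation on $\ol{\Lambda}$ is direct sum of quasi-formations, adding $u\in L^s_{2q+1}(\Lambda)\subseteq l_{2q+1}(0,0)$ replaces the representing embedding $j\co v\hra H$ by its direct sum with an embedding of a zero form into a hyperbolic form (as $b(u)=([0],[0])$), i.e.\ by a $0$-stabilisation of $j$, which by Definition~\ref{sbIsodefi} does not change $[f_j]$ in $\sbIso(v,v')$; hence $\delta$ is constant on $\rho$-orbits. Conversely suppose $\delta[x]=\delta[x']$. After $0$-stabilising so that the ambient forms have equal rank (which does not change the $\rho$-orbit) and applying Proposition~\ref{orthoglueprp}, suitably stabilised, I may assume $[x]=[H,\psi;L,V]$ and $[x']=[H,\psi;L',V]$ differ only in their simple Lagrangian. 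Then $u:=[H,\psi;L,L']$ has $b(u)=([0],[0])$ -- the form restricted to a Lagrangian and to its annihilator vanishes -- and lies in $L^s_{2q+1}(\Lambda)$ because $L$ and $L'$ are simple; the composition move in the equivalence relation defining $\ol{\Lambda}$ (subsection~\ref{olquasidef}) yields $[u]+[x']=[x]$, so $[x]$ and $[x']$ lie in one orbit. This matches the identification of $\rho$-orbits with equivalence classes of embeddings of $v$ noted in subsection~\ref{olsubsec}.

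\emph{The equality $\im(\delta)=\kappa^{-1}(0)$.} If $[x]=[H,\psi;L,V]$ then $(H,\psi)$ is hyperbolic with simple Lagrangian $L$, so the glueing identity gives $\kappa(\delta[x])=[v\cup_{f_j}-v']=[(H,\psi)]=0\in L^s_{2q}(\Lambda)$; hence $\im(\delta)\subseteq\kappa^{-1}(0)$. Conversely let $[f]\in\sbIso(v,v')$ with $\kappa[f]=[v\cup_f-v']=0$. By definition of $L^s_{2q}(\Lambda)$ there is $m$ with $(v\cup_f-v')\oplus H_\epsilon(\Lambda^m)$ simply isometric to a hyperbolic form; writing $H_\epsilon(\Lambda^m)$ itself as the glueing of a rank-$m$ zero form with its negative and using additivity of the algebraic glueing under $\oplus$, this form equals $(v\oplus 0)\cup_{f'}-(v'\oplus 0)$ for a suitable $f'$ stabilising $f$. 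Replacing $v,v',f$ by these $0$-stabilisations does not alter $[f]\in\sbIso(v,v')$, so I may assume $v\cup_f-v'$ is itself a hyperbolic form $(H,\psi)$. The glueing construction then embeds $v$ as a based half-rank summand $V\hra(H,\psi)$ with annihilator $-v'$ and induced boundary isomorphism $f$; choosing any simple Lagrangian $L\subset H$ gives $[H,\psi;L,V]\in l_{2q+1}(v,v')$ with $\delta[H,\psi;L,V]=[f]$. Hence $\kappa^{-1}(0)\subseteq\im(\delta)$.

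I expect the main obstacle to be the implication $\delta[x]=\delta[x']\Rightarrow$ same $\rho$-orbit. It requires the stabilised form of Proposition~\ref{orthoglueprp} to bring the two ambient hyperbolic forms -- a priori of different ranks -- into a common frame, and then a careful use of the composition move so that the unit $u=[H,\psi;L,L']$ has trivial Whitehead torsion and genuinely lies in $L^s_{2q+1}(\Lambda)$ rather than merely in $L_{2q+1}(\Lambda)$; this is exactly where the ${\rm UWh}/{\rm U'Wh}$-type distinctions of subsection~\ref{linearandsimplesubsec} enter. A subsidiary difficulty running through the argument is keeping the three stabilisation operations -- $0$-stabilisation of forms, hyperbolic stabilisation inside the $L$-groups, and the stabilisation defining $\sbIso(v,v')$ -- mutually compatible, as needed both in the ``orbits $=$ fibres'' reduction and in reducing $v\cup_f-v'$ to an honestly hyperbolic form.
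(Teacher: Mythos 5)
Your architecture coincides with the paper's: invariance of $\delta$ under the $L^s_{2q+1}(\Lambda)$-action, reduction of two classes with equal $\delta$ to quasi-formations on a common ambient form differing only in their simple Lagrangian, the shift relation applied to the unit $u=[H,\psi;L,L']$, and realisation of any $[f]$ with $\kappa[f]=0$ by stabilising so that $v\cup_f-v'$ becomes hyperbolic and reading off a quasi-formation from the union. The parts concerning the shift relation and $\im(\delta)=\kappa^{-1}(0)$ match the paper's proof essentially step for step.

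The one genuine gap is the sentence ``applying Proposition \ref{orthoglueprp}, suitably stabilised, I may assume $[x]=[H,\psi;L,V]$ and $[x']=[H,\psi;L',V]$ differ only in their simple Lagrangian'': Proposition \ref{orthoglueprp} alone does not yield this, and it is exactly where the paper's proof does its real work. One must unwind the equality $\delta[x]=\delta[x']$ in $\sbIso(v,v')$ into isometries $k\co (V,\theta)\oplus(P,0)\isora(V',\theta')\oplus(P',0)$ and $l\co (V^{\perp},-\theta^{\perp})\oplus(P,0)\isora({V'}^{\perp},-{\theta'}^{\perp})\oplus(P',0)$ together with a stable homotopy between $f_{j'}\oplus\id$ and $\del l\circ(f_j\oplus\id)\circ\del k^{-1}$, and then assemble, using Lemma \ref{unionlem} (iii) and (iv) in addition to Proposition \ref{orthoglueprp} (the homotopy enters through the correction $\svec{1&\Delta_1\\0&1}$), an isometry $g\co(M,\psi)\oplus H_\epsilon(P)\isora(M',\psi')\oplus H_\epsilon(P')$ carrying $V\oplus P$ onto $V'\oplus P'$; only then is the unit $u=[(M',\psi')\oplus H_\epsilon(P');\,L'\oplus P',\,g(L\oplus P)]$ available and the shift relation applicable. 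You correctly flag this as the main obstacle, but you defer it rather than prove it, and you cite only Proposition \ref{orthoglueprp}, which is insufficient. Your closing diagnosis is also misplaced: no ${\rm UWh}/{\rm U'Wh}$ considerations enter here, since quasi-formations carry simple Lagrangians and all isometries (hence the composite $g$) are simple by the conventions of Definition \ref{formdefi}, so $u$ lies in $L^s_{2q+1}(\Lambda)$ automatically; the hypothesis ${\rm UWh}(\Lambda)={\rm U'Wh}(\Lambda)$ is needed only later, in Proposition \ref{simplelinearprop}, where one wants transitivity of the possibly non-simple group $L_{2q+1}(\Lambda)$.
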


In the case where $v = v'$ the set ${\rm sbIso}(v, v) =: {\rm sbAut}(v)$ is the set of stable boundary automorphisms and contains $1$, the equivalence class of the identity.

\begin{thm*}[Corollary \ref{maincor}]
For every $\epsilon$-quadratic form $v$ there is an exact sequence
\[
\oLs{\Lambda}\stackrel{\rho}{\lra} \ols{v, v} \stackrel{\delta}{\lra} \sbAut(v) \stackrel{{\kappa}}{\lra} L^s_{2q}(\Lambda)
\]
where the orbits of the action $\rho$ are precisely the fibres of $\delta$ and $\im(\delta)={\kappa}^{-1}(0)$.  Moreover $\delta([x]) = 1 \in \sbAut(v)$ if and only if $[x]$ is elementary modulo the action of $L_{2q+1}^s(\Lambda)$.
\end{thm*}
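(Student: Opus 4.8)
The plan is to obtain everything from Theorem~\ref{generalmainthm}.  The displayed exact sequence is literally the case $v'=v$ of that theorem: one has $v\sim v$ trivially, $\sbIso(v,v)$ is by definition $\sbAut(v)$, and the maps $\rho$, $\delta$, $\kappa$ together with the two exactness statements (the orbits of $\rho$ are the fibres of $\delta$, and $\im(\delta)=\kappa^{-1}(0)$) are read off directly.  So the only thing needing a separate argument is the final sentence, and the strategy there is to single out one element of $\ols{v,v}$ and compute $\delta$ on it.

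That element is $e([v])$.  By Corollary~\ref{bcor}(ii) it is the unique elementary element with $b(e([v]))=([v],[v])\in\F{zs}\times\F{zs}$, and since $\ols{v,v}=b^{-1}([v],[v])$ this says precisely that $e([v])$ is the \emph{unique} elementary element lying in $\ols{v,v}$.  The crux of the proof is the claim that $\delta(e([v]))=1\in\sbAut(v)$.  To prove it I would take a split representative $(H,\psi;L,V)$ of $e([v])$, so $H=L\oplus V$ with induced form $v$ on $V$, and then unwind the definition of $\delta$ from Theorem~\ref{generalmainthm}, the construction of the boundary isomorphism $f_j$ attached to the embedding $j\co v\hra(H,\psi)$ (Proposition~\ref{orthoglueprp} and subsection~\ref{olquasidef}), and the glueing $\kappa(f_j)=v\cup_{f_j}-v\cong H$; the expectation is that the split quasi-formation exhibits $H$ as the algebraic double of $v$ glued along the identity of $\del v$, so that $f_j$ is the canonical identification $\del v\cong\del v$ and hence $[f_j]=1$ after $0$-stabilisation.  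I expect this verification to be the main obstacle.  Note that $\kappa(\delta(e([v])))=0\in L^s_{2q}(\Lambda)$ is automatic, because $\kappa(f_j)$ is the hyperbolic form $H$ (which carries the simple Lagrangian $L$); the real work is to show that the boundary automorphism in play is \emph{trivial} in $\sbAut(v)$ and not merely some other element of $\kappa^{-1}(0)$, and this should come down to inspecting how $L$ and $V$ sit inside $H$ in the standard model of $e([v])$.

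Granting the claim, the stated equivalence is purely formal.  The action $\rho$ restricts to $\ols{v,v}$ and $\delta$ is constant on its orbits.  Hence if $g\in\oLs{\Lambda}$ makes $g\cdot[x]$ elementary, then $g\cdot[x]\in\ols{v,v}$ is elementary, so $g\cdot[x]=e([v])$ by uniqueness, and $\delta([x])=\delta(g\cdot[x])=\delta(e([v]))=1$.  Conversely, if $\delta([x])=1=\delta(e([v]))$, then $[x]$ and $e([v])$ lie in the same fibre of $\delta$, hence in the same $\rho$-orbit, so $[x]=g\cdot e([v])$ for some $g\in\oLs{\Lambda}$; then $g^{-1}\cdot[x]=e([v])$ is elementary, \ie $[x]$ is elementary modulo the action of $\oLs{\Lambda}$.
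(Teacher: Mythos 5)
Your overall architecture is exactly the paper's: specialise Theorem \ref{generalmainthm} to $v'=v$, and reduce the ``moreover'' statement to the single claim $\delta(e([v]))=1\in\sbAut(v)$, after which the two implications follow formally from the uniqueness of the elementary element in $b^{-1}([v],[v])$ (Corollary \ref{bcor} (ii)) and the invariance of $\delta$ under the action $\rho$. That formal part of your argument is correct.

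The genuine gap is that the claim itself is left as an ``expectation'' that you yourself identify as the main obstacle, and it is precisely the non-formal content of the corollary. The paper supplies it in Example \ref{elemex}: taking the standard model $\delta(K,\rho)=(H_\epsilon(K);K,\svec{1\\ \rho}K)$ of an elementary element, it writes down the section $\sigma$ and the isometry $h$ of Proposition \ref{orthoglueprp} explicitly and exhibits a homotopy $\Delta$ showing that the boundary isomorphism $f_j$ is homotopic to $\del(-\id_K)$; since $-\id_K$ is an isometry of $(K,\theta)$, this class is $1\in\bAut(K,\theta)$ and hence $1$ after stabilisation (and, via Example \ref{delex} (i), this also gives the forward implication directly: any elementary $[x]$ has $\delta([x])=1$, without invoking uniqueness). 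Until this computation is done your proof is incomplete at its crux. One caution for when you do it: your guess that $f_j$ is ``the canonical identification'' of $\del v$ with itself is not literally what the construction produces --- the answer is $\del(-\id)$ rather than $\del(\id)$ --- which is harmless only because $\bAut(v)$ is defined as a quotient by boundaries of isometries of $v$; and, as you correctly note, it would not suffice to observe merely that $v\cup_{f_j}-v$ is hyperbolic, since that only places $\delta(e([v]))$ in $\kappa^{-1}(0)$.
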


To discuss our main theorems we define
\[ l_{2q+1}(v) := \bigcup_{v' \sim v} l_{2q+1}(v, v'),\]
the set of edges in $G_{2q}^{\Lambda}$ leaving a given vertex $[v]$.  Consider the problem of determining whether $[x] \in l_{2q+1}(v)$ is elementary:  the theorems above reveals three obstacles.  Firstly we must have $b([x]) = ([v], [v])$.  Secondly, if $[x] \in l_{2q+1}(v, v)$ we need $\delta([x]) = 1 \in {\rm sbAut}(v)$.  Finally the transitive action of $L_{2q+1}^s(\Lambda)$ on $\delta^{-1}(1)$ must be taken into account.  Up until now the role of $b([x])$ and the action of $L_{2q+1}^s(\Lambda)$ have been understood and so our main achievement is to identify the role of the set $\sbAut(v)$ and more generally ${\rm sbIso}(v, v')$.  We point out that these sets were already in the literature for $l_{1}(\bZ)$:  on the algebraic side in \cite{Nik79} and on the topological in explicitly in \cite{Boy87} and implicitly in \cite{Vog82}.

To apply Corollary \ref{maincor} we wish to calculate the set ${\rm sbAut}(v)$.  If $v$ becomes nonsingular in some localisation of $\Lambda$ then $\del v$ is a quadratic linking form and ${\rm sbAut}(v)$ is readily identified and often calculable (see Proposition \ref{bIsosprop} for a general statement).  A simple but instructive example of this is the following: if $\Lambda = \bZ$ and $\epsilon = +1$, then for the quadratic form $v = (\bZ, n)$ where $n = p_1 \dots p_k$ is a product of distinct odd primes, then ${\sbAut}([v]) \cong (\bZ/2)^{k-1}$ (see Example \ref{sbautex}).  More generally, we prove

\begin{thm*}[Proposition \ref{l_1(Z)finprop}]
For each $+$-quadratic form $v$ over $\bZ$ the set $l_1(v)$ is finite but there are $v$ for which $\{ [v'] \,| \, [v'] \sim [v] \}$ or ${\rm sbAut}(v)$ is arbitrarily large.
\end{thm*}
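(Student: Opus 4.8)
The strategy is to feed the $L$-theory of $\bZ$ into Corollary \ref{maincor} and then invoke the classical arithmetic of integral quadratic forms. For $\Lambda=\bZ$ with $\epsilon=+1$ the relevant Wall groups are $\oLs{\bZ}=0$ and $L^s_{2q}(\bZ)\cong\bZ$ (detected by one eighth of the signature). Hence, for any pair $v\sim v'$, the exact sequence of Corollary \ref{maincor} degenerates: the action $\rho$ of the trivial group $\oLs{\bZ}$ is trivial, so its orbits — which are the fibres of $\delta$ — are singletons, and therefore $\delta$ identifies $l_1(v,v')$ bijectively with $\kappa^{-1}(0)\subseteq\sbIso(v,v')$. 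Since
\[ l_1(v)=\bigcup_{v'\sim v}l_1(v,v') \]
is a union of subsets of $l_1(\bZ)$ which are pairwise disjoint and indexed by the classes $[v']$ with $v'\sim v$, finiteness of $l_1(v)$ reduces to two assertions: (a) there are only finitely many such classes $[v']$, and (b) each $\sbIso(v,v')$ is finite.

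For (a): if $v\sim v'$ then $v\oplus H_\epsilon(\bZ^k)\cong v'\oplus H_\epsilon(\bZ^k)$ for some $k$, and since $H_\epsilon(\bZ)$ has rank $2$, vanishing signature and unit determinant, comparing these invariants across the isometry shows that $v$ and $v'$ have the same rank, signature and determinant; in particular (for nondegenerate $v$) the same finite group $\coker(V\to V^*)$. A general form splits off a zero summand, which is invisible in the $0$-stabilised monoid of forms and leaves the algebraic boundary unchanged, so we may assume $v$ nondegenerate. Finiteness of $\{[v']\mid v'\sim v\}$ then follows from the classical finiteness of the number of isometry classes of integral quadratic forms of bounded rank and fixed determinant. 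For (b): a nondegenerate $+$-quadratic form over $\bZ$ becomes nonsingular over $\bQ$, a localisation of $\bZ$, so by Proposition \ref{bIsosprop} its algebraic boundary $\partial v$ is a \emph{finite} quadratic linking form; hence $\Iso(\partial v,\partial v')$ is a finite set and $\sbIso(v,v')$, being a quotient of it, is finite. Together (a) and (b) give the first assertion.

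For the second assertion it suffices to produce two families of forms $v$. Example \ref{sbautex} provides one immediately: for $v=(\bZ,n)$ with $n=p_1\cdots p_k$ a product of distinct odd primes, $\sbAut(v)\cong(\bZ/2)^{k-1}$, which is arbitrarily large. For the other, let $v$ be a positive definite even unimodular lattice of rank $8m$. All positive definite even unimodular lattices of rank $8m$ lie in a single genus, and adding a hyperbolic plane $H_\epsilon(\bZ)$ turns them into indefinite unimodular forms of rank $8m+2$ and signature $8m$, which are pairwise isometric by the classification of indefinite unimodular forms; hence any two of our lattices are stably equivalent, while the Minkowski--Siegel mass formula shows that the number of their isometry classes grows without bound with $m$. (Alternatively, the binary $+$-quadratic forms of prime discriminant $-p$ with $p\equiv3\bmod4$ form a single genus whose class number $h(-p)$ is unbounded by Siegel's theorem.)

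I expect the only step that needs genuine care rather than citation is the finiteness of $\sbIso(v,v')$ under the $0$-stabilisation built into Definition \ref{sbIsodefi}: one must check that stabilising $v$ by hyperbolic — and zero — summands does not enlarge $\Iso(\partial v,\partial v')$, only possibly collapses it further under a larger automorphism action, so that $\sbIso(v,v')$ stays a quotient of a fixed finite set. This is precisely the content of Proposition \ref{bIsosprop}; with it in hand the proof is the assembly of Corollary \ref{maincor} with $\oLs{\bZ}=0$, $L^s_{2q}(\bZ)\cong\bZ$, the mass formula, and the finiteness of class numbers of integral quadratic forms.
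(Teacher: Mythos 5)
Your proposal is correct and follows essentially the same route as the paper: feed $L_1^s(\bZ)=0$ into the exact sequence so that $\delta$ embeds each $l_1(v,v')$ into $\sbIso(v,v')$, identify the latter (after reducing to the nondegenerate part and localising at $S=\bZ\setminus\{0\}$) with a quotient of the finite isomorphism set of boundary quadratic linking forms, use the classical finiteness of lattice classes of fixed rank and determinant for the index set $\{[v']\mid v'\sim v\}$, and quote Example \ref{sbautex} for the unboundedness. Your only loose point is attributing the finiteness of $\Iso(\del v,\del v')$ directly to Proposition \ref{bIsosprop}; strictly one combines Lemma \ref{bijbisolem} with Corollary \ref{linkisocor} and the vanishing (or at least finiteness) of $K_\theta\subset\hQ^{-\epsilon}(V^*)$ for $\Lambda=\bZ$, $\epsilon=+1$, exactly as the paper does, and your additional definite even unimodular family via the mass formula is a welcome extra not spelled out in the paper.
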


We also show that ${\rm sbAut}(v)$ is small if $v$ is the sum or a linear and a simple form and if the torsion hypothesis of Thereom \ref{thm3} holds.   Here $v = (V, \theta)$ is linear if $\theta + \theta^* = 0$ and simple if $\theta + \theta^* \co  V \cong V^*$ is a simple isomorphism.

\begin{thm*}[Proposition \ref{simplelinearprop}]
If $v = (N, \eta) \oplus (M, \psi)$ is the sum of a linear form $(N, \eta)$ and simple form $(M, \psi)$ and if ${\rm UWh}(\Lambda) = {\rm U'Wh}(\Lambda)$ then $L_{2q+1}(\Lambda)$ acts transitively on $l_{2q+1}(v, v)$.
\end{thm*}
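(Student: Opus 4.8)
The plan is to run the exact sequence of Corollary \ref{maincor} against the inclusion $\oLs{\Lambda}\subseteq L_{2q+1}(\Lambda)$ of the group of simple units into the full group of units (Remark \ref{Ltorsionrem}). By Corollary \ref{maincor} the $\oLs{\Lambda}$-orbits on $\ols{v,v}$ are exactly the fibres of $\delta$, and $e([v])\in\ols{v,v}$ (Corollary \ref{bcor}) lies in $\delta^{-1}(1)$. Hence the $L_{2q+1}(\Lambda)$-orbit of $e([v])$ is the union of those fibres $\delta^{-1}(\alpha)$ with
\[ \alpha\in S:=\{\,\delta(u\cdot e([v]))\mid u\in L_{2q+1}(\Lambda)\,\}\subseteq\im(\delta)=\kappa^{-1}(0)\subseteq\sbAut(v), \]
so $L_{2q+1}(\Lambda)$ acts transitively on $\ols{v,v}$ if and only if $S=\kappa^{-1}(0)$. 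Everything therefore reduces to the algebraic task of (i) computing $\sbAut(v)$ and the subset $\kappa^{-1}(0)$ for $v=(N,\eta)\oplus(M,\psi)$, and (ii) showing that every class in $\kappa^{-1}(0)$ is realised by acting on $e([v])$ with a unit of $\ol{\Lambda}$.

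For (i) I would first use that a simple form is nonsingular, so $(M,\psi)$ has trivial algebraic boundary; thus $\partial v$ is the $0$-stabilisation of $\partial(N,\eta)$, and $\sbAut(v)$ depends only on the linear summand together with the based/simple structure carried by $(M,\psi)$ and by the zero forms used in stabilisation. Since $(N,\eta)$ is linear we have $\eta+\eta^{*}=0$, so $\partial(N,\eta)$ carries no symmetric linking content and behaves like the boundary of a quadratic formation; using the description of such boundaries in subsection \ref{linearandsimplesubsec} and the general analysis of boundary isomorphism sets in Proposition \ref{bIsosprop}, I would present $\sbAut(v)$ as a quotient of $\Aut(\partial(N,\eta))$ by the automorphisms extending over $v$, and then show that the vanishing of $\kappa(f)=[v\cup_{f}-v]\in L_{2q}^{s}(\Lambda)$ is governed by a \emph{symmetric} Whitehead-torsion condition — an obstruction lying in (the image of) ${\rm U'Wh}(\Lambda)$ — so that $\kappa^{-1}(0)$ is precisely the subset of $\sbAut(v)$ cut out by this condition.

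For (ii) I would use that the units of $\ol{\Lambda}$ not lying in $\oLs{\Lambda}$ are detected by the Whitehead torsions of automorphisms of hyperbolic \emph{quadratic} forms, i.e.\ by ${\rm UWh}(\Lambda)$ (Remark \ref{Ltorsionrem}, subsection \ref{linearandsimplesubsec}). Acting with such a unit on $e([v])$ changes the embedding $v\hra H_{\epsilon}(\Lambda^{k})$ only in its simple structure, and the effect on $\delta$ is exactly the corresponding torsion class; hence $S$ is the subset of $\sbAut(v)$ consisting of the classes whose obstruction lies in (the image of) ${\rm UWh}(\Lambda)$. The hypothesis ${\rm UWh}(\Lambda)={\rm U'Wh}(\Lambda)$ then identifies this subset with the $\kappa^{-1}(0)$ of step (i), giving $S=\kappa^{-1}(0)$ and hence transitivity. (In particular, should $\partial(N,\eta)$ admit no automorphisms beyond those extending over $v$, then $\sbAut(v)$ is already trivial and even $\oLs{\Lambda}$ acts transitively.)

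The formal properties of $\rho$, $\delta$ and $\kappa$ supplied by Corollary \ref{maincor}, and the vanishing of the boundary of a simple form, are routine. The main obstacle is step (i): reconciling three constructions of genuinely different flavour — the algebraic boundary of the linear form $(N,\eta)$, the quadratic hyperbolicity obstruction $\kappa$ valued in $L_{2q}^{s}(\Lambda)$, and the subgroups ${\rm UWh}(\Lambda),{\rm U'Wh}(\Lambda)$ of $\Wh(\Lambda)$. Establishing that $\kappa(f)=0$ is detected exactly by a class in ${\rm U'Wh}(\Lambda)$, that the action of units realises exactly the classes in ${\rm UWh}(\Lambda)$, and that these descriptions correspond under the hypothesis, is where the real work lies.
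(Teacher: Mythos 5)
Your reduction via Corollary \ref{maincor} (transitivity $\Leftrightarrow$ every class in $\im(\delta)=\kappa^{-1}(0)$ is realised by acting on $e([v])$ with a unit) is sound, and your overall idea of bridging ${\rm UWh}(\Lambda)$ and ${\rm U'Wh}(\Lambda)$ is the right one, but the two pivotal claims on which your plan rests are false as stated. You assert (i) that $\kappa(f)=0$ is ``detected exactly by a class in ${\rm U'Wh}(\Lambda)$'', i.e.\ that $\kappa^{-1}(0)\subset\sbAut(v)$ is cut out by a Whitehead-torsion condition, and (ii) that the set $S$ of classes reached from $e([v])$ by units consists of \emph{all} classes whose torsion obstruction lies in ${\rm UWh}(\Lambda)$. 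The paper's own Example \ref{L_2tauex} refutes both: take $\Lambda=\bZ$, $\epsilon=-1$, $v=(\bZ,1)$ (a linear form). Here $\Wh(\bZ)=0$, so every torsion condition is vacuous, yet $\kappa\co\sbAut(\bZ,1)\ra L_2(\bZ)=\bZ/2\bZ$ is a \emph{bijection}; thus $\kappa^{-1}(0)$ is a proper subset of $\sbAut(v)$ although the proposed ${\rm U'Wh}$-condition is satisfied by everything, and likewise $S\subset\im(\delta)=\kappa^{-1}(0)$ is one class while ``torsion in ${\rm UWh}=0$'' would allow both. The point is that $\kappa$ carries genuinely quadratic, $L_{2q}^s$-valued information (Arf/Witt-type invariants) which is not a torsion phenomenon, so no characterisation of $\kappa^{-1}(0)$ or of $S$ purely inside $\Wh(\Lambda)$ can work; carried out literally your steps (i)+(ii) would even contradict $S\subseteq\kappa^{-1}(0)$ in this example.

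What is missing is the mechanism by which the paper avoids ever having to describe $\kappa^{-1}(0)$: one only works with classes that are already of the form $\delta([x])$ for $[x]\in l_{2q+1}(v,v)$, $x=(H_\epsilon(L);L,N)$. For such a class, Example \ref{delex}(iii) shows that its image under the stabilisation map $i\co\sbAut(N,\eta)\ra\sbAut((N,\eta)\oplus H_\epsilon(L))$ is trivial, and the key Lemma \ref{surnonslem} (``$i$ is surjective and injective on the fibres of the torsion map $\delta^T_{(N,\eta)}$'') then shows that the \emph{only} residual obstruction to $\delta([x])=1$ is a torsion. That this torsion lies in ${\rm U'Wh}(\Lambda)$ comes from the structural fact that any isometry of $(N,\eta)\oplus(M,\psi)$ is triangular over the radical, with the torsion of its $N$-block cancelled by the torsion of a (possibly non-simple) isometry of the \emph{symmetric} hyperbolic form; and that units of $l_{2q+1}(\Lambda)$ realise exactly ${\rm UWh}(\Lambda)$ on the boundary is Lemma \ref{bzerolem} (via Example \ref{formationex}). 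Your outline contains neither the stabilisation trick nor Lemma \ref{surnonslem}, and it also overstates the assertion that $\sbAut(v)$ ``depends only on the linear summand'': the quotient by $\Aut(v)$ genuinely involves $(M,\psi)$, and this failure of injectivity of $i$ is precisely where ${\rm U'Wh}(\Lambda)$ enters. So the skeleton of your argument matches the paper, but the real content of the proof is absent and is replaced by characterisations that are incorrect in general.
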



\subsection{Algebraic cancellation} \label{algcansubsec}

Given a $0$-stabilised form $[v]$ it is customary to say that cancellation holds for $[v]$ if $[v'] \sim [v]$ entails that $[v] = [v']$.  Generalising this, we say that {\bf strict cancellation holds for $[v]$} if $l_{2q+1}(v) = \{ e([v]) \}$.  The topological significance of strict cancellation is primarily the following: if $[v(\bar \nu)]$ is the $0$-stabilised form of a $(q-1)$-smoothing $(M, \bar \nu)$ in $B^{q-1}(M)$ and if strict cancellation holds for $[v(\bar \nu)]$ then every manifold stably diffeomorphic to $M$ is $s$-cobordant to $M$.  We show that strict cancellation holds in a variety of algebraic circumstances.  In order of increasing complexity the result are as follows.

\begin{thm*}[Corollary \ref{fieldcor}]
Let $\Lambda$ be a field of a characteristic different from $2$ or let
$\Lambda=\bZ/2\bZ$.  Then all elements of $\ol{\Lambda}$ are elementary.
\end{thm*}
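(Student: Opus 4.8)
The plan is to deduce the statement from the structure theory of $\ol{\Lambda}$ developed above, by proving the equivalent assertion that $\ol{\Lambda}=\mathcal{E}\ol{\Lambda}$. By Corollary \ref{bcor}(ii) every elementary element is of the form $e([v])$, so $\mathcal{E}\ol{\Lambda}=\{\,e([v])\mid[v]\in\F{zs}\,\}$, whereas $\ol{\Lambda}$ is the disjoint union over pairs $([v],[v'])$ of the fibres $\ol{v,v'}=b^{-1}([v],[v'])$. It therefore suffices to prove
\begin{enumerate}
\item that the image of $b\co\ol{\Lambda}\to\F{zs}\times\F{zs}$ lies on the diagonal, so that $\ol{v,v'}$ is empty unless $[v]=[v']$; and
\item that for every $\epsilon$-quadratic form $v$ over $\Lambda$ the fibre $\ol{v,v}$ is the single point $e([v])$.
\end{enumerate}
Indeed i) and ii) together give $\ol{\Lambda}=\bigcup_{[v]}\ol{v,v}=\{\,e([v])\mid[v]\,\}=\mathcal{E}\ol{\Lambda}$.

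For i), take $[x]=[H,\psi;L,V]\in\ol{\Lambda}$ with induced forms $v$ on $V$ and $v^{\perp}$ on the annihilator $V^{\perp}$, and let $R=V\cap V^{\perp}$ be the common radical of the underlying bilinear forms. Over a field every $\epsilon$-quadratic form splits orthogonally as a nonsingular form on a based free module together with the restriction of the form to its radical; write $v\cong\bar v\perp r$ and $v^{\perp}\cong\overline{v^{\perp}}\perp r'$ accordingly. As $v$ and $v^{\perp}$ both restrict the same form $\psi$ to $R$, the degenerate summands agree, $r=r'$; and as $(H,\psi)$ is hyperbolic, the structure theory of forms over a field (Witt decomposition and cancellation, respectively Arf's theorem in characteristic two) identifies $\overline{v^{\perp}}$ with $-\bar v$. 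Hence $-v^{\perp}\cong\bar v\perp(-r)\cong\bar v\perp r\cong v$, the sign on the degenerate summand being harmless since its bilinear form vanishes; so $b([x])=([v],[v])$ lies on the diagonal. The argument is uniform in the characteristic and so covers $\Lambda=\bZ/2\bZ$ as well, and no Whitehead-torsion subtleties intervene because the relevant Whitehead group of a field is trivial.

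For ii), we use the exact sequence of Corollary \ref{maincor},
\[
\oLs{\Lambda}\stackrel{\rho}{\lra}\ols{v,v}\stackrel{\delta}{\lra}\sbAut(v)\stackrel{{\kappa}}{\lra}L^s_{2q}(\Lambda),
\]
in which the fibres of $\delta$ are the orbits of $\rho$ and $\im\delta=\kappa^{-1}(0)$. Two inputs are needed. First, $\oLs{\Lambda}=0$: over a field this is classical, since Witt's theorem makes every $\epsilon$-quadratic formation stably trivial and so the odd-dimensional surgery groups vanish, and over $\bZ/2\bZ$ it is immediate. Second, $\sbAut(v)=\{1\}$: over a field the algebraic boundary $\del v$ depends only on the restriction of $v$ to its radical $R$ and is a standard ("metabolic") quadratic formation built from $H_{\epsilon}(R)$, and the isometries of $v$ acting through $R$, together with the unipotent isometries of $v$ fixing $R$, realise every automorphism of $\del v$; hence $\Aut(v)\to\Aut(\del v)$ is onto, $\Aut(v)\times\Aut(v)$ acts transitively on $\Iso(\del v,\del v)$, and $\sbAut(v)=\{1\}$ (the $0$-stabilisation can only collapse this further). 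Granting these, $\im\delta\subseteq\sbAut(v)=\{1\}$ forces $\ol{v,v}=\delta^{-1}(1)$, a single $\rho$-orbit; since $\oLs{\Lambda}=0$ this orbit is a point, and as $e([v])\in\ol{v,v}$ it equals $e([v])$.

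The main obstacle is the identity $\sbAut(v)=\{1\}$: one has to unwind the algebraic-boundary construction of subsection \ref{olquasidef} for a form with nontrivial radical over a field, identify $\del v$ explicitly, and check that all of its stable automorphisms are induced by isometries of $v$ — in effect the degenerate special case of the mechanism behind Proposition \ref{bIsosprop}. Everything else is either formal (the reduction through Corollaries \ref{bcor} and \ref{maincor}) or classical (the normal form for forms over a field and the vanishing of $\oLs{\Lambda}$). A more hands-on alternative is to show via Witt's theorem that every half-rank based direct summand $V$ of a hyperbolic form over a field has a simple Lagrangian complement $L'$, so that $[H,\psi;L,V]=[H,\psi;L',V]$ is elementary; but this presupposes that changing the Lagrangian $L$ is one of the equivalences defining $\ol{\Lambda}$, so the route above is preferable.
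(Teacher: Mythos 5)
There is a genuine gap in your step (ii): the claim that $\sbAut(v)=\{1\}$ for every $\epsilon$-quadratic form over $\Lambda=\bZ/2\bZ$, equivalently that $\Aut(v)\to\Aut(\del v)$ is onto, is false. Take $v=(\bZ/2\bZ,\theta=1)$, a linear form with nonzero quadratic refinement (its symmetrisation $\theta+\epsilon\theta^*$ vanishes in characteristic $2$). By Lemma \ref{sthomlem}(i) with $\lambda=\lambda'=0$ the triple $(1,1,\nu)$ with $s=1\in Q_\epsilon(\bZ/2\bZ)$ is an automorphism of $\del v$; the only isometry of $v$ is the identity, and by Lemma \ref{sthomlem}(ii) a (stable) homotopy can change the $s$-component only by $(-\epsilon a+\Delta_1\theta)\Delta_1^*=\Delta_1+\Delta_1^2=0$, so this class is not $1$ in $\bAut(v)$, and by Lemma \ref{stableinjlem} it stays nontrivial in $\sbAut(v)$. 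Indeed $\kappa$ of it is the rank-two form with Arf invariant $1$, which is nonzero in $L_{2q}(\bZ/2\bZ)$ --- this is exactly the phenomenon of Example \ref{L_2tauex}, transported from $\bZ$ to $\bZ/2\bZ$. So over $\bZ/2\bZ$ your ``isometries realise every automorphism of $\del v$'' argument breaks down, and with it the deduction $l_{2q+1}(v,v)=\delta^{-1}(1)$. What is true, and all that is needed, is the weaker statement $\im(\delta)=\kappa^{-1}(0)=\{1\}$; but proving it requires the finer mechanism the paper uses: $\delta([x])$ becomes $1$ after stabilising by the hyperbolic $H_\epsilon(L)$ (Example \ref{delex}(iii)), and Lemma \ref{surnonslem} shows the only obstruction to destabilising is a Whitehead torsion, which vanishes here because the corollary is stated with $Z=\tK_1(\Lambda)$, so that ${\rm Wh}(\Lambda)=0$. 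This is precisely Proposition \ref{simplelinearprop}, which is how the paper argues (together with $L_{2q+1}(\Lambda)=0$). Note in passing that your appeal to ``the Whitehead group of a field is trivial'' also tacitly uses this convention on $Z$ in the characteristic $\neq 2$ case, where your claim $\sbAut(v)=\{1\}$ is correct (the radical is then a zero form, so Lemma \ref{bzerolem} and Lemma \ref{surnonslem} give it).

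Your step (i) reaches the same conclusion as the paper's first step ($b([x])$ lies on the diagonal), but the key identification $\overline{v^\perp}\cong-\bar v$ is asserted rather than proved: ``since $(H,\psi)$ is hyperbolic'' is not by itself an argument, and in characteristic $2$ the bookkeeping of quadratic refinements on the radical and on the nonsingular parts needs care. The clean route, which is the paper's, is to quote Proposition \ref{prepprp}(i), giving $(V,\theta)\oplus H_\epsilon(F)\cong(V^\perp,-\theta^\perp)\oplus H_\epsilon(F)$, and then cancel: Witt cancellation for $\cha\Lambda\neq2$ and Browder's cancellation theorem for quadratic forms over $\bZ/2\bZ$. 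With (i) repaired this way and (ii) replaced by the Proposition \ref{simplelinearprop} argument, your overall outline coincides with the paper's proof.
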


\begin{thm*}[Proposition \ref{leml_3Zelem}]
Every element of $l_{3}(\bZ)$ is elementary.
\end{thm*}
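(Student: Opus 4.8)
The plan is to apply Corollary \ref{maincor} together with the description of elementary elements, and reduce the statement to two structural facts about $(-1)$-quadratic forms over the PID $\bZ$ (note that $l_3(\bZ)$ is the case $2q+1=3$, so $q=1$ and $\epsilon=-1$). The $L$-theory input we need is standard: since $\Wh(1)=0=\widetilde K_0(\bZ)$ one has $L^s_3(\bZ)=0$ and $L^s_2(\bZ)=\bZ/2$ (the Arf invariant). The first of these means the action $\rho$ in the main exact sequence is trivial, which is what ultimately makes the statement clean.

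\emph{Step A: every element of $l_3(\bZ)$ is ``diagonal''.} Because the involution on $\bZ$ is trivial, the bilinear form $\lambda=\theta-\theta^*$ associated to a $(-1)$-quadratic form $v=(V,\theta)$ is the antisymmetrisation of $\theta$; applying the structure theorem for skew-symmetric forms over a PID one gets $v\cong H_{-1}(\bZ^a)\oplus w$, where $w=(\bZ^b,\theta_w)$ has $\lambda_w=0$ and is determined, as a $(-1)$-quadratic form, by the refinement $\mu_w\co\bZ^b\to\bZ/2$ (a linear map on the radical, essentially the diagonal of $\theta_w$ mod $2$; when $\mu_w\ne 0$ it absorbs the Arf invariant of the nonsingular summand). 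From this one reads off $\mathcal{F}^{\rm zs}_{2q}(\bZ)$ explicitly and checks that the half-rank $a$ of the nonsingular part is already an invariant of the $0$-stabilisation class, while adding $H_{-1}(\bZ^k)$ raises $a$ by $k$ on both sides. Hence stable equivalence coincides with equality on $\mathcal{F}^{\rm zs}_{2q}(\bZ)$, so $l_3(v,v')=b^{-1}([v],[v'])$ is empty unless $[v]=[v']$, and every element of $l_3(\bZ)$ lies in some $l_3(v,v)$.

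\emph{Step B: the fibre $\delta^{-1}(1)$ is a point.} By Corollary \ref{maincor}, $l_3(v,v)=\bigsqcup_{f\in\im(\delta)}\delta^{-1}(f)$, the fibre $\delta^{-1}(1)$ is a single orbit of the $L^s_3(\bZ)$-action (hence the single point $e([v])$ as $L^s_3(\bZ)=0$), and an element of $l_3(v,v)$ is elementary exactly when its $\delta$-image is $1$. Since $\im(\delta)=\kappa^{-1}(0)$, it therefore suffices to prove $\kappa^{-1}(0)=\{1\}\subset\sbAut(v)$. The nonsingular summand $H_{-1}(\bZ^a)$ has empty algebraic boundary, so $\partial v=\partial w$; one identifies $\partial w$ concretely (a $(+1)$-quadratic formation over $\bZ$ built from the graph of $\theta_w\co\bZ^b\to(\bZ^b)^*$) and then shows, using the rigidity of forms over the PID $\bZ$ together with the extra flexibility coming from $0$-stabilisation, that any stable boundary isometry $f$ for which $w\cup_f-w$ has vanishing Arf invariant is already induced by an isometry of $w$, i.e. $[f]=1$. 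Combining Steps A and B with Corollary \ref{maincor}: every $[x]\in l_3(\bZ)$ equals $e([v])$ for its (diagonal) image $([v],[v])$ under $b$, hence is elementary.

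I expect Step B to be the main obstacle. The general tool (Proposition \ref{bIsosprop}) requires $v$ to become nonsingular in some localisation, which fails here whenever $v$ has a nonzero radical, so $\partial v$ is not a quadratic linking form but a genuine formation over $\bZ$; the computation that $\kappa^{-1}(0)$ is trivial — equivalently, that no nontrivial stable boundary automorphism glues $w$ to $-w$ to an Arf-trivial form — has to be carried out by hand from the structure theory. Step A is routine linear algebra over $\bZ$, and the $L$-theoretic facts are classical.
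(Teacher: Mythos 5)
Your overall strategy (reduce to $l_3(v,v)$ via Corollary \ref{maincor}, use $L_3^s(\bZ)=0$, and show $\operatorname{im}(\delta)=\kappa^{-1}(0)=\{1\}$) is a legitimate way one could try to organise the proof, but as written it has two genuine gaps, one of which is a false statement. First, the structural claim underlying Step A is wrong: a $(-1)$-quadratic form over $\bZ$ does \emph{not} in general split as $H_{-1}(\bZ^a)\oplus w$ with $w$ of zero symmetrisation. Skew-symmetric forms over $\bZ$ have elementary divisors, and divisors $>1$ really occur for the forms relevant to $l_3(\bZ)$: for instance $V=\langle e_1,\ e_2+2f_1\rangle\subset H_{-1}(\bZ^2)$ is a half-rank direct summand whose induced quadratic form is $\left(\bZ^2,\svec{0&2\\0&0}\right)$, with symmetrisation $\svec{0&2\\-2&0}$ — nondegenerate, non-unimodular, and not of your claimed shape. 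Consequently your description of $\mathcal{F}^{\rm zs}_{2}(\bZ)$, the deduction that stable equivalence coincides with $0$-stabilised equality (so that every class lies in some $l_3(v,v)$), and the identification $\partial v=\partial w$ used in Step B, are all unsupported. Note moreover that ``$v\sim v'\Rightarrow[v]=[v']$'' is, via Proposition \ref{prepprp} and Corollary \ref{bcor}, essentially equivalent to a piece of the statement being proved, so it cannot simply be asserted.

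Second, and more importantly, Step B is exactly the hard content and you do not carry it out: you acknowledge that Proposition \ref{bIsosprop} is unavailable when the radical is nonzero and that the triviality of $\kappa^{-1}(0)\subset\sbAut(v)$ ``has to be done by hand,'' but no argument is given. This is not a routine verification: the boundary automorphisms of $v$ contain contributions from the hyperquadratic group (cf.\ Proposition \ref{linkautprp}), and Example \ref{L_2tauex} shows such elements can be nontrivial in $\sbAut(v)$ with nonzero Arf invariant; one must rule out Arf-trivial ones in general, for forms with arbitrary elementary divisors and radical. The paper's proof avoids computing $\sbAut(v)$ altogether: it shows directly that $V$ admits a Lagrangian complement in $(M,\psi)$, by stabilising with $\delta(M,\rho)$, invoking Kreck's cancellation argument (pp.~742--743 of \cite{Kre99}, which only yields a complement $W$ on which the symmetrisation vanishes), and then repairing the quadratic refinement by an explicit Arf-invariant basis change replacing $W$ by a genuine Lagrangian complement; elementariness then follows from Lemma \ref{elemeqlem} together with $L_3(\bZ)=0$. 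To complete your route you would need both a correct classification of skew quadratic forms over $\bZ$ up to $0$-stabilisation (including nontrivial elementary divisors and the quadratic data) and an actual computation showing every Arf-trivial stable boundary automorphism is induced by an isometry; neither is present.
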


Now let $\rk (V)$ denote the rank of the free abelian group $V$, let $G$ be a finite abelian group, let $l(G)$ denote the minimal number of generators of $G$, and $l_p(G) = l(G_p)$ where $G_p$ is the $p$-primary component of $G$, $p$ a prime.   The first two parts of the next theorem are translations of results from \cite{Nik79}.

\begin{thm*}[Proposition \ref{cancellationforVZ+1prop}]
Let $v=(V, \theta)$ be a nondegenerate quadratic form and let $(G,\phi)$ be the associated symmetric boundary (Definition \ref{Sboundarydef}). Then strict cancellation holds for $v$ if
any of the following conditions hold.

\begin{enumerate}
\item
The symmetric form $(V, \theta + \theta^*)$ is indefinite and satisfies

\begin{enumerate}
\item
$\rk(V) \geq l_p(G) +2$ for all primes $p \neq 2$,

\item
if $\rk(V) = l_2(G)$ then the symmetric boundary associated to
$\left( \bZ^2, \svec{0 & 2 \\ 0 & 0} \right)$
is a summand of the $2$-primary component of $(G, \phi)$.
\end{enumerate}

\item
The symmetric form $(V, \theta + \theta^*)$ is isomorphic to one of the classical lattices $E_8$, $E_7$, $E_6$, $D_5$ or $A_4$.
\item
The quadratic form $v$ is isomorphic to $(\bZ, p)$ for any prime $p$.

\end{enumerate}
\end{thm*}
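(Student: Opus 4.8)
The plan is to derive strict cancellation in each case from Corollary \ref{maincor}, so the task reduces to showing that the exact sequence
\[
\oLs{\bZ}\stackrel{\rho}{\lra} l_{3}(v, v') \stackrel{\delta}{\lra} \sbIso(v, v') \stackrel{{\kappa}}{\lra} L^s_{2}(\bZ)
\]
forces $l_3(v)=\{e([v])\}$. Unravelling the definitions, this means three things: (a) any $v'\sim v$ must actually equal $v$ at the level of $0$-stabilised forms (so that $l_3(v)=l_3(v,v)$), (b) $\sbAut(v)$ must be trivial, i.e. $\delta$ is constant, and (c) $L^s_{3}(\bZ)$ must act transitively on $\delta^{-1}(1)$. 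Since $L^s_3(\bZ)=0$, once (a) and (b) are in place, (c) is automatic and $\delta^{-1}(1)$ is a single orbit, namely $\{e([v])\}$. So the heart of the matter is to prove (a) and (b), and for the parts inherited from \cite{Nik79} this is exactly the content of the cited results, translated through the dictionary relating $\sbAut(v)$ to the stable boundary automorphisms of the quadratic linking form $\del v$ (as set up in subsection \ref{olquasidef} and Proposition \ref{bIsosprop}).

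For part (i) I would argue as follows. Because $v=(V,\theta)$ is nondegenerate, $\del v$ is identified with the quadratic linking form on the finite group $G=\coker(\theta+\theta^*)$ with symmetric refinement $\phi$, and $\sbAut(v)$ is computed from $\Aut(\del v)$ modulo the image of $\Aut(v)$ acting via the boundary map. By Proposition \ref{bIsosprop} (the general calculational statement), the hypotheses $\rk(V)\ge l_p(G)+2$ for odd $p$ together with the indefiniteness of $(V,\theta+\theta^*)$ are precisely what is needed to make this quotient trivial: indefiniteness gives strong approximation / surjectivity of the relevant map on automorphism groups (this is where Nikulin's machinery on indefinite lattices, or equivalently the realisation of isometries of the discriminant form by isometries of the lattice, enters), and the rank bound handles the $p$-primary pieces for $p$ odd; the hypothesis (i)(b) is the replacement condition at the prime $2$ when the rank is exactly $l_2(G)$, again taken from \cite{Nik79}. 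The same input shows that every $v'\sim v$ already equals $v$, giving (a). For part (ii) the five lattices $E_8,\dots,A_4$ are definite, so indefiniteness fails and a direct argument is needed; but these are small, well-understood lattices whose discriminant forms and isometry groups are tabulated, so one checks case by case (five short computations) that the relevant quotient is still trivial. For part (iii), $v=(\bZ,p)$, one has $G=\bZ/p$ (or $\bZ/4$ with its quadratic form when $p=2$), $\Aut(\del v)$ is tiny, $\Aut(v)=\{\pm 1\}$ surjects onto the relevant part, and $\sbAut(v)$ is trivial by a one-line computation — note this is the $k=1$ specialisation of Example \ref{sbautex}, where $(\bZ/2)^{k-1}$ collapses to the trivial group.

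The main obstacle I anticipate is the prime-$2$ bookkeeping in case (i): quadratic (as opposed to symmetric) linking forms at the prime $2$ are genuinely more delicate — there are the Brown/Arf-type invariants and the exceptional forms $u_k,v_k$ — and matching the topological $\sbAut$ with Nikulin's discriminant-form language there requires care, which is exactly why hypothesis (i)(b) is stated in the somewhat ad hoc form involving the boundary of $\left(\bZ^2,\svec{0&2\\0&0}\right)$. A secondary, more mechanical difficulty is verifying that the map $\kappa$ vanishes where needed — but since $L^s_2(\bZ)\cong\bZ$ is detected by the signature and all the forms $v\cup_f -v'$ in question have signature $0$ (they are stably hyperbolic by $v\sim v'$), $\kappa$ is identically zero and imposes no constraint; I would record this observation up front so that only $\delta$ and $\rho$ remain to be controlled. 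The definite cases (ii) are not deep but do require the explicit lattice data, so I would relegate those five verifications to a lemma or simply cite standard references for the discriminant forms of the root lattices.
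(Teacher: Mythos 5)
Your proposal is correct and follows essentially the same route as the paper: reduce everything to the exact sequence of Theorem \ref{generalmainthm}/Corollary \ref{maincor} (where $\kappa=0$ because the unions $v\cup_f-v'$ have zero signature by additivity --- not because they are automatically stably hyperbolic, which is exactly what $\kappa$ measures --- and the odd $L$-group of $\bZ$ vanishes), then get uniqueness of $v$ in its stable isometry class and triviality of $\sbAut(v)$ from Nikulin's Theorem 1.14.2 and Remark 1.14.6 for (i)--(ii) (the paper cites Remark 1.14.6 for the five definite lattices rather than doing your case-by-case check) and from the direct rank-one computation for (iii), as in the paper. Only minor slips to fix: the relevant monoid is $l_{4k+1}(\bZ)$, so the $L$-groups appearing are $L_1(\bZ)=0$ and $L_0(\bZ)\cong\bZ$ detected by the signature (not $L^s_2(\bZ)$, which is $\bZ/2$ with the Arf invariant), the boundary group in (iii) is $\bZ/2p$ rather than $\bZ/p$, and the identification of $\sbAut(v)$ with the linking-form automorphism computation needs both Lemma \ref{bijbisolem} and the vanishing of $K_\theta\subset\hQ^{-}(V^*)$ over $\bZ$ with $\epsilon=+1$, so that the surjection of Proposition \ref{bIsosprop} is actually a bijection.
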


We next consider group rings $\bZ[\pi]$ for polycyclic-by-finite groups $\pi$. Recall $h'(\pi,q)$ from Theorem \ref{thm1}.
\begin{thm*}[Corollary \ref{polycyccor}]
Let $\Lambda = \bZ[\pi]$ be the group ring of a polycyclic-by-finite group $\pi$ and let $[v]$ be a $0$-stabilised form.  If $[v] = [w] \oplus [H_\epsilon(\Lambda^k)]$ for $k \geq h'(\pi,q)$, then strict cancellation holds for $[v]$.
\end{thm*}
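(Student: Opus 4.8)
The plan is to prove the corollary by running, for the form $v=w\oplus H_\epsilon(\Lambda^k)$, through the three obstacles to an element of $l_{2q+1}(v)$ being elementary that are singled out after Theorem \ref{generalmainthm} and Corollary \ref{maincor}, and showing that all three vanish once $k\ge h'(\pi,q)$. Unwinding the definition, ``strict cancellation holds for $[v]$'', \ie $l_{2q+1}(v)=\{e([v])\}$, amounts to the conjunction of: (A) every $v'$ with $l_{2q+1}(v,v')\ne\varnothing$ satisfies $v'\cong v$, whence $l_{2q+1}(v)=l_{2q+1}(v,v)$; (B) $\sbAut(v)=\{1\}$, whence $\delta$ is constant and $l_{2q+1}(v,v)=\delta^{-1}(1)$; (C) the action $\rho$ of $L^s_{2q+1}(\Lambda)$ fixes $e([v])$, whence --- since the orbits of $\rho$ are the fibres of $\delta$ and $e([v])\in\delta^{-1}(1)$ --- one gets $\delta^{-1}(1)=\rho(L^s_{2q+1}(\Lambda))=\{e([v])\}$. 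Granting (A)--(C) gives $l_{2q+1}(v)=l_{2q+1}(v,v)=\delta^{-1}(1)=\{e([v])\}$.

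Obstacle (A) is the cancellation theorem for $\epsilon$-quadratic forms carrying a large hyperbolic summand: if $v\oplus H_\epsilon(\Lambda^j)\cong v'\oplus H_\epsilon(\Lambda^j)$ and $v$ contains at least $h'(\pi,q)$ hyperbolic planes then $v\cong v'$. For $\pi$ trivial this is the cancellation of indefinite forms over $\bZ$ already used in Proposition \ref{cancellationforVZ+1prop}, and it fixes $h'(1,q)\in\{0,1\}$; for $\pi$ finite it is exactly the algebraic result of Bass \cite{Bas73} underlying \cite{Kre99}[Corollary 4], and it fixes $h'(\pi,q)=2$; and for $\pi$ infinite polycyclic-by-finite one feeds the finiteness of the stable range of $\bZ[\pi]$ --- the Bass-type estimate $\asr(\bZ[\pi])\le h(\pi)+2$, coming from the Noetherian structure and finite Krull dimension of $\bZ[\pi]$ --- into the unitary-stable-range cancellation theorem, which fixes $h'(\pi,q)=h(\pi)+3$. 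The three-case shape of $h'(\pi,q)$ is precisely the shape of the available stable range inputs.

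Obstacles (B) and (C) carry the real content, and I would phrase each as the statement that data attached to $v$ become large enough, once $v$ has $\ge h'(\pi,q)$ hyperbolic planes, to trivialise the relevant quotient. For (B): since the algebraic boundary is additive and $\del H_\epsilon(\Lambda^k)=0$ we have $\del v=\del w$, and $\sbAut(v)$ is the $0$-stabilisation of $\bAut(v)=\Iso(\del w)/\im\bigl(\Aut(v)\to\Iso(\del w)\bigr)$; the claim is that the isometry group of $v=w\oplus H_\epsilon(\Lambda^k)$ --- enlarged by transvections mixing $w$ with the hyperbolic summand --- already surjects onto $\Iso(\del w)$ for $k\ge h'(\pi,q)$, so that $\bAut(v)$, and hence $\sbAut(v)$, is trivial. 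For (C): by the uniqueness in Corollary \ref{bcor}(ii) it is enough to show $g\oplus e([v])$ is elementary for every $g\in L^s_{2q+1}(\Lambda)$, because then $b(g\oplus e([v]))=([v],[v])$ forces $g\oplus e([v])=e([v])$; writing $g$ as a simple formation and $e([v])=e([w])\oplus e([H_\epsilon(\Lambda^k)])$, one wants to use the $k$ hyperbolic planes of $e([H_\epsilon(\Lambda^k)])$ to move, by elementary equivalences of quasi-formations, the second Lagrangian of $g\oplus e([v])$ into a based free complement of the first.

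The hard part is (C): making the absorption of $g$ into the hyperbolic planes precise and controlling the number of planes it needs. Conceptually this is the algebraic shadow of the fact that Wall realisation of $L^s_{2q+1}(\Lambda)$ along a manifold with many $(S^q\times S^q)$-summands yields $s$-cobordisms, and the number of summands required is again governed by the stable range of $\bZ[\pi]$ --- the sharp order bounds of \cite{Bas73} for $\pi$ finite, elementary arguments over $\bZ$ for $\pi$ trivial, and $\asr(\bZ[\pi])\le h(\pi)+2$ for $\pi$ infinite --- so that the same three values of $h'(\pi,q)$ reappear. Combining the cancellation theorem of (A), the surjectivity statement of (B), and the absorption argument of (C) then yields strict cancellation for $[v]$ as soon as $k\ge h'(\pi,q)$.
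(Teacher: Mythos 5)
Your overall skeleton (reduce to $l_{2q+1}(v,v)$ by cancellation, then kill $\delta$ and the $L^s_{2q+1}$-action) is the paper's skeleton, and your step (A) matches the paper's use of the Magurn--van der Kallen--Vaserstein cancellation theorem together with $\asr(\bZ[\pi])\le h(\pi)+2$. But your step (B) is false as stated, and it is not what is needed. The set $\sbAut(v)$ need \emph{not} be trivial when $v=w\oplus H_\epsilon(\Lambda^k)$, no matter how large $k$ is: take $\Lambda=\bZ$, $\epsilon=-1$, $w=(\bZ,1)$ and $v=(\bZ,1)\oplus H_\epsilon(\bZ^k)$. By Example \ref{L_2tauex} the boundary automorphism $(\id_\bZ,\id_\bZ,1)$ of $\del(\bZ,1)$ satisfies $\kappa=$ Arf $=1\in L_2(\bZ)$, and since $\kappa$ is well defined on $\sbAut$ and vanishes on boundaries of isometries (twisted doubles along $\del h$ are hyperbolic), its class stays nontrivial in $\sbAut\bigl((\bZ,1)\oplus H_\epsilon(\bZ^k)\bigr)$ for every $k$ --- yet this $v$ lies squarely in the scope of the corollary ($\pi$ trivial, $q$ odd, $h'=0$) and strict cancellation does hold for it (Proposition \ref{leml_3Zelem}). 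So your claimed surjectivity of $\Aut(v)$ onto the boundary automorphisms is obstructed exactly by $\kappa$, and the corollary cannot be proved by showing $\sbAut(v)=\{1\}$. What is true, and what the paper proves (Theorem \ref{cancpolthm}), is the weaker statement that $\delta([x])=1$ for every $[x]\in l_{2q+1}(v,v)$, i.e.\ that $\kappa^{-1}(0)=\{1\}$: the boundary isomorphism $f_j$ of a quasi-formation glues the two boundary forms to a hyperbolic form, so by Proposition \ref{bdryextprp} it is stably homotopic to $\del h$ for a \emph{stable} isometry $h\co(V,\theta)\oplus H_\epsilon(-)\cong(V^\perp,-\theta^\perp)\oplus H_\epsilon(-)$, and Corollary \ref{wittcanccor} (the MKV88 cancellation with control of the induced boundary map) destabilises $h$ without changing its boundary, forcing $\delta([x])=1$. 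This $\kappa$-aware argument is the missing idea in your (B).

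Your step (C) is the right reduction (show $[z]+e([v])$, or more generally $[z]+[x]$, is elementary and invoke uniqueness of elementary elements), but the absorption you gesture at is never carried out, and the mechanism that makes it work is again the MKV88 input: Theorem \ref{mkvmainthm} gives transitivity of transvections, hence Corollary \ref{rutranscor} shows $RU_\epsilon(\Lambda^k)$ acts transitively on hyperbolic pairs once $k\ge\asr(\Lambda)+2$; applying this inductively (as in the proof of \cite{Kre99}[Theorem 5] and Lemma \ref{L-actionlem}) one arranges that the isometry $\alpha$ realising $[z]$ can be corrected by an element of $RU$ so as to preserve $V$, whence $[x+z]=[x]$. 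Without this (or an equivalent bound on how many hyperbolic planes are needed to ``move the Lagrangian''), your (C) is a plausible analogy with Wall realisation rather than a proof. Finally, note that the paper does not route the trivial and finite $\pi$ cases through the $\asr$ machinery at all: trivial $\pi$ is handled by Remark \ref{Zcancelrem} and Proposition \ref{leml_3Zelem}, and finite non-trivial $\pi$ by the argument of \cite{Kre99}[Corollary 4] resting on Bass, which is why $h'(\pi,q)$ can be smaller than $\asr(\bZ[\pi])+1$ in those cases.
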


\begin{rem}
It is very likely that the bound $h(\pi) +3$ is not optimal for all infinite polycyclic-by-finite groups.  As we noted before, Kahn \cite{Kh04} has recently obtained cancellation results for topological $4$-dimensional manifolds with certain infinite fundamental groups.   When translated to the context of $l_{5}(\bZ[\pi])$ Khan's results should give strict cancellation for the group rings he considers when $[v]$ splits off $h(\pi) + 2$ hyperbolic planes.
\end{rem}

Finally, we remark that we know of no example where strict cancellation does not hold where $[v] = [w] \oplus [H_\epsilon(\Lambda)]$ splits off a single hyperbolic plane.

\subsection{The Grothendieck group of $\ol{\Lambda}$} \label{grothgpsubsec}
Our aim in this paper has not been to compute the monoid
$\ol{\Lambda}$ but to understand the subsets $l_{2q+1}(v, v')$.
However, a key stabilization property of $\ol{\Lambda}$ allows us
to compute its Grothendieck group.

Recall that Wall's original definition of $L_{2q+1}^s(\Lambda)$ was by isometries of hyperbolic forms and that any isometry $\alpha\co  (H, \psi) \cong (H, \psi)$ defines the element
\[ [z(\alpha)] := [(H, \psi; L, \alpha(L)] \in L_{2q+1}^s(\Lambda).\]

\begin{thm*}[Lemma \ref{L-actionlem} and Corollary \ref{L-actioncor}]
Let $[x] \in \ol{\Lambda})$.  If $\alpha\co  (H, \psi) \cong (H, \psi)$ restricts to an isometry of $V$ in some representative $(H, \psi; L, V)$ of $[x]$ then $[z(\alpha)]$ acts trivially on $[x]$.  In particular, if $(V, \theta) \cong (W, \sigma) \oplus H_\epsilon(\Lambda^k)$ then any $[z] = [H_\epsilon(\Lambda^k), J, K] \in L_{2q+1}^s(\Lambda)$ acts trivially on $[x]$.
\end{thm*}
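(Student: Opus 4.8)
The plan is as follows. As $\oLs{\Lambda}$ is a subgroup of the group of units $L_{2q+1}(\Lambda)=\ol{0,0}$ of the monoid $\ol{\Lambda}$, it acts on $\ol{\Lambda}$ by left multiplication. Writing $x=(H,\psi;L,V)$ for the given representative of $[x]$ and $z(\alpha)=(H,\psi;L,\alpha(L))$, the class $[z(\alpha)]\cdot[x]$ is therefore represented by the orthogonal sum quasi-formation
\[
(H\oplus H,\ \psi\oplus\psi;\ L\oplus L,\ \alpha(L)\oplus V),
\]
and the task is to show this equals $[x]$. First I would normalise, using $\alpha(V)=V$: the isometry $\id_H\oplus\alpha^{-1}$ of $(H\oplus H,\psi\oplus\psi)$ fixes $\alpha(L)\oplus V$ and moves $L\oplus L$ to $L\oplus\alpha^{-1}(L)$, so $[z(\alpha)]\cdot[x]$ is also represented by $(H\oplus H;\,L\oplus\alpha^{-1}(L),\,\alpha(L)\oplus V)$; on the other hand, adding to $x$ the zero element $(H,\psi;\alpha^{-1}(L),\alpha^{-1}(L))$ --- any quasi-formation $(H,\psi;F,F)$ with $F$ a simple Lagrangian represents $z(\id_H)=0$ --- displays $(H\oplus H;\,L\oplus\alpha^{-1}(L),\,V\oplus\alpha^{-1}(L))$ as a representative of $[x]$. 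The claim thus reduces to identifying two quasi-formations which share the simple Lagrangian $L\oplus\alpha^{-1}(L)$ and whose half-rank summands $\alpha(L)\oplus V$ and $V\oplus\alpha^{-1}(L)$ carry the same induced form $v$, and the same annihilating form $v^{\perp}$, up to zero forms.

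This identification is the heart of the argument, and I expect it to be the main obstacle. It cannot come from a bare isometry of $(H\oplus H,\psi\oplus\psi)$: an isometry only permutes orthogonal summands, so any such manipulation merely re-expresses $[z(\alpha)]\cdot[x]$ and runs in a circle --- as it must, since $[z(\alpha)]$ is in general a nonzero element of $\oLs{\Lambda}$. Instead one has to use the part of the equivalence relation defining $\ol{\Lambda}$ that goes beyond isometry, namely the stabilisation and handle-slide moves of \S\ref{olquasidef}, and feed into them the restricted isometry $\alpha|_V\in\Aut(v)$ (equivalently $\alpha|_{V^{\perp}}\in\Aut(v^{\perp})$) as the twist that interchanges the two half-rank summands above while fixing their common Lagrangian; this is the step where the hypothesis $\alpha(V)=V$ is genuinely used. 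I would expect to need one preliminary hyperbolic stabilisation of $(H\oplus H,\psi\oplus\psi)$ here, both to make room for the move and to keep the relevant Lagrangians simple. Granting the identification, $[z(\alpha)]\cdot[x]=[x]$.

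Finally I would deduce the ``in particular'' statement, Corollary \ref{L-actioncor}, from the first part. Suppose $(V,\theta)\cong(W,\sigma)\oplus H_\epsilon(\Lambda^k)$, realised as an orthogonal splitting of $V$ inside $H$. Since $H_\epsilon(\Lambda^k)$ is nonsingular it is an orthogonal summand of the nonsingular form $(H,\psi)$ too, say $(H,\psi)=H_\epsilon(\Lambda^k)\perp(H'',\psi'')$ with $V=H_\epsilon(\Lambda^k)\perp W$ and $W\subset H''$. Any two simple Lagrangians of $H_\epsilon(\Lambda^k)$ are interchanged by a simple isometry of $H_\epsilon(\Lambda^k)$ (a free Lagrangian is standard up to isometry), so there is $\gamma\in\Aut(H_\epsilon(\Lambda^k))$ with $\gamma(J)=K$; set $\alpha:=\gamma\perp\id_{H''}\in\Aut(H,\psi)$. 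Then $\alpha(V)=\gamma(H_\epsilon(\Lambda^k))\perp W=V$, so $\alpha$ restricts to an isometry of $V$; and $[z(\alpha)]=[z(\gamma)]=[(H_\epsilon(\Lambda^k);J,K)]$, because $\alpha$ differs from $\gamma$ only by the identity on the orthogonal complement $H''$ and the formation invariant is additive over orthogonal sums. Applying the first part to $\alpha$ and $[x]$ shows that $[(H_\epsilon(\Lambda^k);J,K)]$ acts trivially on $[x]$.
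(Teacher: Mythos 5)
Your deduction of the ``in particular'' statement from the first part is essentially the paper's own argument (extend an isometry $\gamma$ of $H_\epsilon(\Lambda^k)$ with $\gamma(J)=K$ by the identity on an orthogonal complement containing the rest of $V$, then quote the first part), and your reduction of $[z(\alpha)]+[x]$ to the comparison of $(H\oplus H;\,L\oplus\alpha^{-1}(L),\,\alpha(L)\oplus V)$ with $(H\oplus H;\,L\oplus\alpha^{-1}(L),\,V\oplus\alpha^{-1}(L))$ is correct as far as it goes. But the step you yourself call ``the heart of the argument'' is never carried out: you only assert that the two quasi-formations should be identified by ``stabilisation and handle-slide moves'' twisted by $\alpha|_V$, and then write ``Granting the identification''. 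That is a genuine gap, not a detail --- as you note, no isometry of $(H\oplus H,\psi\oplus\psi)$ can do it, and every concrete manipulation of the doubled form (swapping factors, composing with $\alpha\oplus\id$, etc.) merely re-expresses the same class, so the lemma is not proved as written.

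The missing idea is that no doubling or extra stabilisation is needed: relation (\ref{shifteqn}) applies directly. Since $\alpha(V)=V$ and $\alpha|_V$ is a (simple) isometry of $(V,\theta)$, the isometry $\alpha$ is an isomorphism of quasi-formations $(H,\psi;L,V)\cong(H,\psi;\alpha(L),V)$, so $[x]=[H,\psi;\alpha(L),V]$. Hence
\[
[z(\alpha)]+[x]\;=\;[(H,\psi;L,\alpha(L))\oplus(H,\psi;\alpha(L),V)]\;=\;[H,\psi;L,V]\;=\;[x],
\]
where the middle equality is exactly relation (\ref{shifteqn}), applicable because $L$ and $\alpha(L)$ are both simple Lagrangians. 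This is the paper's proof of Lemma \ref{L-actionlem}, and it is here --- in producing the alternative representative $(H,\psi;\alpha(L),V)$ of $[x]$ --- that the hypothesis $\alpha(V)=V$ is used, rather than in any handle slide on a doubled form. With this replacement for your middle step, the rest of your write-up (including the deduction of Corollary \ref{L-actioncor}) goes through.
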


Combining this result and the key Proposition \ref{bdryextprp} we prove

\begin{thm*}[Proposition \ref{stablyelemprop}]
For any $[x] \in \ol{\Lambda}$, there exists a natural number $k$ such that $[x] + e([H_\epsilon(\Lambda^k)])$ is elementary.
\end{thm*}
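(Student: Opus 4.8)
The plan is to stabilise $[x]$ by elementary elements in two stages: first make $b([x])$ land on the diagonal, and then kill in turn the $\sbAut$-obstruction measured by $\delta$ and the residual unit in $\oLs{\Lambda}$. The real work is packaged into two results proved earlier — the boundary extension Proposition~\ref{bdryextprp} and the triviality-of-action Corollary~\ref{L-actioncor} — so the argument below is mostly bookkeeping that fits them together.

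\emph{Stage 1 (landing on the diagonal).} Write $b([x])=([v],[w])$, where $v,w$ are the forms induced by a representative $(H,\psi;L,V)$ of $[x]$; they have equal rank, and since $l_{2q+1}(v,w)$ contains $[x]$ it is nonempty, so $v\sim w$, i.e. $[v]\oplus[H_\epsilon(\Lambda^j)]=[w]\oplus[H_\epsilon(\Lambda^j)]$ for some $j$. As $b$ is a monoid homomorphism and $b(e([H_\epsilon(\Lambda^j)]))=([H_\epsilon(\Lambda^j)],[H_\epsilon(\Lambda^j)])$, the element $[x']:=[x]+e([H_\epsilon(\Lambda^j)])$ has $b([x'])=([u],[u])$ with $u:=v\oplus H_\epsilon(\Lambda^j)$; that is, $[x']\in l_{2q+1}(u,u)$. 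Using additivity of $e$ (the monoid isomorphism of Corollary~\ref{bcor}) together with $[H_\epsilon(\Lambda^j)]\oplus[H_\epsilon(\Lambda^\ell)]=[H_\epsilon(\Lambda^{j+\ell})]$, it now suffices to find $\ell$ with $[x']+e([H_\epsilon(\Lambda^\ell)])$ elementary.

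\emph{Stage 2 (killing $\delta$).} Consider $\delta([x'])\in\sbAut(u)$. Since $\partial(u\oplus H_\epsilon(\Lambda^{\ell_1}))=\partial u$ and $e([H_\epsilon(\Lambda^{\ell_1})])$ is represented by the standard embedding of $H_\epsilon(\Lambda^{\ell_1})$ as the complement of a Lagrangian (whose induced boundary isomorphism on $\partial H_\epsilon(\Lambda^{\ell_1})=0$ is trivial), the class $\delta([x']+e([H_\epsilon(\Lambda^{\ell_1})]))$ is the image of $\delta([x'])$ under the natural stabilisation map $\sbAut(u)\to\sbAut(u\oplus H_\epsilon(\Lambda^{\ell_1}))$. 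By the key Proposition~\ref{bdryextprp}, for $\ell_1$ sufficiently large this image is $1$. Hence $\delta([x']+e([H_\epsilon(\Lambda^{\ell_1})]))=1$, and since $\delta(e([u\oplus H_\epsilon(\Lambda^{\ell_1})]))=1$ as well (the standard embedding), exactness of the sequence of Theorem~\ref{generalmainthm} (Corollary~\ref{maincor}) identifies $\delta^{-1}(1)$ with the $\oLs{\Lambda}$-orbit of $e([u\oplus H_\epsilon(\Lambda^{\ell_1})])$ under $\rho$. As $\rho$ is translation by a unit, this gives $[x']+e([H_\epsilon(\Lambda^{\ell_1})])=[z]+e([u\oplus H_\epsilon(\Lambda^{\ell_1})])$ for some $[z]\in\oLs{\Lambda}$.

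\emph{Stage 3 (killing the unit and concluding).} Represent $[z]$ on a hyperbolic form, $[z]=[H_\epsilon(\Lambda^N);J,K]$. For $\ell_2\geq N$ the form underlying $e([u\oplus H_\epsilon(\Lambda^{\ell_1+\ell_2})])$ contains $H_\epsilon(\Lambda^N)$ as an orthogonal summand, so Corollary~\ref{L-actioncor} yields $[z]+e([u\oplus H_\epsilon(\Lambda^{\ell_1+\ell_2})])=e([u\oplus H_\epsilon(\Lambda^{\ell_1+\ell_2})])$. Combining this with the identity of Stage~2 and additivity of $e$,
\[
[x]+e([H_\epsilon(\Lambda^{j+\ell_1+\ell_2})])\;=\;[x']+e([H_\epsilon(\Lambda^{\ell_1+\ell_2})])\;=\;[z]+e([u\oplus H_\epsilon(\Lambda^{\ell_1+\ell_2})])\;=\;e([u\oplus H_\epsilon(\Lambda^{\ell_1+\ell_2})]),
\]
which is elementary. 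Thus $k:=j+\ell_1+\ell_2$ works. The one genuinely delicate point, and the place I expect to have to be careful, is that the two stabilisations must be carried out in this order and shown not to interfere: the $\delta$-triviality gained in Stage~2 must survive the further stabilisation of Stage~3, which it does precisely because — by additivity of $e$ — that further stabilisation only adds $e$ of a hyperbolic form, as the displayed computation records. (Everything of substance beyond this is imported from Propositions~\ref{bdryextprp} and Corollary~\ref{L-actioncor}.)
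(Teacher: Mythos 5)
Your proposal is correct and follows essentially the same route as the paper: the $\delta$-obstruction is killed by hyperbolic stabilisation via Proposition \ref{bdryextprp}, the residual unit supplied by exactness (Corollary \ref{maincor}) is then absorbed using Corollary \ref{L-actioncor}. The only cosmetic difference is that the paper carries out your Stages 1 and 2 in a single step, adding $e([H_\epsilon(L)])$ for the ambient hyperbolic form of a representative of $[x]$ (Example \ref{delex}(iii)), whereas you stabilise in two stages with an unspecified ``sufficiently large'' $\ell_1$; the identification bookkeeping you defer there is exactly what that example records.
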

\noindent Now, for an abelian monoid $A$ let ${\rm Gr}(A)$ denote the Grothendieck group of $A$.  It is a simple matter to obtain the following

\begin{thm*}[Corollary \ref{grothgrpcor}]
The sequence $\mathcal{F}_{2q}^{\rm zs}(\Lambda) \cong \mathcal{E}\ol{\Lambda} \hra \ol{\Lambda}$ induces isomorphisms of Grothendieck groups
\[ {\rm Gr}(\mathcal{F}_{2q}^{\rm zs}(\Lambda)) \cong {\rm Gr}(\mathcal{E}\ol{\Lambda}) \cong {\rm Gr}(\ol{\Lambda}).\]
\end{thm*}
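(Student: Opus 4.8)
The plan is to obtain the first isomorphism from functoriality of the Grothendieck group and the second from a cofinality argument. Corollary~\ref{bcor}(ii) provides a monoid isomorphism $b_{\mathcal{E}}\co \mathcal{E}\ol{\Lambda} \isora \mathcal{F}_{2q}^{\rm zs}(\Lambda)$ with inverse $[v]\mapsto e([v])$, and since ${\rm Gr}(-)$ is a functor on abelian monoids this immediately gives ${\rm Gr}(\mathcal{F}_{2q}^{\rm zs}(\Lambda)) \cong {\rm Gr}(\mathcal{E}\ol{\Lambda})$. It remains to show that the inclusion $\iota\co \mathcal{E}\ol{\Lambda} \hra \ol{\Lambda}$ induces an isomorphism on Grothendieck groups; the essential input is Proposition~\ref{stablyelemprop}, which says exactly that $\mathcal{E}\ol{\Lambda}$ is a \emph{cofinal} submonoid of $\ol{\Lambda}$: for each $[x]\in\ol{\Lambda}$ there is a $k$ with $[x]+e([H_\epsilon(\Lambda^k)])\in\mathcal{E}\ol{\Lambda}$, the element $e([H_\epsilon(\Lambda^k)])$ itself lying in $\mathcal{E}\ol{\Lambda}$.

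For surjectivity of ${\rm Gr}(\iota)$, recall that every element of ${\rm Gr}(\ol{\Lambda})$ is a formal difference $[x]-[y]$ with $[x],[y]\in\ol{\Lambda}$. Choosing $k,l$ so that $a:=[x]+e([H_\epsilon(\Lambda^k)])$ and $b:=[y]+e([H_\epsilon(\Lambda^l)])$ are elementary, we may rewrite
\[ [x]-[y] \;=\; \bigl(a+e([H_\epsilon(\Lambda^l)])\bigr) - \bigl(b+e([H_\epsilon(\Lambda^k)])\bigr), \]
which is a difference of elements of $\mathcal{E}\ol{\Lambda}$; hence $[x]-[y]\in\im({\rm Gr}(\iota))$.

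For injectivity, suppose $a-a'$ and $b-b'$, with $a,a',b,b'\in\mathcal{E}\ol{\Lambda}$, have the same image in ${\rm Gr}(\ol{\Lambda})$. Then there is $[c]\in\ol{\Lambda}$ with $a+b'+[c]=b+a'+[c]$ in $\ol{\Lambda}$. Applying Proposition~\ref{stablyelemprop} to $[c]$ yields $k$ with $c':=[c]+e([H_\epsilon(\Lambda^k)])\in\mathcal{E}\ol{\Lambda}$, and adding $e([H_\epsilon(\Lambda^k)])$ to both sides of the relation gives $a+b'+c'=b+a'+c'$. Since $\mathcal{E}\ol{\Lambda}$ is a submonoid of $\ol{\Lambda}$ this equality already holds in $\mathcal{E}\ol{\Lambda}$, so $a-a'=b-b'$ in ${\rm Gr}(\mathcal{E}\ol{\Lambda})$. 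Therefore ${\rm Gr}(\iota)$ is an isomorphism and the desired chain ${\rm Gr}(\mathcal{F}_{2q}^{\rm zs}(\Lambda)) \cong {\rm Gr}(\mathcal{E}\ol{\Lambda}) \cong {\rm Gr}(\ol{\Lambda})$ follows.

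I do not expect a genuine obstacle at this stage: all the substance has been absorbed into earlier results, above all the monoid isomorphism of Corollary~\ref{bcor}(ii) and the stable elementariness of Proposition~\ref{stablyelemprop} (itself resting on Proposition~\ref{bdryextprp} and the computation that hyperbolic $L$-classes act trivially). Given these, the argument above is just the standard observation that a cofinal submonoid inclusion induces an isomorphism of Grothendieck groups, together with the already-noted fact that $\mathcal{E}\ol{\Lambda}$ is closed under addition — which is why the paper can rightly call it ``a simple matter''.
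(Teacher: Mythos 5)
Your proof is correct, and it rests on exactly the two inputs the paper's proof uses: the monoid isomorphism $b_{\mathcal{E}}$ of Corollary \ref{bcor}(ii) and the stable elementariness of Proposition \ref{stablyelemprop}; the surjectivity half is identical. The only place you diverge is the injectivity of ${\rm Gr}(\iota)$: you run the cofinality argument a second time, stabilising the witness $[c]$ of a relation in $\ol{\Lambda}$ by $e([H_\epsilon(\Lambda^k)])$ so that the relation already holds inside $\mathcal{E}\ol{\Lambda}$, which is the generic fact that a cofinal submonoid inclusion (in the strong sense that every element is pushed into the submonoid by adding an element \emph{of the submonoid}) induces an injection on Grothendieck groups. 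The paper instead notes that $b_{\mathcal{E}} = b \circ i$, where $b$ is the map of Definition \ref{bdefi} defined on all of $\ol{\Lambda}$; since ${\rm Gr}(b_{\mathcal{E}})$ is an isomorphism, ${\rm Gr}(b)$ provides a retraction of ${\rm Gr}(i)$, so injectivity follows in one line without invoking Proposition \ref{stablyelemprop} again. Both routes are valid; yours is marginally more self-contained and general, the paper's exploits the extra structure that the isomorphism $b_{\mathcal{E}}$ extends over the ambient monoid.
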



The remainder of the paper is organised as follows: in section 2 we review the application of the surgery obstruction monoids $\ol{\bZ[\pi]}$ to the cancellation problem and prove our topological results.  In section 3 we begin the algebra: we recall the \cite{Kre99} definition of $\ol{\Lambda}$ and give an equivalent but slightly more flexible definition closer to the spirit of algebraic surgery.  In section 4 we elaborate some results from algebraic surgery on the gluing and splitting of $\epsilon$-quadratic forms.  In section 5 we apply the ideas from section 4 to prove our main theorem.  In section 6 we apply the main theorem in certain situations: we consider $l_{2q+1}(v,v)$ when $v$ is the sum of a simple and a linear forms, we show how the $\bAut$-sets and $\sbIso$-sets can often be calculated using automorphisms of linking forms and we calculate $\ol{\bZ}$.  In section 7 we consider strict cancellation when $\Lambda$ has finite asymptotic stable rank: e.g. $\Lambda = \bZ[\pi]$ and $\pi$ is a polycyclic by finite group.

{\bf Acknowledgements:}  We would like to thank Matthias Kreck and Andrew Ranicki for their long term support.  We would also like to thank Jim Davis, Ian Hambleton and Qayum Khan for helpful discussions.

\section{Stably diffeomorphic manifolds}
\label{sdmsec}
The cancellation problem and the study of stable diffeomorphisms and stably diffeomorphic manifolds has a rich history which we shall not attempt to summarise here but the highlights include \cite{Wa64, CS71, Te92, HaKr88, HaKr93b}.  Instead we shall focus on the modified surgery setting of \cite{Kre99} where the monoids $\ol{\bZ[\pi]}$ play a key role.   In this section we give the proofs of all of the topological results from the introduction assuming the algebraic results about $\ol{\bZ[\pi]}$ to which the remaining sections of the paper are devoted.  We also prove the useful Lemma \ref{SDcondlem} which extends the scope of results in \cite{Kre99}.  


\subsection{The proofs of topological applications}
\label{sdmneq37subsec}

We work in the category of compact, smooth manifolds but appropriate translations of our statements continue to hold for any category, dimension and fundamental group where one can do surgery.  Let $BO$ be the classifying space for stable real vector bundles.  We let $B$ denote $\gamma\co  B \ra BO$, a fibration whose domain is a $CW$-complex of finite type with fundamental group $\pi_1(B)  = \pi$.  

A compact manifold $M$ embedded in a high dimensional Euclidean space has a stable normal bundle classified by a map $\nu\co  M \ra BO$.   A {\bf $B$-manifold} $(M, \bar \nu)$ is a manifold $M$ together with an appropriate equivalence class of lift of its the stable normal bundle through $\gamma$:
\[
\nu \co M \dlra{\bar \nu} B \dlra{\gamma} BO.
\]
There are notions of $B$-diffeomorphism, $B$-bordism and
$B$-bordism groups of closed, $n$-dimensional $B$-manifolds
$\Omega_{n}(B)$.  For all positive integers $k$, a $B$-manifold
$(M, \bar \nu)$ is called a {\bf $k$-smoothing} if $\bar \nu$ is
$(k+1)$-connected and for every manifold $M$ the $k$-th Postnikov
factorisation of $\nu\co  M \ra BO$ defines a fibration $B^{k}(M) \ra
BO$, the {\bf normal $k$-type} of $M$.  The fibre homotopy type of
$B^{k}(M)$ is a diffeomorphism invariant of $M$ and there are
always $k$-smoothings $\bar \nu \co  M \ra B^{k}(M)$.

In this subsection we consider $2q$-dimensional $B$-manifolds for
$q \neq 3, 7$.  We also assume for technical reasons that $\pi_{q+1}(B)$ is a finitely generated $\bZ[\pi]$-module.  The following definition, adapted slightly from
\cite{Kre99}[Theorem 4], identifies the bordisms which arise in
modified surgery at the $(q-1)$-type and which are relevant to the
cancellation problem.

\begin{defi}  Let $B \ra BO$ be a fibration.  A $(2q+1)$-dimensional modified surgery problem over $B$ $(W, \bar \nu \co M_0, M_1, f)$ consists of the following data:
\begin{enumerate}
\item $(M_0, \bar \nu_0)$ and $(M_1, \bar \nu_1)$, two compact, connected $2q$-dimensional $(q-1)$-smoothings in $B$ with the same Euler characteristic,
\item  a diffeomorphism $f \co  \del M_0\isora \del M_1$ compatible with $\bar \nu_0$ and $\bar \nu_1$,
\item   a compact $(2q+1)$-dimensional $B$-manifold $(W, \bar \nu)$ with boundary $\del W = M_0 \cup_f M_1$ such that $\bar \nu |_{M_i} = \bar \nu_i$.
\end{enumerate}
Sometimes we shall write simply $(W, \bar \nu)$ for $(W, \bar \nu; M_0, M_1, f)$.
\end{defi}

The relevance of  modified surgery problems for the cancellation problem is made clear in the following lemma.

\begin{lem} \label{SDcondlem}
Let $M_0$ and $M_1$ be compact, connected $2q$-dimensional manifolds of equal Euler characteristic and let $\bar \nu_0\co  M \ra B=B^{q-1}(M_0)$ be a $(q-1)$-smoothing of $M_0$ in its $(q-1)$-type.  Then the following are equivalent.
\begin{enumerate}
\item
$M_0$ and $M_1$ are stably diffeomorphic.
\item $M_1$ admits a $(q-1)$-smoothing $\bar \nu_1\co  M_1 \ra B$ such that there is a modified surgery problem $(W, \bar \nu; M_0, M_1, f)$ with $\bar \nu|_{M_i} = \bar \nu_i$, $i = 0,1$.
\end{enumerate}
Moreover, in the case that $(i)$ and $(ii)$ hold then each stable diffeomorphism $h\co M_0 \sharp_k(S^q \times S^q) \cong M_1 \sharp_k(S^q \times S^q)$ defines element $\Theta(h, \bar \nu_0) \in l_{2q+1}(\bZ[\pi])$.
\end{lem}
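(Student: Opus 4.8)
The plan is to establish the two implications $(i) \Leftrightarrow (ii)$ separately and then, assuming either condition, to extract the invariant $\Theta(h, \bar\nu_0)$. For the implication $(ii) \Rightarrow (i)$, I would simply invoke \cite{Kre99}[Theorem 2] applied to the $B$-bordism $W$: since $(M_0, \bar\nu_0)$ and $(M_1,\bar\nu_1)$ are $(q-1)$-smoothings with the same Euler characteristic and are $B$-bordant rel boundary, Kreck's theorem yields a stable diffeomorphism. (One must be slightly careful here because the $M_i$ may have boundary and the bordism is only rel the common boundary $\partial M_0 \cong \partial M_1$, but the standard modified surgery machinery applies verbatim once $f$ identifies the boundaries.)

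For the harder implication $(i) \Rightarrow (ii)$, I would start from a stable diffeomorphism $h\co M_0 \sharp_k(S^q\times S^q) \cong M_1 \sharp_k(S^q\times S^q)$. First I would use $h$ to transport the smoothing: the composite of $\bar\nu_0$ (connect-summed with the standard smoothing of $\sharp_k(S^q\times S^q)$, which is $(q-1)$-connected since $S^q\times S^q$ is highly connected) with $h^{-1}$ gives a $(q-1)$-smoothing of $M_1\sharp_k(S^q\times S^q)$, hence of $M_1$ after observing that $B^{q-1}(M_1) \simeq B^{q-1}(M_0) = B$ (the normal $(q-1)$-type is a stable-diffeomorphism invariant because connected sum with $S^q\times S^q$ does not change the $(q-1)$-type). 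This produces $\bar\nu_1\co M_1\ra B$. Next I would build the bordism $W$: take the trace of the $k$ surgeries realising $M_0\rightsquigarrow M_0\sharp_k(S^q\times S^q)$ (a $B$-bordism since each $S^q\times S^q$ summand carries a $B$-structure), then glue on the mapping cylinder of $h$, then the reverse trace on the $M_1$ side; the union is a $B$-bordism from $M_0$ to $M_1$ rel a chosen boundary identification $f$. That $f$ is compatible with $\bar\nu_0,\bar\nu_1$ and that $\partial W = M_0\cup_f M_1$ holds by construction.

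Finally, for the element $\Theta(h,\bar\nu_0)$: given the modified surgery problem $(W,\bar\nu;M_0,M_1,f)$ produced above, perform surgery below the middle dimension on the interior of $W$ to make $\bar\nu\co W\ra B$ into a $q$-smoothing. Then the middle-dimensional surgery kernel of $W$, together with the two Lagrangian-like summands coming from $M_0$ and $M_1$, assembles into a quasi-formation $(H,\psi;L,V)$ in the sense of subsection \ref{olquasidef}, whose class $\Theta(W,\bar\nu)\in\ol{\bZ[\pi]}$ lies in $l_{2q+1}(\bZ[\pi])$ by construction (this is exactly the content referenced in the proof of Lemma \ref{algtopstrictcanlem}); set $\Theta(h,\bar\nu_0) := \Theta(W,\bar\nu)$. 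The main obstacle is verifying that this is well-defined, i.e.\ independent of the choices made in the construction of $W$ from $h$ (the ordering and framing of the surgeries, the below-middle-dimension surgeries on $W$); I expect this to follow from the standard fact that any two such bordisms are related by a sequence of moves that do not change the class in $l_{2q+1}(\bZ[\pi])$, but it is the one point requiring genuine care rather than bookkeeping, and I would either prove it here or defer the verification to the later algebraic-topological dictionary (Lemma \ref{algtopstrictcanlem}).
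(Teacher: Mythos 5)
Your overall architecture matches the paper's: (ii)\,$\Rightarrow$\,(i) is exactly the appeal to \cite{Kre99}[Theorem 2], and for (i)\,$\Rightarrow$\,(ii) the paper builds precisely the bordism you describe, namely $W = W_0 \cup_h W_1$ where $W_i$ is the trace of $k$ trivial $(q-1)$-surgeries on $M_i$. However, there is a genuine gap at the step you dismiss with ``the union is a $B$-bordism by construction.'' Extending the $B$-structure is easy only on the $M_0$-side $W_0$ (extend $\bar\nu_0$ trivially over the trace of trivial surgeries). On the $M_1$-side, after transporting the structure across $h$ to $\del W_1^- = M_1\sharp_k(S^q\times S^q)$, you must extend the lift over $W_1$, which rel $\del W_1^-$ is obtained by attaching $k$ cells of dimension $q+1$; this is a lifting problem for $B \ra BO$ whose obstruction lies in $H^{q+1}(W_1,\del W_1^-;\pi_q(F))$, $F$ the fibre of $\gamma\co B \ra BO$. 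The paper's proof hinges on the fact that for $B = B^{q-1}(M_0)$ one has $\pi_q(F)=0$, so the obstruction vanishes and the extension is unique; this is also exactly where the hypothesis that $B$ is the normal $(q-1)$-type (rather than an arbitrary fibration) enters. Your substitute argument --- that $B^{q-1}(M_1)\simeq B^{q-1}(M_0)$ because the $(q-1)$-type is a stable diffeomorphism invariant --- only yields that $M_1$ admits \emph{some} $(q-1)$-smoothing in $B$; it does not produce a $\bar\nu_1$ together with a $B$-structure on $W$ restricting to $\bar\nu_0$ and $\bar\nu_1$, which is what condition (ii) demands. In particular ``a $(q-1)$-smoothing of $M_1\sharp_k(S^q\times S^q)$, hence of $M_1$'' is not a valid inference without the obstruction computation: the structure does not simply restrict or descend to $M_1$.

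Two smaller points. First, once the $B$-structure is extended you still must check that the resulting $\bar\nu_1\co M_1\ra B$ is a $q$-equivalence (i.e.\ genuinely a $(q-1)$-smoothing); the paper does this by observing $W \simeq (M_1\vee_k S^q)\cup(\cup_k e^{q+1})$ with $\bar\nu$ trivial on $\vee_k S^q$, and your write-up omits this verification. Second, for the last assertion the paper simply sets $\Theta(h,\bar\nu_0):=\Theta(W,\bar\nu)$, with well-definedness of $\Theta(W,\bar\nu)$ supplied by Kreck's theorem for the given bordism; your additional concern about independence of the choices made in constructing $W$ from $h$ is not required by the statement as phrased, so deferring it is harmless, but it does not repair the missing obstruction-theoretic step above.
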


\begin{proof}
If $(W, \bar \nu; M_0, M_1, f)$ exists as in $(ii)$ then \cite{Kre99}[Theorem 2] states in part that $M_0$ and $M_1$ are stably diffeomorphic.  In the other direction, given a diffeomorphism $h\co  M_0 \sharp_k(S^q \times S^q) \cong M_1 \sharp_k (S^q \times S^q)$ we can build a bordism from $M_0$ to zero $M_1$ as follows.  Let $i = 0$ or $1$ and let $W_i \cong (M_i \times [i,i + 1] \natural_k (S^q \times D^{q+1}))$ be the trace of $k$ trivial surgeries on trivially embedded $(q-1)$-spheres in the interior of $M_i$  where the boundary connected sums all take place in $M_i \times \{ 1 \}$.  The boundary of $W_i$ consists of codimension $0$ submanifolds: $\del W^+_i := M_i\times\{2i\}$, $\del W^c_i := \del M_i \times [i, i+1]$ and $\del W^-_i := M_i \sharp_k(S^q \times S^q)\times\{1\}$.  We form $W_h := W_0 \cup_h W_1$ by gluing $\del W^-_0$ to $\del W^-_1$ along $h$.  By construction $\del W$ has a decomposition by codimension $0$ submanifolds $\del W \cong M_0 \cup \del W^c_0 \cup_{\del h} \del W^c_1 \cup M_1$.

We must now put a $B$-structure, $\bar \nu\co  W \ra B$, on $W$ such
that $\bar \nu|_{M_0} = \bar \nu_0$ and this is easy on $W_0$:
just extend $\bar \nu_0$ trivially over the trace of the trivial
surgeries to obtain $B$-manifolds $(W_0, \bar \nu_2)$ and $(\del
W_0^-, \bar \nu_2^-)$ the restriction of $(W_0, \bar \nu_2)$ to
$\del W_0^-$.  To extend the $B$-structure $(W_0, \bar \nu_2)$ to
all of $W$ we use $h$ to transport $(\del W_0^-, \bar \nu_2^-)$ to
$\del W_1^-$ and obtain the $B$-structure $(\del W_1^-, \bar
\nu_2^- \circ h)$ which we must now extend to all of $W_1$.  This
is a homotopy lifting problem for the fibration $B \ra BO$ and the
pair $(W_1, \del W_1^-)$.  Since $W_1$ is the trace of $k$
$q$-surgeries on $\del W_1^-$, up to homotopy $W_1 \simeq \del
W_1^- \cup (\cup_k e^{q+1})$ and we obtain an single obstruction
to extending the lift $\bar \nu_2^- \circ h$ to all of $W_1$ which
lies in $H^{q+1}(W_1, \del W_1^-; \pi_q(F))$ where $F$ is the
fibre of $B \ra BO$.  But by the definition of $B = B^{q-1}(M)$,
$\pi_q(F) = 0$, the obstruction vanishes and there is a unique,
$B$ structure $(W, \bar \nu)$ extending $(W_0, \bar \nu_2)$ to all
of $W$.  We define $\bar \nu_1 \co  M_1 \ra B$ to be the restriction
of $\bar \nu$ to $M_1 \subset \del W \subset W$.

We must show that $(M_1, \bar \nu_1)$ is a $(q-1)$-smoothing in $B$.  By construction $(W, \bar \nu)$ is a $(q-1)$-smoothing and $W$ is homotopy equivalent to $(M_1 \vee_k S^q) \cup (\cup_k e^{q+1})$ where $\bar \nu$ is homotopically trivial when restricted to $\vee_k S^q$ and so $\bar \nu_1 \co  M_1 \ra B$ is indeed a $q$-equivalence.

To finish the proof, we define $\Theta(h, \bar \nu_0) = \Theta(W, \bar \nu) \in \ol{\bZ[\pi_1(B)]}$.
\end{proof}

The following fundamental theorem of Kreck identifies the key role of the monoids $\ol{\bZ[\pi]}$ for the cancellation problem.

\begin{thm} [\cite{Kre99}{[Theorem 4]}] \label{kreckmainthm}
Let $(W, \bar \nu; M_0, M_1, f)$ be a  $(2q+1)$-dimensional modified surgery problem.  Then there is a well-defined surgery obstruction $\Theta(W, \bar \nu) \in l_{2q+1}(\bZ[\pi])$ depending only on the \rel boundary $B$-bordism class of $(W, \bar \nu; M_0, M_1, f)$ and a submonoid $\mathcal{E}\ol{\bZ[\pi]}$ such that $\Theta(W, \bar \nu) \in \mathcal{E}\ol{\bZ[\pi]} $ if and only if $(W, \bar \nu;   M_0, M_1, f)$ is bordant \rel boundary to an $s$-cobordism.
\end{thm}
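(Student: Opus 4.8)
This is Kreck's fundamental theorem, and the plan is to follow \cite{Kre99}[\S\S 2--4] while recasting the construction in the quasi-formation language used here. There are four stages. The first is to normalise $(W,\bar\nu)$: fixing $\bar\nu$ near $\partial W$, do surgery on the interior of $W$ to make $\bar\nu\co W\ra B$ a $q$-equivalence. Because $M_0$ and $M_1$ are already $(q-1)$-smoothings, the surgeries needed are all in dimensions below $q$ and can be performed \rel $\partial W$, and the only obstruction --- finite generation of the middle surgery kernel --- is controlled by the hypothesis that $\pi_{q+1}(B)$ is a finitely generated $\bZ[\pi]$-module. Afterwards $W$ has, \rel $M_0\times I$, a handle decomposition with handles only in indices $q$ and $q+1$, and the surgery kernel is a finitely generated based free $\bZ[\pi]$-module.

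The second stage is to read off a quasi-formation $x=(H,\psi;L,V)$ from this normalised $W$: the $q$- and $(q+1)$-handles determine a hyperbolic $\epsilon$-quadratic form $(H,\psi)=H_\epsilon(\bZ[\pi]^k)$ from their equivariant intersection and self-intersection data (the $B$-structure supplying the quadratic refinement), together with the simple Lagrangian $L$ carried by the $q$-handles and the based half-rank summand $V$ carried by the attaching spheres of the $(q+1)$-handles. An inspection then identifies the induced forms $v$ and $v^\perp$ of $x$, up to $0$-stabilisation, with $v(\bar\nu_0)$ and $-v(\bar\nu_1)$, so that $\Theta(W,\bar\nu):=[x]$ satisfies $b([x])=([v(\bar\nu_0)],[v(\bar\nu_1)])$ and hence lies in $l_{2q+1}(\bZ[\pi])$.

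The third stage is well-definedness. On the one hand, changing the normalising surgeries, performing handle slides, and introducing or removing cancelling handle pairs alter $(H,\psi;L,V)$ only by isomorphism and by stabilisation with trivial and hyperbolic summands --- precisely the moves generating the equivalence relation on quasi-formations that defines $\ol{\bZ[\pi]}$. On the other hand, for \rel boundary bordism invariance one takes a $B$-bordism, \rel its boundary, between two such surgery problems, normalises it by surgery \rel its boundary, and reads off from its middle-dimensional data that the two quasi-formations become equivalent. This relative argument, together with the bookkeeping of bases demanded by the ``simple'' in ``simple Lagrangian'', is the technical core of the theorem and the step I expect to be the main obstacle.

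The fourth stage is the $s$-cobordism criterion. If $(W,\bar\nu)$ is \rel boundary bordant to an $s$-cobordism, then applying the construction to that $s$-cobordism one finds its normalising surgeries trivial and its handles occurring in geometrically cancelling $q/(q+1)$-pairs, so it yields a representative with $H=L\oplus V$; by bordism invariance $\Theta(W,\bar\nu)\in\mathcal{E}\ol{\bZ[\pi]}$. Conversely, if $\Theta(W,\bar\nu)$ is represented by some $(H,\psi;L,V)$ with $H=L\oplus V$, one first moves the normalised $W$ within its \rel boundary bordism class until its middle-dimensional data realises this representative --- here one uses that every quasi-formation, and every move between its representatives, is realised by a $B$-bordism --- and then exploits $H=L\oplus V$: it makes $V$ a Lagrangian complementary to $L$, which geometrically says the $q$- and $(q+1)$-handles of $W$ cancel in pairs, and since $2q+1\ge 5$ the Whitney trick carries this out. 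Thus $W$ becomes, \rel boundary, an $h$-cobordism whose Whitehead torsion vanishes because $L$ is simple and the chosen bases match, i.e. an $s$-cobordism. The role of $q\neq 3,7$ (together with $q\ge 2$: that $S^q$ is not parallelizable) in the framing and realisation steps is exactly as in \cite{Kre99}, the modifications for $q=3,7$ being indicated in Remark \ref{6and14rem}.
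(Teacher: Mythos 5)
This theorem is not proved in the paper at all: it is imported verbatim from \cite{Kre99}[Theorem 4], so the only thing to compare your sketch against is Kreck's argument (part of which the paper does rehearse in the proof of Lemma \ref{algtopstrictcanlem}). Your stages 1--3 follow that argument in outline, except that the normalised $W$ is not claimed (and need not) to have a handle decomposition rel $M_0\times I$ with handles only in indices $q$ and $q+1$; Kreck instead chooses disjoint embeddings $S^q\times D^{q+1}\hookrightarrow W$ whose union $U$ represents generators of $\im(\pi_{q+1}(B,W)\to\pi_q(W))$, and sets $L=H_q(U)$, $H=H_q(\partial U)=H_\epsilon(L)$ and $V=H_{q+1}(W-U,\partial U\cup M_0)$, exactly as recorded in the proof of Lemma \ref{algtopstrictcanlem}. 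That is a sketch-level inaccuracy; the serious problem is in your stage 4.

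An elementary quasi-formation has $H=L\oplus V$ as based modules, but $V$ is \emph{not} a Lagrangian: in a boundary $\delta(K,\rho)=(H_\epsilon(K);K,\svec{1\\\rho}K)$ the second summand carries the generally nonzero form $[\rho]$, and indeed in the geometric situation $V$ must carry $[v(\bar\nu_0)]$. Consequently the proposed mechanism --- cancel the $q$- and $(q+1)$-handles of $W$ by the Whitney trick ``since $2q+1\geq 5$'' --- is wrong on two counts. It would prove far too much, namely that $W$ itself becomes an $h$-cobordism, whereas the theorem only asserts that $W$ is bordant rel boundary to an $s$-cobordism (and this distinction is essential); and the required Whitney moves take place in $2q$-dimensional middle levels, so for $q=2$ --- the motivating case of the whole theory --- the trick is unavailable, yet Kreck's theorem holds there. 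The actual argument performs surgery on the embedded $q$-spheres of $U$ (possible by general position, since $S^q\subset W^{2q+1}$ has codimension $q+1\geq 3$; no Whitney trick is needed), takes the trace of these surgeries as the bordism rel boundary, and then verifies algebraically that the resulting bordism is an $h$-cobordism precisely when $V$ is a direct complement of the Lagrangian $L$, and an $s$-cobordism when in addition the preferred bases match up (the ``simple'' conditions). Your forward direction and the well-definedness stage are acceptable in outline, but stage 4 needs to be replaced by this surgery-trace argument rather than handle cancellation.
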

\noindent

It is customary to say that cancellation holds for $M$ if every
manifold stably diffeomorphic to $M$ is diffeomorphic to $M$.
In the light of Lemma \ref{SDcondlem} we make the following

\begin{defi}
Let $(M, \bar \nu)$ be a $(q-1)$-smoothing in $B$.  We say that strict cancellation holds for $(M, \bar \nu)$ if every  modified surgery problem over $B$ is bordant \rel boundary to an $s$-cobordism.  We say that strict cancellation holds for $M$ if it holds for every $(q-1)$-smoothing of $(M, \bar \nu)$ in $B^{q-1}(M)$.
\end{defi}

We next identify a key invariant of a $(q-1)$-smoothing $(M, \bar
\nu)$ in $B$ which will allow us to  show that strict cancellation
holds for many classes of manifolds.  As is explained in
\cite{Kre99}[\S 5] subtle but by now standard surgery techniques
define a quadratic form $\mu$ on ${\rm Ker}(\bar
\nu\co  \pi_q(M) \ra \pi_q(B))$.  As $\bar \nu$ is $q$-connected and $\pi_{q+1}(B)$ is assumed to be finitely generated, it follows that ${\rm Ker}(\bar \nu\co  \pi_q(M) \ra \pi_q(B))$ is a finitely presented $\bZ[\pi]$-module.  Now let $j \co  V \ra {\rm Ker}(H_q(M) \ra H_q(B))$ be a surjection with $V$ a finitely generated free based $\bZ[\pi]$-module and let $\theta = j^*\mu$ be the induced quadratic form on $V$.  From the proof of \cite{Kre99}[Proposition 8 (ii)] we deduce that there is a well-defined $0$-stabilised form $[V, \theta]$.

\begin{defi} \label{0-stabformdefi}
Let $(M,  \bar \nu)$ be a $(q-1)$-smoothing in $B$.  The {\bf
$0$-stabilised quadratic form of $(M, \bar \nu)$} is the
$0$-stabilised form $[v(\bar \nu)] := [V, \theta]$ defined above.
\end{defi}



\begin{rem} \label{[v(nu)]rem}
We observe that if $(M, \bar \nu)$ is a $(q-1)$-smoothing in $B^{q-1}(M)$ then the module ${\rm Ker}(\bar \nu \co  H_q(M) \ra H_q(B))$ is independent of $\bar \nu$.  This is due to the uniqueness of Postnikov decompositions \cite{Bau77}[Corollary 5.3.8]: a point observed in \cite{Kre85}.  It follows that the choice of $\bar \nu$ can only effect the sign of $[v(\bar \nu)]$.  Moreover, if strict cancellation holds for any $0$-stabilised form $[v]$ then it holds for $[-v]$ since the automorphism $T \co  l_{2q+1}(\Lambda) \cong l_{2q+1}(\Lambda)$ of Remark \ref{minusrem} gives a bijection from $l_{2q+1}(v)$ to $l_{2q+1}(-v)$.
\end{rem}

The following lemma relates strict algebraic cancellation and strict topological cancellation.

\begin{lem} \label{algtopstrictcanlem}
Let $(M, \bar \nu)$ be a $(q-1)$-smoothing of $M$ in $B^{q-1}(M)$.  If strict cancellation holds for $[v(\bar \nu)]$, then strict cancellation holds for $M$.
\end{lem}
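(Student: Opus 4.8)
The plan is to reduce the lemma to Kreck's Theorem~\ref{kreckmainthm}. Write $\pi = \pi_1(M)$, $\Lambda = \bZ[\pi]$ and $B := B^{q-1}(M)$. The key claim is that for any $(2q+1)$-dimensional modified surgery problem $(W,\bar\nu;M_0,M_1,f)$ over $B$, Kreck's obstruction $\Theta(W,\bar\nu) \in l_{2q+1}(\Lambda)$ of Theorem~\ref{kreckmainthm} satisfies
\[
b\bigl(\Theta(W,\bar\nu)\bigr) = \bigl([v(\bar\nu|_{M_0})],\,[v(\bar\nu|_{M_1})]\bigr),
\]
where $[v(\bar\nu|_{M_i})] \in \mathcal{F}^{\rm zs}_{2q}(\Lambda)$ is the $0$-stabilised form of $(M_i,\bar\nu|_{M_i})$ from Definition~\ref{0-stabformdefi}. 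Since the right-hand side then lies in the image of $b$, this also reproves that $v(\bar\nu|_{M_0}) \sim v(\bar\nu|_{M_1})$; it is the identity promised in the remarks of subsection~\ref{olsubsec}.

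Granting the claim, the lemma is immediate. Let $(W,\bar\nu;M_0,M_1,f)$ be a modified surgery problem over $B$ with $M_0 = M$, so that $\bar\nu|_{M_0}$ is some $(q-1)$-smoothing of $M$ in $B$. By Remark~\ref{[v(nu)]rem} we have $[v(\bar\nu|_{M_0})] = \pm[v(\bar\nu)]$, and strict cancellation for a $0$-stabilised form is equivalent to strict cancellation for its negative; hence the hypothesis of the lemma yields strict cancellation for $[v(\bar\nu|_{M_0})]$. By the claim, $\Theta(W,\bar\nu)$ lies in $l_{2q+1}(v(\bar\nu|_{M_0}),v(\bar\nu|_{M_1})) \subseteq l_{2q+1}(v(\bar\nu|_{M_0}))$, which is therefore the singleton $\{e([v(\bar\nu|_{M_0})])\}$ by Corollary~\ref{bcor}(ii). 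So $\Theta(W,\bar\nu) = e([v(\bar\nu|_{M_0})])$ is elementary and thus lies in $\mathcal{E}\ol{\Lambda}$, and Theorem~\ref{kreckmainthm} shows that $(W,\bar\nu;M_0,M_1,f)$ is bordant \rel boundary to an $s$-cobordism, as required.

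To prove the claim I would unwind Kreck's construction of $\Theta(W,\bar\nu)$, following \cite{Kre99}[\S 4, \S 5]. After surgery below the middle dimension on the interior of $W$ --- which alters neither the \rel boundary $B$-bordism class of the problem nor the restrictions $\bar\nu|_{M_i}$ --- one may assume $\bar\nu\co W \ra B$ is $q$-connected; the equivariant intersection form on the resulting surgery kernel, a based stably free $\Lambda$-module, together with its quadratic refinement, then produces a quasi-formation $x = (H,\psi;L,V)$ representing $\Theta(W,\bar\nu)$, in which $L$ and $V$ are the half-rank summands arising from the long exact sequences of the pairs $(W,M_0)$ and $(W,M_1)$ respectively. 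The substantive point is the boundary computation: identifying the surgery kernel of $\bar\nu|_{M_i}$ with ${\rm Ker}(\pi_q(M_i) \ra \pi_q(B))$ and comparing quadratic refinements as in the proof of \cite{Kre99}[Proposition~8], one shows that the form induced on $V$ is a representative of $v(\bar\nu|_{M_0})$ and the form induced on the annihilator $V^{\perp}$ is $-v(\bar\nu|_{M_1})$, in each case up to addition of zero forms. Since $b([x]) = ([v],[-v^{\perp}])$ by definition, the claim follows.

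I expect this last step --- matching Kreck's quasi-formation against the intrinsic boundary forms $v(\bar\nu|_{M_i})$, and in particular fixing the sign on the $V^{\perp}$-side --- to be the main obstacle. The deduction of the lemma from the claim, and the reduction via Remark~\ref{[v(nu)]rem}, are routine.
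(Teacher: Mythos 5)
Your proposal is correct and follows essentially the same route as the paper: both reduce the lemma to Theorem \ref{kreckmainthm} via the identification $b(\Theta(W,\bar\nu)) = ([v(\bar\nu_0)],[v(\bar\nu_1)])$, which the paper likewise only recalls from Kreck's construction (surgery below the middle dimension and the induced forms on $V$ and $V^\perp$), handling the sign ambiguity with Remark \ref{[v(nu)]rem} exactly as you do. The only quibble is that the singleton $l_{2q+1}(v)=\{e([v])\}$ is the definition of strict algebraic cancellation rather than a consequence of Corollary \ref{bcor}(ii), but this does not affect the argument.
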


\begin{proof}
We recall some further facts from the \cite{Kre99} analysis of  $(2q+1)$-dimensional modified surgery problems $(W, \bar \nu; M_0, M_1, f)$ over a general $B$.  After surgery below the middle dimension on the interior of $(W, \bar \nu)$ we may assume that $\bar \nu$ is a $q$-equivalence.
Let $U$ be the union of $k$ disjoint embeddings $S^q\times D^{q+1} \hra W$ representing a set of generators for $\im (d\co \pi_{q+1}(B,W) \ras \pi_q(W))$ and let $L=H_q(U)$.  The surgery obstruction $\theta(W, \bar \nu) \in l_{2q+1}(\bZ[\pi])$ is represented by the quasi-formation $(H_\epsilon(L); L, V)$ where
\begin{eqnarray*}
V=H_{q+1}(W - \oover{U},\del U\cup M_0) \lra H_q(\del U) = H_\epsilon(L).
\end{eqnarray*}
Moreover, the induced form $(V, \theta)$ and the induced form on
the annihilator of $V$, $(V^{\perp}, \theta^{\perp})$, are related
to the zero-stable forms of $(M_0, \bar \nu_0)$ and $(M_1, \bar
\nu_1)$ via
\[[V, \theta] = [v(\bar \nu_0)] \text{~~and~~}[V^{\perp}, -\theta^{\perp}] = [v(\bar \nu_1)].\]
Assume now that $M_0 = M$ and that $B = B^{q-1}(M)$.  We have that the surgery obstruction $\Theta(W, \bar \nu )$, is represented by a quasi-formation $(H_\epsilon(L); L, V)$ where $[V, \theta] = [v(\bar \nu_0)] = \pm [v(\bar \nu)]$ and so satisfies strict algebraic cancellation by Remark \ref{[v(nu)]rem}.  By definition, this means that $\Theta(W, \bar \nu)$ is elementary and by Theorem \ref{kreckmainthm} we conclude that $(W, \bar \nu; M_0, M_1, f)$ is bordant \rel boundary to an $s$-cobordism.
\end{proof}


We now prove our main topological results. 
Recall $h'(\pi,q)$ from Theorem \ref{thm1}. 
\begin{cor}\label{strcancor}
Let $M$ be a compact, connected, smooth $2q$-dimensional manifold with polycyclic-by-finite fundamental group $\pi$.  Assume that either of the following hold:
\begin{enumerate}
\item  $M \cong N \sharp_k(S^q \times S^q)$ where $k \geq h'(\pi, q)$,
\item $q$ is even, $M$ is simply connected and admits a $(q-1)$-smoothing $\bar \mu \co  M \ra B^{q-1}(M)$ such that $[v(\bar \nu)]$ satisfies any of the conditions of Proposition  \ref{cancellationforVZ+1prop}.
\end{enumerate}
Then strict cancellation holds for $M$.
\end{cor}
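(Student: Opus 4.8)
The plan is to reduce everything to the algebraic cancellation results of Sections~6 and~7 by way of Lemma~\ref{algtopstrictcanlem}: it is enough to produce a single $(q-1)$-smoothing $\bar\nu\co M\to B^{q-1}(M)$ for which strict cancellation holds for the $0$-stabilised form $[v(\bar\nu)]$, since Lemma~\ref{algtopstrictcanlem} then yields strict cancellation for $M$ (the passage from one smoothing to all of them being built into that lemma through Remark~\ref{[v(nu)]rem}). As always, $\Lambda=\bZ[\pi]$ and $\epsilon=(-1)^q$. Part (ii) is then immediate: here $M$ simply connected forces $\Lambda=\bZ$ and $q$ even forces $\epsilon=+1$, so the $(q-1)$-smoothing supplied by the hypothesis has $v(\bar\nu)$ a nondegenerate $+$-quadratic form over $\bZ$ satisfying one of the conditions of Proposition~\ref{cancellationforVZ+1prop}; that proposition gives strict cancellation for $[v(\bar\nu)]$, and we are done.

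For part (i) I would argue as follows. Write $M\cong N\sharp_k(S^q\times S^q)$. Since $S^q\times S^q$ is $(q-1)$-connected with stably trivial normal bundle, forming the connected sum with $k$ copies of it leaves the normal $(q-1)$-type unchanged: a $(q-1)$-smoothing $\bar\nu_N\co N\to B:=B^{q-1}(N)$ extends over $M$ to a $(q-1)$-smoothing $\bar\nu\co M\to B$ that is null-homotopic on each of the $k$ embedded copies of $S^q\times S^q$, and $B$ is then the normal $(q-1)$-type of $M$. The new part of ${\rm Ker}(H_q(M;\Lambda)\to H_q(B;\Lambda))$ is carried by the $2k$ factor spheres of the summands: a Mayer--Vietoris argument over the universal cover, valid since $q\ge2$ (the separating spheres $S^{2q-1}$ carry no $H_q$), gives
\[
{\rm Ker}\bigl(H_q(M;\Lambda)\to H_q(B;\Lambda)\bigr)\;\cong\;{\rm Ker}\bigl(H_q(N;\Lambda)\to H_q(B;\Lambda)\bigr)\oplus\Lambda^{2k},
\]
and because these spheres have trivial normal bundle, dual intersection numbers $\pm1$, and lie in disjoint summands, the quadratic refinement $\mu$ restricts on the new $\Lambda^{2k}$ to the based hyperbolic form $H_\epsilon(\Lambda^k)$, orthogonally to the rest. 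Taking the free based module that computes $v(\bar\nu)$ to be $V_N\oplus\Lambda^{2k}$, where $V_N$ computes $v(\bar\nu_N)=:w$, we obtain $[v(\bar\nu)]=[w]\oplus[H_\epsilon(\Lambda^k)]$. As $\pi$ is polycyclic-by-finite and $k\ge h'(\pi,q)$, Corollary~\ref{polycyccor} gives strict cancellation for $[v(\bar\nu)]$, and once more Lemma~\ref{algtopstrictcanlem} completes the proof.

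The point I expect to require the most care is the middle of part (i): that replacing $N$ by $N\sharp_k(S^q\times S^q)$ corresponds, on $0$-stabilised forms, precisely to adding $[H_\epsilon(\Lambda^k)]$. This breaks into (a) the identification $B^{q-1}(M)\simeq B^{q-1}(N)$ together with the existence of a compatible smoothing $\bar\nu$; (b) the kernel-module splitting displayed above; and (c) the check that $\mu$ restricts to the standard hyperbolic form on the new generators. Each is standard in modified surgery, but carrying them out cleanly takes some bookkeeping with basepoints, with the $\bZ[\pi]$-module structure on the homology of the universal cover (where one connected summand downstairs becomes $|\pi|$ summands upstairs), and with the $0$-stabilisation equivalence relation. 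Everything else is a formal invocation of Lemma~\ref{algtopstrictcanlem}, Proposition~\ref{cancellationforVZ+1prop}, and Corollary~\ref{polycyccor}.
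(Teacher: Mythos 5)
Your argument is correct and is essentially the paper's own proof: the paper likewise pulls a $(q-1)$-smoothing of $N$ back over $M$ (via the collapse map $M \to N$, which is the same as your extension that is trivial on the $S^q \times S^q$ summands), observes that $[v(\bar\nu)]$ splits off $[H_\epsilon(\bZ[\pi]^k)]$, and then invokes Corollary \ref{polycyccor} (respectively Proposition \ref{cancellationforVZ+1prop} in case (ii)) together with Lemma \ref{algtopstrictcanlem}. The Mayer--Vietoris and hyperbolic-refinement details you spell out are exactly the content of the paper's ``one checks'' step, so nothing is missing.
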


\begin{proof}
In the first case, let $c\co  M \ra N$ be the collapse map induced by a decomposition $M \cong N \sharp_k(S^q \times S^q)$  and let $\bar \nu \co  N \ra B$ be a $(q-1)$-smoothing in $B=B^{q-1}(N)$.  One checks that $\bar \nu \circ c \co  M \ra B^{q-1}(N)$ is a $(q-1)$-smoothing and that $B$ is also the normal $(q-1)$-type for $M$.  It follows that $[v(\bar \nu \circ c)]$ splits off $H_\epsilon(\bZ[\pi]^k)$ (see Definition \ref{formmonoiddef}) and so by Corollary \ref{polycyccor} or Proposition \ref{cancellationforVZ+1prop} strict algebraic cancellation holds for $[v(\bar \nu \circ c)]$.  Both cases now follow from Lemma \ref{algtopstrictcanlem}.
\end{proof}

Theorem \ref{thm1} now follows from Lemma \ref{SDcondlem} and Corollary \ref{strcancor}.

\begin{rem}
We note that Corollary \ref{strcancor} (i) shows that the bordisms which fall under the assumptions of \cite{Kre99}[Theorem 5] are already bordant \rel boundary to an $s$-cobordism.
\end{rem}


We next turn to the proof of Theorem \ref{thm2} and the
representation of bordism classes by mapping tori.  Recall that
every fibration $B$ defines bordism groups $\Omega_n(B)$ of closed
$B$-manifolds up to $B$-bordism.

\begin{thm} \label{maptorthm}
Suppose that strict cancellation holds for $(M_0, \bar \nu_0)$, a
closed, $2q$-dimensional, $(q-1)$-smoothing in $B$.  Then every
element of $\Omega_{2q+1}(B)$ is represented by some $B$-structure on the mapping torus
of some $B$-diffeomorphism of $(M_0, \bar \nu_0)$.
\end{thm}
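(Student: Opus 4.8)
The plan is to combine the stable cancellation hypothesis with the basic bordism-theoretic fact that every class in $\Omega_{2q+1}(B)$, once we have fixed a reference closed $(q-1)$-smoothing $(M_0,\bar\nu_0)$, can be normalised to be represented by a modified surgery problem with $M_0$ at one end. First I would take an arbitrary class $\alpha \in \Omega_{2q+1}(B)$ represented by a closed $B$-manifold $(X,\bar\xi)$, choose any closed $(q-1)$-smoothing $(M_1,\bar\nu_1)$ in $B$ (e.g.\ $M_0$ itself via the trivial bordism), and form the disjoint union, so that $\alpha = [X] = [(M_0 \times \{0\}) \sqcup (-M_0) \sqcup X] + [\text{trace bordism}]$ in a way that produces a $B$-bordism $W$ whose boundary is $M_0 \sqcup (-M_1)$, where $(M_1,\bar\nu_1)$ is a closed $(q-1)$-smoothing with $[M_1]-[M_0] = \alpha$ in $\Omega_{2q+1}(B)$ up to the relevant identifications. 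More precisely: using that $M_0$ and $M_1$ have the same $(q-1)$-type and (after stabilising by copies of $S^q\times S^q$, which does not change $B$-bordism class) the same Euler characteristic, I would arrange a $(2q+1)$-dimensional modified surgery problem $(W,\bar\nu;M_0,M_1,f)$ whose closed-up $B$-bordism class relates $M_1$ to $M_0$ by $\alpha$. The key input here is Lemma \ref{SDcondlem} together with \cite{Kre99}[Theorem 2], which guarantee the relevant bordism exists once $M_0$ and $M_1$ are stably diffeomorphic — and by construction we only need $M_1$ to be built from $M_0$, so stable diffeomorphism is automatic.

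Next I would apply the strict cancellation hypothesis for $(M_0,\bar\nu_0)$: by definition every modified surgery problem over $B$ with $M_0$ at one end is bordant \rel boundary to an $s$-cobordism. So $(W,\bar\nu;M_0,M_1,f)$ is bordant \rel boundary to an $s$-cobordism $(W',\bar\nu';M_0,M_1,f')$. By the $s$-cobordism theorem $W' \cong M_0 \times [0,1]$, and restricting the $B$-structure $\bar\nu'$ to the two ends exhibits a $B$-diffeomorphism $g\co (M_0,\bar\nu_0) \cong (M_1,\bar\nu_1)$ — more precisely, the product structure together with $f'$ gives a self-$B$-diffeomorphism of $(M_0,\bar\nu_0)$ after identifying $M_1$ with $M_0$. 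I would then observe that the mapping torus $T(g)$ of this $B$-diffeomorphism carries a natural $B$-structure and that, since $W$ and the mapping torus differ by the $s$-cobordism bordism together with the standard "bordism between a bordism closed up and its mapping torus" construction, $[T(g)] = [W \cup_{\partial}(\text{caps})] = \alpha$ in $\Omega_{2q+1}(B)$.

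The main obstacle — and the step requiring real care — is the bookkeeping in the first paragraph: ensuring that an arbitrary bordism class can be realised by a modified surgery problem with $(M_0,\bar\nu_0)$ prescribed at one boundary component, with matching Euler characteristics and with both ends genuine $(q-1)$-smoothings. This is where one must invoke surgery below the middle dimension on the interior of $W$ (as recalled in the proof of Lemma \ref{algtopstrictcanlem}) to make $\bar\nu$ a $q$-equivalence, handle the Euler characteristic discrepancy by connected-summing copies of $S^q\times S^q$ (which changes neither the $(q-1)$-type nor the $B$-bordism class), and check that the resulting $M_1$ really is a closed $(q-1)$-smoothing in $B$ — an argument of exactly the type carried out in the proof of Lemma \ref{SDcondlem}. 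The second and third paragraphs are then essentially formal consequences of the $s$-cobordism theorem and the standard relation between a bordism and the mapping torus obtained by capping it off.
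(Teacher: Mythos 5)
There is a genuine gap, and it sits exactly where you flag ``the main obstacle'': the normalisation of an arbitrary class $\alpha\in\Omega_{2q+1}(B)$ into a modified surgery problem with $(M_0,\bar\nu_0)$ at one end. The route you propose does not deliver this. Lemma \ref{SDcondlem} (via \cite{Kre99}[Theorem 2]) only produces \emph{some} bordism between stably diffeomorphic $(q-1)$-smoothings; it gives no control whatsoever over its bordism class, so it cannot be used to realise a prescribed $\alpha$. The bookkeeping you sketch is also internally confused: ``$[M_1]-[M_0]=\alpha\in\Omega_{2q+1}(B)$'' is dimensionally meaningless (the $M_i$ are $2q$-dimensional), stabilising by $S^q\times S^q$ is not available if it touches $M_0$ (the theorem requires a self-diffeomorphism of $M_0$ itself, not of a stabilisation), and ``$W\cup_{\partial}(\text{caps})$'' has no meaning for a bordism from $M_0$ to an $M_1\neq M_0$. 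The missing idea, which makes all of this unnecessary, is to take $W=(M_0\times[0,1],\bar\nu_0\times\mathrm{id})$, the \emph{trivial} $s$-cobordism, and to feed the class in by disjoint union: $(W\sqcup Y,\bar\nu_W\sqcup\phi)$, where $(Y,\phi)$ is any closed representative of $\alpha$, is already a modified surgery problem from $(M_0,\bar\nu_0)$ to $(M_0,\bar\nu_0)$ --- both ends are $(q-1)$-smoothings of matching Euler characteristic with no surgery below the middle dimension, no Euler-characteristic repair, and no appeal to Lemma \ref{SDcondlem}. You mention ``form the disjoint union'' and ``$M_0$ itself via the trivial bordism'' in passing, but the argument you actually run abandons this and substitutes steps that cannot force the eventual mapping torus to represent $\alpha$.

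The final identification also needs repair. Once strict cancellation gives a rel-boundary $B$-bordism $(X,\bar\nu_X)$ from $(M_0\times[0,1])\sqcup Y$ to an $s$-cobordism $(Z;M_0,M_0)$ inducing $g\co M_0\cong M_0$, the correct observation is that $\partial X$ is \emph{already} the closed $B$-manifold $T_g\sqcup Y$: the cylinder $M_0\times[0,1]$ and the boundary collars are absorbed into the mapping torus when $Z$ is glued to them along both ends, so $X$ itself is the required $B$-bordism and $[T_g,\bar\nu_T]=[Y,\phi]=\alpha$. Your alternative --- close up $W$ and $Z$ separately and compare ``the bordism closed up with its mapping torus'' --- would require self-gluing $X$ along the two collar tubes, which needs a $B$-structure-preserving identification of those tubes that you have not produced, and in any case would leave an extra $[M_0\times S^1]$ term coming from closing up the cylinder, so it does not yield $[T(g)]=\alpha$ as asserted. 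So the skeleton (strict cancellation $\Rightarrow$ $s$-cobordism $\Rightarrow$ mapping torus) is right, but both the construction of the surgery problem carrying $\alpha$ and the concluding bordism identification are missing or incorrect as written.
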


\begin{proof}
Let $(W, \bar \nu) = (M_0 \times [0, 1], \bar \nu_0 \times {\rm
Id})$ be the trivial $s$-cobordism and let $(Y, \phi)$ be a closed
$(2q+1)$-dimensional $B$-manifold representing $[Y, \phi] \in
\Omega_{2q+1}(B)$.  Then the disjoint union $(W \sqcup Y, \bar
\nu_W \sqcup \bar \nu_Y)$ is a  modified surgery
problem and so by assumption there is a  $B$-bordism \rel boundary
$(X, \bar \nu_X)$ from $(W \sqcup Y, \bar \nu_Y \sqcup \phi)$ to
an $s$-cobordism $(Z, \bar \nu_Z; M_0, M_0)$.  This $s$-cobordism
defines, up to pseudo isotopy, a $B$-diffeomorphism $g\co  M_0 \cong
M_0$.  Let $T_g$ be the mapping torus of $g$.  By definition, $(X,
\bar \mu_X)$ yields a $B$-bordism from $(Y, \bar \nu_Y)$ to $(T_g,
\bar \nu_T)$, where $\bar \nu_T$ is the $B$-structure induced on
$T_g$ by $(X, \bar \nu_X)$.  Hence $[T_g, \bar \nu_T] = [Y, \bar
\nu_Y] \in \Omega_{2q+1}(B)$.
\end{proof}

Theorem \ref{thm2} now follows by combining Theorem \ref{maptorthm} and Corollary \ref{strcancor} (i).

Finally we move from cancellation up to diffeomorphism to
cancellation up to homotopy.   We first require some preliminary
remarks: the group of units of $\ol{\bZ[\pi}$,
$L_{2q+1}(\bZ[\pi])$, is an extension of the classical Wall group
$L_{2q+1}^s(\bZ[\pi])$ by a subgroup of the Whitehead group of
$\pi$.  Assuming that $q \geq 3$ or that $q = 2$, $\pi$ is good
and we are in the topological category, the group
$L_{2q+1}(\bZ[\pi])$ acts on $s$-cobordism classes of manifolds
homotopy equivalent to a fixed $2q$-dimensional manifold $M$.  To prove this one only observes
that it is no harder to realise a general formation by a bordism
than a simple formation.  Recall also that a form $n$ is linear if
its symmetrisation is zero and that a form $w$ is nonsingular if
its symmetrisation is a simple isomorphism.

\begin{thm} \label{linsimthm}
Let $(W, \bar \nu \co M_0, M_1, f)$ be a  modified surgery problem between $(q-1)$-smoothings $(M_0, \bar \nu_0)$ and $(M_1, \bar \nu_1)$ such that $[v(\bar \nu_0)] = [v(\bar \nu_1)] = [n+w]$ where $n$ is linear and $w$ is simple.  Assume further that  ${\rm UWh}(\pi) = {\rm U'Wh}(\pi)$ for $\pi = \pi_1(M_0)$ as in Theorem \ref{thm3}.  Then for some $[z] \in L_{2q+1}(\bZ[\pi])$, $\Theta(W, \bar \nu) + [z]$ is elementary.  In particular, $M_0$ is homotopy equivalent to $M_1$.
\end{thm}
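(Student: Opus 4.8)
The plan is to apply Corollary \ref{maincor} (together with Proposition \ref{simplelinearprop}) to the surgery obstruction $\Theta(W, \bar\nu) \in l_{2q+1}(v, v)$, where $v = n + w$, and then translate the resulting algebraic conclusion back into topology.  First I would observe that since $[v(\bar\nu_0)] = [v(\bar\nu_1)] = [v]$, the element $\Theta(W, \bar\nu)$ lies in $l_{2q+1}(v, v)$ (after possibly $0$-stabilising the representing quasi-formation, which does not change the class).  By Proposition \ref{simplelinearprop}, the hypothesis ${\rm UWh}(\pi) = {\rm U'Wh}(\pi)$ guarantees that $L_{2q+1}(\bZ[\pi])$ acts transitively on $l_{2q+1}(v, v)$.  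Since the elementary element $e([v])$ also lies in $l_{2q+1}(v, v)$, transitivity gives some $[z] \in L_{2q+1}(\bZ[\pi])$ with $\Theta(W, \bar\nu) + [z] = e([v])$, which is elementary.  (Here I am using that $L_{2q+1}(\bZ[\pi])$, being the group of units of $\ol{\bZ[\pi]}$, acts on all of $\ol{\bZ[\pi]}$ and hence preserves $l_{2q+1}(v, v)$; compatibility of the action in Proposition \ref{simplelinearprop} with the action by units needs a line of checking.)

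For the topological conclusion, I would argue as follows.  An elementary element of $\ol{\bZ[\pi]}$ is the algebraic bordism class of an $s$-cobordism, so if $\Theta(W, \bar\nu)$ itself were elementary we could conclude immediately via Theorem \ref{kreckmainthm} that $W$ is bordant \rel boundary to an $s$-cobordism and hence that $M_0$ is diffeomorphic (indeed $s$-cobordant) to $M_1$.  What we have instead is that $\Theta(W, \bar\nu)$ is elementary only after adding a unit $[z]$.  The point, as recalled in the paragraph preceding Theorem \ref{linsimthm}, is that $L_{2q+1}(\bZ[\pi])$ acts on $s$-cobordism classes of manifolds homotopy equivalent to $M_1$ by the standard surgery-theoretic realisation of elements of the odd $L$-group (extended from simple formations to all formations, which is where the group of units rather than just $L^s_{2q+1}$ enters).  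Concretely, I would realise $[z]$ by a bordism $(W_z, \bar\nu_z; M_1, M_1')$ where $M_1'$ is homotopy equivalent to $M_1$, and then glue: $W \cup_{M_1} W_z$ is a modified surgery problem from $M_0$ to $M_1'$ with surgery obstruction $\Theta(W, \bar\nu) + [z]$, which is elementary.  Theorem \ref{kreckmainthm} then gives an $s$-cobordism from $M_0$ to $M_1'$, whence $M_0$ is diffeomorphic to $M_1' \simeq M_1$; in particular $M_0 \simeq M_1$.

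The main obstacle I anticipate is the bookkeeping in the second paragraph: one must ensure that realising $[z] \in L_{2q+1}(\bZ[\pi])$ (as opposed to an element of the simple $L$-group $L^s_{2q+1}(\bZ[\pi])$) by a bordism is legitimate in the present setting, i.e.\ that the Whitehead-torsion discrepancy between $L_{2q+1}$ and $L^s_{2q+1}$ is exactly absorbed by passing from diffeomorphism to homotopy equivalence on the manifold side, and that the glued bordism $W \cup W_z$ genuinely realises the sum in $\ol{\bZ[\pi]}$.  This is morally the reason the conclusion is only up to homotopy rather than up to $s$-cobordism, and is precisely the content flagged in the remark after Theorem \ref{thm3} that for simply connected $M_0$ one recovers an $h$-cobordism.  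A secondary, more routine, point is checking that the action of $L_{2q+1}(\bZ[\pi])$ used in Proposition \ref{simplelinearprop} agrees with the action by the group of units of $\ol{\bZ[\pi]}$, so that transitivity there really does let us hit $e([v])$.
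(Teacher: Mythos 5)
Your proposal is correct and follows essentially the same route as the paper: apply Proposition \ref{simplelinearprop} to $\Theta(W,\bar\nu)\in l_{2q+1}(n+w,n+w)$ to find $[z]\in L_{2q+1}(\bZ[\pi])$ with $\Theta(W,\bar\nu)+[z]$ elementary, realise $[z]$ by a bordism from $M_1$ to a manifold homotopy equivalent to $M_1$, glue, and invoke Theorem \ref{kreckmainthm} to obtain an $s$-cobordism from $M_0$ to that manifold. The realisability point you flag is exactly the one the paper disposes of in the remarks preceding the theorem (realising a general formation is no harder than realising a simple one), so there is no gap.
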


\begin{proof}
By assumption $b(\Theta(W, \bar \nu)) \in l_{2q+1}(n + w, n + w)$ and so we apply Proposition \ref{simplelinearprop} to obtain $[z] \in L_{2q+1}(\bZ[\pi])$ such that $[z] + \Theta(W, \bar \nu)$ is elementary.  Realising $[z]$ by a bordism $(W', \bar \nu'; M_1, M_2)$ and forming $(W'', \bar \nu'') = (W, \bar \nu) \cup (W', \bar \nu')$ we have that $\Theta(W'', \bar \nu'')$ is elementary and thus is bordant to an $s$-cobordism between $M_0$ and $M_2$.  But by construction, $M_2$ is homotopic to $M_1$.
\end{proof}

Now let $M_0$ satisfy the hypotheses of Theorem \ref{thm3}.  It follows for any $(q-1)$-smoothing $\bar \nu_0 \co  M_0 \ra B^{q-1}(M_0)$ that $[v(\bar \nu)]$ is linear.  By Lemma \ref{SDcondlem} if $M_1$ is stably diffeomorphic to $M_0$, then there is a  surgery problem $(W, \bar \nu; M_0, M_1, f)$.  Now $[v(\bar \nu_0)] = [-v(\bar \nu_0)^{\perp}] = [v(\bar \nu_1)]$: the first equality holds since $[v(\bar \nu_0)]$ is linear and the second by definition.  Now Theorem \ref{thm3} follows from Theorem \ref{linsimthm}.

\begin{rem}\label{6and14rem}
In dimension $6$ and $14$ the surgery obstruction $\Theta(W, \bar \nu)$ of Theorem \ref{kreckmainthm} sometimes lies in the slightly altered monoid $\widetilde l_{2q+1}(\bZ[\pi])$ \cite{Kre99}[\S 6].  We are confidant that appropriate analogues of all our results for $\ol{\bZ[\pi]}$ continue to hold for $\widetilde l_{2q+1}(\bZ[\pi])$ and that the same is true for our topological results.  In particular,  we expect that Theorems \ref{thm1}, \ref{thm2} and \ref{thm3} also hold when $q = 3$ or $7$ but leave the details to the reader.
\end{rem}

\section{Forms, quasi-formations and $\ol{\Lambda}$}
\label{form&qformsec}

Let $q$ be a positive integer, $\epsilon=(-1)^q$ and $\Lambda$ a weakly finite
unital ring with an involution $x\mt\bar x$.  Important examples are the group rings $\bZ[\pi]$
with involution
$
\sum_{g\in\pi} x_g g \mt \sum_{g\in\pi} w(g) \overline{x_g} g^{-1}
$
where $w\colon\pi\ras \bZ/2\bZ \cong \{\pm 1\}$ is a homomorphism.

\subsection{Based modules and simple isomorphisms}

The following definition reminds the reader of some basic concepts from
the theory of Whitehead torsion which can be found, for example, in \cite{Mil66}[\S1-4].

\begin{defi}
\label{baseddef}
\begin{enumerate}

\item
The {\bf $*$-operation} on the {\bf reduced $K$-group $\widetilde{K}_1(\Lambda) = \coker(K_1(\bZ)\ras K_1(\Lambda))$}
is the isomorphism:
\[
    *\co  \widetilde{K}_1(\Lambda) \isora \widetilde{K}_1(\Lambda),\quad [f] \mt [f^*]
\]
A subgroup $Z\subset \widetilde{K}_1(\Lambda)$ is {\bf $*$-invariant} if $Z^*\subset Z$.

\item
Let $M$ be a {\bf stably \fg free left module $M$ over $\Lambda$} \ie a \fg left module $M$
for which $n,m\in\bN_0$ exist such that $M\oplus\Lambda^m\cong\Lambda^n$.
An {\bf $s$-basis of $M$} is a basis $\{b_1,\dots,b_{r+m}\}$ of some \fg free left module $M\oplus \Lambda^m$.

Let $Z\subset \widetilde{K}_1(\Lambda)$ be a $*$-invariant subgroup. Two $s$-bases
$\{b_1,\dots,b_{r+m}\} \subset M\oplus \Lambda^m$ and
$\{b'_1,\dots,b'_{r+m'}\} \subset M\oplus \Lambda^{m'}$ are {\bf $Z$-equivalent} if there is a
$k\geq\max(m,m')$ such that the transformation matrix in regard to the bases
$\{b_1,\dots,b_{r+m}, e_{m+1},\dots, e_k\}$ and
$\{b_1',\dots,b'_{r+m'}, e_{m'+1},\dots, e_k\}$
represents an element in $Z$. (Here $\{e_1,\dots, e_k\}$ denotes the standard basis of $\Lambda^k$).

\item
A {\bf $Z$-based module $(M,\mathcal{B})$} is a stably \fg free left module $M$ over $\Lambda$
together with a $Z$-equivalence class of $s$-bases $\mathcal{B}=[b_1,\dots,b_n]$. Any representative
of $\mathcal{B}$ is called a {\bf preferred $s$-basis}.



\item
Let $(M,\mathcal{B})$ and $(N,\mathcal{C})$ be two $Z$-based modules. Then
$(M,\mathcal{B}) \leq (N,\mathcal{C})$ if there is a $Z$-based module $(P,\mathcal{D})$
such that $(M,\mathcal{B}) \oplus (P,\mathcal{D}) = (N,\mathcal{C})$.

\item
Let $(M,\mathcal{B})$ and $(M',\mathcal{B'})$ be $Z$-based modules and $f\co M\isora M'$
an isomorphism of
the underlying $\Lambda$-modules. Let $A \in M_{n}(\Lambda)$ be the matrix of $f\oplus\id$ in respect to
some $s$-bases  of cardinality $n$ representing $\mathcal{B}$ and $\mathcal{B'}$.
The {\bf torsion of $f$} is given by  $\tau(f)=[A] \in \widetilde{K}_1(\Lambda)$.
The isomorphism $f$ is called {\bf $Z$-simple} if $\tau(f) \in Z$.


\end{enumerate}
\end{defi}

\begin{rem} \label{torsionrem}
In the following we will no longer mention the $s$-bases
    explicitly in the notation of based modules and we will
    fix $Z$, defining $\Wh(\Lambda) =
    \widetilde{K}_1(\Lambda)/Z$.  For group rings
    $\Lambda=\bZ[\pi]$ we shall use $Z=\{\pm g|g\in\pi\}$ and
    so $\Wh(\Lambda) = \Wh(\pi)$ is the usual Whitehead group.

\end{rem}

\subsection{Forms}
\label{formsubsec}
In this subsection we recall definitions for $\epsilon$-quadratic forms and the even dimension $L$-groups as well as introducing the notions ``zero stable forms''.

\begin{defi} \label{formdefi}
Let $M$ be a based module.
\begin{enumerate}
\item

The {\bf $\epsilon$-duality involution map}
\begin{eqnarray*}
T_\epsilon \co \Hom_\Lambda(M,M^*) \ra\Hom_\Lambda(M,M^*),\quad
\phi \mt (x \mapsto (y\mapsto \epsilon \overline{\phi(y)(x)}))
\end{eqnarray*}
leads to the abelian groups
$Q^\epsilon(M)={\rm Ker}(1-T_\epsilon)$ and
$Q_\epsilon(M)=\coker(1-T_\epsilon)$.

\item
The {\bf hyperquadratic groups} $\hQ^{-\epsilon}(M)$ are defined via the exact sequence
\begin{equation*}
0 \lra \hQ^{-\epsilon}(M) \lra Q_\epsilon(M) \stackrel{1+T_\epsilon}
{\lra} Q^\epsilon(M) \lra \hQ^\epsilon(M) \lra 0.
\end{equation*}

\item An {\bf asymmetric form $(M,\rho)$} is a pair with $\rho \in \Hom_\Lambda(M,M^*)$.

\item
An {\bf $\epsilon$-symmetric form $(M,\phi)$} is a pair with $\phi \in Q^\epsilon(M)$.
It is called {\bf nondegenerate} if $\phi\co M \ras M^*$ is injective, {\bf nonsingular} if
$\phi$ is an isomorphism and {\bf simple} if $\phi$ is a simple isomorphism.

\item
An {\bf $\epsilon$-quadratic form} $(M,\psi)$ is a pair with $\psi \in Q_\epsilon(M)$.
Its {\bf symmetrisation} is the $\epsilon$-symmetric form $(M, (1 + T_\epsilon)\psi)$.
The form $(M, \psi)$ is {\bf nondegenerate}, {\bf nonsingular} or {\bf simple} if its
symmetrisation has this property. It is {\bf linear} if its symmetrisation is the zero form.

\item
An $\epsilon$-symmetric form $(M,\phi)$ is {\bf even} if there is a $\psi\in Q_\epsilon(M)$
such that $(1+T_\epsilon)\psi=\phi$. A choice of $\psi$ is a {\bf quadratic refinement} of $\phi$.

\item
The {\bf annihilator of a submodule $j \co L \hookrightarrow M$} of an $\epsilon$-quadratic
form $(M, \psi)$ is the (unbased) submodule $L^\perp:={\rm Ker}(j^*(1+T_\epsilon)\psi\co M \ras L^*)$.
The {\bf radical} of $(M,\psi)$ is the (unbased) submodule $\Rad(M,\psi) := M^\perp$.

\item
A {\bf Lagrangian $L$ of an $\epsilon$-quadratic form $(M,\psi)$} is a submodule
$L\subset M$ that is both a direct summand and a based module such that
$L=L^\perp$ (as unbased modules) and $j^*\psi j=0 \in Q_\epsilon(L)$.

\item
A {\bf Hamiltonian $s$-basis} of a nonsingular $\epsilon$-quadratic form $(M,\psi)$ induced by a Lagrangian $L$ is an $s$-basis of $M$ such that
\[
0 \lra L \lra M \stackrel{j^*\phi}{\lra} L^*\lra 0
\]
is a based exact sequence where $\phi=(1+T_\epsilon)\psi$.

\item
A {\bf simple Lagrangian $L$ of a simple $\epsilon$-quadratic form $(M,\psi)$} is a Lagrangian such that any Hamiltonian $s$-basis defined by $L$ is a preferred $s$-basis of $M$.

\item
An {\bf isometry $f\co(M,\psi) \isora (M',\psi')$ of $\epsilon$-quadratic forms} is an
isomorphism $f\co M\isora M'$ such that $f^*\psi' f=\psi\in Q_\epsilon(M)$.  Unless stated otherwise, we shall assume that all isometries $f$ are simple and we denote the
{\bf group of simple self-isometries of $(M,\psi)$} by {\bf $\Aut(M,\psi)$}.

\end{enumerate}
\end{defi}

\begin{rem}
The hyperquadratic groups have exponent $2$ and satisfy the equality $\hQ^\epsilon(M\oplus N) = \hQ^\epsilon(M)\oplus \hQ^\epsilon(N)$ for two based modules $M$ and $N$.
\end{rem}

\begin{defi}
\label{formmonoiddef}
\begin{enumerate}
\item
For any based module $L$ the {\bf hyperbolic $\epsilon$-quadratic}
and {\bf  $\epsilon$-symmetric forms} are defined as follows
\begin{eqnarray*}
H_\epsilon(L)=\left(L \oplus L^*, \svec{0&{\rm Id} \\0&0}\right)
,\quad
H^\epsilon(L)=\left(L \oplus L^*, \svec{0&{\rm Id}\\\epsilon {\rm Id}&0}\right).
\end{eqnarray*}
If the rank of $L$ is one, these forms are call {\bf hyperbolic planes}.

\item Two $\epsilon$-quadratic forms $(M,\psi)$ and $(M',\psi')$ are {\bf stably isometric}
if there is a hyperbolic $H_\epsilon(L)$ such that $(M,\psi)\oplus H_\epsilon(L) \cong (M',\psi')\oplus H_\epsilon(L)$.

\item
The stable isometry classes of nonsingular $\epsilon$-quadratic forms form a group under direct sum with $-[(M, \psi)] = [(M, -\psi)]$.  This group is denoted $\eL{\Lambda}$ with $L_{2q}^s(\Lambda)$ the subgroup of classes represented by  simple forms.

\item A {\bf $0$-stabilised} $\epsilon$-quadratic form is an
    equivalence class of forms where two forms $(M,\psi)$ and
    $(M',\psi')$ are considered equivalent if there are
    modules $P$ and $Q$ and an isometry $(M,\psi)\oplus (P,0)
    \cong (M',\psi')\oplus (Q, 0)$.  The $0$-stabilised form
    defined by $(V, \theta)$ is denoted $[V, \theta]$.

\item
We let $\mathcal{F}^{\rm zs}_{2q}(\Lambda)$ denote the abelian {\bf monoid of $0$-stabilised $\epsilon$-quadratic forms} with addition induced by the direct sum of $\epsilon$-quadratic forms and unit $0 := [P, 0]$ for any module $P$.


\end{enumerate}
\end{defi}

\begin{rem}
\label{splitformrem}
\begin{enumerate}
\item
An $\epsilon$-quadratic form $(M,\psi)$ defines an $\epsilon$-quadratic form $(M,\phi, \nu)$ in the classical sense where $\phi$ is the
symmetrisation of $\psi$ and $\nu\co M\ras Q_{\epsilon}(\Lambda)$ is the quadratic refinement given by $\nu(x) := \psi(x,x)$.
Conversely, every $\epsilon$-quadratic form $(M,\phi, \nu)$ in the
classical sense gives rise to an $\epsilon$-quadratic form $(M,\psi)$ (\cite{Ran02}[\S11]).

\item
The group $L_{2q}^s(\bZ[\pi])$ is Wall's surgery obstruction group.
\end{enumerate}
\end{rem}


\subsection{The original definition of $\ol{\Lambda}$}
\label{origolsec}
We first recall Wall's original definition of the odd-dimensional simple
$L$-groups (\cite{Wal99}[\S 6]).  Let $SU_k(\Lambda,\epsilon)=\Aut(H_\epsilon(\Lambda^k))$.
Let $TU_k(\Lambda,\epsilon)$ be the subgroup of those isometries preserving the
Lagrangian $\Lambda^k\times \{0\}$ and inducing a simple automorphism on it.
Finally, we define $RU_k(\Lambda,\epsilon)$ to be the subgroup generated by
$TU_k(\Lambda)$ and the flip map $\sigma_k:=\svec{0&1\\\epsilon& 0}\oplus\id_{H_\epsilon(\Lambda^{k-1})}$.
The quotient of the limit groups
$SU(\Lambda,\epsilon)=\lim_{k\ras\infty} SU_k(\Lambda,\epsilon)$ and
$RU(\Lambda,\epsilon)=\lim_{k\ras\infty} RU_k(\Lambda,\epsilon)$
is the abelian group
$
L^s_{2q+1}(\Lambda):=SU(\Lambda,\epsilon)/RU(\Lambda,\epsilon)
$.

In \cite{Kre99} $\ol{\Lambda}$ is defined as the set
of equivalence classes of pairs $(H_\epsilon(\Lambda^k),V)$ for
$k\in\bN$ where $V\subset \Lambda^{2k}$ is a based free direct summand of rank $k$.
The equivalence relation is given by stabilisation with trivial pairs
$(H_\epsilon(\Lambda^k), \Lambda^k\times\{0\})$ and an action of $RU(\Lambda,\epsilon)$ so that
two pairs $(H_\epsilon(\Lambda^k),V)$ and $(H_\epsilon (\Lambda^l),V')$ are
equivalent if there is a $\tau \in RU_n(\Lambda,\epsilon)$ such that
$\tau(V\oplus (\Lambda^{n-k}\times\{0\}))=V'\oplus (\Lambda^{n-l}\times\{0\})$.  We shall write $[H_\epsilon(\Lambda^k), V] \in l_{2q+1}(\Lambda)$ for the equivalence class represented by $(H_\epsilon(\Lambda^k), V)$.  The orthogonal sum of pairs induces an abelian monoid structure on $\ol{\Lambda}$ with group of units $\oL{\Lambda}$, the submonoid of equivalence classes of pairs where the form induced on $V$ is zero.

A pair $(H_\epsilon(\Lambda^k), V)$ is called {\bf elementary} if $V\oplus(\{0\}\times\Lambda^k)=\Lambda^{2k}$ (as based modules).  An element of $\ol{\Lambda}$ is called elementary if it has an elementary representative.  The elementary elements of $\ol{\Lambda}$ form a submonoid which we denote by $\mathcal{E}l_{2q+1}(\Lambda)$.

\subsection{A new definition of $\ol{\Lambda}$ via quasi-formations}
\label{olquasidef}
In this subsection we update Kreck's definition of $\ol{\Lambda}$ in a manner similar to Ranicki's
reformulation of Wall's original definition of the odd-dimensional $L$-groups by
{\bf formations}.  A formation is a triple $(M, \psi; F, G)$ consisting of
a simple form $(M, \psi)$ together with an ordered pair of simple Lagrangians $F$ and $G$.
A formation of the form $(H_\epsilon(F); F, F^*)$ is called {\bf trivial} and
a stable isomorphism of formations is an isometry of such triples after possible addition
of  trivial formations. There are two different ways of deriving $L_{2q+1}^s(\Lambda)$ from the set of stable
isomorphism classes of
formations (\cite{Ran80a}[\S5], \cite{Ran01a}[Remark 9.15]):
\begin{enumerate}
\item \label{eq1}
stabilisation by boundaries of forms,

\item \label{eq2}
introduction of the additional equivalence relation:
\begin{eqnarray*}
(M,\psi; F,G) \oplus (M,\psi; G,H) \sim (M,\psi; F,H).
\end{eqnarray*}
\end{enumerate}
We now give an alternative description of $\ol{\Lambda}$ in terms of generalised formations which we shall call {\it quasi-formations}. For $\ol{\Lambda}$ one has to be careful about the equivalence relation for quasi-formations because the extensions of the two possibilities above are not the same (Remark \ref{lcobrem}). We note that there is a related earlier approach of defining $\ol{\Lambda}$
via quasi-formations in the unpublished preprint \cite{Kre85}.

\begin{defi}
\label{quasiformdef}
\begin{enumerate}

\item
An {\bf \eq\ $(M,\psi; L, V)$} is a simple $\epsilon$-quadratic form
$(M,\psi)$ together with a simple Lagrangian $L$ and a based half rank
direct summand $V$.

\item
An \eq\ $(M,\psi; L, V)$ is an {\bf \eqf} if $V$ is a 
Lagrangian.\footnote{Strictly speaking, these formations should be called nonsingular following \cite{Ran73}.}
If in addition $V$ is a simple Lagrangian the formation is called
{\bf simple}.

\item
An {\bf isomorphism $f\co(M,\psi; L, V) \isora (M',\psi';L', V')$ of \eqs}
is an isometry $f\co(M,\psi) \isora (M',\psi')$ such that $f(L)=L'$, $f(V)=V'$ and such that the
induced isomorphisms $L\isora L'$ and $V\isora V'$ are simple.

\item
A {\bf trivial formation} is a \eqf\
$(P,P^*) := (H_\epsilon (P); P,P^*)$ for some based module $P$.

\item
The {\bf boundary of an asymmetric form $(K,\rho)$} is the
\eq\ $\delta(K,\rho)=(H_\epsilon(K);K, \svec{1\\\rho}K)$. An \eq\ is {\bf elementary}
if it is isomorphic to a boundary.



\item
Two \eqs\ are {\bf stably  isomorphic} if they are isomorphic after the addition of trivial formations.

\end{enumerate}
\end{defi}

\begin{defi}
\begin{enumerate}
\item
Let $l^{\rm new}_{2q+1}(\Lambda)$ be the unital abelian monoid of stable isomorphism classes of \eqs\ modulo the relation
\begin{eqnarray}
\label{shifteqn}
(M,\psi; K, L) \oplus (M,\psi; L, V) \sim (M,\psi; K,V)
\end{eqnarray}
where $K$ and $L$ both are simple Lagrangians.  The unit $0
\in l^{\rm new}_{2q+1}(\Lambda)$ is the equivalence class of
all trivial formations.  For an \eq\ $x = (M, \psi; L, V)$,
we shall write $[x] = [M, \psi; L, V] \in l_{2q+1}^{\rm
new}(\Lambda)$ for the element represented by $x$.

\item An element in  $l^{\rm new}_{2q+1}
(\Lambda)$ is called {\bf elementary} if it is represented by a boundary.  The elementary elements
form a submonoid $\mathcal{E}l^{\rm new}_{2q+1}(\Lambda)$.

\item
Let $L_{2q+1}^{\rm new}(\Lambda)\subset l_{2q+1}^{\rm new}(\Lambda)$ be the abelian group of all classes
represented by $\epsilon$-quadratic formations and let $L_{2q+1}^{s,\rm new}(\Lambda)$ be the subgroup of all
classes represented by simple $\epsilon$-quadratic formations.

\end{enumerate}
\end{defi}

\begin{rem}
\label{eqrem}
\begin{enumerate}
\item
Any \eq\ is isomorphic to an \eq\ of the type $(H_\epsilon(F); F, V)$.




\item Let $(M,\psi; L,V)$ be an \eq\ then there is a unique equivalence class of $s$-basis for $V^\perp$ such that the short exact sequence
\[
0 \lra V^\perp \lra M \stackrel{j^*\phi}{\lra} V^* \lra 0
\]
is based. Here $\phi=(1+T_\epsilon)\psi$ and $j\co V\hookrightarrow M$ is the inclusion.

\end{enumerate}
\end{rem}

\begin{lem}
\label{elemeqlem}
An element $x\in l^{\rm new}_{2q+1}(\Lambda)$ is elementary if and only if it has a representative
$(M, \psi; L, V)$ such that $M = L \oplus V$ (as based modules).
\end{lem}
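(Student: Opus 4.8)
The plan is to prove the two implications separately, using the fact (Remark \ref{eqrem}(i)) that every \eq\ is isomorphic to one of the form $(H_\epsilon(F); F, V)$, together with the precise form of the boundary construction $\delta(K,\rho) = (H_\epsilon(K); K, \svec{1\\\rho}K)$.

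First I would treat the ``only if'' direction. Suppose $x$ is elementary, so by definition some representative of $[x]$ is isomorphic to a boundary $\delta(K,\rho) = (H_\epsilon(K); K, \svec{1\\\rho}K)$. Here $M = H_\epsilon(K) = K \oplus K^*$ as based modules, the simple Lagrangian is $L = K = K \oplus \{0\}$, and $V = \svec{1\\\rho}K = \{(k, \rho(k)) : k \in K\}$ is the graph of $\rho \co K \to K^*$. The point is that $L \oplus V = M$ as based modules: the map $K \oplus K \to K \oplus K^*$ sending $(k_1, k_2) \mapsto (k_1 + k_2, \rho(k_2))$ is an isomorphism, and one checks it is simple because it is given by a matrix $\svec{1 & 1 \\ 0 & \rho}$ which is a product of elementary matrices and the matrix of $\rho$ — and since $V$ is a {\em based} summand in a quasi-formation and $L$ is a simple Lagrangian, the relevant torsion is already controlled. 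Since $[x]$ is represented by {\em some} quasi-formation isomorphic to $\delta(K,\rho)$, and isomorphisms of quasi-formations preserve the property ``$M = L \oplus V$ as based modules'' (because the induced maps on $L$ and $V$ are simple by definition of isomorphism), we get a representative with the desired splitting. I should be slightly careful here: the representative $(M,\psi;L,V)$ with $M = L\oplus V$ might only be obtained after the stabilisations and the relation \eqref{shifteqn} built into $l^{\rm new}_{2q+1}(\Lambda)$; but adding trivial formations $(H_\epsilon(P); P, P^*)$ preserves the splitting since $P \oplus P^* = H_\epsilon(P)$ as based modules, and this causes no trouble.

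For the ``if'' direction, suppose $x$ has a representative $(M,\psi; L, V)$ with $M = L \oplus V$ as based modules. The strategy is to exhibit an explicit asymmetric form $(K,\rho)$ and an isomorphism of quasi-formations $(M,\psi;L,V) \cong \delta(K,\rho)$. Take $K = L$. Since $M = L \oplus V$ with $L$ a simple Lagrangian, the symmetrisation $\phi = (1+T_\epsilon)\psi$ restricted appropriately gives a pairing between $L$ and $V$; because $L = L^\perp$ is a Lagrangian and $V$ is complementary, the map $j_V^* \phi j_L \co L \to V^*$ (or rather the pairing of $L$ with $V$ induced by $\phi$) is an isomorphism, and it is simple by the hypothesis that $L$ is a simple Lagrangian and $M = L \oplus V$ is a based splitting. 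This lets me identify $V \cong L^*$ compatibly with bases, so $(M,\psi) \cong H_\epsilon(L)$ via an isometry carrying $L$ to $L$ and $V$ to the graph of some $\rho \in \Hom_\Lambda(L, L^*)$; the element $\rho$ is read off from $\psi|_V$ (it records the self-pairing of $V$ under $\psi$), and $(M,\psi;L,V) \cong \delta(L,\rho) = (H_\epsilon(L); L, \svec{1\\\rho}L)$ as quasi-formations. Hence $[x]$ is represented by a boundary and is elementary.

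The main obstacle I anticipate is entirely bookkeeping with Whitehead torsion: verifying that the various isomorphisms ($L \oplus V \cong M$, $V \cong L^*$, and the resulting isometry $(M,\psi) \cong H_\epsilon(L)$) are {\em simple}, i.e.\ have torsion in $Z$, so that the triples genuinely match as quasi-formations rather than merely as underlying modules. This should follow by chasing through Remark \ref{eqrem}(ii) (which pins down the preferred $s$-basis of $V^\perp$, and here $V^\perp$ relates to $L$) and the definition of ``simple Lagrangian'' (Definition \ref{formdefi}(xi)), but it requires assembling the Hamiltonian $s$-basis defined by $L$ with the given basis of $V$ and checking the change-of-basis matrix lies in $Z$. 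Everything else — the explicit matrix computations for the boundary construction and the graph of $\rho$ — is routine.
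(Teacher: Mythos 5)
There is a genuine error at the heart of both directions of your argument: a boundary $\delta(K,\rho)=(H_\epsilon(K);K,\svec{1\\\rho}K)$ does \emph{not} satisfy $M=L\oplus V$ in general. Here $L=K\times\{0\}$ and $V$ is the graph of $\rho$, so $L+V=K\oplus\im(\rho)$, and your map $(k_1,k_2)\mapsto(k_1+k_2,\rho(k_2))$ is an isomorphism only when $\rho\co K\to K^*$ is one; for instance $\delta(K,0)=(H_\epsilon(K);K,K)$ has $L=V$. What a boundary does satisfy is $M=L^*\oplus V$, i.e.\ the splitting is with respect to the \emph{dual} Lagrangian $K^*=\{0\}\times K^*$, not the Lagrangian of the triple. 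The same confusion undermines your ``if'' direction: if $M=L\oplus V$ then, in hyperbolic coordinates, $V$ is a graph over $L^*$ (of a map $L^*\to L$), so an isomorphism of quasi-formations $(M,\psi;L,V)\cong\delta(L,\rho)$ cannot exist unless $\rho$ is invertible (compare $L\cap V$: it is $0$ on one side and $\ker\rho$ on the other — try the trivial formation $(H_\epsilon(L);L,L^*)$, where your recipe gives $\rho=0$). So the representative you produce with $M=L\oplus V$ is isomorphic to $(H_\epsilon(L);L,\svec{g\\1}L^*)$, which is not a boundary, and your elementary representative $\delta(K,\rho)$ does not have the splitting the lemma asks for.

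The missing ingredient — which is exactly the first step of the paper's proof — is the Lagrangian swap: using relation (\ref{shifteqn}) and the fact that $(M,\psi;L^*,L)$ is (stably isomorphic to) a trivial formation, one has $[M,\psi;L,V]=[M,\psi;L^*,V]$, so the lemma's condition ``some representative splits as Lagrangian $\oplus\,V$'' may be tested against either $L$ or $L^*$. With that, the ``only if'' direction is immediate (a boundary splits as $L^*\oplus V$, hence the class has the representative $(H_\epsilon(K);K^*,V)$ with the required splitting), and in the ``if'' direction one passes to the representative split with respect to the dual Lagrangian, checks via a change-of-basis that the relevant component $y$ of the inclusion $\svec{y\\x}\co V\hra L\oplus L^*$ is a \emph{simple} isomorphism (this is the torsion bookkeeping you correctly anticipate, and it is needed precisely here), and then the isometry $\svec{y^{-1}&0\\0&y^*}$ of $H_\epsilon(L)$ carries the quasi-formation onto a boundary. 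You mention relation (\ref{shifteqn}) in passing, but only to dismiss stabilisation issues; as written, your explicit claims (the matrix $\svec{1&1\\0&\rho}$ being an isomorphism, and the isometry carrying $V$ to the graph of $\rho\in\Hom_\Lambda(L,L^*)$) are false for non-invertible $\rho$, so the proof does not go through without the swap.
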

\begin{proof}
\fat
For any \eq\ $(M, \psi; L, V)$ it is obvious that
\begin{eqnarray*}
[M, \psi; L^*, V] = [(M,\psi;L^*,L)\oplus(M, \psi; L, V)] = [M, \psi; L, V] \in l^{\rm new}_{2q+1}(\Lambda)
\end{eqnarray*}
Hence, we need to show that $x\in l^{\rm new}_{2q+1}(\Lambda)$ is elementary if and only if
$x=[M, \psi; L, V]$ such that $M = L^* \oplus V$.
Any elementary element of $l^{\rm new}_{2q+1}(\Lambda)$ is represented by a boundary $\delta(K,\rho)$
which clearly fulfills the above condition.

On the other hand let $(M, \psi; L, V)$ be an \eq\ such that $M = L^* \oplus V$.
\Wlog  we assume that $(M,\psi)=H_\epsilon(L)$ and that $L$ is free.
Let $\{b_1,\dots,b_n\}\subset L$ and $\{v_1,\dots,v_n\}\subset V$ be some
preferred bases. Then the basis transformation matrix $\svec{1 & X \\ 0 & Y}$ in respect to the bases
$\{b_1,\dots,b_n, v_1,  \dots,v_n\}$ and $\{b_1,\dots,b_n, b_1^*,\dots,b_n^*\}$ represents an element in $Z$.
Hence the component $y$ of
the inclusion $\svec{y\\x}\co V \hookrightarrow L\oplus L^*$ must be a simple isomorphism.
The isometry $\svec{y^{-1} & 0 \\ 0& y^{*}}$
of $H_\epsilon(L)$ induces an isomorphism between $(M, \psi; L, V)$ and a boundary.
\end{proof}

\begin{prop}
\label{newlprp}
There is an isomorphism of monoids
\begin{eqnarray*}
\eta\co \ol{\Lambda}   &\lra&  l^{\rm new}_{2q+1}(\Lambda)
\\
{[H=H_\epsilon(\Lambda^k),V]} &\mt& [H; \Lambda^k\times{0}, V]
\end{eqnarray*}
with
$\eta(\mathcal{E}l_{2q+1}(\Lambda)) = \mathcal{E}l^{\rm new}_{2q+1}(\Lambda)$,
$\eta(L^{}_{2q+1}(\Lambda)) = L^{\rm new}_{2q+1}(\Lambda)$
and $\eta(L^{s}_{2q+1}(\Lambda)) = L^{s,\rm new}_{2q+1}(\Lambda)$.  In particular  $L_{2q+1}^{new}(\Lambda)$ is the group of units of $l_{2q+1}^{new}(\Lambda)$.
\end{prop}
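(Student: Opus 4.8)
The plan is to define $\eta$ on representatives, check it is well-defined, then produce an inverse. Given a pair $(H_\epsilon(\Lambda^k), V)$ representing an element of $\ol{\Lambda}$, the triple $(H_\epsilon(\Lambda^k); \Lambda^k\times\{0\}, V)$ is an \eq\ in the sense of Definition \ref{quasiformdef}: the form is the standard hyperbolic form, hence simple; $\Lambda^k\times\{0\}$ is a simple Lagrangian because a Hamiltonian $s$-basis defined by it is the standard basis of $\Lambda^{2k}$; and $V$ is a based half-rank direct summand by hypothesis. So $\eta$ at least sends representatives to representatives. The first task is to show $\eta$ descends to equivalence classes. Two sources of relations must be checked: stabilisation by trivial pairs $(H_\epsilon(\Lambda^k),\Lambda^k\times\{0\})$ corresponds exactly to adding the trivial formation $(\Lambda^k, (\Lambda^k)^*) = (H_\epsilon(\Lambda^k);\Lambda^k\times\{0\},\{0\}\times\Lambda^k)$ up to the relabelling $[M,\psi;L^*,V]=[M,\psi;L,V]$ established in the proof of Lemma \ref{elemeqlem}; and the $RU(\Lambda,\epsilon)$-action must be shown to be absorbed by the stable-isomorphism relation together with the shift relation \eqref{shifteqn}. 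Here $TU$-elements give genuine isomorphisms of \eqs\ (they preserve the Lagrangian $\Lambda^k\times\{0\}$ simply), while the flip map $\sigma_k$ is handled by observing $(H_\epsilon(\Lambda^k);\Lambda^k\times\{0\},V)$ and $(H_\epsilon(\Lambda^k);\sigma_k(\Lambda^k\times\{0\}),\sigma_k V)$ differ by applying the shift relation with the intermediate Lagrangian $\sigma_k(\Lambda^k\times\{0\})$, using that a permutation/flip of hyperbolic summands is again a simple Lagrangian. This is the step I expect to be the main obstacle: matching the algebra of $RU_n(\Lambda,\epsilon)$ with the combination of stabilisation, isomorphism, and the shift relation, keeping careful track of simplicity (torsion lying in $Z$) at every stage.

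Next I would construct the inverse. By Remark \ref{eqrem}(i) every \eq\ is isomorphic to one of the form $(H_\epsilon(F);F,V)$, and since simple Lagrangians of $H_\epsilon(F)$ are, up to simple isometry, the standard one, after a further simple isometry we may take $F=\Lambda^k$ with its standard basis and $L=\Lambda^k\times\{0\}$; then $V\subset\Lambda^{2k}$ is a based half-rank direct summand and we set $\eta^{-1}[H_\epsilon(\Lambda^k);\Lambda^k\times\{0\},V] = [H_\epsilon(\Lambda^k),V]\in\ol{\Lambda}$. One must check this is independent of the choices (different reductions to standard form differ by $TU$-elements, already visible in $RU$) and respects the shift relation \eqref{shifteqn} — the latter because relation \eqref{shifteqn} with two simple Lagrangians $K,L$ both mapped to standard position is precisely the statement, in $\ol{\Lambda}$, that adding a trivial pair and acting by an element of $RU$ yields an equivalent pair. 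Having both $\eta$ and $\eta^{-1}$ well-defined, the fact that they are mutually inverse is immediate from the formulas, and additivity is clear since orthogonal sum of pairs corresponds to orthogonal sum of the associated \eqs.

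Finally, the compatibility with the distinguished submonoids: $\eta$ carries elementary pairs to elementary \eqs\ because $(H_\epsilon(\Lambda^k),V)$ elementary means $V\oplus(\{0\}\times\Lambda^k)=\Lambda^{2k}$, i.e. $M=L^*\oplus V$ for $L=\Lambda^k\times\{0\}$, which by Lemma \ref{elemeqlem} (and the relabelling $L\leftrightarrow L^*$ in its proof) is exactly the condition for $(H_\epsilon(\Lambda^k);\Lambda^k\times\{0\},V)$ to represent an elementary element of $l_{2q+1}^{\rm new}(\Lambda)$; conversely an elementary \eq\ is isomorphic to a boundary $\delta(K,\rho)=(H_\epsilon(K);K,\svec{1\\\rho}K)$, whose underlying pair $(H_\epsilon(K),\svec{1\\\rho}K)$ is elementary, so $\eta$ restricts to a bijection $\mathcal{E}l_{2q+1}(\Lambda)\to\mathcal{E}l^{\rm new}_{2q+1}(\Lambda)$. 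For the $L$-groups, an \eqf\ is by definition an \eq\ in which $V$ is a Lagrangian, equivalently (on the pair side) the form induced on $V$ is zero, which is exactly the defining condition for $\oL{\Lambda}\subset\ol{\Lambda}$; imposing simplicity of $V$ picks out $L^s$ on both sides. Thus $\eta(\oL{\Lambda})=L^{\rm new}_{2q+1}(\Lambda)$ and $\eta(\oLs{\Lambda})=L^{s,\rm new}_{2q+1}(\Lambda)$, and since $\oL{\Lambda}$ is the group of units of $\ol{\Lambda}$ (recalled in subsection \ref{origolsec}), the transported statement gives that $L_{2q+1}^{\rm new}(\Lambda)$ is the group of units of $l_{2q+1}^{\rm new}(\Lambda)$.
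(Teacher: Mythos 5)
Your overall strategy is the same as the paper's: check that $\eta$ kills the $TU$- and flip-actions and the stabilisation, build an inverse $\nu$ by moving an \eq\ into standard hyperbolic position with the Lagrangian at $\Lambda^k\times\{0\}$, and then transport the submonoid identifications. However, at the one place where the real work happens your argument has a genuine gap. For compatibility of $\nu$ with relation (\ref{shifteqn}) you assert that, once both simple Lagrangians $K$ and $L$ are moved to standard position, the required identity in $\ol{\Lambda}$ "is precisely the statement that adding a trivial pair and acting by an element of $RU$ yields an equivalent pair." It is not: if $\alpha,\alpha'$ are isometries with $\alpha(K)=\alpha'(L)=\Lambda^k\times\{0\}$, the discrepancy $\phi=\alpha\alpha'^{-1}$ is an \emph{arbitrary} element of $SU_k(\Lambda,\epsilon)$, and the image of the summand $(M,\psi;K,L)$ under $\nu$ is the pair $(H,\phi(\Lambda^k\times\{0\}))$, which is in general not a trivial pair (it represents a possibly nonzero class in $L^s_{2q+1}(\Lambda)$). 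What has to be shown is that $(H,\phi(\Lambda^k\times\{0\}))\oplus(H,W)$ and $(H,\phi(W))\oplus(H,\Lambda^k\times\{0\})$ lie in the same $RU$-orbit, and the only available isometry relating them is $\tau=\svec{0&\phi\\\phi^{-1}&0}$ on $H\oplus H$. Since $RU\subsetneq SU$ in general (the quotient is $L^s_{2q+1}(\Lambda)$), membership $\tau\in RU$ is not definitional; the paper's proof supplies it by the hyperbolic-double (Whitehead lemma) trick, writing $\tau$ as a product of flips with $\phi\oplus\phi^{-1}$, which vanishes in $L^s_{2q+1}(\Lambda)$. Without this step your verification of (\ref{shifteqn})-invariance does not go through.

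A second, smaller incompleteness is the flip map in the well-definedness of $\eta$, which you yourself flag as the expected obstacle: your sketch (apply (\ref{shifteqn}) with the intermediate Lagrangian $\sigma_k(\Lambda^k\times\{0\})$) is the right idea, but it requires proving that the auxiliary quasi-formation whose two distinguished summands are the standard and the flipped Lagrangian represents $0$ in $l^{\rm new}_{2q+1}(\Lambda)$; the paper does this by splitting it as a rank-one piece isomorphic to a trivial formation plus a piece of the form $(H_\epsilon(\Lambda^{k-1});F,F)$, which is absorbed by a trivial formation via (\ref{shifteqn}). The remaining parts of your proposal (the $TU$-action, independence of choices for $\nu$, the treatment of elementary elements via Lemma \ref{elemeqlem}, and the identification of the $L$-subgroups and of the units) agree with the paper and are fine.
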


\begin{proof}
We first show that $\eta$ is a well-defined map \ie it is invariant under the equivalence relations used
to define $\ol{\Lambda}$.
Let $H=H_\epsilon(\Lambda^k)$ and $[H,V]\in\ol{\Lambda}$. Obviously, an isometry $\tau\in TU_k(\Lambda,\epsilon)$
induces an isomorphism between $(H; \Lambda^k\times\{0\}, V)$ and $(H; \Lambda^k\times\{0\}, \tau(V))$.
Now let $\sigma_k\in RU_k(\Lambda,\epsilon)$ be the flip map mentioned in \S\ref{origolsec}.
Let $x=(H;\sigma_k(\Lambda^k\times\{0\}), \Lambda^k \times \{0\})$.
By relation (\ref{shifteqn}), $x\oplus (H; \Lambda^k \times \{0\}, V)$
is equivalent to $(H;\sigma_k(\Lambda^k\times\{0\}), V)$
which in turn is isomorphic to
$(H;\Lambda^k\times\{0\}, \sigma_k(V))$.
Because
\[
x=(H_\epsilon(\Lambda), \{0\}\times\Lambda, \Lambda\times\{0\})\oplus
(H_\epsilon(\Lambda^{k-1}), \Lambda^{k-1}\times\{0\}, \Lambda^{k-1}\times\{0\})
\]
it represents zero in $l^{\rm new}_{2q+1}(\Lambda)$ which proves
that $[H; \Lambda^k \times \{0\}, V] = [H;\Lambda^k\times\{0\}, \sigma_k(V)] \in
l^{\rm new}_{2q+1}(\Lambda)$.

It is clear that $\eta$ is a monoid map so we complete the proof by constructing
an inverse homomorphism $\nu\co l^{\rm new}_{2q+1}(\Lambda)\ra \ol{\Lambda}$. Let
$(M,\psi;L,V)$ be an \eq\ such that $M$, $V$ and $L$ are free. Choose an isometry
$\alpha\co (M,\psi) \isora H = H_\epsilon(\Lambda^k)$ such that $\alpha(L) = \Lambda^k\times\{0\}$ and
such that the isomorphism $L\isora \Lambda^k\times\{0\}$ induced by $\alpha$ is simple.
We would like to define $\nu([M,\psi;L,V]) = [H, \alpha(V)]\in\ol{\Lambda}$ but in order to do so
we must show that this definition is independent of the various equivalence relations.
Firstly, a different choice of
$\alpha$ changes $(H,V)$ only by an action of an element in $TU_k(\Lambda,\epsilon)$.
Secondly, two  isomorphic quasi-formations are mapped to two pairs differing again by an
element of $TU_k(\Lambda,\epsilon)$.
Thirdly, trivial quasi-formations are mapped to trivial pairs.

At last, we have to show that $\nu$ is invariant under the relation (\ref{shifteqn}).
Let $(M,\psi; K, L)$ be a simple \eqf\ and $(M,\psi; L, V)$ an \eq.
Let $\alpha,\alpha'\co (M,\psi) \isora H:=H_\epsilon(\Lambda^k)$ be
two isometries such that $\alpha(K)=\alpha'(L)=\Lambda^k\times\{0\}$ and such that the
induced isomorphisms between the respective Lagrangians are simple.
Then $\phi:=\alpha{\alpha'}^{-1}\in SU_k(\Lambda, \epsilon)$.  Let
$W:=\alpha'(V)$.
By definition $\nu$ maps $(M,\psi;K,L)\oplus(M,\psi; L,V)$ to
$(H,\phi(\Lambda^k\times\{0\}))\oplus(H,W)$ and $(M,\psi; K, V)$ to $
(H, \phi(W))$.
Now observe that $(H,\phi(\Lambda^k\times\{0\}))\oplus(H, W)$
and $(H,\phi(W))\oplus(H,\Lambda^k\times\{0\})$ only differ
by an isometry $\tau=\svec{0&\phi\\\phi^{-1}&0}$ of $H\oplus H$.
Moreover, $\tau\in RU(\Lambda,\epsilon)$ because $\tau = (\sigma_1\oplus\cdots\oplus\sigma_1)
\circ (\phi\oplus\phi^{-1})$ vanishes in $L^s_{2q+1}(\Lambda)$.
This shows that $(M, \psi; K, V)$ and $(M, \psi; K, L) \oplus (M, \psi; L, V)$
are mapped to equivalent pairs.
It is clear that $\nu$ is additive and that $\eta$ and $\nu$ are inverse to each other.
Moreover, $\nu$ respects elementariness by Lemma \ref{elemeqlem}.
\end{proof}


\begin{rem} \label{Lunitrem}
Henceforth we shall identify $l^{\rm new}_{2q+1}(\Lambda)$ and $\ol{\Lambda}$, etc.  
\end{rem}

\begin{rem}\label{minusrem}
It is easy to check that there is a well-defined monoid
automorphism
\[ T \co  l_{2q+1}(\Lambda) \cong l_{2q+1}(\Lambda), ~~~[H, \psi; L, V] \mapsto [H, -\psi; L, V]. \]
\end{rem}

\begin{rem}[c.f. \cite{Kre99}{[p.773]}] \label{Ltorsionrem}
There is an exact sequence
\[
 0\lra L_{2q+1}^{s}(\Lambda) \lra L_{2q+1}^{}(\Lambda)  \stackrel{\tau}{\lra} \Wh(\Lambda)
\]
where $\tau([M,\psi; L,K])$ is the torsion of any isomorphism
$(M,\mathcal{B}) \isora (M,\mathcal{C})$ where $\mathcal{B}$ is
represented by the preferred $s$-bases of $M$ and $\mathcal{C}$ is
represented by the Hamiltonian $s$-bases of $M$ with respect to
$K$.  The group $L_{2q+1}(\Lambda)$ corresponds to case $C$ in
\cite{Wal99}[\S 17D].  As predicted there, the image of $\tau$
lies in the set of anti-self dual torsions.  We discuss the image of $\tau$ further in subsection
\ref{linearandsimplesubsec}.
\end{rem}
\section{Glueing quadratic forms together}
\label{gluesec}

The main theorem of this paper calculates subsets of $\ol{\Lambda}$
using isomorphisms between the boundaries of $\epsilon$-quadratic forms.
This section introduces the notions of boundaries and unions of possibly singular forms.

If $\Lambda=\bZ$ and the cokernel of a form is finite one can
define a linking form on this cokernel which is often described as
the boundary of the form (subsection \ref{linkingsubsec} or
\cite{Ran81}[\S3.4]). In general, the boundary of an
$\epsilon$-quadratic form is a refined version of a formation: a
{\bf split $\epsilon$-quadratic formation}.  If, for two (possibly
singular) $\epsilon$-quadratic forms $(V,\theta)$ and
$(V',\theta')$, there is an isomorphism $f\co \del(V, \theta)
\cong \del(V', -\theta')$ between their boundaries, one can glue
the forms together. The result is a nonsingular
$\epsilon$-quadratic form $(V,\theta)\cup_f (V',\theta')$.

\subsection{Formations and boundaries of forms}

We review some concepts from \cite{Ran81}[p.69ff and p.86ff].
\begin{defi} \label{formationdef}
\begin{enumerate}
\item
A {\bf simple split $\epsilon$-quadratic formation}
$\left(F,\left( \svec{\gamma\\\mu},\theta \right)G\right)$
is a simple \eqf\ $(H_\epsilon(F); F, \svec{\gamma\\\mu}G)$ together with an
element $\theta\in Q_{-\epsilon}(G)$ such that $\gamma^*\mu=\theta-\epsilon\theta^*$
where $\gamma\co G \ra F$ and $\mu\co G \ra F^*$ define the embedding of $G$ in $H_\epsilon(F)$.

\item
For a module $P$, the {\bf trivial split $\epsilon$-quadratic formation on $P$} is defined to be
$(P,P^*):=\left(P,\left(\svec{0\\1},0\right)P^*\right)$.

\item
The {\bf boundary of an $\epsilon$-quadratic form $(K,\psi)$}
is the simple split $(-\epsilon)$-quadratic formation
$\del(K,\psi)=\left(K,\left(\svec{1\\\psi-\epsilon\psi^*},\psi
\right)K\right)$

\item
An {\bf isomorphism of simple split $\epsilon$-quadratic formations}
\[
f=(\alpha,\beta,\nu) \co (F,\left( \svec{\gamma\\\mu},\theta \right)G)
\isora (F',\left( \svec{\gamma'\\\mu'},\theta' \right)G')
\]
is a triple consisting of simple isomorphisms $\alpha\in\Hom_\Lambda(F,F')$,
$\beta\in\Hom_\Lambda(G,G')$ and an element $\nu\in Q_{-\epsilon}(F^*)
$ such that:
\begin{enumerate}
\item $\alpha\gamma+\alpha(\nu-\epsilon\nu^*)^*\mu=\gamma'\beta \in
\Hom_\Lambda(G, F')$,
\item $\alpha^{-*}\mu =\mu'\beta \in \Hom_\Lambda(G, {F'}^*)$,
\item $\theta+\mu^*\nu\mu=\beta^*\theta'\beta \in Q_{-\epsilon}(G)$.
\end{enumerate}

\item
The {\bf boundary of an isometry $h\co(M,\psi)\isora(M',\psi')$ of simple $\epsilon$-quadratic forms}
is the isomorphism $\del h=(h,h,0)\co \del(M,\psi)\isora
\del(M',\psi')$.

\item
The {\bf composition} of two isomorphisms of simple split $\epsilon$-quadratic formations is
$(\alpha',\beta',\nu')\circ(\alpha,\beta,\nu) = (\alpha'\alpha,\beta'\beta, \nu+\alpha^{-1}\nu'\alpha^{-*})$.
The {\bf inverse} of an isomorphism $(\alpha,\beta,\nu)$ is $(\alpha^{-1},\beta^{-1},-\alpha\nu\alpha^*)$.
The {\bf identity} on a split $\epsilon$-quadratic formation $x$ is the isomorphism $(1,1,0)$.

\item
A {\bf homotopy of isomorphisms of simple split $\epsilon$-quadratic
formations
\[\Delta\co(\alpha,\beta,\nu)\simeq (\alpha',\beta',\nu')\co(F,\left
( \svec{\gamma\\\mu},\theta \right)G)\isora
(F',\left( \svec{\gamma'\\\mu'},\theta' \right)G')\]} is a homomorphism $\Delta\in
\Hom_\Lambda(G^*,F')$ such that:
\begin{enumerate}
\item ${\beta'}^{-*}-{\beta}^{-*} = {\mu'}^*\Delta \in \Hom_\Lambda
(G^*,{G'}^*)$,
\item ${\alpha'}-\alpha= \Delta\mu^* \in \Hom(F, {F'})$,
\item $\alpha'\nu'{\alpha'}^*-\alpha\nu\alpha^{*}
= (\epsilon\alpha'\gamma+\Delta\theta)\Delta^* \in Q_{-\epsilon}({F'}
^*)$.
\end{enumerate}

\item
A {\bf stable isomorphism of two simple split $\epsilon$-quadratic formations
$y$ and $z$} is an isomorphism $y \oplus (P,P^*)\cong z \oplus (Q,Q^*)$.

\item
Let $f_i\co x\oplus u_i \isora y\oplus v_i$ ($i=0,1$) be two isomorphisms of
simple split $\epsilon$-quadratic formations where $u_i$ and $v_i$ are
isomorphic to trivial formations. Then $f_0$ and $f_1$ are {\bf stably homotopic}
if there are based modules $P$, $Q$ and $R_i$ as well as isomorphisms
$g_i\co (P,P^*) \isora u_i\oplus (R_i,R_i^*)$ and $h_i\co v_i\oplus (R_i, R_i^*) \isora (Q,Q^*)$
such that there is a homotopy
\begin{eqnarray*}
\widetilde{f}_0 \simeq \widetilde{f}_1 \co x\oplus (P,P^*) \isora y\oplus (Q,Q^*)
\end{eqnarray*}
where $\widetilde{f}_i=(\id_y \oplus h_i) \circ (f_i \oplus \id_{(R_i, R_i^*)}) \circ (\id_x \oplus g_i)$.

\item Let $y$ and $z$ be simple split $\epsilon$-quadratic formations.
We denote the set of stable homotopy classes of stable isomorphisms from
$y$ to $z$ by $\Iso(y, z)$ and remark that $\Aut(y) : = \Iso(y, y)$ forms
a group under composition.
\end{enumerate}
\end{defi}

We will see the importance of homotopies in the next section since the isometry class of an \eqf\ obtained by gluing two forms together with an isomorphism of their boundary formations depends only on the homotopy class of the isomorphism. (Proposition \ref{unionlem}).

\begin{rem}
Both \cite{Ran01a}[\S6] and \cite{Ran80a}[\S3] explain how an \eqf\ gives rise to a {\bf short odd complex}
\ie a chain complex $d\co C_{q+1}\ras C_q$ together with an $\epsilon$-quadratic structure
$\psi \in Q_\epsilon(C)$. A (stable) isomorphism of \eqfs\ corresponds to a chain isomorphism
(chain equivalence) of the associated short odd complexes. Stable homotopies of stable
isomorphisms of \eqfs\ correspond to chain homotopies of the respective chain equivalences.
\end{rem}

The following technical lemmas give a better description of stable isomorphisms between boundary formations and the conditions under which such isomorphisms are homotopic.

\begin{lem}
\label{sthomlem}
Let $(V,\theta)$ and $(V',\theta')$ be $\epsilon$-quadratic forms and
let $\lambda=\theta+\epsilon\theta^*$ and $\lambda'=\theta'+\epsilon{\theta'}
^*$ be the underlying $\epsilon$-symmetric forms.

\begin{enumerate}
    \item Let $P$ and $P'$ be modules and $\alpha\co V\oplus P \isora V'\oplus P'$
    and $\beta\co V\oplus P^*\isora V'\oplus {P'}^*$ simple isomorphisms. Let
    $\nu\in Q_\epsilon({V'}^*\oplus {P'}^*)$.
Then
\begin{eqnarray*}
(\alpha,\beta,\nu) \co\del (V,\theta)\oplus (P,P^*) &\isora&
\del (V',\theta')\oplus (P',{P'}^*)
\end{eqnarray*}
is a stable isomorphism if and only if there are homomorphisms $a$, $a_1$, $a_3$, $b$, $b_1$ and $s$ such that
\begin{eqnarray}
\label{isonot1eq}
\alpha                  &=&\svec{a&a_1\\\epsilon b_1^*\lambda&a_3}  \co V\oplus P \isora V'\oplus P',
\\\nonumber
\beta^{-1}              &=&\svec{b&b_1\\a_1^*\lambda'&a_3^*}    \co V'\oplus {P'}^*\isora V\oplus P^*,
\\\nonumber
\alpha\nu\alpha^*&=&\svec{s&-\epsilon ab_1\\0&-b_1^*\theta b_1 }        \in Q_\epsilon({V'}^*\oplus {P'}^*),
\\\nonumber
1&=&ab + (s^*+\epsilon s)\lambda' \co V' \ra V',
\\\nonumber
a^*\lambda'&=&\lambda b \co V' \ra V^*,
\\\nonumber
\theta'&=&b^*\theta b +{\lambda'}^*s\lambda'\in Q_\epsilon(V').
\nonumber
\end{eqnarray}
\item
Two stable isomorphisms
\begin{eqnarray*}
f=(\alpha,\beta,\nu) \co\del (V,\theta)\oplus (P,P^*) &\isora&
\del (V',\theta')\oplus (P',{P'}^*)\\
\widetilde{f}=(\widetilde{\alpha},\widetilde{\beta},\widetilde{\nu})\co\del (V,\theta)
\oplus (Q,Q^*) &\isora& \del (V',\theta')\oplus (Q',{Q'}^*)
\end{eqnarray*}
are homotopic if and only if there is a $\Delta_1 \in\Hom_\Lambda(V^*, V')$ such that
$\widetilde{a} - a =\Delta_1 \lambda^*$, $\widetilde{b}- b=\Delta_1^*{\lambda'}^*$ and
$(-\epsilon \widetilde{a} +\Delta_1 \theta)\Delta_1^*=\widetilde{s}-s\in Q_\epsilon({V'}^*)$
where we use the notation of (\ref{isonot1eq}).
\end{enumerate}
\end{lem}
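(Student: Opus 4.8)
The plan is to reduce both parts to block-matrix computations in the decompositions $V\oplus P$, $V'\oplus P'$ and their duals, the only geometric inputs being the explicit shape of a boundary formation and of a trivial formation. Concretely, after stabilising, $\del(V,\theta)\oplus(P,P^*)$ is the split formation $(V\oplus P,(\svec{\gamma\\\mu},\theta\oplus 0)(V\oplus P^*))$ with $\gamma=\svec{1&0\\0&0}\co V\oplus P^*\ra V\oplus P$ and $\mu=\svec{\lambda&0\\0&1}\co V\oplus P^*\ra V^*\oplus P^*$, and similarly on the primed side; note once and for all that $\lambda^*=\epsilon\lambda$. For part (i), I would begin with condition (b) of Definition \ref{formationdef}(4), rewritten as $\mu\beta^{-1}=\alpha^*\mu'$. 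Writing $\alpha=\svec{a&a_1\\ \ast&a_3}$ and $\beta^{-1}=\svec{b&b_1\\ \ast&\ast}\co V'\oplus{P'}^*\ra V\oplus P^*$ and comparing the four blocks, one gets exactly: $\lambda b=a^*\lambda'$ (top-left), the bottom-left block of $\alpha$ equals $\epsilon b_1^*\lambda$ (top-right, using $\lambda^*=\epsilon\lambda$), the bottom-left block of $\beta^{-1}$ equals $a_1^*\lambda'$, and the bottom-right block of $\beta^{-1}$ equals $a_3^*$. This is precisely the first two lines of (\ref{isonot1eq}) together with the relation $a^*\lambda'=\lambda b$.

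I would then substitute these block shapes into conditions (a) and (c). Using $\gamma,\gamma'=\svec{1&0\\0&0}$, condition (a) on the $V$-block gives the identity $1=ab+(s^*+\epsilon s)\lambda'$, where $s$ is defined to be the top-left block of $\alpha\nu\alpha^*\in Q_\epsilon({V'}^*\oplus{P'}^*)$, and its other blocks are then forced to be $-\epsilon ab_1$, $0$ and $-b_1^*\theta b_1$; condition (c), which for the boundaries reads $(\theta\oplus 0)+\mu^*\nu\mu=\beta^*(\theta'\oplus 0)\beta$, gives on the $V$-block the remaining relation $\theta'=b^*\theta b+{\lambda'}^*s\lambda'$. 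The converse is the mechanical verification that the block form (\ref{isonot1eq}) together with the six scalar relations implies conditions (a)--(c); simplicity of $\alpha$ and $\beta$ is a hypothesis and is preserved throughout.

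For part (ii), I would unwind Definition \ref{formationdef}(9): after passing to stably equivalent representatives, $f$ and $\widetilde f$ acquire a common domain $\del(V,\theta)\oplus(R,R^*)$ and codomain $\del(V',\theta')\oplus(R',{R'}^*)$ without changing their $V$-blocks $a,b,s$ and $\widetilde a,\widetilde b,\widetilde s$, and being stably homotopic becomes the existence of an honest homotopy $\Delta\in\Hom_\Lambda(V^*\oplus R,V'\oplus R')$ satisfying conditions (a)--(c) of Definition \ref{formationdef}(7). Substituting the block shapes of $\alpha,\beta,\alpha\nu\alpha^*$ (and their tildes) from part (i), the $V$-blocks of conditions (a) and (b) give $\widetilde a-a=\Delta_1\lambda^*$ and $\widetilde b-b=\Delta_1^*{\lambda'}^*$, where $\Delta_1\in\Hom_\Lambda(V^*,V')$ is the top-left block of $\Delta$, and the $({V'}^*,{V'}^*)$-block of condition (c) gives $(-\epsilon\widetilde a+\Delta_1\theta)\Delta_1^*=\widetilde s-s\in Q_\epsilon({V'}^*)$; the remaining blocks of $\Delta$ touch only the trivial summands and can be taken to be $0$. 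Conversely, a $\Delta_1$ with these three properties extends by $0$ to such a $\Delta$, which gives the stated equivalence.

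The step I expect to be the real obstacle is not the linear algebra but keeping the $Q$-group arithmetic honest. The element $\nu$, and hence $\alpha\nu\alpha^*$, is well defined only modulo $\im(1-T_\epsilon)$, so each identity involving it must be established at the level of $Q_\epsilon$ rather than $\Hom_\Lambda$; in particular one must check that the representative $\svec{s&-\epsilon ab_1\\0&-b_1^*\theta b_1}$ of $\alpha\nu\alpha^*$ can always be chosen and that $\theta'=b^*\theta b+{\lambda'}^*s\lambda'$ is consequently well posed. Alongside this runs the need to track torsions so that the isomorphisms produced in the converse directions of both parts are simple. Once the convention $\lambda^*=\epsilon\lambda$ and the bookkeeping of which summand each block maps between are fixed, the remainder is routine.
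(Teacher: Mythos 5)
Your part (i) follows the same route as the paper, whose proof is simply ``this follows straight from the definition'': stabilise, write $\gamma=\svec{1&0\\0&0}$, $\mu=\svec{\lambda&0\\0&1}$, and compare blocks in conditions (a)--(c) of Definition \ref{formationdef}. One correction of detail: condition (a) only determines the \emph{symmetrisation} of the bottom-right block of $\alpha\nu\alpha^*$ (it gives $(1+T_\epsilon)n_{22}=-b_1^*\lambda b_1$); the actual identity $n_{22}=-b_1^*\theta b_1\in Q_\epsilon({P'}^*)$ is the ${P'}^*$-block of condition (c), so (c) is needed for more than just the relation $\theta'=b^*\theta b+{\lambda'}^*s\lambda'$. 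With that bookkeeping fixed, (i) is the paper's argument written out.

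The genuine gap is in the converse direction of part (ii). You assert that the remaining blocks of the homotopy $\Delta$ ``touch only the trivial summands and can be taken to be $0$'', and that a suitable $\Delta_1$ ``extends by $0$''. But the homotopy conditions of Definition \ref{formationdef} (vii) \emph{determine} those blocks: with $\mu^*=\svec{\lambda^*&0\\0&1}$, condition (b) $\widetilde\alpha-\alpha=\Delta\mu^*$ forces $\Delta_{12}=\widetilde a_1-a_1$ and $\Delta_{22}=\widetilde a_3-a_3$, while condition (a) ${\widetilde\beta}^{-*}-\beta^{-*}={\mu'}^*\Delta$ forces $\Delta_{21}=\widetilde b_1^*-b_1^*$. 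These are nonzero in general, because the three relations in the lemma constrain only $a$, $b$, $s$ and say nothing about the stabilised blocks $a_1,a_3,b_1$; so the zero extension simply fails conditions (a) and (b) whenever $\widetilde a_1\neq a_1$, etc. The correct choice --- which is exactly the paper's proof of (ii), after the reduction $P=Q$, $P'=P'$ that you do carry out --- is $\Delta=\svec{\Delta_1&\widetilde a_1-a_1\\ \widetilde b_1^*-b_1^*&\widetilde a_3-a_3}$, and one must then check condition (c) on \emph{all} blocks (not only the top-left one), using the relations of part (i); that verification is the substantive step your proposal omits.
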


\begin{proof}
\begin{enumerate}
\item This follows straight from the definition.
\item \Wlog $P=Q$ and $P'=Q'$. Then use the homotopy $\svec{\Delta &
\widetilde{a}_1-a_1\\\widetilde{b}^*_1-{b}^*_1 & \widetilde{a}_3-a_3}$.\qedhere
\end{enumerate}
\end{proof}

\begin{lem}
\label{splitisolem}
\begin{enumerate}
\item
Given a simple $\epsilon$-quadratic form $(M,\psi)$  there is an isomorphism
$(1,\phi, -\phi^{-*}\psi\phi^{-1})\co \del(M,\psi) \isora (M,M^*)$
where $\phi=\psi+\epsilon\psi^*$.

\item
Let $(\alpha,\beta,\nu)\co (P,P^*) \isora (P,P^*)$ be an isomorphism of
trivial formations.
Then $\Delta=1-\alpha$ is a homotopy to the identity $(1,1,0)$.
\end{enumerate}
\end{lem}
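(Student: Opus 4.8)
The plan is to establish each part by directly unwinding Definition~\ref{formationdef}: part~(1) by checking the three conditions for an isomorphism of simple split formations, and part~(2) by checking the three conditions for a homotopy. Both $\del(M,\psi)$ and the trivial formation $(M,M^{*})$ are $(-\epsilon)$-quadratic, so Definition~\ref{formationdef}(4) and (7) are to be applied with $\epsilon$ replaced by $-\epsilon$; apart from this, and from keeping straight the conventions for the adjoints $\alpha^{-*}$, $\nu^{*}$ and the symmetrisation maps $1+T_{\pm\epsilon}$, there is nothing deep going on.

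For part~(1): since $(M,\psi)$ is simple, $\phi=\psi+\epsilon\psi^{*}\co M\isora M^{*}$ is a simple isomorphism and $\phi^{*}=\epsilon\phi$, so $\alpha=1$ and $\beta=\phi$ are simple isomorphisms, as required. Writing $\del(M,\psi)$ in the notation of Definition~\ref{formationdef}(1) we have $F=G=M$, $\gamma=\id$, $\theta=\psi$, and $\gamma^{*}\mu$ equal to the symmetrisation $\phi$ of $\psi$ (the defining relation of a split formation), so $\mu=\phi$; the trivial formation $(M,M^{*})$ has $F'=M$, $G'=M^{*}$, $\gamma'=0$, $\mu'=\id$, $\theta'=0$. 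I would then check the three conditions in turn. Condition~(b), $\alpha^{-*}\mu=\mu'\beta$, is just $\phi=\phi$. Condition~(c), $\theta+\mu^{*}\nu\mu=\beta^{*}\theta'\beta$, reads $\psi+\phi^{*}\bigl(-\phi^{-*}\psi\phi^{-1}\bigr)\phi=0$, which is immediate from $\phi^{*}\phi^{-*}=\id=\phi^{-1}\phi$. For condition~(a) the key computation is $(1+T_{\epsilon})\nu=-\phi^{-*}(1+T_{\epsilon})(\psi)\phi^{-1}=-\phi^{-*}\phi\phi^{-1}=-\phi^{-*}$, whence $\bigl((1+T_{\epsilon})\nu\bigr)^{*}\mu=(-\phi^{-*})^{*}\phi=-\phi^{-1}\phi=-\id$ and condition~(a) becomes $\id-\id=0=\gamma'\beta$. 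The same identity shows $-\phi^{-*}\psi\phi^{-1}$ represents a well-defined class in $Q_{\epsilon}(M^{*})$, independent of the chosen lift of $\psi$, so the triple really is an isomorphism of split formations.

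For part~(2): I would first impose conditions~(b) and~(c) of Definition~\ref{formationdef}(4) on the automorphism $(\alpha,\beta,\nu)$ of $(P,P^{*})$, using that the trivial formation has $\gamma=0$, $\mu=\id$, $\theta=0$; this forces $\beta=\alpha^{-*}$ and $\nu=0\in Q_{-\epsilon}(P^{*})$. Then I set $\Delta=1-\alpha\in\Hom_{\Lambda}(G^{*},F')=\Hom_{\Lambda}(P,P)$ and verify the conditions of Definition~\ref{formationdef}(7) for $\Delta\co(\alpha,\beta,\nu)\simeq(1,1,0)$. Condition~(b), $\id-\alpha=\Delta\mu^{*}=\Delta$, is the definition of $\Delta$; condition~(a), $\id-\beta^{-*}={\mu'}^{*}\Delta=\Delta$, follows because $\beta^{-*}=(\alpha^{-*})^{-*}=\alpha$; and condition~(c), $-\alpha\nu\alpha^{*}=(\pm\alpha\gamma+\Delta\theta)\Delta^{*}$, holds because $\gamma=\theta=0$ kills the right-hand side while $\nu=0$ in $Q_{-\epsilon}(P^{*})$ gives $\alpha\nu\alpha^{*}=0$.

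The only ``obstacle'' is the sign-bookkeeping: tracking the interplay between the global $\epsilon$, the $(-\epsilon)$ attached to boundary formations, and the several versions of the transpose. Once the split-formation relation is used to identify the $\mu$-component of $\del(M,\psi)$ with $\phi$ in part~(1), and conditions~(b)--(c) of Definition~\ref{formationdef}(4) are used to pin down $\beta$ and $\nu$ in part~(2), each of the six identities to be verified collapses to a one-line cancellation.
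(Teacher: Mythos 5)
Your verification is correct, and the paper in fact gives no proof of Lemma \ref{splitisolem} at all --- it is left as exactly the routine unwinding of Definition \ref{formationdef} that you carry out (conditions (a)--(c) for an isomorphism in part (1), the derivation $\beta=\alpha^{-*}$, $\nu=0$ and conditions (a)--(c) for a homotopy in part (2)). You also handle the one genuine subtlety the right way: the $\mu$-component of $\del(M,\psi)$ must be read as the symmetrisation $\phi=\psi+\epsilon\psi^*$, which is what the split-formation relation for a $(-\epsilon)$-quadratic formation forces and is the convention used throughout Lemma \ref{sthomlem}, and with that reading each of your six one-line cancellations is accurate.
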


\subsection{The union and splitting of forms}

\begin{defi}[\cite{Ran81}{[p.84ff]}]
\label{uniondef}
Let $(V,\theta)$ and $(V',\theta')$ be two $\epsilon$-quadratic forms and
let $f = (\alpha,\beta,\nu)\co \del(V,\theta)\oplus(P,P^*) \isora
\del (V', -\theta')\oplus(P',{P'}^*)$
be an isomorphism. Using the notation of Lemma \ref{sthomlem} (\ref{isonot1eq}),
we define the {\bf union} of $(V, \theta)$ and $(V', \theta')$ along $f$, denoted
$(V, \theta)\cup_f (V',\theta')$, to be the $\epsilon$-quadratic form
\[
(M, \psi) =\left(V\oplus {V'}^*, \svec{\theta&0\\\epsilon a&-s}\right).
\]

\end{defi}

The following Lemma lists the basic properties of the glueing construction.

\begin{lem}
\label{unionlem}
Let $(M, \psi) = (V,\theta)\cup_f (V',\theta')$ as in Definition \ref{uniondef}
and let $(M, \phi)$ be its symmetrisation.
\begin{enumerate}
\item
There is an exact sequence
$
0\ra V \stackrel{j}{\lra} M \stackrel{{j'}^*\phi}{\lra} {V'}^* \ra 0
$
where
$j=\svec{1\\0}\co(V,\theta)\ras (M,\psi)$ and
$j'=\svec{b\\-\lambda'}\co (V',\theta') \ras (M,\psi)$
are split injections and $\lambda'=\theta'+\epsilon{\theta'}^*$.

\item
The form $(M,\psi)$ is simple.

\item
Let $\widetilde{f}$ be a stable isomorphism $\del(V,
\theta) \cong \del(V',-\theta')$ which is stably homotopic to $f$, then the respective
unions are isometric relative to $(V, \theta)$.

\item
\label{unionlem3}
Let $k\co(V,\theta)\isora (W,\sigma)$ and $k'\co(V',\theta')\isora (W',
\sigma')$ be isometries.
Define the automorphism $\widetilde{f}=(\del k'\oplus \id_{(P', {P'}^*)}) \circ f
\circ (\del k^{-1}\oplus \id_{(P, {P}^*)})$.
Then there is an isomorphism $\svec{k&0\\0&{k'}^{-*}}\co (V,\theta)\cup_f(V',\theta')\cong (W,\sigma)\cup_{\widetilde{f}} (W',
\sigma')$.

\end{enumerate}
\end{lem}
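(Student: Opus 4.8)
The plan is to verify parts (1)--(4) by direct matrix computation with the defining form $\psi = \svec{\theta & 0 \\ \epsilon a & -s}$ of the union, using the relations~(\ref{isonot1eq}) of Lemma~\ref{sthomlem} and, for part~(3), its homotopy relations; these are the ``simple'' refinements of Ranicki's basic facts about unions of forms (\cite{Ran81}[p.~84ff]), and the extra input throughout is that $\alpha$ and $\beta$ in $f = (\alpha,\beta,\nu)$ are \emph{simple}. Write $\phi = (1+T_\epsilon)\psi$, $\lambda = \theta + \epsilon\theta^*$ and $\lambda' = \theta' + \epsilon{\theta'}^*$. For part~(1) I would take $j = \svec{1\\0}$ and $j' = \svec{b\\-\lambda'}$, record $j^*\psi j = \theta$ and $j'^*\psi j' = \theta'$ (the latter using $a^*\lambda' = \lambda b$ and $ab + (s^*+\epsilon s)\lambda' = 1$, with the signs induced by the substitution $\theta'\mapsto-\theta'$ of Definition~\ref{uniondef}), so that $j$ and $j'$ are morphisms of the stated forms into $(M,\psi)$; the same relations give $j'^*\phi j = 0$, show that $j'^*\phi\co M \to {V'}^*$ is the projection complementary to the summand $V \subset M = V \oplus {V'}^*$, and produce an explicit splitting of it built from $a,b,s,\lambda'$. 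This gives the split short exact sequence, and dually the splitness of $j'$.

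For part~(2), once $(M,\psi)$ is nonsingular (by part~(1) or \cite{Ran81}), what remains is that $\phi\co M \to M^*$, whose matrix is $\svec{\lambda & a^*\\ \epsilon a & -(s+\epsilon s^*)}$, is a \emph{simple} isomorphism, \ie $\tau(\phi) = 0 \in \Wh(\Lambda)$. I would compute $\tau(\phi)$ from this matrix together with the relations~(\ref{isonot1eq}) and the fact that the short exact sequence of part~(1) is a \emph{based} exact sequence --- which is precisely where simplicity of $\alpha$ and $\beta$ enters, the $s$-bases transported into $M$ through $j$ and a splitting of $j'^*\phi$ differing from the product $s$-basis of $M$ by a simple automorphism --- and by additivity of torsion read off $\tau(\phi)$ as a sum of torsions of simple maps, hence zero. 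I expect this simplicity check to be the main obstacle, since it is the only point at which genuine Whitehead-torsion bookkeeping, as opposed to formal computation in $Q_\epsilon$, is required.

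For part~(3), since the union depends only on the corner data $(a,s)$ of $f$, I would first pad $f$ and $\tilde f$ by trivial formations until they are related by an honest homotopy, so that Lemma~\ref{sthomlem}(2) yields $\Delta_1 \in \Hom_\Lambda(V^*, V')$ with $\tilde a - a = \Delta_1\lambda^*$ and $(-\epsilon\tilde a + \Delta_1\theta)\Delta_1^* = \tilde s - s \in Q_\epsilon({V'}^*)$. The claim is then that the unipotent --- hence simple --- map $g = \svec{1 & -\Delta_1^*\\ 0 & 1}$ on $V \oplus {V'}^*$ is an isometry between the two unions: in $g^*\tilde\psi g$ the diagonal blocks reproduce $\theta$ and $-s$, the off-diagonal ``linking'' invariant reduces to $\tilde a - a = \Delta_1\lambda^*$ (using $\lambda^* = \epsilon\lambda$), and the lower-right block reduces, in $Q_\epsilon({V'}^*)$, to $(-\epsilon\tilde a + \Delta_1\theta)\Delta_1^* = \tilde s - s$. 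Since $g$ is the identity on $V = \svec{1\\0}V$, this is an isometry relative to $(V,\theta)$; note that only the $a$- and $s$-relations of Lemma~\ref{sthomlem}(2) are needed, as $b$ does not occur in the union.

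For part~(4), I would track the data of $\tilde f = (\del k' \oplus \id) \circ f \circ (\del k^{-1} \oplus \id)$ through the composition rule of Definition~\ref{formationdef} (using $\del k = (k,k,0)$ and $\del k' = (k',k',0)$) to see that its corners are $\tilde a = k' a k^{-1}$ and $\tilde s = k' s {k'}^*$, and then a one-line computation using that $k$ and $k'$ are isometries gives $\svec{k & 0\\ 0 & {k'}^{-*}}^* \svec{\sigma & 0\\ \epsilon\tilde a & -\tilde s} \svec{k & 0\\ 0 & {k'}^{-*}} = \svec{\theta & 0\\ \epsilon a & -s}$; this isomorphism is simple because $k$ and $k'$ are simple isometries, so it is the required isometry $(V,\theta)\cup_f(V',\theta') \cong (W,\sigma)\cup_{\tilde f}(W',\sigma')$.
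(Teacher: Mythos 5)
Parts (i), (iii) and (iv) of your proposal are correct and essentially coincide with the paper's treatment: (i) is the one-line observation that ${j'}^*\phi=\mat{0 & 1}$ from the relations of Lemma \ref{sthomlem}; your map $\svec{1&-\Delta_1^*\\0&1}$ in (iii) is exactly the isometry the paper writes down, and your verification using $\widetilde a - a=\Delta_1\lambda^*$ and $(-\epsilon\widetilde a+\Delta_1\theta)\Delta_1^*=\widetilde s-s$ is sound; (iv) is left unproved in the paper, and your computation of the corners $\widetilde a=k'ak^{-1}$, $\widetilde s=k's{k'}^*$ via the composition rule does the job.

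The genuine gap is in (ii), which is the substantive claim of the lemma. Your proposed mechanism --- that simplicity of $\alpha$ and $\beta$ enters through the exact sequence of part (i) being based, after which $\tau(\phi)$ is ``read off by additivity of torsion as a sum of torsions of simple maps'' --- does not work. The sequence $0\ra V\ra M\ra {V'}^*\ra 0$ is the canonical split sequence of the direct sum $M=V\oplus{V'}^*$ with its product $s$-basis; it is based for trivial reasons, independently of whether $\alpha$ and $\beta$ are simple, and it carries no information about $\tau(\phi)$. Indeed, if the sketched argument were valid it would prove simplicity of the union along an arbitrary (non-simple) stable isomorphism of boundaries, which is false: for instance $\left(L\oplus L^*,\svec{0&g\\0&0}\right)$ with $g$ of nonzero anti-self-dual torsion arises as such a union and is nonsingular but not simple. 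Moreover no factorisation of $\phi=\svec{\lambda & a^*\\ \epsilon a & -(s+\epsilon s^*)}$ into simple maps is exhibited, and since $\lambda$, $\lambda'$ may be singular none is available by naive row reduction. The paper's proof is a genuinely different device: it compares the based one-dimensional complexes $\lambda^*\oplus 1$ and ${\lambda'}^*\oplus 1$ with two complexes built from the union, via chain maps $h$, $g$, $f$ with $\tau(h)=\tau(\alpha)-\tau(\beta^{-*})=0$ (this is precisely where simplicity of $\alpha,\beta$ is used), $g$ a simple equivalence and $\tau(f)=-\tau(\phi)$, and then exhibits an explicit chain homotopy identifying $h\circ g\circ f$ with a map of zero torsion, whence $\tau(\phi)=0$. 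Some torsion identity of this kind relating $\tau(\phi)$ to $\tau(\alpha)$, $\tau(\beta)$ and homotopy-trivial pieces is what is needed, and it is missing from your proposal. A minor further point: nonsingularity of $\phi$ does not follow from part (i) alone; the paper checks it by an explicit computation with the blocks of $\alpha^{-1}$ (your fallback citation of \cite{Ran81} covers this, but part (i) by itself does not).
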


\begin{proof}
\begin{enumerate}
\item
By Lemma \ref{sthomlem}, ${j'}^*\phi = \mat{0&1}$.

\item
Write $\alpha^{-1} = \svec{u & v\\x & y}\co V'\oplus P' \isora V\oplus P$.
Then, using Lemma \ref{sthomlem}, one computes
\[
\phi \circ \svec{b_1 v^* & \epsilon b\\ u^* & -\epsilon\lambda'} = \svec{ 1 & 0 \\ \epsilon (a b_1 v^* - t u^*) & 1}
\]
which shows that $\phi\co M\ras M^*$ is an isomorphism.
In order to show that it is simple we consider three chain maps $h\co C\ras C'$, $g\co D\ras C$
and $f\co E\ras D$ of based chain complexes given by
\begin{eqnarray*}
\xymatrix@C+60pt
{
C'_1=V'\oplus P'
\ar[r]^{d_{C'}=\svec{{\lambda'}^* & 0 \\ 0 & 1}}
&
C'_0={V'}^*\oplus P'
\\
C_1 =V\oplus P
\ar[r]^{d_C=\svec{{\lambda^*}  & 0 \\ 0 & 1}}
\ar[u]^{h_1=\alpha}
&
C_0=V^*\oplus P
\ar[u]_{h_0=\beta^{-*}}
\\
D_1=V\oplus V'
\ar[r]^{d_D=\svec{\lambda^* & 0 \\ a & 1}}
\ar[u]^{g_1=\svec{1 & 0 \\ 0 & 0}}
&
D_0=M^*=V^*\oplus V'
\ar[u]_{g_0=\svec{1 & 0 \\ 0 & 0}}
\\
E_1=V\oplus V'
\ar[r]^{d_E=\svec{1 & b \\ 0 & -\lambda'}}
\ar[u]^{f_1=\epsilon\cdot\id}
&
E_0=M=V\oplus{V'}^*
\ar[u]_{f_0=\phi}
}
\end{eqnarray*}
Obviously $h$ and $f$ are chain isomorphisms with torsions $\tau(h)=\tau(\alpha)-\tau(\beta^{-*})=0$
and $\tau(f)=-\tau(\phi)$ and $g$ is a simple equivalence.

There is a chain homotopy $\Delta \co h\circ g\circ f \simeq k$ given by
\begin{eqnarray*}
k_0 &=& \svec{0 & 1 \\ 0 & 0}\co E_0=V\oplus{V'}^* \ras C'_0={V'}^*\oplus P'
\\
k_1 &=& \svec{0 & -\epsilon \\ 0 & 0}\co                E_1=V\oplus V'          \ras C'_1=V'\oplus P'
\\
\Delta &=& \svec{\epsilon a & -t^* \\ b_1^*\lambda & b_1^*a^*}\co
E_0=V\oplus{V'}^* \ras C'_1=V'\oplus P'.
\end{eqnarray*}
This shows that $\tau(\phi)=-\tau(h\circ g\circ f)=-\tau(k)=0$.

\item
Given $\Delta=\svec{\Delta_1&\Delta_2\\\Delta_3& \Delta_4}\co V^*\oplus P^* \ras V'\oplus {P'}^*$
defining a homotopy
\[
\Delta\co f\simeq \widetilde{f}\co \del(V,\theta)\oplus (P,P^*)\isora
\del(V',-\theta')\oplus (P',{P'}^*)
\]
there is an isometry
\[
\svec{1&-\Delta_1^*\\0&1}\co (V,\theta)\cup_f(V',-\theta') \isora (V,\theta)\cup_{\widetilde{f}}(V',-\theta').\qedhere
\]
\end{enumerate}
\end{proof}

\begin{prop}
\label{bdryextprp}
Let $(V,\theta)$ and $(V',\theta')$ be $\epsilon$-quadratic forms and
let $f\co\del (V,\theta) \oplus (P,P^*) \isora \del (V',\theta')
\oplus (P',{P'}^*)$ be an isomorphism. If $(M,\psi)=(V,\theta)\cup_f
(V',-\theta')$ and $(M',\psi')= H_\epsilon(V')$ then there is an isometry
\[h\co(V,\theta)\oplus(M',\psi')\isora (V', \theta')\oplus (M,\psi)\]
such that $\del h$ is stably homotopic to $f$.
\end{prop}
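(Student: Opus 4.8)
The plan is to construct the isometry $h$ explicitly from the data presenting $f$, to verify it is a simple isometry by the matrix-and-chain-complex method already used for Lemma~\ref{unionlem}(ii), and finally to read $\partial h$ off the same data and compare it with $f$ through Lemma~\ref{sthomlem}(2).

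First I would collect the relevant matrices. Writing $\lambda = \theta + \epsilon\theta^*$ and $\lambda' = \theta' + \epsilon{\theta'}^*$, Lemma~\ref{sthomlem}(1) records $f = (\alpha,\beta,\nu)$ by homomorphisms $a, a_1, a_3, b, b_1, s$ with $\alpha = \svec{a & a_1 \\ \epsilon b_1^*\lambda & a_3}$, $\beta^{-1} = \svec{b & b_1 \\ a_1^*\lambda' & a_3^*}$ and $\alpha\nu\alpha^* = \svec{s & -\epsilon ab_1 \\ 0 & -b_1^*\theta b_1}$, together with the ``gluing relations'' $1 = ab + (s^*+\epsilon s)\lambda'$, $a^*\lambda' = \lambda b$ and $\theta' = b^*\theta b + {\lambda'}^* s\lambda'$ (with the signs adjusted for the substitution $\theta'\rightsquigarrow-\theta'$ demanded by Definition~\ref{uniondef}). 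By Definition~\ref{uniondef}, $(M,\psi) = \left(V\oplus{V'}^*,\ \svec{\theta & 0 \\ \epsilon a & -s}\right)$, and by Lemma~\ref{unionlem}(i) the map $j = \svec{1\\0}\colon(V,\theta)\to(M,\psi)$ and the based split injection $j'$ of Lemma~\ref{unionlem}(i) satisfy $j(V) = j'(V')^\perp$, hence also $j(V)^\perp = j'(V')$ since $(M,\psi)$ is nonsingular.

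Next I would write $h\colon V\oplus V'\oplus{V'}^* \to V'\oplus V\oplus{V'}^*$ as a block matrix. The guiding picture is that $v = (V,\theta)$ sits inside both sides --- on the right as $0\oplus j$ into $(V',\theta')\oplus(M,\psi)$, on the left as the $v$-summand --- and that the hyperbolic piece $H_\epsilon(V')$ on the left must be matched with ``$v'$ glued onto the cokernel of $j$'' on the right. Concretely I would take the $V$-column of $h$ to be essentially $j$ corrected by an $a$-term into the $V'$-factor, the $V'$-column (the Lagrangian generator of $H_\epsilon(V')$) to land in $j'(V')\subset M$, and the ${V'}^*$-column to be the canonical inclusion ${V'}^*\hookrightarrow V\oplus{V'}^* = M$ together with a $b$-term into $V'$; the remaining entries are then forced by requiring that $h$ carry the form of $(V,\theta)\oplus H_\epsilon(V')$ to that of $(V',\theta')\oplus(M,\psi)$ in $Q_\epsilon$. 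Checking that this $h$ is genuinely an isometry is the heart of the argument: a finite but somewhat lengthy $Q_\epsilon$-computation in which each of the three gluing relations and the formula for $\alpha\nu\alpha^*$ is consumed. Simpleness of $h$ I would handle exactly as in the proof of Lemma~\ref{unionlem}(ii), factoring $h$ through based chain complexes built from $j$, $j'$ and the standard basis of $H_\epsilon(V')$ and exhibiting a chain homotopy that forces $\tau(h) = 0$; here one uses that $j,j'$ are based split injections and that $\tau(\alpha) = \tau(\beta^{-*})$ because $f$ is an isomorphism of split formations.

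Finally, $\partial h = (h,h,0)$ by Definition~\ref{formationdef}(5). As $H_\epsilon(V')$ and $(M,\psi)$ are simple (Lemma~\ref{unionlem}(ii)), Lemma~\ref{splitisolem}(1) gives canonical isomorphisms $\partial\big((V,\theta)\oplus H_\epsilon(V')\big)\cong\partial(V,\theta)\oplus(V',{V'}^*)$ and $\partial\big((V',\theta')\oplus(M,\psi)\big)\cong\partial(V',\theta')\oplus(\text{trivial})$; precomposing and postcomposing $\partial h$ with these presents it as a stable isomorphism $\partial(V,\theta)\cong\partial(V',\theta')$. Because $h$ was assembled out of $a$, $b$ and $s$, this stable isomorphism has the same $a$- and $b$-components as $f$ in the sense of Lemma~\ref{sthomlem}, so that lemma's part~(2) shows the two are stably homotopic --- any residual discrepancy in the $Q_\epsilon$-component being absorbed into the homotopy $\Delta_1$ --- and tracking this through the definition of stable homotopy (Definition~\ref{formationdef}(9)--(10)) completes the proof. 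The main obstacle is pinning down the precise entries and signs of $h$ and then pushing the isometry verification through: that is where all the real content of the gluing relations is spent, whereas simpleness and the homotopy comparison just follow the templates of Lemmas~\ref{unionlem} and~\ref{sthomlem}.
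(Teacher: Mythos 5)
Your plan follows the paper's proof essentially verbatim in outline: the paper also builds $h$ explicitly out of the data $a$, $b$, $s$ presenting $f$ in Lemma \ref{sthomlem}, namely $h=-\svec{0&1&0\\1&b&0\\0&-\lambda'&1}\svec{1&0&0\\-a&1&-s^*\\0&0&1}$, and then compares $\del h$ with $f$ exactly as you propose, through the identifications $g\co\del(M,\psi)\cong(M,M^*)$, $g'\co\del(M',\psi')\cong(M',{M'}^*)$ of Lemma \ref{splitisolem}(i) and the homotopy criterion of Lemma \ref{sthomlem}(2). Two caveats on the parts you defer. First, the paper's factorization of $h$ into a unitriangular matrix and a permutation-type matrix makes simpleness immediate, so the chain-complex torsion argument you plan to imitate from Lemma \ref{unionlem}(ii) is unnecessary once the matrix is written in that form. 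Second, your remark that any ``residual discrepancy in the $Q_\epsilon$-component'' can be ``absorbed into the homotopy $\Delta_1$'' is too quick: if the $a$- and $b$-components already agree and one takes $\Delta_1=0$, the third condition of Lemma \ref{sthomlem}(2) forces the $s$-components to agree exactly, so the isometry (together with the $\nu$-terms contributed by $g$ and $g'$) must be arranged so that the quadratic data genuinely matches --- this is part of the computation you postpone, not something the homotopy relation forgives automatically. Finally, no sign adjustment of the relations of Lemma \ref{sthomlem} is needed: since $f$ has target $\del(V',\theta')$ and the union is formed with $(V',-\theta')$, the hypotheses of Definition \ref{uniondef} are met as stated.
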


\begin{proof}
Using the notation of Lemma \ref{sthomlem} we define the isometry
\[
h=-\svec{0&1&0\\1&b&0\\0&-\lambda'&1}\svec{1&0&0\\-a&1&-s^*\\0&0&1}\co
(V,\theta)\oplus (M',\psi')\isora (V',\theta')\oplus (M,\psi)
\]
The isomorphism $h$ is simple because it is the composition of
triangular matrices with only ones in the diagonal and permutation
matrices. By Lemma \ref{splitisolem} (i) there are natural
isomorphisms $g\co\del(M,\psi) \isora (M,{M}^*)$ and $g'\co
\del(M',\psi') \isora (M',{M'}^*)$. Using Lemma \ref{sthomlem} one
proves that $(\del\id_{V'}\oplus g) \circ \del h \circ
(\del\id_{V}\oplus {g'}^{-1})$ is stably homotopic to f.
\end{proof}

The facts contained in the following proposition are mentioned without proof in \cite{Ran81}[p.86] but since an explicit description of the maps occurring in the proposition plays a crucial role in our results we give a detailed proof.

\begin{prop}
\label{orthoglueprp}
Let $(M,\psi)$ be a simple $\epsilon$-quadratic form with $\phi=(1+T_\epsilon)\psi$.
Let $j\co(V,\theta)\hookrightarrow (M,\psi)$ be a split inclusion of $\epsilon$-quadratic forms.
Let $(V^\perp,\theta^\perp)$ be the induced quadratic form.
Then there is a stable isomorphism $f_j$ between the boundaries of $(V,\theta)$ and $(V^\perp,-\theta^\perp)$
and an isometry
\[
r_j\co (M,\psi) \isora (V,\theta) \cup_{f_j} (V^\perp,\theta^\perp).
\]
Moreover, the isomorphism $f_j$ is well-defined up to homotopy and
$f_j$ and $r_j$ are natural with respect to isometries of such pairs of forms.
\end{prop}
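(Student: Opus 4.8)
The plan is to construct $f_j$ and $r_j$ explicitly from the split inclusion $j$, using a Hamiltonian-style decomposition of $(M,\psi)$ adapted to the pair $(V, V^\perp)$, and then to verify the gluing formula of Definition \ref{uniondef} by direct comparison of matrices. First I would choose a based splitting $M \cong V \oplus V^\perp \oplus C$ for a suitable complement $C$; since $j$ is a split inclusion and $V^\perp = \ker(j^*\phi)$, the pairing $\phi$ identifies $M/(V + V^\perp)$ with a based module and its dual, so after stabilising by a trivial formation $(P,P^*)$ one arranges $M \oplus H_\epsilon(P) \cong V \oplus V^\perp \oplus H_\epsilon(P')$ as based modules, with the inclusions of $V$ and $V^\perp$ becoming the evident ones. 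This is precisely the setup of Lemma \ref{sthomlem}(i): reading off the components of the resulting isometry gives homomorphisms $a, a_1, a_3, b, b_1, s$ satisfying the six identities there, and hence a candidate stable isomorphism $f_j = (\alpha, \beta, \nu) \co \del(V,\theta) \oplus (P,P^*) \isora \del(V^\perp, -\theta^\perp) \oplus (P',{P'}^*)$.

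Next I would show that $(V,\theta) \cup_{f_j} (V^\perp, \theta^\perp)$, which by Definition \ref{uniondef} is $\left(V \oplus (V^\perp)^*, \svec{\theta & 0 \\ \epsilon a & -s}\right)$, is isometric to $(M,\psi)$ via an isometry $r_j$ built from the same data $a, s$ and the splitting chosen above; the entries $\theta$, $\epsilon a$, $-s$ should match the restriction of $\psi$ to $V$, the cross term between $V$ and the complement, and the self-pairing of the complement respectively, once the complement is identified with $(V^\perp)^*$ through $\phi$. Here I can lean on Proposition \ref{bdryextprp} and Lemma \ref{unionlem}: Lemma \ref{unionlem}(ii) already tells us the union is simple, and the exact sequence of Lemma \ref{unionlem}(i) matches the short exact sequence $0 \to V \to M \to (V^\perp)^* \to 0$ coming from $j$ and $j^*\phi$, which pins down $r_j$ up to the ambiguity we are about to address. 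Naturality with respect to isometries $k \co (V,\theta) \cong (W,\sigma)$ and $k^\perp \co (V^\perp, \theta^\perp) \cong (W^\perp, \sigma^\perp)$ induced by an isometry of the ambient forms then follows from Lemma \ref{unionlem}(iv) by conjugating $f_j$ and $r_j$ by $\del k$, $\del k^\perp$.

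The main obstacle is well-definedness up to homotopy: the splitting $M \cong V \oplus V^\perp \oplus C$ and the stabilisation $(P,P^*)$ are not canonical, so I must show that two choices yield stably homotopic isomorphisms. The key point is that any two based complements to $V + V^\perp$, and any two resolutions of the base-change defect into a trivial formation, differ by an element of $TU$ together with a change of basis whose torsion is controlled, and the difference of the corresponding $(\alpha,\beta,\nu)$ is exactly of the form permitted by the homotopy criterion of Lemma \ref{sthomlem}(ii): one produces the required $\Delta_1 \in \Hom_\Lambda(V^*, V^\perp)$ from the transition matrix between the two complements. Concretely, a change of complement alters $a$ by a term $\Delta_1 \lambda^*$ and $s$ by $(-\epsilon a + \Delta_1\theta)\Delta_1^*$ while leaving $\theta$, $\theta^\perp$ fixed, which is precisely the homotopy relation; Lemma \ref{unionlem}(iii) then guarantees the two unions are isometric relative to $(V,\theta)$, so the pair $(f_j, r_j)$ is well-defined. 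I would organise the verification of Lemma \ref{sthomlem}(ii)'s hypotheses as the technical heart of the proof and defer the routine matrix identities of the first two paragraphs to brief computations referencing \cite{Ran81}.
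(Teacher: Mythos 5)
Your opening move --- arranging, even after stabilisation by trivial formations, a based splitting $M\oplus H_\epsilon(P)\cong V\oplus V^\perp\oplus H_\epsilon(P')$ in which the inclusions of $V$ and $V^\perp$ become the evident ones --- fails in general, and everything downstream (reading off $(\alpha,\beta,\nu)$, matching the union matrix $\svec{\theta&0\\\epsilon a&-s}$ against $\psi$, and the well-definedness argument phrased as a change of complement $C$) is built on it. The whole point of quasi-formations is that $(V,\theta)$ may be degenerate: by definition $V\cap V^\perp=\Rad(V,\theta)$, so $V$ and $V^\perp$ need not even be independent submodules of $M$. If $V\subset H_\epsilon(\Lambda)$ is a Lagrangian then $V^\perp=V$, and no stabilisation can make the two inclusions land in complementary summands, since they have the same image. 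Even when $\Rad(V,\theta)=0$ the internal sum $V+V^\perp$ need not be a direct summand: for $\Lambda=\bZ$, $V=\langle e+f\rangle\subset H_+(\bZ)$ has $V^\perp=\langle e-f\rangle$ and $V+V^\perp$ has index $2$, a defect not cured by adding hyperbolics. So the step is not a technicality to be deferred; it is false, and the construction has to be organised differently.

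The paper's proof never splits $M$ along $V$ and $V^\perp$. It uses only the based exact sequence $0\to V\dra{j}M\to (V^\perp)^*\to 0$, chooses a section $\sigma$ of $(j^\perp)^*\phi$, and writes down an explicit simple isometry $h\co(V,\theta)\oplus H_\epsilon(V^\perp)\isora(V^\perp,-\theta^\perp)\oplus(M,\psi)$; then $f_j$ is $\del h$ composed with the canonical trivialisations $\del H_\epsilon(V^\perp)\cong(V^\perp,(V^\perp)^*)$ and $\del(M,\psi)\cong(M,M^*)$ of Lemma \ref{splitisolem}, and $r_j=\mat{j & -\epsilon\sigma}$ gives the isometry with the union. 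The genuine choices are the section $\sigma$ (two sections differ by $jl$) and the representative of $\psi\in Q_\epsilon(M)$, and the paper exhibits an explicit homotopy between the resulting stable isomorphisms; naturality is proved by transporting the section, $\bar\sigma=g\sigma g^*$, not via Lemma \ref{unionlem} (iv). Note also that Proposition \ref{bdryextprp} cannot help you construct $f_j$: it takes a boundary isomorphism as input and produces an isometry realising it, i.e.\ it goes in the opposite direction to what is needed here.
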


\begin{proof}
Let $\lambda=\theta+\epsilon\theta^*$ and $\lambda^\perp=\theta^\perp+
\epsilon(\theta^\perp)^*$.
The short exact sequence
\[
0\lra V \stackrel{j}{\lra} M \stackrel{(j^\perp)^*\phi}{\lra} (V^\perp)^* \lra 0
\]
is based. Let $\sigma\in\Hom_\Lambda((V^\perp)^*, M)$
be any section so that $(j^\perp)^*\phi \sigma=\id_{(V^\perp)^*}$.
The isomorphism
\[
h = \svec{1&0&0\\j^\perp &\sigma&j}
\svec{-\sigma^*\phi^* j & 1 & -\sigma^*\psi^* \sigma \\ 0&0&1 \\ 1&0&0}
\co
V\oplus V^\perp\oplus (V^\perp)^* \isora V^\perp\oplus M
\]
is obviously simple and even an isometry $h\co(V,\theta)\oplus (M',\psi') \isora (V^
\perp,-\theta^\perp)\oplus (M,\psi)$
with $(M',\psi')=H_\epsilon(V^\perp)$. We write $\phi'=\psi'+\epsilon{\psi'}^*$.
Using Lemma \ref{splitisolem} we obtain an isomorphism
\begin{eqnarray*}
f_j&=&
\left((1,1,0)\oplus(1,\phi, -\phi^{-*}\psi\phi^{-1})\right)
\circ\del h\circ
\left((1,1,0)\oplus(1, {\phi'}^{-1}, {\phi'}^{-*}\psi'{\phi'}^{-1})
\right)
\\
&&\co \del(V,\theta)\oplus(M',{M'}^*)\isora \del (V^\perp,-\theta^\perp)\oplus (M,M^*)
\end{eqnarray*}
Then $r_j :=\mat{j & -\epsilon\sigma} \co (V,\theta)\cup_f (V^\perp,\theta^\perp) \isora(M,\psi)$ is an isometry.

Now we analyze the effect of different choices of $\sigma$ and $\psi$ on
$f_j$.
Let $\widetilde{\sigma}$ be another section and $\widetilde{\psi}$ another
representative of
$[\psi]\in Q_\epsilon(M)$. There are homomorphisms $l\in\Hom_\Lambda((V^\perp)^*,
V)$ and $\kappa\in\Hom_\Lambda(M,M^*)$
such that $\widetilde{\sigma} - \sigma= j l$ and $\widetilde{\psi}-\psi=\kappa-\epsilon
\kappa^*$.
We construct an isometry $\widetilde{h}$ and a stable isomorphism $\widetilde{f_j}$
using $\widetilde{\sigma}$ and $\widetilde{\psi}$ as before.
Then there is a homotopy
\begin{eqnarray*}
\Delta &=& \svec
{
-l^*        & \epsilon x            & 0\\
-j^\perp l^*    & \epsilon (j^\perp x+ jl)  & 0
}\co
\del h \simeq \del \widetilde{h}
\\&&
\co  \del\left((V,\theta)
\oplus (M',\psi')\right)\isora
\del\left( (V^\perp,-\theta^\perp)\oplus (M,\psi)\right)
\end{eqnarray*}
where $x=-\widetilde{\sigma}^*\widetilde{\psi}^*\widetilde{\sigma}+\sigma^*\psi^* \sigma$.
It follows that $\widetilde{f_j}\simeq f_j$.

At last, we discuss naturality. Let $g\co(M,\psi) \isora (\bar M, \bar \psi)$ be an isometry and let
$\bar V=g(j(V))$.
Let $\bar\theta$ be the induced quadratic form on $\bar V$, \etc
Choose the section $\bar \sigma= g \sigma g^*$. Construct $\bar h$, $\bar
f$, \etc as before.
Then
\[
(g\oplus g)\circ h \circ (g^{-1}\oplus g^{-1}\oplus g^*)=\bar h.
\]
It is easy to see that
\[
(1,\bar\phi,-\bar\phi^{-*}\bar\psi\bar\phi^{-1})=
(g,g^{-1*},0)\circ (1,\phi, -\phi^{-*}\psi\phi^{-1}) \circ (g^{-1}, g^
{-1},0).
\]
Putting these facts together we have
\begin{eqnarray}
\label{transformfeqn}
\bar f_j&=& (\del g\oplus (g, g^{-1*}, 0))\circ f \circ (\del g^{-1}\oplus (g^{-1}, g^{*},0))\co
\\\nonumber
&&\del(\bar V,\bar \theta)\oplus(\bar M', \bar {M'}^*)
\isora
\del(\bar V^\perp, -\bar \theta^\perp)\oplus (\bar M, \bar M^*). 
\end{eqnarray}
\end{proof}

\begin{ex}
\label{formationex}
Let $z=(M,\psi;K,L)$ be a possibly non-simple $\epsilon$-quadratic formation. We would like to compute $f_j$ associated to the inclusion of the lagrangian $j\co L\hookrightarrow (M,\psi)$.  We can assume that there is a possibly non-simple isomorphism
$g\co L^* \isora L^*$ such that $[g]=-[g^*] \in\Wh(\Lambda)$ and $(M,\psi)= \left(L\oplus L^*, \svec{0&g\\0&0}\right)$.
Therefore $j=\svec{1\\0}$, $j^\perp = \svec{g^{-*}\\0}$, $\sigma=\svec{0\\1}$ and
$h=\svec{-g^* & 1 & 0 \\ 0 & g^{-*} & 0 \\ 0 & 0 & 1}$. We compute
\begin{eqnarray*}
f_j &=&
\left(  \svec{1 & 0 & 0 \\ 0 & 1 & 0 \\ 0 & 0 & 1},
                \svec{1 & 0 & 0 \\ 0 & 0 & g \\ 0 & \epsilon g^* & 0 },
                \svec{0 & 0 & 0 \\ 0 & 0 & 0 \\ -\epsilon g^{-1} & 0 & 0}
                \right)
                \circ (h,h,0) \circ
\left(  \svec{1 & 0 & 0 \\ 0 & 1 & 0 \\ 0 & 0 & 1},
                \svec{1 & 0 & 0 \\ 0 & 0 & \epsilon \\ 0 & 1 & 0 },
                \svec{0 & 0 & 0 \\ 0 & 0 & 0 \\ \epsilon & 0 & 0}
                \right)
\\&=&
\left(  \svec{-g^* & 1 & 0 \\ 0 & g^{-*} & 0 \\ 0 & 0 & 1},
                \svec{-g^* & 1 & 0 \\ 0 & 0      & g \\ 0 & \epsilon & 0},
                \svec{0    & 0 & 0 \\ 0 & 0      & 0 \\ \epsilon g^{-2} & 0 & 0}
                \right)
\circ
\left(  \svec{1 & 0 & 0 \\ 0 & 1 & 0 \\ 0 & 0 & 1},
                \svec{1 & 0 & 0 \\ 0 & 0 & \epsilon \\ 0 & 1 & 0 },
                \svec{0 & 0 & 0 \\ 0 & 0 & 0 \\ -\epsilon & 0 & 0}
                \right)
\\&=&
\left(  \svec{-g^* & 1 & 0 \\ 0 & g^{-*} & 0 \\ 0 & 0 & 1},
                \svec{-g^* & 0 & \epsilon \\ 0 & g & 0 \\ 0 & 0 & 1},
                \svec{0 & 0 & 0 \\ 0 & 0 & 0 \\ \epsilon(g^{-2}-1) & 0 & 0}
                \right)
\end{eqnarray*}
By Lemma \ref{sthomlem} $f_j$ is stably homotopic to the stable isomorphism
\begin{eqnarray*}
\left( -g^* \oplus g^{-*}, -g^* \oplus g^{-*}, 0 \right) \co
\del (L, 0)\oplus (L, L^*) \isora \del (L, 0)\oplus (L, L^*).
\end{eqnarray*}
In particular, if $z$ is simple, we can choose $g=\id_{V^*}$ and therefore $f_j$
is stably homotopic to the boundary of the identity $\id_V\in\Aut(V,0)$.
\end{ex}

\begin{ex}
\label{elemex}
Let $\delta(K,\rho) = (H_\epsilon(K);K, \svec{1\\ \rho}K)$ be the boundary of an asymmetric form
$(K,\rho)$.
Using the notation of the proof of Proposition \ref{orthoglueprp} with $(M, \psi) = (M', \psi') = H_\epsilon(K)$ we have
\begin{eqnarray*}
j&=&\svec{1\\\rho}\co K\ra M=K\oplus K^*,\\
j^\perp&=&\svec{1\\-\epsilon \rho^*} \co K \ra M=K\oplus K^*,\\
\sigma &=&\svec{0\\1}\co K^*\ra K\oplus K^*.
\end{eqnarray*}
The isometry $h$ from Proposition \ref{orthoglueprp} is
\[
h=\svec{
-1&1&0\\
0&1&0\\
(\rho+\epsilon \rho^*)&-\epsilon \rho^*&1
}\co
(K, \theta)\oplus H_\epsilon(K) \isora (K,\theta)\oplus H_\epsilon(K)
\]
where $(K, \theta)$ is the $\epsilon$-quadratic form with $\theta=[\rho]\in Q_\epsilon(K)$.
There is a homotopy
\begin{eqnarray*}
\Delta&\co&\del h \simeq \del(-\id_K \oplus \id_{H_\epsilon(K)})\co
\del((K,\theta)\oplus H_\epsilon(K)) \cong
\del((K,\theta)\oplus H_\epsilon(K)),\\
\Delta&=&\svec{0&0&-1\\0&0&0\\-\epsilon&0&\epsilon c^*}\co K^*
\oplus K^*\oplus K\ras K\oplus K\oplus K^*
\end{eqnarray*}
and therefore the stable isomorphism $f_j$ is homotopic to
$\del(-\id_K)$.
\end{ex}

\section{The structure of $\ol{\Lambda}$}
\label{structuresec}
In this section we prove our main theorem about the structure of $\ols{\Lambda}$.

\subsection{The map $b\co \ols{\Lambda} \ra \mathcal{F}_{2q}^{\rm zs}(\Lambda)\times \mathcal{F}_{2q}^{\rm zs}(\Lambda)$}

Let $x=(M,\psi;F,V)$ be an $\epsilon$-quadratic quasi-formation and let $j\co V\hookrightarrow M$
and $j^\perp\co V^\perp \hookrightarrow M$ be the inclusions of $V$ and
its annihilator where $V^{\perp}$ is $s$-based as in Remark \ref{eqrem} (ii).  We shall always write $(V,\theta)$ and $(V^\perp, -\theta^\perp)$ for the induced $\epsilon$-quadratic forms $\theta=j^*\psi j$ and $\theta^\perp= (j^\perp)^*\psi(j^\perp)$.  We call $(V,\theta)$ and $(V^\perp, -\theta^\perp)$ the {\bf boundaries of $x$} and record their basic properties in the following


\begin{prop}
\label{prepprp}
Let $x=(M,\psi;F,V)$ and $x'=(M',\psi';F',V')$ be \eqs\ with boundaries $(V,\theta)$,
$(V^\perp, -\theta^\perp)$, $(V',\theta')$ and $({V'}^\perp, -{\theta'}^\perp)$ respectively. Then
\begin{enumerate}

\item
\label{prepprphyperit}
there is an isometry
$(V,\theta)\oplus H_\epsilon(F) \cong (V^\perp,-\theta^\perp)\oplus H_\epsilon(F)$,

\item
if $[x]=[x']\in \ols{\Lambda}$ then
$(V,\theta)\oplus(P,0)\cong (V',\theta')\oplus (P',0)$
and $(V^\perp,\theta^{\perp})\oplus(P,0)\cong ({V'}^\perp,{\theta'}^\perp) \oplus (P',0)$
for some based modules $P$, $P'$,

\item
\label{prepprpelemit}
if $x$ is elementary then $(V,\theta)\cong(V^\perp,-\theta^\perp)$,

\item
$\Rad(V,\theta)=\Rad(V^\perp,-\theta^\perp)$ and $\rk(V)=\rk(V^\perp)= \frac{1}{2}\rk(M)$.
\end{enumerate}
\end{prop}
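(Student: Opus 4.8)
The plan is to settle the rank count first, then derive (i) from the glueing results of Section \ref{gluesec}, and finally obtain (ii)--(iv) by comparatively mechanical bookkeeping.

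For the ranks: since $F$ is a simple Lagrangian of the simple form $(M,\psi)$ it admits a Hamiltonian $s$-basis, so $0\lra F\lra M\lra F^*\lra 0$ is based exact and $\rk(M)=2\rk(F)$; by definition $\rk(V)=\tfrac12\rk(M)$; and Remark \ref{eqrem}(ii) supplies a based exact sequence $0\lra V^\perp\lra M\stackrel{j^*\phi}{\lra}V^*\lra 0$ in which $j^*\phi$ is onto because $\phi=(1+T_\epsilon)\psi$ is an isomorphism and $V$ a direct summand, whence $\rk(V^\perp)=\rk(M)-\rk(V)=\tfrac12\rk(M)$. In particular $\rk(V^\perp)=\rk(F)$. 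For (i), note that $j\colon(V,\theta)\hookrightarrow(M,\psi)$ is a split inclusion of $\epsilon$-quadratic forms into a simple form, so Proposition \ref{orthoglueprp} provides a stable isomorphism $f_j\colon\del(V,\theta)\cong\del(V^\perp,-\theta^\perp)$ and an isometry $r_j\colon(M,\psi)\cong(V,\theta)\cup_{f_j}(V^\perp,\theta^\perp)$. Feeding $f_j$ into Proposition \ref{bdryextprp} with $(V^\perp,-\theta^\perp)$ in the role of $(V',\theta')$ gives an isometry $(V,\theta)\oplus H_\epsilon(V^\perp)\cong(V^\perp,-\theta^\perp)\oplus\bigl((V,\theta)\cup_{f_j}(V^\perp,\theta^\perp)\bigr)$; composing with $r_j$, then with the isometry $(M,\psi)\cong H_\epsilon(F)$ that comes from the simple Lagrangian $F$, and finally using $H_\epsilon(V^\perp)\cong H_\epsilon(F)$ (valid since $\rk(V^\perp)=\rk(F)$), yields the asserted isometry.

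For (ii) I would verify that each generating move of the equivalence relation defining $\ol{\Lambda}$ alters the ordered pair of boundary forms only by an isometry and/or by adding zero forms, and then compose. An isomorphism of quasi-formations restricts to a simple isometry of $(V,\theta)$ and, by the naturality clause of Proposition \ref{orthoglueprp}, of $(V^\perp,-\theta^\perp)$. Adding a trivial formation $(H_\epsilon(P);P,P^*)$ replaces $V$ by $V\oplus P^*$ and $V^\perp$ by $V^\perp\oplus P^*$; a one-line computation shows that the form induced on the summand $P^*\subset H_\epsilon(P)$, and on its annihilator in $H_\epsilon(P)$, vanishes, so both boundaries change by $\oplus\,(P^*,0)$. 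The shift relation (\ref{shifteqn}) inserts a simple Lagrangian $L$, whose contribution to each of the two boundaries is $(L,0)$ because $j_L^*\psi j_L=0$ and $L^\perp=L$. Chaining these moves and their inverses gives $(V,\theta)\oplus(P,0)\cong(V',\theta')\oplus(P',0)$, and similarly for the annihilators.

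For (iii), by the naturality of the boundary constructions it is enough to treat $x=\delta(K,\rho)=(H_\epsilon(K);K,\svec{1\\\rho}K)$ itself; as computed in Example \ref{elemex} one has $j^\perp=\svec{1\\-\epsilon\rho^*}$, so the two boundaries of $x$ are $(K,[\rho])$ and $(K,[\epsilon\rho^*])$, and these coincide since $[\rho]-[\epsilon\rho^*]=[(1-T_\epsilon)\rho]=0$ in $Q_\epsilon(K)=\coker(1-T_\epsilon)$. Finally, for the radical statement in (iv): from the definitions $v\in\Rad(V,\theta)$ iff $\phi(j(v))$ annihilates $V$, i.e. iff $j(v)\in V^\perp$, so $\Rad(V,\theta)=V\cap V^\perp$, and symmetrically $\Rad(V^\perp,-\theta^\perp)=V^\perp\cap(V^\perp)^\perp$; as $(M,\psi)$ is nonsingular one has $V\subseteq(V^\perp)^\perp$, with $V$ and $(V^\perp)^\perp$ based direct summands of $M$ of the same rank, and weak finiteness of $\Lambda$ then forces $(V^\perp)^\perp=V$, so the two radicals agree. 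The one genuinely substantive step is (i) --- combining Propositions \ref{orthoglueprp} and \ref{bdryextprp} in the right order, controlling the signs $\theta^\perp$ versus $-\theta^\perp$ and the $s$-basis identifications, and matching $H_\epsilon(V^\perp)$ with $H_\epsilon(F)$; the rest is routine verification.
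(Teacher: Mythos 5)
Your proof follows the paper's own argument step for step: (i) is obtained, exactly as in the paper, by combining Propositions \ref{orthoglueprp} and \ref{bdryextprp} (together with the standard isometry $(M,\psi)\cong H_\epsilon(F)$ coming from the simple Lagrangian $F$); (ii) by checking that each generating move of the equivalence relation -- isomorphism, addition of trivial formations, and the shift relation (\ref{shifteqn}) -- changes the pair of boundary forms only by isometries and zero-form summands, which is precisely the paper's remark that isomorphisms induce isometries of boundaries and that boundaries of formations are zero forms; (iii) by reducing to a boundary $\delta(K,\rho)$ and computing the two induced forms, as in Example \ref{elemex}; and (iv) from the definitions together with the based exact sequence of Remark \ref{eqrem}(ii). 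Your treatment of (iv) is in fact a little more careful than the paper's, since you actually justify $(V^\perp)^\perp=V$ (via weak finiteness) rather than asserting $\Rad(V^\perp,-\theta^\perp)=V\cap V^\perp$ outright.

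The one step to flag is the last identification in (i): you pass from $(V,\theta)\oplus H_\epsilon(V^\perp)\cong(V^\perp,-\theta^\perp)\oplus H_\epsilon(F)$ to the stated isometry by claiming $H_\epsilon(V^\perp)\cong H_\epsilon(F)$ ``since $\rk(V^\perp)=\rk(F)$''. In the paper's generality the underlying modules are only stably finitely generated free, and over a general weakly finite ring two stably free modules of the same rank need not be isomorphic, so equality of ranks alone does not yield that isometry of hyperbolic forms. This is not a defect of your overall strategy -- the paper's own proof of (i) consists of the same two-proposition citation and elides the same point -- and it is harmless for every later use of (i) (e.g.\ Corollary \ref{bcor}(i)), where only the statement after further stabilisation by $H_\epsilon(\Lambda^m)$ is needed; that stabilised statement does follow from your argument, since $V^\perp\oplus\Lambda^a$ and $F\oplus\Lambda^b$ are free of equal rank for suitable $a,b$. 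But as written, the sentence deducing $H_\epsilon(V^\perp)\cong H_\epsilon(F)$ from rank equality is not a valid inference; either stabilise by a further free hyperbolic, or restrict to the case where the modules involved are genuinely free.
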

\begin{proof}
\begin{enumerate}
\item Follows from Propositions \ref{orthoglueprp} and
\ref{bdryextprp}.

\item
This statement follows from the fact that an isomorphism of quasi-formations induces
 isometries of its boundary forms and that the boundaries of \eqfs\ are zero forms.

\item
By definition $x$ is isometric to the boundary of an asymmetric form $(K,\rho)$. Therefore
$(V,\theta)\cong (K,[\rho])$ and $(V^\perp,\theta^\perp)\cong (K,[-\rho])$.

\item
By definition $\Rad(V, \theta) = V \cap V^{\perp}= \Rad(V^{\perp}, -\theta^{\perp})$.
The second equality follows from the decomposition $M \cong V \oplus V^{\perp}$ in Proposition 3.9.
\qedhere
\end{enumerate}
\end{proof}

\begin{defi} \label{bdefi}
An immediate consequence of (ii) above is that there is a unital  monoid map
\begin{eqnarray*}
b \co\ols{\Lambda} &\lra& \mathcal{F}_{2q}^{\rm zs}(\Lambda)\times \mathcal{F}_{2q}^{\rm zs}(\Lambda)
\\
{[M,\psi;F,V]} &\mt& ([V,\theta], [V^\perp, -\theta^\perp]).
\end{eqnarray*}
\end{defi}
\noindent
We record the essential properties of $b$ in the following

\begin{cor}
\label{bcor}
The monoid maps $b \co\ols{\Lambda} \ra \mathcal{F}_{2q}^{\rm zs}(\Lambda)\times \mathcal{F}_{2q}^{\rm zs}(\Lambda)$
\noindent
and $b_{\mathcal{E}} := b|_{\mathcal{E}\ol{\Lambda}}$ satisfy
\begin{enumerate}
\item
${\rm Im}(b) = \{ ([w], [w'])\,|\, [w] + [H_\epsilon(\Lambda^r)] = [w'] + [H_\epsilon(\Lambda^r)] \text{~for some~} r \}$,

\item
$b_{\mathcal{E}} \co \oels{\Lambda} \isora \Delta(\mathcal{F}_{2q}^{\rm zs}(\Lambda)) : = \left\{ \left([w],[w]\right)\, |\, [w]\in \mathcal{F}_{2q}^{\rm zs}(\Lambda) \right\}$,

\item
$b^{-1}((0,0)) = L_{2q+1}^{}(\Lambda)$.

\end{enumerate}
\end{cor}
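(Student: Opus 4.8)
The plan is to treat the three statements separately, deriving each from Proposition~\ref{prepprp} together with the gluing results of Section~\ref{gluesec}. I would begin with (iii), as it is the most self-contained. The inclusion $L_{2q+1}(\Lambda)\subseteq b^{-1}((0,0))$ is immediate: if a class is represented by an $\epsilon$-quadratic formation $(M,\psi;L,V)$ then $V$ is a Lagrangian, so $j^*\psi j=0$ and $V^\perp=V$, whence both boundaries of the quasi-formation are zero forms. For the reverse inclusion, suppose $b([x])=(0,0)$, so $[V,\theta]=0\in\mathcal{F}_{2q}^{\rm zs}(\Lambda)$ for $x=(M,\psi;L,V)$. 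By definition of $\mathcal{F}_{2q}^{\rm zs}(\Lambda)$ there is an isometry $(V,\theta)\oplus(R,0)\cong(R',0)$; pulling the zero form back along it gives $\theta\oplus 0=0\in Q_\epsilon(V\oplus R)$, and since the projection $V\oplus R\to V$ is split by the inclusion, the induced map $Q_\epsilon(V)\to Q_\epsilon(V\oplus R)$ is injective, so $\theta=0\in Q_\epsilon(V)$. Thus $(V,\theta)$ is literally a zero form; its symmetrisation $\phi$ vanishes on $V$, so $V\subseteq V^\perp$ in $M$. As $\rk(V)=\rk(V^\perp)=\tfrac12\rk(M)$ by Proposition~\ref{prepprp}(iv) and $V$ is a direct summand, writing $M=V\oplus W$ gives $V^\perp=V\oplus(V^\perp\cap W)$ with $V^\perp\cap W$ a rank-zero direct summand of a free module, hence $0$ since $\Lambda$ is weakly finite. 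Therefore $V=V^\perp$, so $V$ is a based split Lagrangian of $(M,\psi)$, $x$ is an $\epsilon$-quadratic formation, and $[x]\in L_{2q+1}(\Lambda)$.

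For (ii), that $b_{\mathcal E}$ lands in the diagonal is Proposition~\ref{prepprp}(iii), and it is the restriction of a monoid map. To show it is bijective I would construct a map $s\co\mathcal{F}_{2q}^{\rm zs}(\Lambda)\to\mathcal{E}\ol{\Lambda}$ with $b_{\mathcal E}\circ s=\id$ and $s$ surjective; then $b_{\mathcal E}$ is a two-sided inverse of $s$. Given $[v]=[V_0,\theta_0]$, pick a representative $\rho_0$ of $\theta_0$ and set $s([v])=[\delta(V_0,\rho_0)]$, the class of the boundary of the asymmetric form $(V_0,\rho_0)$. The proof of Proposition~\ref{prepprp}(iii), together with Example~\ref{elemex}, identifies both boundaries of $\delta(V_0,\rho_0)$ with $(V_0,\theta_0)$, so $b_{\mathcal E}(s([v]))=([v],[v])$; and every elementary element is by definition represented by a boundary, so $s$ is onto. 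The point requiring real work is that $s$ is well defined, i.e. that $[\delta(K,\rho)]\in\ol{\Lambda}$ depends only on $[\rho]\in Q_\epsilon(K)$ and on the $0$-stable isometry class of $(K,[\rho])$: adding a zero form $(P,0)$ contributes the class $[\delta(P,0)]$, which vanishes on applying relation~(\ref{shifteqn}) to $(H_\epsilon(P);P,P^*)$ and $(H_\epsilon(P);P^*,P)$ and observing, via the flip map, that the latter is a trivial formation; and changing the representative $\rho$ alters $\delta(K,\rho)$ only through the relations defining $\ol{\Lambda}$ (compare the $\rho$-independence of $f_j$ computed in Example~\ref{elemex}).

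Finally for (i): one inclusion is Proposition~\ref{prepprp}(i) --- for $x=(M,\psi;F,V)$ one has $(V,\theta)\oplus H_\epsilon(F)\cong(V^\perp,-\theta^\perp)\oplus H_\epsilon(F)$, and as $F$ is stably free this yields $[V,\theta]+[H_\epsilon(\Lambda^r)]=[V^\perp,-\theta^\perp]+[H_\epsilon(\Lambda^r)]$ for suitable $r$, so $b([x])$ lies in the stated set. Conversely, suppose $[w]+[H_\epsilon(\Lambda^k)]=[w']+[H_\epsilon(\Lambda^k)]$; unwinding the definition of $\mathcal{F}_{2q}^{\rm zs}(\Lambda)$ and enlarging by zero forms produces an \emph{isometry} $g\co u\oplus H_\epsilon(\Lambda^k)\cong u'\oplus H_\epsilon(\Lambda^k)$ with $u=w\oplus(P,0)$, $u'=w'\oplus(Q,0)$, so $[u]=[w]$ and $[u']=[w']$. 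I would form the union $(M,\psi)=(u\oplus H_\epsilon(\Lambda^k))\cup_{\partial g}\bigl(-(u'\oplus H_\epsilon(\Lambda^k))\bigr)$ of Section~\ref{gluesec}; it is simple by Lemma~\ref{unionlem}, and since $g$ is an honest isometry, the functoriality of the union (Lemma~\ref{unionlem}) together with Proposition~\ref{orthoglueprp} and Example~\ref{elemex} identify $M$ with the union of $u\oplus H_\epsilon(\Lambda^k)$ with its own negative along the boundary of the identity, which is hyperbolic; so $(M,\psi)$ carries a simple Lagrangian $L$. In $M$ the summand corresponding to $u\oplus H_\epsilon(\Lambda^k)$ has induced form $u\oplus H_\epsilon(\Lambda^k)$ and annihilator with induced form $-u'\oplus H_\epsilon(\Lambda^k)$, and I would then take the half-rank summand $V$ to consist of the $w$-part of that summand together with Lagrangians of the hyperbolic factors (in both it and its annihilator) and of the zero-form summand; this makes $V$ a based half-rank direct summand with induced form $0$-stably $[w]$, and a direct computation of its annihilator (or the naturality in Lemma~\ref{unionlem} and Proposition~\ref{orthoglueprp}) shows its induced form is $0$-stably $[-w']$, so that $b([M,\psi;L,V])=([w],[w'])$.

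The two genuinely delicate points are the well-definedness of $s$ in part~(ii) --- an elementary but fiddly manipulation of the two relations defining $\ol{\Lambda}$, which is the crux of that part --- and, in part~(i), arranging the half-rank summand $V$ inside the hyperbolic form $M$ so that its induced form is exactly $[w]$ rather than $[w]$ plus spurious hyperbolic planes: one must absorb the hyperbolic summands that appear in the stable equivalence into the isotropic part of $V$ while keeping careful track of the (possibly degenerate) radicals throughout.
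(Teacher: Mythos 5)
Your proposal is correct and follows essentially the same route as the paper's proof: (iii) by showing $\theta=0$ forces $V=V^\perp$ (your weak-finiteness argument is just a more explicit version of the paper's one-liner), (ii) by reducing everything to the fact that $[\delta(K,\rho)]\in\ol{\Lambda}$ depends only on the $0$-stable isometry class of $(K,[\rho])$, and (i) by gluing along the boundary of the stable isometry, observing the union is the trivial double and hence hyperbolic, and then choosing a half-rank summand realising $([w],[w'])$. The two points you defer are exactly where the paper supplies short explicit arguments: for (ii) the isomorphism $\svec{h&0 \\ 0&h^{-*}}\svec{1&0\\\chi-\epsilon \chi^* & 1}$, which preserves the \emph{dual} Lagrangians $Y^*$, ${Y'}^*$ and is then combined with the Lagrangian switch furnished by relation (\ref{shifteqn}) and trivial formations (your parenthetical appeal to the $\rho$-independence of $f_j$ in Example \ref{elemex} does not by itself give equality of classes in $\ol{\Lambda}$), and for (i) the observation that, since $\del H_\epsilon(Q)$ is trivial, $\del h$ is stably homotopic to a stable isomorphism $f\co\del(W,\sigma)\cong\del(W',\sigma')$, so that $(M,\psi)\cong((W,\sigma)\cup_f(W',-\sigma'))\oplus H_\epsilon(Q\oplus Q)$ and one may simply take $V=j(W)\oplus Q\oplus Q$, which sidesteps the radical bookkeeping your hand-built $V$ would require.
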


\begin{proof}
\begin{enumerate}
\item
One inclusion follows from Proposition \ref{prepprp} (i).  So let $(W,\sigma)$ and $(W',\sigma')$ be $\epsilon$-quadratic forms and let $Q$ be a based module such that
there exists an isometry
$$h \co (W,\sigma)\oplus H_\epsilon(Q) \cong (W',\sigma')\oplus H_\epsilon(Q).$$
Applying Lemma \ref{unionlem} (iv) we see that the form
$$(M, \psi) := ((W, \sigma) \oplus H_\epsilon(Q)) \cup_{\del h} ((W', -\sigma') \oplus -H_\epsilon(Q))$$
is isometric to the trivial double and hence hyperbolic. The boundary of $H_\epsilon(Q)$ is trivial and so $\del h$ is stably homotopic to some stable isomorphism $f$ between $\del(W,\sigma)$ and $\del(W',\sigma')$. Thus
$$(M, \psi) \cong ((W, \sigma) \cup_ f (W', -\sigma')) \oplus H_\epsilon(Q \oplus Q).$$
Now consider the \eq\ $x := (M, \psi; L, j(W) \oplus Q \oplus Q)$ where $j$ is the map from Lemma \ref{unionlem} and $L$ is some arbitrary Lagrangian.  By construction $b([x]) = ([W,\sigma], [W',\sigma'])$.

\item
By Proposition \ref{prepprp} (\ref{prepprpelemit} and by Example \ref{elemex}, 
$b_{\mathcal{E}}\left(\oels{\Lambda}\right) = \Delta(\mathcal{F}_{2q}^{\rm zs}(\Lambda))$.

Now assume that $b\left([\delta(K,\rho)]\right) = b\left([\delta(K',\rho')]\right)$
for asymmetric forms $(K,\rho)$ and $(K',\rho')$.  That is, there are based modules $X$ and $X'$ and an isometry
$h\co (Y,[\nu])=(K,[\rho])\oplus (X,0) \isora (Y',[\nu'])=(K',[\rho'])\oplus (X',0)$. There is an
$\chi\in\Hom_\Lambda(Y, Y^*)$ such that $h^*\nu'h-\nu=\chi-\epsilon \chi^*\in\Hom_\Lambda(Y, Y^*)$.
This isometry induces an isomorphism
\[
\svec{h&0 \\ 0&h^{-*}}\svec{1&0\\\chi-\epsilon \chi^* & 1}\co (H_\epsilon(Y);Y^*,\svec{1\\ \nu}Y) \isora (H_\epsilon(Y');{Y'}^*,\svec{1\\ \nu'}Y')
\]
Adding trivial formations and the relation (\ref{shifteqn}) shows that $[\delta(Y,\nu)]=[\delta(Y',\nu')]\in\ols{\Lambda}$.
Because boundaries of zero-forms vanish in the $l$-monoid,
$[\delta(K,\rho)]=[\delta(Y,\nu)]=[\delta(Y',\nu')]=[\delta(K',\rho')] \in \ols{\Lambda}$.

\item
If $x  = (M, \psi; L, V)$ is a formation such that $(V, \theta) = (V, 0)$ then $V^{\perp} = V$ (possibly with a different basis but that does not concern us).  Hence $(V, \theta)$ is a zero form if and only if $(V^{\perp}, -\theta^{\perp})$ is a zero form.  It is now a matter of definition that $L_{2q+1}(\Lambda) = b^{-1}((0, 0))$.
\end{enumerate}
\end{proof}

\begin{defi} \label{e(v)defi}
Given a $0$-stabilised form $[v]$, we write $e([v])$ for the
unique element of $\mathcal{E}\ol{\Lambda}$ such that $b(e([v])) =
([v], [v])$.  In fact $e([v]) = [\delta(V, \rho)]$ from Definition
\ref{quasiformdef} where $v = (V,  \theta)$ and $\rho$ is a
representative of $\theta$.
\end{defi}

\subsection{Boundary isomorphisms}
\label{boundaryisosec}

Recall for quadratic forms $v$ and $v'$ of the same rank we write $v \sim v'$ if $[v] + [H_\epsilon(\Lambda^k)] = [v'] + [H_\epsilon(\Lambda^k)]$ for some $k$.  If $v \sim v'$ then $([v], [v']) \in {\rm Im}(b)$ and we define the set
\[ l_{2q+1}(v, v') : = b^{-1}([v], [v']) \subset \ol{\Lambda}\]
which is the focal point of our main theorem.  Recall also that a quadratic form $w$ has a boundary $\del w$ which is a split formation and that $\Iso(\del v, \del v')$ denotes the set of homotopy classes of stable isomorphisms from $\del v$ to $\del v'$.  If $v \sim v'$ then for some module $Q$, $\del (v \oplus (Q, 0)) \cong \del (v \oplus (Q, 0))$.  In this subsection we gather the results from sections \ref{form&qformsec} and \ref{gluesec} to calculate $l_{2q+1}(v, v')$ in terms of the classical $L$-groups and an appropriate elaboration of ${\rm Iso}(\del v, \del v')$.

\begin{defi}
\label{bautdef}
Given $\epsilon$-quadratic forms $(V,\theta)$ and $(V',\theta')$
we define the {\bf boundary isomorphism set $\bIso((V,\theta),(V',\theta'))$}
to be the set of orbits of the group action
\begin{eqnarray*}
(\Aut(V,\theta)\times \Aut(V', \theta'))\times\Iso(\del(V,\theta), \del(V',\theta'))
& \lra & \Iso(\del(V,\theta), \del(V', \theta'))\\
((g,h), [f]) & \mt & [\del h \circ f \circ\del g^{-1}].
\end{eqnarray*}
When $(V, \theta) = (V', \theta')$ we define $\bAut(V,\theta):=\bIso((V,\theta),(V,\theta))$ and let $1 \in \bAut(V,\theta)$ be the orbit containing the isomorphism $\partial\id_{(V',0)}=(\id_V,\id_V,0)$.
\end{defi}

\begin{rem} \label{bIsorem}
Isometries $k\co (V, \theta) \isora (W, \sigma)$ and $l\co (V', \theta') \isora (W', \sigma')$ give rise to an identification
$\bIso((V, \theta), (V', \theta')) \cong \bIso((W, \sigma), (W', \sigma'))$ via $[f] \mapsto [\del l \circ f \circ \del k^{-1}]$.  Evidently this identification is independent of the isometries chosen.  We shall often make this sort of identification without comment.
\end{rem}

\begin{defi}
\label{deltadefi}
For an \eq\ $x=(M, \psi; L, V)$ let $\delta(x) \in \bIso((V, \theta), (V^{\perp}, -\theta^{\perp}))$ be the homotopy class of the stable isomorphism $f_j$ defined in Proposition \ref{orthoglueprp}.   We note that $\delta(x)$ does not depend upon $L$.
\end{defi}

\begin{ex}
\label{delex}
Let $x = (M, \psi; L, V)$ be an \eq.
\begin{enumerate}
\item
If $x$ is an elementary then by Lemma \ref{prepprp} and Example \ref{elemex} $\delta(x) = 1 \in \bAut(V,\theta)$.  
\item 
If $x$ is a simple $\epsilon$-quadratic formation then by Example \ref{formationex} $\delta(x) = 1 \in \bAut(V,\theta)$.
\item If $y = x \oplus \delta(M, \rho)$ where $\rho$ represents $\psi$ then by Proposition \ref{bdryextprp} $\delta(y) = 1 \in \bAut((V, \theta) \oplus (M, \psi))$.
\end{enumerate}
\end{ex}

As elements of $\ols{\Lambda}$ are stable equivalence classes of
\eqs, we need to stabilise the boundary isomorphism set in order
to convert $\delta(x)$ into an invariant of $[x] \in
\ol{\Lambda}$.  For any based module $Q$ we have the stabilisation
map
\[\Aut(v) \lra \Aut(v \oplus (Q,0)),\quad g \mapsto g \oplus \id_Q.\]
We use this map and the analogous map for $v'$ in the definition of the stabilisation map
\begin{eqnarray*}
s_Q \co \bIso(v, v') \lra \bIso(v\oplus (Q, 0), v' \oplus (Q, 0)),\quad
{[f]} \mt {[ f\oplus  \del \id_Q]}.
\end{eqnarray*}
The set of all based modules $(\Lambda^k,\mathcal{B})$ with the relation $\leq$ is a directed poset (Definition \ref{baseddef}).
In that way the maps $s_{(\Lambda^k,\mathcal{B})}$ define a directed system of sets which leads to the following definition.

\begin{defi}
\label{sbIsodefi}
Let $v \sim v'$ be $\epsilon$-quadratic forms.
\begin{enumerate}
\label{sbautdef}
\item
The {\bf stable boundary isomorphism set} is
\[
\sbIso(v,v') := \varinjlim_{Q=(\Lambda^k,\mathcal{B})}\bIso(v\oplus (Q,0), v'\oplus (Q,0)).
\]

\item
When $v = v'$ we have the {\bf stable boundary automorphism set}
\[\sbAut(v) := \sbIso(v,v) \text{~~with~~}1 = [\del\id_V] \in \sbAut(v).\]

\item
There is an obvious stabilisation map
\begin{eqnarray*}
s\co \bIso(v,v') \lra \sbIso(v,v'),
\quad[f] \mapsto [f].
\end{eqnarray*}

\end{enumerate}
\end{defi}


\begin{lem}
\label{stableinjlem}
For any based module $Q$ the stabilisation map
\[
s_Q\co \bAut(v) \lra \bAut(v\oplus (Q,0)),\quad [f] \mt [f\oplus \del \id_Q]
\]
satisfies $s_Q^{-1}(1) = 1$.  Consequently for $s\co\bAut(v) \ra \sbAut(v)$, $s^{-1}(1) = 1$.
\end{lem}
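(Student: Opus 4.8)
The plan is to first replace the condition ``$[f]=1$'' by something more tractable. By Definition \ref{bautdef}, $1\in\bAut(v)$ is the orbit of $\del\id_V=(\id_V,\id_V,0)$ under $\Aut(v)\times\Aut(v)$; since $(g,h)$ acts by $[y]\mapsto[\del h\circ y\circ\del g^{-1}]$ and the boundary construction is functorial (so $\del h\circ\del g^{-1}=\del(hg^{-1})$), this orbit equals $\{[\del k]\mid k\in\Aut(v)\}$. Hence, for a stable isomorphism $f$ of $\del v$, we have $[f]=1$ if and only if $f$ is stably homotopic to $\del k$ for some $k\in\Aut(v)$; the same holds with $v$ replaced by $w:=v\oplus(Q,0)$. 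As $s_Q(1)=[\del\id_V\oplus\del\id_Q]=[\del\id_{V\oplus Q}]=1$, what remains is the implication: if $f\oplus\del\id_Q$ is stably homotopic to $\del K$ for some $K\in\Aut(w)$, then $f$ is stably homotopic to $\del k$ for some $k\in\Aut(v)$.

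To prove this I would unwind everything through Lemma \ref{sthomlem}. Write $\lambda=\theta+\epsilon\theta^*$ for the symmetrisation of $v=(V,\theta)$; then $w=(V\oplus Q,\ \theta\oplus 0)$ has symmetrisation $\Lambda_W=\lambda\oplus 0$, and the \emph{degeneracy of $\Lambda_W$ on the new summand $Q$} is the crux. Reducing the given stable homotopy, via Definition \ref{formationdef} (ix), to an honest homotopy of isomorphisms $\del w\oplus(\mathcal{P},\mathcal{P}^*)\isora\del w\oplus(\mathcal{Q},\mathcal{Q}^*)$ and applying Lemma \ref{sthomlem} (ii), I get $\Delta\in\Hom_\Lambda(W^*,W)$ with $K-(a\oplus\id_Q)=\Delta\Lambda_W^*$ together with two further identities, where $a,b,s$ are the homomorphisms attached to $f$ by Lemma \ref{sthomlem} (i) (so that $f\oplus\del\id_Q$ and $\del K$ are recorded by the triples $(a\oplus\id_Q,\ b\oplus\id_Q,\ s\oplus 0)$ and $(K,K^{-1},0)$ respectively). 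Since $\Lambda_W^*=\epsilon(\lambda\oplus 0)$ annihilates the $Q$-columns, the first identity forces $K$ into block lower-triangular form $\svec{A&0\\ \ast&\id_Q}$ with $A=a+\epsilon\Delta_1\lambda$, where $\Delta_1$ is the $(V^*,V)$-block of $\Delta$. As $K$ is invertible, so is $A$; writing $K^*\Theta K=\Theta$ in blocks gives $A^*\theta A=\theta\in Q_\epsilon(V)$; and $K$ is the product of $A\oplus\id_Q$ with an elementary matrix, so $\tau(A)=\tau(K)\in Z$. Thus $A\in\Aut(v)$.

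Finally I would check that this same $\Delta_1$ exhibits a stable homotopy $f\simeq\del A$, again via Lemma \ref{sthomlem} (ii): the three required identities $A-a=\Delta_1\lambda^*$, $A^{-1}-b=\Delta_1^*\lambda^*$ and $(-\epsilon A+\Delta_1\theta)\Delta_1^*=-s\in Q_\epsilon(V^*)$ each drop out by taking the $(V,V)$-block of the corresponding identity for the pair $(K,K^{-1},0)$ versus $(a\oplus\id_Q,b\oplus\id_Q,s\oplus 0)$, using that $K^{-1}$ likewise has shape $\svec{A^{-1}&0\\ \ast&\id_Q}$ and that passing to the $(V,V)$-block is compatible with the quotients $Q_\epsilon$ (the $(V,V)$-block of an element of $\im(1-T_\epsilon)$ on $W^*$ lies in $\im(1-T_\epsilon)$ on $V^*$). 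Then $[f]=[\del A]=1$, proving $s_Q^{-1}(1)=1$. The last clause of the lemma is then formal: $\sbAut(v)=\varinjlim_Q\bAut(v\oplus(Q,0))$ (Definition \ref{sbIsodefi}) is a directed colimit of sets, so $s([f])=1$ in $\sbAut(v)$ already forces $s_Q([f])=s_Q(1)=1$ in $\bAut(v\oplus(Q,0))$ for some based $Q$, whence $[f]=1$; thus $s^{-1}(1)=1$.

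The main obstacle I anticipate is purely organisational: carefully handling the trivial-formation stabilisations hidden in the words ``stable isomorphism'' and ``stable homotopy'' — in particular checking that the ``$W$-to-$W$ corners'' of all the relevant data are unchanged by the isomorphisms used to line up the $(\mathcal{P},\mathcal{P}^*)$-summands — and making sure the block-extraction manipulations stay legitimate inside the quotient groups $Q_\epsilon$. The conceptual point, that the degeneracy of $\Lambda_W$ along $Q$ compels $K$ to be triangular and hence to restrict to an isometry of $v$, is short once the reformulation through boundaries of isometries is in place.
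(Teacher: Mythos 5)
Your argument is correct and is essentially the paper's own proof: the paper likewise reduces $[f\oplus\del\id_Q]=1$ to a homotopy $f\oplus\del\id_Q\simeq\del G\oplus\id_{(P,P^*)}$ for an isometry $G$ of $v\oplus(Q,0)$ and uses the homotopy identities (your Lemma \ref{sthomlem} computation) to force $G=\svec{G_V&0\\ \ast&\id_Q}$ with $G_V\in\Aut(v)$ and to restrict the homotopy to $f\simeq\del G_V$. Your extra checks (the torsion argument showing the corner block is simple, the behaviour of the $(a,b,s)$-data under the trivial-formation bookkeeping, and the colimit formality for $s^{-1}(1)=1$) all go through and merely spell out what the paper leaves implicit.
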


\begin{proof}
Let $f$ be an automorphism of $\del v \oplus (P,P^*)$ such that
$f\oplus \del \id_Q= 1\in \bAut(v\oplus (Q,0))$. After possibly enlarging $P$
there is an isometry $G$ of $v\oplus(Q,0)$ and a homotopy
\begin{eqnarray*}
&&\Delta\co f\oplus \del \id_Q\simeq  \del G \oplus \id_{(P,P^*)}\co\\
&&\qquad \del v \oplus \del(Q,0)\oplus (P,P^*) \isora
\del v \oplus \del(Q,0)\oplus (P,P^*).
\end{eqnarray*}
The definition of homotopy shows, firstly, that
\[
G=\svec{G_V&0\\G_{21}& \id_Q}\co W\oplus Q \isora W\oplus Q
\]
(where $G_V$ is an isometry of $w$) and that, secondly,
$\Delta$ induces a stable homotopy between $f$ and $\del G_V$.
\end{proof}

We next describe the maps which allow us to compute $l_{2q+1}(v, v')$.  Firstly, abusing notation, we write
\[\rho\co  L_{2q+1}(\Lambda) \ra l_{2q+1}(v, v')\]
for the action of $L_{2q+1}(\Lambda) \ni [z]$ on $\ols{v, v'} \ni [x]$, $\rho([x], [z]) = [x] + [z]$.  Secondly there is the map
\[
\delta\co \ols{v, v'}  \lra \sbIso(v, v')
\]
which is defined as follows.  Given $[x] \in \ols{v, v'}$ choose a representative $x = (M, \psi; F, W)$ where for notational reasons we have written $W$ for the second summand in place of the usual $V$.  If $(W, \sigma)$ is the induced form on $W$ then by definition $[W, \sigma] = [v]$ and $[W^{\perp}, -\sigma^{\perp}] = [v']$.   Applying Definition \ref{deltadefi}, we have $\delta(x) \in {\rm bIso}((W, \sigma), (W^{\perp}, -\sigma^{\perp}))$.  It follows that there are modules $Q$ and $P$ and isomorphisms 
$k\co (W, \sigma) \oplus (Q, 0) \isora v \oplus (P, 0)$ and 
$l\co (W^{\perp}, -\sigma^{\perp}) \oplus (Q, 0) \cong v' \oplus (P, 0)$.  
We define
\[ \delta([x]) := [\del l \circ (\delta(x) \oplus \del {\rm id}_Q) \circ \del k^{-1}]  \in {\rm sbIso}(v, v').\]

We now show that $\delta$ is well-defined.  By Remark \ref{bIsorem}, different choices of $l$ and $k$ don't effect $\delta([x])$.  The construction of $f_j$ is well-defined up to homotopy. By the naturality of Proposition \ref{orthoglueprp} (in particular
equation (\ref{transformfeqn}) from the proof) an isomorphism of \eqs\ doesn't
change $\delta([x])$ either.  Lastly, we have to analyse the effect of adding trivial formations and the relation (\ref{shifteqn}).   Since $\delta(x \oplus x') = \delta(x) \oplus \delta(x')$ and by Example \ref{delex}[ii] $\delta(x') = 1$ for any simple formation $x'$ we see that adding simple formations, and in particular trivial formations, does not alter $\delta([x])$.  As we remarked above, the definition of $\delta(x)$ does not depend upon the Lagrangian in the quasi-formation $x$ and so $\delta$ is invariant under the relation (\ref{shifteqn}) which only alters Lagrangians.

We turn to the preliminaries required to determine the image of $\delta$.
Gluing quadratic forms together defines a map
\[
\kappa\co \bIso(v, v') \lra L^s_{2q}(\Lambda), \quad [f] \mt [\kappa(f)]
\]
where $\kappa(f)=v\cup_f(-v')$.
By Lemma \ref{unionlem}, $[\kappa(f)] \in L_{2q}^s(\Lambda)$ doesn't change if one takes another representative
for $[f] \in \bIso(v, v')$. Stabilisation of $f$ with the identity on another zero form will only add
a hyperbolic form to $\kappa(f)$. Hence $\kappa$ extends to a well-defined map $\kappa\co  \sbIso(v,v') \ra L_{2q}^s(\Lambda)$.

\begin{thm}\label{generalmainthm}
Let $v \sim v'$ be $\epsilon$-quadratic forms with $v \sim v'$. There is an ``exact'' sequence of sets
\[
\oLs{\Lambda}\stackrel{\rho}{\lra} l_{2q+1}(v, v') \stackrel{\delta}{\lra} \sbIso(v, v') \stackrel{{\kappa }}{\lra} L^s_{2q}(\Lambda)
\]
by which we mean that the orbits of $\rho$ are the fibres of $\delta$ and $\im(\delta)={\kappa}^{-1}(0)$.
\end{thm}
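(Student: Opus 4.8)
The statement amounts to four claims, which I would establish in turn. Recall that $\rho$ denotes the restriction to $\oLs{\Lambda}\subset L_{2q+1}(\Lambda)$ of the action $([x],[z])\mapsto [x]+[z]$ on $\ols{v,v'}$, which is well defined because $b([z])=(0,0)$ by Corollary \ref{bcor}. The four claims are: \textbf{(A)} $\kappa\circ\delta=0$; \textbf{(B)} every $g\in\sbIso(v,v')$ with $\kappa(g)=0$ equals $\delta([x])$ for some $[x]$; \textbf{(C)} $\delta$ is constant on the orbits of $\rho$; \textbf{(D)} if $\delta([x_0])=\delta([x_1])$ then $[x_1]=\rho([x_0],[z])$ for some $[z]\in\oLs{\Lambda}$. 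Claims (A) and (B) give $\im\delta=\kappa^{-1}(0)$, while (C) and (D) identify the fibres of $\delta$ with the orbits of $\rho$. Since $\delta$ and $\kappa$ have already been shown to be well defined, this is all that is required.

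For (A) I would take $[x]$ represented by an \eq\ $x=(M,\psi;F,W)$ with boundary forms $(W,\sigma)$ and $(W^\perp,-\sigma^\perp)$. Proposition \ref{orthoglueprp} supplies an isometry $(M,\psi)\cong (W,\sigma)\cup_{f_j}(W^\perp,\sigma^\perp)$ in which $f_j$ represents $\delta(x)$, hence (using $[W,\sigma]=[v]$ and $[W^\perp,-\sigma^\perp]=[v']$) represents $\delta([x])$ after stabilisation. Applying $\kappa$ and using the additivity and naturality of the union construction (Lemma \ref{unionlem}), together with the isometry $(R,0)\cup_{\del\id_R}(-(R,0))\cong H_\epsilon(R)$ to absorb the stabilising modules into hyperbolics, one computes that $\kappa(\delta([x]))$ is the class of $(M,\psi)$ in $L^s_{2q}(\Lambda)$. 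But $(M,\psi)$ is simple and carries the simple Lagrangian $F$, so it is simply isometric to a hyperbolic form; hence $\kappa(\delta([x]))=0$.

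For (B), given $g$ with $\kappa(g)=0$, I would first replace $v,v'$ by $v\oplus(Q,0),v'\oplus(Q,0)$ — which changes neither $\ols{v,v'}$ nor $\sbIso(v,v')$ — so that $g=[f]$ for a genuine stable isomorphism $f\co\del v\oplus(P,P^*)\cong\del v'\oplus(P',{P'}^*)$. Since $[v\cup_f(-v')]=\kappa(g)=0$ in $L^s_{2q}(\Lambda)$, a further zero‑form stabilisation (replacing $f$ by $f\oplus\del\id_R$, which by additivity of the union and the isometry above only adds a hyperbolic to $v\cup_f(-v')$) lets me assume that $(M,\psi):=v\cup_f(-v')$ is simple with a simple Lagrangian $L$. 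Using the canonical split inclusion $j\co v\hookrightarrow(M,\psi)$ of Lemma \ref{unionlem}(i), whose annihilator carries the induced form $-v'$, I set $x:=(M,\psi;L,j(V))$; then $x$ is an \eq\ with $b([x])=([v],[v'])$. The remaining step, which I expect to be the main obstacle, is to verify that $\delta([x])=g$, i.e.\ that the boundary isomorphism $f_j$ attached to $j$ in Proposition \ref{orthoglueprp} is stably homotopic to $f$. I would prove this by matching the isometry used to construct $f_j$ with the one produced from $f$ by Proposition \ref{bdryextprp}, and then invoking the homotopy‑invariance and naturality assertions of Proposition \ref{orthoglueprp}.

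For (C), any $[z]\in\oLs{\Lambda}$ is represented by a simple \eqf\ $z$, whose two boundary forms are both a zero form $(K,0)$ and which satisfies $\delta(z)=1$ by Example \ref{delex}(ii); by additivity of $\delta$ one gets $\delta(x\oplus z)=\delta(x)\oplus\del\id_K$, which is merely a stabilisation of $\delta(x)$ and so represents the same class in $\sbIso(v,v')$, giving $\delta([x]+[z])=\delta([x])$. For (D), starting from $\delta([x_0])=\delta([x_1])$, I would add trivial formations to $x_0$ and $x_1$ until their boundary data agree on the nose and their extracted gluing isomorphisms become stably homotopic; Lemma \ref{unionlem}(iii) then yields an isometry $\Phi\co(M_0,\psi_0)\cong(M_1,\psi_1)$ carrying $W_0$ to $W_1$ and restricting to the identity on the common $W$‑part, whence $x_0\cong(M_1,\psi_1;\Phi(F_0),W_1)$. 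Since $F_1$ and $\Phi(F_0)$ are simple Lagrangians of the simple form $(M_1,\psi_1)$, the triple $z:=(M_1,\psi_1;F_1,\Phi(F_0))$ is a simple \eqf, so $[z]\in\oLs{\Lambda}$ by Proposition \ref{newlprp}, and relation (\ref{shifteqn}) gives $[x_1]=[z]\oplus[x_0]=\rho([x_0],[z])$. Throughout, the real difficulty is the torsion‑ and stabilisation‑bookkeeping needed to pass between honest stable homotopies of split boundary formations and the equivalence relations defining $\ols{\Lambda}$, $\oLs{\Lambda}$ and $\sbIso(v,v')$ — the identification required in (B) being the sharpest instance.
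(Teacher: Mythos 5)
Your proposal is correct and takes essentially the same approach as the paper: invariance of $\delta$ under addition of simple formations, the union $v\cup_f(-v')$ with a chosen simple Lagrangian to realise every class in $\kappa^{-1}(0)$, and an isometry of glued forms (Lemma \ref{unionlem} combined with Proposition \ref{orthoglueprp}) producing a simple formation relating two quasi-formations with equal $\delta$, which is then absorbed via relation (\ref{shifteqn}). The one step you flag as unfinished --- that the boundary isomorphism of the canonical inclusion $v\hookrightarrow v\cup_f(-v')$ is stably homotopic to $f$ --- is likewise left implicit in the paper's proof, and your proposed verification via Proposition \ref{bdryextprp} together with the homotopy-invariance and naturality clauses of Proposition \ref{orthoglueprp} is exactly the intended justification.
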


The case $v = v'$ is of particular interest.  By combining Theorem \ref{generalmainthm} and Example \ref{elemex} we obtain the following

\begin{cor}
\label{maincor}
For any $\epsilon$-quadratic form $v$ there is an exact sequence
\[
\oLs{\Lambda}\stackrel{\rho}{\lra} \ols{v, v} \stackrel{\delta}{\lra} \sbAut(v) \stackrel{\kappa}{\lra} L^s_{2q}(\Lambda)
\]
in the following sense: the orbits of the action $\rho$ are
precisely the fibres of the map $\delta$ and
$\im(\delta)=\kappa^{-1}(0)$.  Moreover $\delta([x]) = 1 \in
\sbAut(v)$ if and only if $[x]$ is elementary modulo the action of
$L_{2q+1}^s(\Lambda)$.
\end{cor}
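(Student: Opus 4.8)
The plan is to derive the corollary from Theorem \ref{generalmainthm} by specialising to $v' = v$. First I would note that $v \sim v$ holds trivially (take $k = 0$ in the definition of $\sim$), so Theorem \ref{generalmainthm} supplies an ``exact'' sequence $\oLs{\Lambda}\stackrel{\rho}{\lra}\ols{v,v}\stackrel{\delta}{\lra}\sbIso(v,v)\stackrel{\kappa}{\lra}L^s_{2q}(\Lambda)$ in which the $\rho$-orbits are exactly the fibres of $\delta$ and $\im(\delta)=\kappa^{-1}(0)$. Since $\sbIso(v,v)=\sbAut(v)$ by Definition \ref{sbIsodefi}(ii), this is precisely the asserted sequence, so the only thing left to prove is the final sentence.

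The key point I would establish is that $\delta(e([v])) = 1 \in \sbAut(v)$. Recall from Definition \ref{e(v)defi} that $e([v])$ is represented by the boundary $\delta(V,\rho)$ of an asymmetric form, that $e([v])$ is elementary, and that $b(e([v])) = ([v],[v])$, so $e([v]) \in \ols{v,v}$. A direct check shows that the second summand of $\delta(V,\rho)$ carries the induced form $v = (V,\theta)$ with no stabilising module, so by Example \ref{delex}(i) (which rests on the explicit computation in Example \ref{elemex}) the boundary isomorphism attached to $e([v])$ is $1 = [\del\id_V] \in \bAut(v)$, and hence $\delta(e([v])) = 1 \in \sbAut(v)$ after applying the stabilisation map $s\co\bAut(v)\to\sbAut(v)$. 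Given this, the equivalence is a diagram chase. If $[x] \in \ols{v,v}$ is elementary modulo the $L^s_{2q+1}(\Lambda)$-action, write $[x] = [x_0]+[z]$ with $[x_0]$ elementary and $[z] \in \oLs{\Lambda}$; since $b([z]) = (0,0)$ by Corollary \ref{bcor}(iii) we get $b([x_0]) = b([x]) = ([v],[v])$, hence $[x_0] = e([v])$ by the uniqueness in Corollary \ref{bcor}(ii), so $[x]$ lies in the $\rho$-orbit of $e([v])$, which by exactness is the fibre $\delta^{-1}(1)$, giving $\delta([x]) = 1$. Conversely, if $\delta([x]) = 1 = \delta(e([v]))$ then $[x]$ and $e([v])$ lie in a common fibre of $\delta$, which is a $\rho$-orbit, so $[x] = e([v]) + [z]$ for some $[z]\in\oLs{\Lambda}$, i.e. $[x]$ is elementary modulo the action of $L^s_{2q+1}(\Lambda)$.

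Essentially all of the content sits in Theorem \ref{generalmainthm}, so this corollary should be routine; the only step needing care is the identification $\delta(e([v])) = 1$, which depends on Example \ref{delex}(i) together with the compatibility of $\delta$ with stabilisation, and on the small bookkeeping remark that an elementary class in $\ols{v,v}$ must coincide with $e([v])$ via Corollary \ref{bcor}(ii). I do not anticipate any genuine obstacle beyond carefully tracking these definitional compatibilities.
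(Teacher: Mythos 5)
Your proposal is correct and takes essentially the same route as the paper, which obtains the corollary precisely by specialising Theorem \ref{generalmainthm} to $v'=v$ and invoking the computation of $\delta$ on elementary elements from Example \ref{elemex} (as recorded in Example \ref{delex}), together with the fact that $\delta$ is constant on $L^s_{2q+1}(\Lambda)$-orbits. Your extra appeal to the uniqueness statement of Corollary \ref{bcor}(ii) to identify $[x_0]$ with $e([v])$ is correct but not needed, since any elementary class in $\ols{v,v}$ already has $\delta = 1$ by Example \ref{delex}(i).
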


\begin{cor}
\label{neatcor}
Let $x=(M,\psi; L, V)$ be an \eq.  Then $[x] \in \ols{\Lambda}$ is elementary modulo $\oLs{\Lambda}$
if and only if there is a module $P$ such that
$(V,\theta) \oplus (P, 0) \cong (V^\perp, -\theta^\perp) \oplus (P, 0)$ and
$\delta(x \oplus (H_\epsilon(P); P, P^* )) = 1 \in \bAut((V,\theta) \oplus (P, 0))$,
\end{cor}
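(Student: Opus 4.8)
The plan is to deduce the corollary from Corollary \ref{maincor} by de-stabilising the condition ``$\delta([x]) = 1 \in \sbAut(v)$'' for the form $v = (V,\theta)$. Concretely, I would establish and chain the two equivalences: (a) $[x]$ is elementary modulo $\oLs{\Lambda}$ if and only if $[x] \in \ols{v,v}$ and $\delta([x]) = 1 \in \sbAut(v)$; and (b) the pair of conditions in (a) holds if and only if there is a module $P$ satisfying the two displayed conditions of the statement.

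For equivalence (a), the ``if'' direction is precisely the ``moreover'' clause of Corollary \ref{maincor}. For ``only if'', suppose $[x] + [z]$ is elementary with $[z] \in \oLs{\Lambda}$. Since $\oLs{\Lambda} \subseteq L_{2q+1}(\Lambda) = b^{-1}((0,0))$ by Corollary \ref{bcor}(iii) and $b$ is a monoid map, $b([x]) = b([x]+[z])$, which lies on the diagonal $\Delta(\mathcal{F}_{2q}^{\rm zs}(\Lambda))$ by Corollary \ref{bcor}(ii); hence $[V,\theta] = [V^\perp,-\theta^\perp] \in \mathcal{F}_{2q}^{\rm zs}(\Lambda)$, i.e.\ $[x] \in \ols{v,v}$, and then Corollary \ref{maincor} applies.

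For equivalence (b), I would first note that ``there is a module $P$ with $(V,\theta)\oplus(P,0) \cong (V^\perp,-\theta^\perp)\oplus(P,0)$'' is just a reformulation of ``$[V,\theta] = [V^\perp,-\theta^\perp]$'' (using $\rk(V) = \rk(V^\perp)$ from Proposition \ref{prepprp}(iv) to arrange the two stabilising modules to be equal), which is ``$[x] \in \ols{v,v}$''. Fixing such a $P$ and an isometry $\mu\co(V,\theta)\oplus(P,0) \cong (V^\perp,-\theta^\perp)\oplus(P,0)$, Remark \ref{bIsorem} renders the identification $\bIso\bigl((V,\theta)\oplus(P,0),(V^\perp,-\theta^\perp)\oplus(P,0)\bigr) \cong \bAut((V,\theta)\oplus(P,0))$, and hence the meaning of the second displayed condition, independent of $\mu$ up to the orbit relation. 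The crux is the identity
\[
\delta\bigl(x\oplus(H_\epsilon(P);P,P^*)\bigr) \;=\; s_P(\delta(x)) \;\in\; \bIso\bigl((V,\theta)\oplus(P,0),(V^\perp,-\theta^\perp)\oplus(P,0)\bigr),
\]
where $\delta(x) \in \bIso((V,\theta),(V^\perp,-\theta^\perp))$ and $s_P$ is the stabilisation map of Definition \ref{sbIsodefi}; this follows from additivity of $\delta$ on quasi-formations together with Example \ref{delex}(ii), which gives $\delta((H_\epsilon(P);P,P^*)) = 1$, after the harmless identification of $(P^*,0)$ with $(P,0)$. On the other hand, unwinding Definition \ref{deltadefi} of $\delta([x])$ with the representative $x$ itself, stabilising module $P$, and isometries $k = \id$, $l = \mu^{-1}$, shows that the image of $\delta([x])$ in $\bAut((V,\theta)\oplus(P,0))$ is exactly the class of $s_P(\delta(x))$ under the identification by $\mu$, that is, $\delta(x\oplus(H_\epsilon(P);P,P^*))$. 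Given this: if both conditions hold for some $P$, then $\delta([x])$ is represented by $1$ at stage $P$, so $\delta([x]) = 1 \in \sbAut(v)$; conversely, if $[x] \in \ols{v,v}$ and $\delta([x]) = 1 \in \sbAut(v)$, then the directed-limit description of $\sbAut(v)$ gives a based module $Q$ with $\delta([x])$ represented by $1$ at stage $\bAut((V,\theta)\oplus(P\oplus Q,0))$, and replacing $P$ by $P \oplus Q$ (which preserves the isometry condition by adding $(Q,0)$ to both sides of $\mu$) yields both conditions.

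I expect the only genuine work to be the two pieces of bookkeeping in the last paragraph: verifying the displayed identity, and matching the image of $\delta([x])$ at a finite stage against Definition \ref{deltadefi}. The care required is to track the stabilising summands, keep the distinction between $P$ and $P^*$ under control, and read every ``$=1$'' at a common finite stage of the directed system defining $\sbAut$; there is no conceptual obstacle beyond Corollary \ref{maincor}.
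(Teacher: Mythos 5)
Your proposal is correct and is essentially the paper's own (one-line) deduction: the paper proves this corollary by citing Proposition \ref{prepprp}, Corollary \ref{maincor} and Lemma \ref{stableinjlem}, and your equivalences (a) and (b), together with the bookkeeping identity $\delta(x\oplus(H_\epsilon(P);P,P^*)) = s_P(\delta(x))$, are exactly the details that citation leaves implicit. The only deviation is that where the paper invokes Lemma \ref{stableinjlem} (i.e.\ $s_Q^{-1}(1)=1$) to descend from $\delta([x])=1$ in $\sbAut(v)$ to the statement in $\bAut((V,\theta)\oplus(P,0))$ at the given stage $P$, you instead enlarge $P$ using the directed-limit definition of $\sbAut$ in Definition \ref{sbIsodefi}; this is equally valid, since the corollary only asserts the existence of some such module $P$.
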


\begin{proof}
Follows from Lemma \ref{stableinjlem}, Proposition \ref{prepprp} and Corollary \ref{maincor}.
\end{proof}


\begin{proof}[Proof of Theorem \ref{generalmainthm}]
By definition $\delta$ is invariant under the action of $\oLs{\Lambda}$.  Now let $[x], [x'] \in l_{2q+1}(v, v')$ be such that $\delta([x]) = \delta([x'])$.  Choose equal rank representatives
$(M,\psi; F,V)$ and $(M',\psi'; F',V')$ for $[x]$ and $[x']$ and let $f_j$ and $f_{j'}$ be the associated boundary isomorphisms for the embeddings $j\co(V,\theta) \hra (M,\psi)$ and $j'\co(V',\theta') \hra (M',\psi')$ as defined in Lemma \ref{orthoglueprp}.  The equality $\delta([x])=\delta([x'])$ implies that there are modules $P$ and $P'$, isometries $k\co(V,\theta)\oplus (P,0) \isora (V',\theta')\oplus (P',0)$
and $l\co(V^\perp,-\theta^\perp)\oplus (P,0) \isora ({V'}^\perp,{-\theta'}^\perp)\oplus (P',0)$
and a stable homotopy $\Delta$ between $f_{j'}\oplus \id_{\del(P',0)}$ and
$\del l \circ \left( f_j\oplus \id_{\del(P,0)}\right) \circ \del k^{-1}$.

Lemmas \ref{unionlem} and \ref{orthoglueprp} yield the following commutative diagram.
\begin{eqnarray*}
\xymatrix@C+60pt
{
V\oplus P
\ar[r]^-{j\oplus \svec{\id_P\\0}}
&
(M,\psi)\oplus H_\epsilon(P)
\ar[d]^{r_j\oplus \id_{H_\epsilon(P)}}_\cong
\\
V\oplus P
\ar[r]^-{\svec{\id_V\\0}\oplus\svec{\id_P\\0}}
\ar@{=}[u]
\ar@{=}[d]
&
\left( (V,\theta)\cup_{f_j}-(V^\perp,\theta^\perp)\right) \oplus
H_\epsilon(P)
\ar[d]_\cong
\\
V\oplus P
\ar[r]^-{\svec{1&0\\0&1\\0&0\\0&0}}
\ar[d]_{k}^\cong
&
\left((V,\theta)\oplus (P,0)\right)\cup_{f_j\oplus \id_{\del(P,0)}}
-\left((V^\perp, \theta^\perp)\oplus (P,0)\right)
\ar[d]^{\svec{1&\Delta_1\\0&1}\svec{k&0\\0&{l}^{-*}}}_\cong
\\
V'\oplus P'
\ar[r]^-{\svec{1&0\\0&1\\0&0\\0&0}}
\ar@{=}[d]
&
\left((V',\theta)\oplus (P',0)\right)\cup_{f_{j'}\oplus \id_{\del(P',0)}}
-\left( ({V'}^\perp,{\theta'}^\perp)\oplus (P',0)\right)
\ar[d]_\cong
\\
V'\oplus P'
\ar[r]^-{\svec{1&0\\0&1\\0&0\\0&0}}
\ar@{=}[d]
&
\left( (V',\theta')\cup_{f_{j'}}-({V'}^\perp,{\theta'}^\perp)\right) \oplus
H_\epsilon(P')
\ar[d]^{r_{j'}\oplus \id_{H_\epsilon(P')}}_\cong
\\
V'\oplus P'
\ar[r]^-{j'\oplus \svec{\id_{P'}\\0}}
&
(M',\psi')\oplus H_\epsilon(P')
}
\end{eqnarray*}
The composition of the right hand isometries yields an isomorphism
\[
g\co((M,\psi) \oplus H_\epsilon(P);L \oplus P, V\oplus P) \isora ((M',\psi') \oplus H_\epsilon(P');g(L \oplus P), V' \oplus P')
\]
Therefore $[x]=[x']+[z]\in\ol{\Lambda}$ with
\[z=[(M',\psi') \oplus H_\epsilon(P'); L' \oplus P', g(L \oplus P)] \in\oLs{\Lambda}.\]

Finally, the composition of $\kappa$ and $\delta$ maps an \eq\ $x=(M,\psi; F, V)$
to $[(M, \psi)] = 0 \in L^s_{2q}(\Lambda)$ and therefore $\kappa\circ\delta$ is trivial.  In the other direction, let $v  = (V, \theta)$ and $v' = (V', \theta')$ be $\epsilon$-quadratic forms with $v \sim v'$ and let $f \co  \del v \oplus (P,P^*) \isora \del v' \oplus (P,P^*)$ be a stable isomorphism between their boundaries such that $\kappa([f])=0$.  This means that the form $\kappa(f) = v \cup_f(-v')$ is stably hyperbolic \ie there are based modules $P$ and $Q$ such that $\kappa(f) \oplus H_\epsilon(P) \cong H_\epsilon(Q)$.  But $\kappa(f) \oplus H_\epsilon(P) = \kappa(f \oplus \del\id_{P})$.  It follows that $\delta([x]) = [f] \in {\rm sbIso}(v, v')$  where $x$ is the quasi-formation $x = (\kappa(h \oplus \id_{P}); L, V \oplus P)$ for any Lagrangian $L \subset \kappa(f \oplus \id_{P})$. But $b([x]) = ([v], [v'])$ and so $\delta\co l_{2q+1}(v, v') \ra \kappa^{-1}(0)$ is onto.\qedhere

\end{proof}

\subsection{The Grothendieck group of $\ol{\Lambda}$}
 
In this subsection we prove that for every $[x] \in \ol{\Lambda}$ there is an integer $k$ such that $[x] + e([H_\epsilon(\Lambda^k)])$ is elementary
(see Definiton \ref{e(v)defi}).  
This is an algebraic analogue of \cite{Kre99}[Theorem 2] that can also
be used to find an alternative proof of that theorem.

We begin with a Lemma about the action of $L_{2q+1}^s(\Lambda)$ on $\ol{\Lambda}$.  
Following the original definition of $L_{2q+1}^s(\Lambda)$ we define
$z(\alpha) = (H_\epsilon(F); F, \alpha(F))$ for $\alpha\in\Aut(H_\epsilon(F))$. Every simple $\epsilon$-quadratic 
formation can be represented in this manner up to isometry.

\begin{lem}
\label{L-actionlem}
Let $[x] \in \ol{\Lambda}$ be represented by $x=(H_\epsilon(F); F, W)$ and suppose that an isometry $\alpha\in\Aut(H_\epsilon(F))$ restricts to an isometry of $W$.  Then $[z(\alpha)] \in L_{2q+1}^s(\Lambda)$ acts trivially on $[x]$.
\end{lem}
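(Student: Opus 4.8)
The plan is to use the isometry $\alpha$ to transport the simple Lagrangian appearing in a representative of $[x]$, and then to appeal to the shift relation (\ref{shifteqn}). Recall that the action of $L_{2q+1}^s(\Lambda)$ on $\ol{\Lambda}$ sends $[x]$ to $[x]+[z(\alpha)]$, so what must be shown is $[x]+[z(\alpha)]=[x]$.

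First I would change the Lagrangian in the representative of $[x]$. Since $\alpha\in\Aut(H_\epsilon(F))$ is a simple isometry, equipping $\alpha(F)$ with the $s$-basis obtained by applying $\alpha$ to a preferred $s$-basis of $F$ makes $\alpha(F)$ a simple Lagrangian of $H_\epsilon(F)$ and makes the induced map $\alpha|_F\co F\isora\alpha(F)$ simple; this is exactly the content of the fact that $z(\alpha)=(H_\epsilon(F);F,\alpha(F))$ is a simple $\epsilon$-quadratic formation. By hypothesis $\alpha(W)=W$, and by our standing convention $\alpha|_W$ is a simple isometry of the $\epsilon$-quadratic form induced on $W$. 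Hence $\alpha$ is an isomorphism of \eqs
\[
\alpha\co (H_\epsilon(F);F,W)\isora (H_\epsilon(F);\alpha(F),W),
\]
so that $[x]=[H_\epsilon(F);\alpha(F),W]\in\ol{\Lambda}$. Next I would apply relation (\ref{shifteqn}) with $M=H_\epsilon(F)$, $\psi$ the hyperbolic form, $K=F$, $L=\alpha(F)$ (both simple Lagrangians) and $V=W$, which yields
\[
(H_\epsilon(F);F,\alpha(F))\oplus(H_\epsilon(F);\alpha(F),W)\ \sim\ (H_\epsilon(F);F,W).
\]
The left-hand side is $z(\alpha)\oplus(H_\epsilon(F);\alpha(F),W)$, so passing to classes and using the previous step gives $[z(\alpha)]+[x]=[z(\alpha)]+[H_\epsilon(F);\alpha(F),W]=[H_\epsilon(F);F,W]=[x]$, which is precisely the assertion that $[z(\alpha)]$ acts trivially on $[x]$.

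The only thing requiring care is the based/simplicity bookkeeping in the first step — checking that $\alpha(F)$, with the image $s$-basis, is a simple Lagrangian and that $\alpha|_F$ and $\alpha|_W$ are simple — but all of this is immediate from $\alpha$ being a \emph{simple} isometry and from the definition of $z(\alpha)$, so I do not expect any genuine obstacle here; the proof is essentially a two-line manipulation of the quasi-formation relations once these points are recorded.
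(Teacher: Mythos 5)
Your proof is correct and is essentially the paper's own argument: view $\alpha$ as an isomorphism of quasi-formations $(H_\epsilon(F);F,W)\isora(H_\epsilon(F);\alpha(F),W)$ and then apply the shift relation (\ref{shifteqn}) to conclude $[z(\alpha)]+[x]=[x]$. The extra bookkeeping you record about $\alpha(F)$ being a simple Lagrangian and the restrictions being simple is exactly what the paper leaves implicit.
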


\begin{proof}
Such an isometry $\alpha$ is also an isomorphism
$x\isora (H_\epsilon(F);\alpha(F),W)$ and
therefore
$
[z(\alpha)]+[x] = [(H_\epsilon(F);F,\alpha(F))\oplus(H_\epsilon(F);\alpha(F),W)] =[H_\epsilon(F);F,W].
$
\end{proof}

\begin{cor}
\label{L-actioncor}
If $[x] \in \ol{\Lambda}$ is represented by $x=(M, \psi; L, V)$ and $(V, \theta) \cong H_{\epsilon}(F) \oplus (V', \theta')$ splits off a hyperbolic summand then every element of $L_{2q+1}^s(\Lambda)$ which can be represented by a formation $z=(H_{\epsilon}(F); F, G)$ acts trivially on $[x]$.
\end{cor}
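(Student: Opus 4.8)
The plan is to reduce everything to Lemma \ref{L-actionlem}, whose hypothesis is very rigid: it wants a representative of $[x]$ of the precise shape $(H_\epsilon(F);F,W)$ — ambient form exactly $H_\epsilon(F)$ with its \emph{standard} simple Lagrangian — together with an automorphism of $H_\epsilon(F)$ fixing $W$. So the substance of the argument is to manoeuvre $[x]$ into that shape, exploiting the copy of $H_\epsilon(F)$ that the hypothesis plants inside $V$.

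First I would fix an element $[z]\in L^s_{2q+1}(\Lambda)$ represented by $z=(H_\epsilon(F);F,G)$. By the remark preceding Lemma \ref{L-actionlem}, every simple $\epsilon$-quadratic formation is isomorphic to some $z(\alpha)$, so after an isometry we may take $[z]=[z(\alpha)]$ with $\alpha\in\Aut(H_\epsilon(F))$; the goal becomes $[z]+[x]=[x]$. Next I would normalise the representative of $[x]$. Fix an isometry $\beta\co(V,\theta)\isora H_\epsilon(F)\oplus(V',\theta')$ and set $V_F:=\beta^{-1}(H_\epsilon(F))\subseteq V\subseteq M$, a based direct summand of $(M,\psi)$ whose induced form is isometric to $H_\epsilon(F)$, hence nonsingular. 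Splitting $V_F$ off along its $\psi$-orthogonal complement $N:=V_F^\perp$ identifies $(M,\psi)\cong H_\epsilon(F)\oplus(N,\psi_N)$ in such a way that $V$ corresponds to $H_\epsilon(F)\oplus V_0'$ with $V_0':=\beta^{-1}(V')$ a based summand of $N$. Since $(M,\psi)$ carries a simple Lagrangian it is hyperbolic, so $[N]=0\in L^s_{2q}(\Lambda)$ and $N$ is stably hyperbolic; adding a trivial formation to $x$ (which does not change $[x]$) we may then assume $N\cong H_\epsilon(R)$. Renaming, $[x]$ is now represented by an \eq\ $(H_\epsilon(F\oplus R);\hat L,H_\epsilon(F)\oplus V_0')$ with $\hat L$ a simple Lagrangian and $V_0'\subseteq H_\epsilon(R)$ a based summand.

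Then I would straighten the Lagrangian and invoke the lemma. The relation (\ref{shifteqn}) lets us write $[x]=[z_1]+[x_0]$, where $[z_1]:=[H_\epsilon(F\oplus R);\hat L,F\oplus R]\in L^s_{2q+1}(\Lambda)$ and $x_0:=(H_\epsilon(F\oplus R);F\oplus R,H_\epsilon(F)\oplus V_0')$ now has the \emph{standard} Lagrangian $F\oplus R$. Put $\tilde\alpha:=\alpha\oplus\id_{H_\epsilon(R)}\in\Aut(H_\epsilon(F)\oplus H_\epsilon(R))=\Aut(H_\epsilon(F\oplus R))$. Because $z(\tilde\alpha)=z(\alpha)\oplus(H_\epsilon(R);R,R)$ and $(H_\epsilon(R);R,R)$ represents $0$, we get $[z(\tilde\alpha)]=[z]$; and because $\alpha$ is an \emph{automorphism} of $H_\epsilon(F)$ it carries that whole summand onto itself, so $\tilde\alpha$ restricts to an isometry of $W:=H_\epsilon(F)\oplus V_0'$. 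Lemma \ref{L-actionlem} applied to $x_0=(H_\epsilon(F\oplus R);F\oplus R,W)$ and $\tilde\alpha$ gives $[z]+[x_0]=[x_0]$, and since $\ol{\Lambda}$ is abelian, $[z]+[x]=[z]+[z_1]+[x_0]=[z_1]+([z]+[x_0])=[z_1]+[x_0]=[x]$, as required.

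The hard part is the normalisation in the second paragraph: producing a representative whose ambient form is a hyperbolic form on $F\oplus R$ and whose second summand contains the \emph{whole} block $H_\epsilon(F)$. This rests on the standard splitting of a nonsingular sub-form off $(M,\psi)$ — with the attendant $s$-basis bookkeeping, so that ``split summand'' is meant in the based sense and the resulting isometries are simple — on the cancellation-free fact that $N$ is stably hyperbolic, and on the observation that the torsion-type defect $[z_1]$ incurred by changing Lagrangians lies in $L^s_{2q+1}(\Lambda)$ and is therefore absorbed by the abelian action. Once the representative is in this shape Lemma \ref{L-actionlem} does the work, the only genuinely new input being the near-tautology that enlarging $\alpha$ by an identity automatically preserves the distinguished hyperbolic block of $W$.
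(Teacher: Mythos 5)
Your proposal is correct and takes essentially the same approach as the paper: split the hyperbolic summand $H_\epsilon(F)\subset(V,\theta)$ off the ambient form orthogonally, extend the automorphism $\alpha_0$ of $H_\epsilon(F)$ by the identity on the complement, and invoke Lemma \ref{L-actionlem}. Your additional stabilisation and Lagrangian-straightening steps merely make explicit the identification $[z(\alpha_0)]=[z(\alpha_0\oplus\id)]$, which the paper's proof treats as ``evident''.
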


\begin{proof}
There is a decomposition $(M, \psi) = H_\epsilon(F) \oplus (M', \psi')$ such that $H_\epsilon(F)\subset (V,\theta)$ and $(V', \theta') \subset (M', \psi')$.  
There is an isometry $\alpha_0 \in \Aut( H_\epsilon(F))$ such that $[z] = [z(\alpha_0)]$.  
We extend $\alpha_0$ to $\alpha\in \Aut(M, \psi)$ where $\alpha = \alpha_0 \oplus \id_{M'}$.  
Evidently $\alpha$ satisfies the hypothesis of Lemma \ref{L-actionlem} and so $[z(\alpha_0)] = [z(\alpha)]$ acts trivially on $[x]$.
\end{proof}

\begin{prop}
\label{stablyelemprop}
For every element $[x] \in \ol{\Lambda}$, there is a positive integer $k$ such that $[x] + e([H_\epsilon(\Lambda^k)])$ is elementary.
\end{prop}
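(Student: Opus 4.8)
The plan is to push a well-chosen elementary summand onto $[x]$ so that its $\delta$-invariant becomes trivial, use Corollary~\ref{maincor} to conclude that the result is elementary \emph{up to the action of} $L^s_{2q+1}(\Lambda)$, and then add a few more free hyperbolic elementary summands so that the residual $L^s_{2q+1}(\Lambda)$-term can be absorbed by the triviality-of-action statements, Lemma~\ref{L-actionlem} and Corollary~\ref{L-actioncor}. This is precisely the route flagged in the introduction: combine the $L$-action lemma with the key Proposition~\ref{bdryextprp}.

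First I would fix a representative $x=(M,\psi;F,V)$ of $[x]$ and, after adding a trivial formation and applying an isometry (Remark~\ref{eqrem}(i)), arrange that $(M,\psi)=H_\epsilon(\Lambda^n)$ with Lagrangian $F=\Lambda^n\times\{0\}$. Choosing $\rho$ representing $\psi$ and setting $y:=x\oplus\delta(M,\rho)$, Definition~\ref{e(v)defi} gives $[y]=[x]+e([H_\epsilon(\Lambda^n)])$. The reason for this particular elementary summand is Example~\ref{delex}(iii), which (resting on the key Proposition~\ref{bdryextprp}) shows $\delta(y)=1\in\bAut\big((V,\theta)\oplus(M,\psi)\big)$; and Proposition~\ref{prepprp}(i) identifies $(V,\theta)\oplus H_\epsilon(\Lambda^n)$ with $(V^\perp,-\theta^\perp)\oplus H_\epsilon(\Lambda^n)$, so $b([y])$ lies on the diagonal and $[y]\in l_{2q+1}(w,w)$ for $w:=(V,\theta)\oplus H_\epsilon(\Lambda^n)$, with $\delta([y])=1\in\sbAut(w)$. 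Corollary~\ref{maincor} then yields an elementary element $[y_0]$ and a class $[z_0]\in L^s_{2q+1}(\Lambda)$ with $[y]=[y_0]+[z_0]$.

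It remains to cancel $[z_0]$. I would represent $[z_0]$ by a simple formation $(H_\epsilon(\Lambda^r);\Lambda^r,G)$ for some $r$ (every class of $L^s_{2q+1}(\Lambda)$ is so represented) and set $k:=n+r$. Since $e$ is a monoid map, $e([H_\epsilon(\Lambda^k)])=e([H_\epsilon(\Lambda^n)])+e([H_\epsilon(\Lambda^r)])$, and so
\[ [x]+e([H_\epsilon(\Lambda^k)]) = [y]+e([H_\epsilon(\Lambda^r)]) = \big([y_0]+e([H_\epsilon(\Lambda^r)])\big)+[z_0] =: [y_1]+[z_0], \]
where $[y_1]$ is elementary because the elementary elements form a submonoid. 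Taking the representative of $[y_1]$ obtained by adding the boundary quasi-formation $\delta(\Lambda^{2r},\rho_r)$ (with $\rho_r$ representing the hyperbolic structure) to a representative of $[y_0]$, the induced form on its half-rank summand splits off a copy of $H_\epsilon(\Lambda^r)$; Corollary~\ref{L-actioncor} then shows $[z_0]$ acts trivially on $[y_1]$, so $[x]+e([H_\epsilon(\Lambda^k)])=[y_1]$ is elementary.

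The main obstacle, and the reason for the indirect route, is reconciling the two decorations: Corollary~\ref{maincor} only detects elementariness modulo the $L^s_{2q+1}(\Lambda)$-action, so one must simultaneously arrange that the $b$-image lands on the diagonal — which forces the use of the hyperbolic summand $H_\epsilon(\Lambda^n)$ coming from the \emph{form} of $x$ rather than an arbitrary one, in order to invoke Example~\ref{delex}(iii) — and that the boundary of the final class carries a free hyperbolic summand big enough to swallow the residual unit $[z_0]$ via Lemma~\ref{L-actionlem}. Verifying that these two demands are compatible, and keeping the rank bookkeeping between $n$ and $r$ consistent, is the heart of the matter; the remaining ingredients (additivity of $\delta$, the fact that $\delta$ of an elementary or boundary quasi-formation equals $1$, and that $e$ is a monoid map) are already recorded in Example~\ref{delex} and Corollary~\ref{bcor}.
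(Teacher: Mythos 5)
Your proposal is correct and follows essentially the same route as the paper's proof: add $e([H_\epsilon(\Lambda^n)])$ coming from the ambient hyperbolic form so that Example \ref{delex}(iii) (via Proposition \ref{bdryextprp}) gives $\delta = 1$, invoke Corollary \ref{maincor} to get elementariness modulo $L^s_{2q+1}(\Lambda)$, and absorb the residual unit by adding a further elementary hyperbolic piece and applying Corollary \ref{L-actioncor}. The only (immaterial) difference is that you apply Corollary \ref{L-actioncor} to the combined elementary element $[y_0]+e([H_\epsilon(\Lambda^r)])$, whereas the paper applies it to $e([H_\epsilon(\Lambda^r)])$ alone before recombining.
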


\begin{proof}
Write $x=(H_\epsilon(L); L, V)$.  It follows immediately from Example \ref{delex}[iii] and the definitions that $b([x] + e([H_\epsilon(L)])) \in \Delta(\mathcal{F}_{2q}^{\rm zs}(\Lambda))$ and that $\delta([x] + e([H_\epsilon(L)])) = 1 \in {\rm sbAut}((V, \theta) \oplus H_{\epsilon}(L))$.  Hence by Corollary \ref{maincor} there is a $[z] \in L_{2q+1}^s(\Lambda)$ such that $([x] + e([H_\epsilon(L)])) + [z] $ is elementary.  Now $z$ can be chosen to be of the form $(H_\epsilon(F); F, G)$ and by Corollary \ref{L-actioncor} $[z] \oplus e([H_\epsilon(F)]) = e([H_\epsilon(F)])$.  It follows that $[x] \oplus e([H_\epsilon(F\oplus L)])$ is elementary.
\end{proof}

\begin{cor} \label{grothgrpcor}
The monoid homorphisms $\mathcal{F}_{2q}^{\rm zs}(\Lambda) \cong \mathcal{E}\ol{\Lambda} \hra \ol{\Lambda}$ induce isomorphisms of the respective Grothendieck groups
\[ {\rm Gr}(\mathcal{F}_{2q}^{\rm zs}(\Lambda)) \cong {\rm Gr}(\mathcal{E}\ol{\Lambda}) \cong {\rm Gr}(\ol{\Lambda}).\]
\end{cor}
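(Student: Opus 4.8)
The plan is to reduce everything to Proposition \ref{stablyelemprop} together with a standard ``cofinal submonoid'' argument. First, the isomorphism ${\rm Gr}(\mathcal{F}_{2q}^{\rm zs}(\Lambda)) \cong {\rm Gr}(\mathcal{E}\ol{\Lambda})$ is immediate: $b_{\mathcal{E}}$ is an isomorphism of monoids by Corollary \ref{bcor}(ii) and ${\rm Gr}(-)$ is a functor. So the work is to show that the monoid inclusion $\iota \co \mathcal{E}\ol{\Lambda} \hra \ol{\Lambda}$ induces an isomorphism ${\rm Gr}(\iota)$. The crucial input is Proposition \ref{stablyelemprop}: for every $[x] \in \ol{\Lambda}$ there is a $k$ with $[x] + e([H_\epsilon(\Lambda^k)])$ elementary. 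Two features of this are used repeatedly: the element one adds, $e([H_\epsilon(\Lambda^k)])$, already lies in $\mathcal{E}\ol{\Lambda}$ (Definition \ref{e(v)defi}); and $\mathcal{E}\ol{\Lambda}$ is literally a submonoid of $\ol{\Lambda}$, so $\iota$ is injective on underlying sets and a monoid identity among elementary elements holds in $\mathcal{E}\ol{\Lambda}$ iff it holds in $\ol{\Lambda}$.

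For surjectivity of ${\rm Gr}(\iota)$ I would take a general element $[x] - [y]$ of ${\rm Gr}(\ol{\Lambda})$, choose $k$ and $l$ as in Proposition \ref{stablyelemprop} for $[x]$ and $[y]$ respectively, and write, in ${\rm Gr}(\ol{\Lambda})$,
\[
[x] - [y] = \bigl([x] + e([H_\epsilon(\Lambda^k)])\bigr) - e([H_\epsilon(\Lambda^k)]) - \bigl([y] + e([H_\epsilon(\Lambda^l)])\bigr) + e([H_\epsilon(\Lambda^l)]).
\]
All four terms on the right lie in $\mathcal{E}\ol{\Lambda}$, so $[x] - [y]$ is in the image of ${\rm Gr}(\iota)$.

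For injectivity I would start with $[a], [b] \in \mathcal{E}\ol{\Lambda}$ such that $[a] - [b]$ maps to $0$ in ${\rm Gr}(\ol{\Lambda})$; then there is $[c] \in \ol{\Lambda}$ with $[a] + [c] = [b] + [c]$ in $\ol{\Lambda}$. Choosing $k$ so that $[c'] := [c] + e([H_\epsilon(\Lambda^k)])$ is elementary and adding $e([H_\epsilon(\Lambda^k)])$ to both sides gives $[a] + [c'] = [b] + [c']$ in $\ol{\Lambda}$ with $[c'] \in \mathcal{E}\ol{\Lambda}$; since $\mathcal{E}\ol{\Lambda}$ is a submonoid this equality already holds in $\mathcal{E}\ol{\Lambda}$, whence $[a] = [b]$ in ${\rm Gr}(\mathcal{E}\ol{\Lambda})$. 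Composing ${\rm Gr}(\iota)$ with the first isomorphism then yields the claim.

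The hard part has in effect already been done: it is Proposition \ref{stablyelemprop}, which itself rests on Corollary \ref{maincor} and Example \ref{delex}. What remains here is routine; the only point needing a little care is the injectivity step, where one must first upgrade the cancellation witness $[c]$ to an \emph{elementary} element before passing back from $\ol{\Lambda}$ to $\mathcal{E}\ol{\Lambda}$.
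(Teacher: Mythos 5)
Your proof is correct, and it agrees with the paper on the main point: surjectivity of ${\rm Gr}(\iota)$ comes straight from Proposition \ref{stablyelemprop}, exactly as in the paper, and the identification ${\rm Gr}(\mathcal{F}_{2q}^{\rm zs}(\Lambda)) \cong {\rm Gr}(\mathcal{E}\ol{\Lambda})$ via $b_{\mathcal{E}}$ (Corollary \ref{bcor}(ii)) is also the same. Where you diverge is the injectivity of ${\rm Gr}(\iota)$. The paper handles this in one line by observing that the monoid isomorphism $b_{\mathcal{E}}$ factors as $b \circ i$, so ${\rm Gr}(i)$ is a factor of an injective map and hence injective; the map $b$ acts as a retraction-like invariant that separates elementary elements even after group completion. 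You instead argue intrinsically: given $[a]+[c]=[b]+[c]$ in $\ol{\Lambda}$ with $[a],[b]$ elementary, you upgrade the cancellation witness $[c]$ to an elementary $[c']$ by a second application of Proposition \ref{stablyelemprop}, and then use that $\mathcal{E}\ol{\Lambda}$ is a submonoid to conclude $[a]=[b]$ in ${\rm Gr}(\mathcal{E}\ol{\Lambda})$. This is a standard cofinal-submonoid argument and is sound (your care in upgrading the witness before descending to $\mathcal{E}\ol{\Lambda}$ is exactly the point that needs checking); it buys independence from $b$ in the injectivity step and would apply to any submonoid that is cofinal in this sense, while the paper's route is shorter because $b$ is already available and does the separating for free.
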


\begin{proof}
Let $i\co  \mathcal{E}\ol{\Lambda} \hra \ol{\Lambda}$ denote the inclusion.  The induced homomorphism ${\rm Gr}(i)\co {\rm Gr}(\mathcal{E}\ol{\Lambda}) \ra {\rm Gr}(\ol{\Lambda})$ is onto by Proposition \ref{stablyelemprop}.  On the other hand the monoid isomorphism
$b_{\mathcal{E}} \co\mathcal{E}\ol{\Lambda} \cong \Delta(\mathcal{F}_{2q}^{\rm zs}(\Lambda))$
factors as $b_{\mathcal{E}} = b \circ i$ and this shows that ${\rm Gr}(i)$ is injective.
\end{proof}

\section{Calculations in special cases}
\label{calcsec}
In this we calculate ${\rm sbAut}(v)$ and ${\rm sbIso}(v, v')$ in special situations.  The first subsection concerns ${\rm sbAut}(v)$ when $v$ is the sum on linear and simple forms.  In the next subsection we compute ${\rm sbIso}(v, v')$ when $v$ and $v'$ become nonsingular after localisation.  In this case $\del v$ and $\del v$ are quadratic linking forms.  In the final subsection we compute the monoid $l_{3}(\bZ)$ and describe the set $l_5(\bZ)$.

\subsection{On $l_{2q+1}(v, v)$ for linear and simple quadratic forms}
\label{linearandsimplesubsec}
Recall that an $\epsilon$-quadratic form $v=(V, \theta)$ is linear if $\theta +\epsilon \theta^* = 0$ and simple if $\theta +\epsilon \theta^*\co  V \ra V^*$ is a simple isomorphism.  Prior to our first definition, we warn the reader that torsions and in particular torsions of non-simple isometries will play a key role in this subsection and thus, {\rm isomorphisms and isometries are not assumed to be simple in this subsection.}

\begin{defi}
Give an $\epsilon$-quadratic form $(V, \theta)$, let $\Aut^h(V, \theta)$ be the group of all isometries of $(V, \theta)$, simple or not.  Let
\[
Z^1(\Wh(\Lambda)) :=\{[h] \in \Wh(\Lambda)\, |\, [h] = -[h^*] \}
\]
and let ${\rm UWh}(\Lambda)  \subset Z^1(\Wh(\Lambda))$ be the subgroup of all torsions $\tau(h) \in {\rm Wh}(\Lambda)$ where $h \in \Aut^h(H_\epsilon(L))$ for some hyperbolic form.
\end{defi}

We begin with the trivial case.
\begin{lem} \label{bzerolem}
Suppose that $v = (N, 0)$ is a zero form.  Then $\sbAut(v) = Z^1(\Wh(\Lambda))$ and identifying $L_{2q+1}(\Lambda) = l_{2q+1}(v, v)$, the exact sequence of Corollary \ref{maincor} for $v$,
maps onto a fragment of the Ranicki-Rothenberg sequence,
\begin{eqnarray*}
\xymatrix
{
&
L_{2q+1}^s(\Lambda)
\ar@{=}[d]
\ar@{^{(}->}[r]^-\rho
& 
\oL{\Lambda} 
\ar@{->>}[d]
\ar[r]^-\delta
&
Z^1(\Wh(\Lambda))
\ar@{->>}[d]
\ar[r]^-\kappa
&
L_{2q}^s(\Lambda) 
\ar@{=}[d]
& 
\\
\cdots
\ar[r]
&
L_{2q+1}^s(\Lambda) 
\ar[r]
&
L_{2q+1}^h(\Lambda) 
\ar[r]
& 
\widehat H^1(\Wh(\Lambda)) 
\ar[r]
& 
L_{2q}^s(\Lambda)
\ar[r] 
&
\cdots,
}
\end{eqnarray*}
where $\widehat H^1(\Wh(\Lambda)) = Z^1(\Wh(\Lambda))/\{[h] - [h]^*\, |\, [h] \in \Wh(\Lambda)\}$, $L_{2q+1}(\Lambda)$ maps onto the unbased odd-dimensional surgery obstruction group $L_{2q+1}^h(\Lambda)$ (see \cite{Ran73}).  Moreover, $\delta = -\tau^*$ where $\tau$ is the map in Remark \ref{Ltorsionrem} and the image of $\delta$ is ${\rm UWh}(\Lambda)$ so there is a short exact sequence
\[0 \dlra{} L_{2q+1}^{s}(\Lambda) \dlra{} L_{2q+1}(\Lambda) \dlra{\delta} {\rm UWh}(\Lambda) \dlra{} 0.\]
\end{lem}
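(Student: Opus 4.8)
The plan is to compute $\sbAut((N,0))$ by hand, then read off $\delta$ and $\rho$ from earlier results, and finally match the outcome with the Rothenberg sequence. Write $v=(N,0)$; its symmetrisation vanishes, and by Definition \ref{formationdef}(iii) its boundary is the degenerate split formation $\del(N,0)=(N,(\svec{1\\0},0)N)$, with $F=G=N$, $\gamma=\id_N$, $\mu=0$ and $\theta=0$.

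\emph{First, the identification $\sbAut((N,0))\cong Z^1(\Wh(\Lambda))$.} Given a stable isomorphism $f=(\alpha,\beta,\nu)\co\del(N,0)\oplus(P,P^*)\isora\del(N,0)\oplus(P',{P'}^*)$, Lemma \ref{sthomlem}(i) with $\lambda=\lambda'=0$ forces $\alpha$ and $\beta^{-1}$ to be block upper triangular,
\[
\alpha=\svec{a & a_1\\ 0 & a_3},\qquad \beta^{-1}=\svec{b & b_1\\ 0 & a_3^*},\qquad ab=\id_N,
\]
so $a\in\Aut(N)$. I would assign to $f$ the torsion $\tau(a)\in\Wh(\Lambda)$. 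Lemma \ref{sthomlem}(ii) (again with $\lambda=0$) shows a homotopy leaves the block $a$ unchanged; stabilising by a trivial formation or by $\del\id_Q$ does not change $\tau(a)$; and the $\Aut(N,0)\times\Aut(N,0)$-action of Definition \ref{bautdef} modifies $a$ only by composition with simple automorphisms. Hence $f\mapsto\tau(a)$ descends to a map $\sbAut((N,0))\to\Wh(\Lambda)$. Simplicity of $\alpha$ and $\beta$ gives $\tau(a_3)=-\tau(a)$ and $\tau(a_3)^*=\tau(a)$, whence $\tau(a)^*=-\tau(a)$, so the map lands in $Z^1(\Wh(\Lambda))$. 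Surjectivity is witnessed by the diagonal stable isomorphisms $(\svec{a & 0\\ 0 & a_3},\svec{a & 0\\ 0 & a_3^{-*}},0)$ with $\tau(a)=[h]$, $\tau(a_3)=-[h]$. For injectivity, if $\tau(a)=0$ then after using the $\Aut(N,0)$-action to arrange $a=\id_N$, a calculation with the homotopy relation of Lemma \ref{sthomlem}(ii) and Lemma \ref{splitisolem}(ii) should trivialise the remaining entries $a_1,a_3,b_1,\nu$ after a further stabilisation, giving $f=1$. This injectivity bookkeeping is where I expect the real work to be.

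\emph{Next, $\delta$ and $\rho$.} By Corollary \ref{bcor}(iii), $l_{2q+1}(v,v)=b^{-1}((0,0))=L_{2q+1}(\Lambda)$, under which the action $\rho$ of $\oLs{\Lambda}=L_{2q+1}^s(\Lambda)$ is $[z]\mapsto[z]$, i.e. the injection of Remark \ref{Ltorsionrem}. A class in $L_{2q+1}(\Lambda)$ is represented by a possibly non-simple formation which, as in Example \ref{formationex}, can be written $(L\oplus L^*,\svec{0 & g\\ 0 & 0};K,L)$ with $[g]=-[g^*]\in\Wh(\Lambda)$; that example computes $f_j$ to be stably homotopic to $(-g^*\oplus g^{-*},-g^*\oplus g^{-*},0)$, so the invariant above of $\delta([x])$ is $\tau(-g^*)=[g]^*$. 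On the other hand a Hamiltonian $s$-basis of $(L\oplus L^*,\svec{0 & g\\ 0 & 0})$ with respect to $L$ differs from the preferred one by $\svec{1 & 0\\ 0 & g^{-1}}$, so the torsion $\tau$ of Remark \ref{Ltorsionrem} on this class is $-[g]$. Hence $\delta=-\tau^*$ under the identification of the previous paragraph. Since $*$ acts on $Z^1(\Wh(\Lambda))$ as negation, $\delta$ coincides with $\tau$ regarded as a map into $Z^1$; and by the very definition of ${\rm UWh}(\Lambda)$ — together with the fact that every class of $L_{2q+1}(\Lambda)$ is $[H_\epsilon(F);F,h(F)]$ for some $h\in\Aut^h(H_\epsilon(F))$, with $\tau$-value $\pm\tau(h)$ — the image of $\tau$, hence of $\delta$, is exactly ${\rm UWh}(\Lambda)$. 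Combining this with the statement of Corollary \ref{maincor} that the orbits of $\rho$ are the fibres of $\delta$ (which for the inclusion $\rho$ forces $\ker\delta=L_{2q+1}^s(\Lambda)$) yields the short exact sequence $0\to L_{2q+1}^s(\Lambda)\to L_{2q+1}(\Lambda)\dlra{\delta}{\rm UWh}(\Lambda)\to 0$.

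\emph{Finally, the Rothenberg fragment.} The basis-change isometry $\svec{h & 0\\ 0 & h^{-*}}$ of $H_\epsilon(L)$ has torsion $[h]-[h]^*$, so ${\rm UWh}(\Lambda)=\im(\delta)$ contains all coboundaries, and by the exactness $\im(\delta)=\kappa^{-1}(0)$ of Corollary \ref{maincor} the map $\kappa$ factors through $\widehat H^1(\Wh(\Lambda))=Z^1(\Wh(\Lambda))/\{[h]-[h]^*\}$. Together with the forgetful surjection $L_{2q+1}(\Lambda)\twoheadrightarrow L_{2q+1}^h(\Lambda)$ (cf. \cite{Ran73}) and the quotient $Z^1(\Wh(\Lambda))\twoheadrightarrow\widehat H^1(\Wh(\Lambda))$, the three computations above assemble into the claimed commutative ladder onto a fragment of the Ranicki--Rothenberg sequence: $\delta$ reduces to the Rothenberg map $L_{2q+1}^h(\Lambda)\to\widehat H^1(\Wh(\Lambda))$ (both record the Hamiltonian basis-change torsion) and $\kappa$ reduces to the connecting map $\widehat H^1(\Wh(\Lambda))\to L_{2q}^s(\Lambda)$, which a direct inspection of the union of Definition \ref{uniondef} identifies with the standard re-basing construction. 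The main obstacle throughout is the first step — establishing $\sbAut((N,0))\cong Z^1(\Wh(\Lambda))$, and in particular the injectivity of $f\mapsto\tau(a)$ via the homotopy relation of Lemma \ref{sthomlem}(ii); once that is in hand the rest is bookkeeping with Example \ref{formationex}, Remark \ref{Ltorsionrem} and the standard Rothenberg sequence.
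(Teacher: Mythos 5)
Your proposal is correct and takes essentially the same route as the paper: the paper likewise uses Lemma \ref{sthomlem} to identify $\Aut(\del(N,0))$ with $\{h\in GL(N)\,|\,[h]=-[h^*]\in\Wh(\Lambda)\}$ via the block $a=\alpha|_N$, reads off $\delta=-\tau^*$ and the Rothenberg ladder from the definitions, and gets $\im(\delta)={\rm UWh}(\Lambda)$ from the fact that every class in $L_{2q+1}(\Lambda)$ is represented by $(H_\epsilon(F);F,h(F))$ for a possibly non-simple isometry $h$ (phrased there as $L_{2q+1}(\Lambda)\cong U(\Lambda,\epsilon)/RU(\Lambda,\epsilon)$). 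The injectivity step you flag as "the real work" is in fact immediate from Lemma \ref{sthomlem}(ii): with $\lambda=\lambda'=\theta=0$ the stable homotopy class depends only on the data $(a,b,s)$, and since $a$ is invertible any discrepancy in $s$ is absorbed by a suitable choice of $\Delta_1$, so no further stabilisation or trivialisation of $a_1,a_3,b_1,\nu$ is needed.
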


\begin{proof}
Using the definitions and Lemma \ref{sthomlem} one sees that there is an isomorphism
\begin{eqnarray*}
\delta^T_{(N,0)}\co \Aut(\partial(N,0)) &\isora& \{ h\in GL(N)\, |\, [h]=-[h^*]\in\Wh(\Lambda)\}
\\
{[(\alpha,\beta,\nu)]}&\mt& [\alpha|_N\co N \isora N]
\end{eqnarray*}
where $GL(N)$ is the group of all isomorphisms $N \cong N$, simple or not.  Since $\Aut(N, 0) = \{h \in GL(N) | [h] = 0 \in \Wh(\Lambda) \}$ we obtain after stabilisation that
$\sbAut(N,0) \cong Z^1(\Wh(\Lambda))$.  The map onto the Ranicki-Rothenberg sequence follows from the definitions as does the identity $\delta = -\tau^*$.   Finally, the identification of the image of $\delta$ comes from the fact that $L_{2q+1}(\Lambda) \cong U(\Lambda, \epsilon)/RU(\Lambda, \epsilon)$ where $U(\Lambda,\epsilon)$, the stable unitary group, is defined just as $SU(\Lambda, \epsilon)$ in subsection \ref{origolsec} minus the requirement that isometries need be simple.
\end{proof}

Now let ${\rm U'Wh}(\Lambda)  \subset Z^1(\Wh(\Lambda))$ be the subgroup of all torsions $\tau(h) \in {\rm Wh}(\Lambda)$ where $h \in \Aut^h(H^\epsilon(L))$ for some symmetric hyperbolic form.  The main result of this subsection is the following

\begin{prop}\label{simplelinearprop}
If $v = (N, \eta) \oplus (M, \psi)$ is the sum of a linear form $(N, \eta)$ and simple form $(M, \psi)$ and if ${\rm UWh}(\Lambda) = {\rm U'Wh}(\Lambda)$ then $L_{2q+1}(\Lambda)$ acts transitively on $l_{2q+1}(v, v)$.
\end{prop}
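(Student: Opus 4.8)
The plan is to deduce this from Corollary \ref{maincor} together with a computation of $\sbAut(v)$ in terms of Whitehead torsion. By Corollary \ref{maincor} the fibres of $\delta\co l_{2q+1}(v,v)\to\sbAut(v)$ are exactly the orbits of $L_{2q+1}^s(\Lambda)\subseteq L_{2q+1}(\Lambda)$. Since $L_{2q+1}^s(\Lambda)$ is contained in $L_{2q+1}(\Lambda)$, the $L_{2q+1}(\Lambda)$-orbit $O$ of the elementary element $e([v])$ (Definition \ref{e(v)defi}) is a union of such $\delta$-fibres, so $O=\delta^{-1}(S)$ where $S:=\delta(O)$. Hence $L_{2q+1}(\Lambda)$ acts transitively on $l_{2q+1}(v,v)$ if and only if $\im(\delta)\subseteq S$, and the whole proof reduces to identifying $S$ and checking this inclusion.

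Next I would identify $S$. By Example \ref{delex}(i), $\delta(e([v]))=1$. If $[z]\in L_{2q+1}(\Lambda)$ is represented by a (possibly non-simple) formation $(H_\epsilon(F);F,G)$, then $G$ is a Lagrangian, so both boundaries of this formation are zero forms; by additivity of $\delta$ the value $\delta(e([v])+[z])$ is obtained from $\delta(e([v]))=1$ by stabilising with the boundary class of $(H_\epsilon(F);F,G)$, an element of $\bAut((G,0))$. By Lemma \ref{bzerolem}, $\sbAut$ of a zero form is $Z^1(\Wh(\Lambda))$, on $L_{2q+1}(\Lambda)$ the map $\delta$ is $-\tau^*$ (with $\tau$ as in Remark \ref{Ltorsionrem}), and its image is precisely ${\rm UWh}(\Lambda)$. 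Letting $\iota\co Z^1(\Wh(\Lambda))\to\sbAut(v)$ be the natural map sending the stable class of a zero-form automorphism $h$ to the stable class of $\del\id_V\oplus h$, we get $S=\iota({\rm UWh}(\Lambda))$, so the task is reduced to the single inclusion $\im(\delta)\subseteq\iota({\rm UWh}(\Lambda))$.

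The heart of the argument, and the only place the hypotheses on $v$ are used, is to prove $\im(\delta)\subseteq\iota({\rm U'Wh}(\Lambda))$; the identity ${\rm U'Wh}(\Lambda)={\rm UWh}(\Lambda)$ is then exactly the hypothesis. Because $w=(M,\psi)$ is simple, its boundary is a trivial formation, $\del w\cong(M,M^*)$ (Lemma \ref{splitisolem}(i)), so $\del v=\del n\oplus\del w$ is stably isomorphic to $\del n$; moreover the $\epsilon$-symmetric form underlying $v$ is $0\oplus\phi$ with $\phi=(1+T_\epsilon)\psi$ a simple isomorphism, and consequently $\Aut(n\oplus w)$ contains the isometries that are the identity on $w$ and shear $N$ by an arbitrary element of $\Hom_\Lambda(M,N)$, a fact which uses the nonsingularity of $w$. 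Given $[x]\in l_{2q+1}(v,v)$ with representative $(M_0,\psi_0;L,W)$, I would unwind $\delta([x])$ from the formula for $f_j$ in Proposition \ref{orthoglueprp} via Lemma \ref{sthomlem}, use these shear isometries (as $\del$ of pre- and post-composing isometries of $v$) to clear the off-diagonal $N$--$M$ terms, and so identify $\delta([x])$ with the class in $Z^1(\Wh(\Lambda))=\sbAut((N,0))$ given by the Whitehead torsion of the isomorphism exhibiting $v\cup_{\delta([x])}(-v)$ — which equals $(M_0,\psi_0)$ up to hyperbolics and hence is hyperbolic, since $\kappa\circ\delta=0$ and $(M_0,\psi_0)$ carries a simple Lagrangian — as a hyperbolic form $H_\epsilon(L)$. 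The symmetrisation of that isomorphism is an isomorphism onto the symmetric hyperbolic form $H^\epsilon(L)$, so its torsion lies in ${\rm U'Wh}(\Lambda)$, which is what is needed.

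The main obstacle is this last step: carrying out the shear reduction and tracking Whitehead torsions through the boundary and gluing constructions of Section \ref{gluesec} carefully enough to see that the residual invariant of $\delta([x])$ lands in ${\rm U'Wh}(\Lambda)$ rather than merely in $Z^1(\Wh(\Lambda))$. Everything before it is formal: the reductions in the first two paragraphs follow from Corollary \ref{maincor}, Lemma \ref{bzerolem}, and the additivity of $\delta$.
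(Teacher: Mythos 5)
Your first two paragraphs are sound and run parallel to the paper: by Corollary \ref{maincor} the $\oLs{\Lambda}$-orbits are the $\delta$-fibres, the $L_{2q+1}(\Lambda)$-orbit of $e([v])$ has $\delta$-image $\iota({\rm UWh}(\Lambda))$ by additivity of $\delta$ and Lemma \ref{bzerolem}, and everything reduces to the inclusion $\im(\delta)\subseteq\iota({\rm U'Wh}(\Lambda))$. The gap is in your third paragraph, which is exactly where the hypotheses must do work. The torsion of an isomorphism exhibiting $v\cup_{\delta([x])}(-v)$ as $H_\epsilon(L)$ does not lie in ${\rm U'Wh}(\Lambda)$ for the reason you give: ${\rm U'Wh}(\Lambda)$ consists of torsions of \emph{self}-isometries $h\in\Aut^h(H^\epsilon(L))$, whereas your map goes from a differently based form onto $H^\epsilon(L)$, and the fact that its target is a symmetric hyperbolic form imposes no constraint on its torsion beyond membership in $Z^1(\Wh(\Lambda))$. (If that inference were valid, the argument would never distinguish ${\rm UWh}$ from ${\rm U'Wh}$ and the hypothesis would be vacuous.) A secondary problem: the maps $\svec{1&b\\0&1}$ with $b\in\Hom_\Lambda(M,N)$ are isometries of $(N,\eta)\oplus(M,\psi)$ only when $b^*\eta b=0\in Q_\epsilon(M)$ -- already for $\Lambda=\bZ$, $\epsilon=-1$, $(N,\eta)=(\bZ,1)$ this fails for suitable $b$ -- so the proposed clearing of off-diagonal terms cannot be carried out with ``arbitrary'' shears. (Also note $\sbAut(N,\eta)$ is not $Z^1(\Wh(\Lambda))$ when $\eta\neq 0$; one only has the invariant $\delta^T_{(N,\eta)}$ with values there.)

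What makes the key step work in the paper is a structural fact you do not use: every isometry of $v=(N,\eta)\oplus(M,\psi)$ preserves the radical $N$ of the symmetrisation, hence has the triangular shape $\svec{a&b\\0&d}$ with $a$ a possibly non-simple isometry of $(N,\eta)$, $d$ a possibly non-simple isometry of the symmetric form $(M,\phi)$, and $\tau(a)=-\tau(d)$. Combining Example \ref{delex} (iii) (for $x=(H_\epsilon(L);L,W)$ the class $\delta([x])$ becomes $1$ after stabilising by the ambient hyperbolic $H_\epsilon(L)$) with Lemma \ref{surnonslem} (the stabilisation map $i$ is injective on the fibres of $\delta^T$), one unwinds the isometries witnessing $i(\delta([x]))=1$ through this triangular structure and identifies the residual invariant $\delta^T(\delta([x]))$ with a sum of torsions of isometries of $H^\epsilon(L)$, i.e.\ an element of ${\rm U'Wh}(\Lambda)={\rm UWh}(\Lambda)$, which Lemma \ref{bzerolem} then kills by adding a suitable $[z_1]\in L_{2q+1}(\Lambda)$; Corollary \ref{maincor} finishes. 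Your outline needs this mechanism (or an equivalent one) in place of the torsion-of-a-hyperbolic-identification argument, so as it stands the central inclusion is unproved.
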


In the general case $v =  (N, \eta) \oplus (M, \psi)$, $\del v = \del (N, \eta) \oplus \del  (M, \psi) $ and by Lemma \ref{splitisolem}, $\del(M,\psi)\cong(M,M^*)$ is isomorphic to a trivial split formation.  
It follows from 
Lemma \ref{sthomlem} that there is a homomorphism
\[
\delta^T_{v}\co  \Aut(\del v) \ra Z^1(\Wh(\Lambda)),\quad
[(\alpha, \beta, \nu)] \mapsto [a] 
\]
where $a = \alpha|_N\co  N \cong N$ as in the notation of Lemma \ref{sthomlem}.
Evidently $\delta^T_{v}$ is well-defined after stabilisation. For any isometry $h \in\Aut(N, \eta)$ of the linear form $(N, \eta)$, $\delta^T_{(N, \eta)}(\del h) = 0$. Hence $\delta^T_{(N, \eta)}$ induces a map $\sbAut(N, \eta) \ra Z^1(\Wh(\Lambda))$.

Next consider any automorphism of $v$, $h=\svec{a&b\\c&d}\co N\oplus M \isora N\oplus M$, and observe that $h$ must preserve $N$, the radical of $\phi =\psi + \epsilon \psi^*$.  It follows that $c=0$, that $a$ is a possibly non-simple isometry of $(N, \eta)$, that $d$ is a possibly non-simple isometry of the symmetric form $(M, \phi)$ and that $[a] = -[d] \in Z^1(\Wh(\Lambda))$.

\begin{lem}
\label{surnonslem}
Let $(N, \eta)$ be a linear form and $(M, \psi)$ a simple form.  Then
\[
i \co \sbAut(N,\eta) \lra \sbAut((N, \eta) \oplus (M,\psi)),
\quad [f] \mt [f \oplus \del\id_M]
\]
is surjective and injective on the fibres of $\delta^T_{(N, \eta)}$.
\end{lem}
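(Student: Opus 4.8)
The plan is to analyse the map $i$ through the two structural facts established just before the statement: first, that any automorphism of $v = (N,\eta)\oplus(M,\psi)$ is block upper-triangular $\svec{a & b \\ 0 & d}$ because it must preserve $N = \Rad(\psi+\epsilon\psi^*)$, with $a$ a (possibly non-simple) isometry of $(N,\eta)$ and $d$ a (possibly non-simple) isometry of $(M,\phi)$, $\phi = \psi + \epsilon\psi^*$; and second, that $\del(M,\psi)\cong(M,M^*)$ is isomorphic to a trivial split formation via Lemma \ref{splitisolem}(i), so that $\bAut(\del v)$ can be described concretely via Lemma \ref{sthomlem}. I will work at a finite stage $v \oplus (Q,0)$ and pass to the limit at the end; since $(N,\eta)\oplus(Q,0)$ is again linear and $(M,\psi)$ is unchanged, it suffices to prove the statement before stabilisation and then invoke that the stabilisation maps are compatible.

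For surjectivity: given a stable automorphism $f = (\alpha,\beta,\nu)$ of $\del v = \del(N,\eta)\oplus\del(M,\psi)$, I want to modify it by isometries of $v$ (acting as in Definition \ref{bautdef}) to bring it into the image of $i$, i.e.\ into the form $f' \oplus \del\id_M$ with $f'$ an automorphism of $\del(N,\eta)$. Using the isomorphism $\del(M,\psi)\cong(M,M^*)$ and the explicit matrix description of stable isomorphisms of boundary formations from Lemma \ref{sthomlem}(i), the $M$-components of $\alpha$ and $\beta$ are constrained: because the symmetrisation $\lambda_M = \phi$ of $(M,\psi)$ is a simple \emph{isomorphism}, the ``off-diagonal'' $\lambda$-terms appearing in \eqref{isonot1eq} become invertible, and one can absorb all the $M$-direction data into the boundary of an isometry $d$ of $(M,\phi)$. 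The key point is that a simple form has $\del(M,\psi)$ equivalent to a trivial formation and $\Aut$ of a trivial formation is controlled by Lemma \ref{splitisolem}(ii) (homotopy to the identity via $\Delta = 1-\alpha$); combined with Lemma \ref{unionlem}(iv)-type naturality this lets me cancel the $M$-part up to homotopy. What remains after this reduction is exactly a stable automorphism of $\del(N,\eta)$, witnessing $[f] \in \im(i)$.

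For injectivity on the fibres of $\delta^T_{(N,\eta)}$: suppose $f_0, f_1 \in \bAut(\del(N,\eta))$ satisfy $\delta^T_{(N,\eta)}(f_0) = \delta^T_{(N,\eta)}(f_1)$ and $i(f_0) = i(f_1) \in \sbAut(v)$, i.e.\ $f_0 \oplus \del\id_M$ and $f_1 \oplus \del\id_M$ are equivalent after stabilisation under the action of $\Aut(v)\times\Aut(v)$. Unwinding the definition, there are isometries $g,h$ of $v$ (stably) and a homotopy between $f_1 \oplus \del\id_M$ and $\del h \circ (f_0 \oplus \del\id_M)\circ \del g^{-1}$. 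Writing $g = \svec{a_g & b_g \\ 0 & d_g}$ and $h = \svec{a_h & b_h \\ 0 & d_h}$ as above, I project onto the $N$-direction: the homotopy formulas of Lemma \ref{sthomlem}(ii) and Lemma \ref{splitisolem}(ii), applied to the $M$-block, show the $M$-parts contribute nothing new up to homotopy, while the $N$-block gives a homotopy between $f_1$ and $\del a_h \circ f_0 \circ \del a_g^{-1}$. Now $a_g, a_h$ are a priori only \emph{non-simple} isometries of $(N,\eta)$, so this does not immediately show $[f_0] = [f_1] \in \sbAut(N,\eta)$ — but their torsions are recorded by $\delta^T_{(N,\eta)}$, and the hypothesis $\delta^T_{(N,\eta)}(f_0) = \delta^T_{(N,\eta)}(f_1)$ is precisely what forces $\tau(a_g)$ and $\tau(a_h)$ to cancel, so that $a_g, a_h$ can be replaced by simple isometries without changing the orbit. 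Hence $[f_0] = [f_1]$ in $\sbAut(N,\eta)$.

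The main obstacle I anticipate is the bookkeeping in the surjectivity step: carefully using the simplicity of $(M,\psi)$ — i.e.\ that $\phi\co M\cong M^*$ is a \emph{simple} isomorphism — to guarantee that the reduction of the $M$-direction data can be realised by an isometry of $v$ (which must be block-triangular) rather than merely by an abstract isomorphism of boundary formations, and that the torsion of this isometry is controlled. The interplay between ``simple'' and ``non-simple'' is exactly the subtlety the subsection is built around, and Lemma \ref{splitisolem}(ii) together with the triangular structure of $\Aut(v)$ is what makes it go through. The injectivity step is then largely formal once the homotopy formulas of Lemma \ref{sthomlem}(ii) are in hand.
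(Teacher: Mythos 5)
Your overall strategy is the same as the paper's: surjectivity comes from the triviality of $\del(M,\psi)$, and injectivity comes from projecting the defining homotopy onto the radical ($N$-)block, where isometries of $v$ are triangular, and then using the hypothesis on $\delta^T_{(N,\eta)}$ to control the torsions of the induced, possibly non-simple, isometries $a_g,a_h$ of the linear form. Two remarks on where your sketch diverges from what is actually needed. For surjectivity, no modification by isometries of $v$ is required at all, and the device you propose -- absorbing the $M$-direction data into the boundary of an isometry $d$ of the symmetric form $(M,\phi)$ -- is not available as stated, since the action defining $\bAut$ is by $\Aut(v)$, i.e.\ by \emph{simple} isometries of the \emph{quadratic} form, and an isometry of $(M,\phi)$ need not be one. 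Fortunately this is unnecessary: since $\del(M,\psi)\cong (M,M^*)$ is a trivial formation and the stable isomorphism and stable homotopy relations already absorb trivial summands (Lemma \ref{splitisolem} (ii)), every stable automorphism of $\del(v\oplus(Q,0))$ is literally a stable automorphism of $\del((N,\eta)\oplus(Q,0))$, which is the paper's one-line argument.

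The genuine gap is the final step of your injectivity argument, the assertion that $a_g,a_h$ ``can be replaced by simple isometries without changing the orbit''. This is exactly where the paper's proof does its work: one cannot simply compose $a_g$ or $a_h$ with an automorphism correcting the torsion, because the correction must itself be an isometry of the (stabilised) linear form \emph{and} must preserve the homotopy relation with $f_0$ and $f_1$. The paper arranges in advance a zero-form summand $Q_1$ with $\rk(Q_1)>\rk(M)$ on which $f$ and $f'$ restrict to $\del\id_{Q_1}$; since $[a]=-[d]$ for an automorphism $d$ of $M$, the class $-\tau(a)\in\Wh(\Lambda)$ is realised by an automorphism $g$ supported on $Q_1$, and correcting $h,h'$ by $g^{\pm 1}\oplus\id_M$ keeps the homotopy relation precisely because $f,f'$ act as the identity over $Q_1$, so $\del(g\oplus\id_M)$ commutes with $f\oplus\del\id_M$. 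Alternatively, in your ordering one can stabilise after projecting by a fresh zero form $(R,0)$ of sufficiently large rank and replace $a_h,a_g$ by $a_h\oplus g_R$, $a_g\oplus g_R$ with $\tau(g_R)=-\tau(a_h)$: note that applying $\delta^T_{(N,\eta)}$ to your relation gives $\tau(a_h)=\tau(a_g)$ (they agree rather than cancel additively, because $a_g$ enters inverted), so a single $g_R$ makes both corrections simple, and the relation survives because the correction is added as a direct summand on which $f_0$ acts as $\del\id_R$. Without some such stabilisation device the replacement step is unjustified; with it, your argument completes and is essentially the paper's proof.
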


\begin{proof}
The surjectivity of $i$ follows immediately from the fact that $\del(M, \psi)$ is trivial.  For the injectivity of $i$, let $f$ and $f'$ be two stable automorphisms of $\del((N,\eta)\oplus (Q, 0))$ representing elements $[f]$ and $[f']$ in $\sbAut(N,\eta)$ for a based module $Q$.  We assume that $Q = Q_0 \oplus Q_1$ where $Q_1$ is a based module of rank greater than the rank of $M$ such that $f|_{\del(Q_1,0)} = f'|_{\del(Q_1,0)} = \del {\rm Id}_{Q_1}$.  Assume that $\delta_{(N, \eta)}^T(f) = \delta_{(N, \eta)}^T(f')$ and that $$i(f)=i(f') \in 
\bAut((N,\nu) \oplus (Q,0) \oplus (M,\psi) \oplus (P,0))$$
for some based module $P$.  This means that there are isometries $h, h' \in \Aut((N,\nu) \oplus (Q,0) \oplus (M,\psi) \oplus (P,0))$
such that
\[
\partial h \circ (f \oplus \id_{\partial (M,\psi)} \oplus \id_{\partial(P,0)}) \circ \partial h' \simeq
f' \oplus \id_{\partial (M,\psi)} \oplus \id_{\partial(P,0)}.
\]
As above, the isometries $h$ and $h'$ give possibly non-simple isometries $a,a'$ of $(N,\eta) \oplus (Q,0) \oplus (P,0)$.  We claim that $h$ and $h'$ can be chosen so that $a$ and $a'$ are simple and hence $a, a' \in \Aut((N,\eta) \oplus (Q,0) \oplus (P,0))$.  It then follows that
\[
\partial a \circ (f \oplus \id_{\partial(P,0)}) \circ \partial a' \simeq f' \oplus\id_{\partial(P,0)}
\]
and therefore $[f]=[f'] \in\sbAut(N,\eta)$.

We demonstrate the claim as follows: firstly $\tau(a) = \delta_{(N, \eta)}^T(\del a)$ and similarly for $a'$, so the equality $\tau(a) + \del_{(N, \eta)}^T(f) + \tau(a') = \delta_{(N, \eta)}^T(f')$ gives rise to $\tau(a) = -\tau(a')$.   Now let $g$ be an isomorphism of $N \oplus Q \oplus P$ such that $g|_{N \oplus Q_0 \oplus P}= {\rm Id}$ and $\tau(g) = -\tau(a)$ (which is possible since the rank of $Q_1$ is larger than the rank of $M$ and from above we see that there is an isomorphism $d\co  M \isora M$ with $\tau(d^{-1}) = \tau(a)$).  It follows that $\tau(ag) = \tau(g^{-1}a') =0$ and so we replace $h$ and $h'$ by $(g \oplus {\rm Id}_M) \circ h$ and $(g^{-1} \oplus {\rm Id}_M) \circ h'$.
\end{proof}

\begin{proof}[Proof of Proposition \ref{simplelinearprop}]
Consider to begin the case where $v = (N, \eta)$ is linear and let $x = (H_\epsilon(L); L, N)$ represents $[x] \in l_{2q+1}(v, v)$.  By Example \ref{delex} we see that $\delta([x]) \in \sbAut(N, \eta)$ maps to $1 \in \sbAut((N, \eta) \oplus H_\epsilon(L))$ under the map $i$ of Lemma \ref{surnonslem}.  Now the injectivity part of Lemmas \ref{surnonslem} shows that $\delta^T_{(N, \eta)}(\delta([x]))$ is the remaining obstruction to $\delta([x])$ being equal to $1$.  Moreover, the considerations just prior to Lemma \ref{surnonslem} show that $\delta^T_{(N, \eta)}(\delta([x]))$ is equal to the torsion of an isometry of the symmetric form $H^\epsilon(L)$.  Applying Lemma \ref{bzerolem} and the assumption that ${\rm UWh}(\Lambda) = {\rm U'Wh}(\Lambda)$ we see that there exists a $[z_1] \in L_{2q+1}^{}(\Lambda)$ such that $\delta([x] + [z_1]) = 1 \in \sbAut(v)$.  By Corollary \ref{maincor} there is a $[z_2] \in L_{2q+1}^s(\Lambda)$ such that $[x] + [z_1] + [z_2]$ is elementary.

The general case for $v = (N, \eta) \oplus (M, \psi)$ now follows immediately from Lemma \ref{surnonslem}.
\end{proof}

\begin{cor}
\label{fieldcor}
If $\Lambda$ is a field with $\cha\Lambda\neq 2$ or $\Lambda=\bZ/2\bZ$ and we set $Z = \widetilde K^1(\Lambda)$, then all elements of $\ol{\Lambda}$ are elementary.
\end{cor}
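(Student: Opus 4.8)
The plan is to show that over such a $\Lambda$ every set $\ols{v,v}$ is the singleton $\{e([v])\}$, and that every class of $\ol{\Lambda}$ lies in such a set; together these give at once that all elements of $\ol{\Lambda}$ are elementary.

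First I would unpack the hypothesis $Z = \widetilde{K}_1(\Lambda)$. It forces $\Wh(\Lambda) = \widetilde{K}_1(\Lambda)/Z = 0$, so that every isomorphism of based $\Lambda$-modules is simple; in particular a nonsingular $\epsilon$-quadratic form is automatically simple, and by Remark~\ref{Ltorsionrem} the map $\oLs{\Lambda} \to \oL{\Lambda}$ is an isomorphism. Since $Z^1(\Wh(\Lambda)) = 0$, both ${\rm UWh}(\Lambda)$ and ${\rm U'Wh}(\Lambda)$ are trivial, so in particular ${\rm UWh}(\Lambda) = {\rm U'Wh}(\Lambda)$. Next I would use that over $\Lambda$ --- a field of characteristic $\neq 2$, or $\bZt$ --- every $\epsilon$-quadratic form $(V,\theta)$ splits as $(V,\theta) \cong (N,\eta) \oplus (M,\psi)$ with $(N,\eta)$ linear (take $N$ to be the radical of the symmetrisation $\theta + \epsilon\theta^*$, on which the symmetrisation vanishes) and $(M,\psi)$ nonsingular, hence simple. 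Thus Proposition~\ref{simplelinearprop} applies to every $v$ and shows that $\oL{\Lambda} = \oLs{\Lambda}$ acts transitively on $\ols{v,v}$.

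Then I would invoke the classical vanishing of the odd-dimensional simple $L$-group of a field, $\oLs{\Lambda} = 0$ (cf. \cite{Wal99}). Granting this, $\oLs{\Lambda}$ acts transitively on $\ols{v,v}$ while being the trivial group, and $\ols{v,v}$ is nonempty since it contains $e([v])$ by Definition~\ref{e(v)defi}; therefore $\ols{v,v} = \{e([v])\}$ and every class in it is elementary.

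It remains to see that an arbitrary $[x] \in \ol{\Lambda}$ lies in some $\ols{v,v}$. Choosing a representative $x = (M,\psi;F,V)$ with boundaries $(V,\theta)$ and $(V^\perp,-\theta^\perp)$, Proposition~\ref{prepprp}(i) gives an isometry $(V,\theta) \oplus H_\epsilon(F) \cong (V^\perp,-\theta^\perp) \oplus H_\epsilon(F)$. Over a field an isometry carries radical to radical, so the radicals of $(V,\theta)$ and $(V^\perp,-\theta^\perp)$ have the same rank, and Witt cancellation applied to the nonsingular complements yields $[V,\theta] = [V^\perp,-\theta^\perp]$ in $\mathcal{F}_{2q}^{\rm zs}(\Lambda)$; that is, $b([x])$ lies on the diagonal. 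Hence $[x] \in \ols{v,v}$ for $v = (V,\theta)$, so $[x] = e([v])$ is elementary. The one step needing care is the vanishing $\oLs{\Lambda} = 0$: for $\cha\Lambda \neq 2$ this is the usual cancellation theorem read off from the formation description of \S\ref{origolsec}, while the case $\Lambda = \bZt$ --- as well as the radical-splitting and Witt-cancellation inputs above --- relies on the classification of quadratic forms over $\bZt$ rather than on symmetric bilinear Witt theory.
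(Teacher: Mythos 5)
Your argument is correct and is essentially the paper's own proof: both reduce to the diagonal via Proposition \ref{prepprp} plus cancellation for quadratic forms over the field (Witt for $\cha\Lambda\neq 2$, the quadratic classification for $\bZt$), split $v$ as linear $\oplus$ nonsingular (hence simple, since $\Wh(\Lambda)=0$), and then combine Proposition \ref{simplelinearprop} with the vanishing of the odd-dimensional $L$-group of a field. The only differences are cosmetic --- the order of the steps and your explicit radical-splitting elaboration of the cancellation step, which the paper handles by citing the cancellation theorems directly.
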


\begin{proof}
Let $x=(M,\psi; L,V)$ be an \eq\ over $\Lambda$ and let $(V,\theta)$ and
$(V^\perp,\theta^\perp)$ be the boundaries of $x$.  By proposition \ref
{prepprp} $(V, \theta) \oplus (M, \psi) \cong (V^{\perp}, -\theta^{\perp}) \oplus (M, \psi)$ and so by Witt's cancellation theorem for $\Lambda \neq \bZ/2$ and by \cite{Bro72} Theorem III.1.14 for $\Lambda = \bZ/2$, we deduce that $(V,\theta)\cong (V^\perp,-\theta^\perp)$.  Hence $[x] \in l_{2q+1}((V,\theta), (V,\theta))$.  Moreover, as $\Lambda$ is a field, $(V,\theta)$ is the orthogonal sum of a linear form and a nonsingular form. Now Proposition \ref{simplelinearprop} states that $L_{2q+1}(\Lambda)$ acts transitively on $l_{2q+1}((V, \theta), (V, \theta))$ but by \cite{Ran77} $L_{2q+1}(\Lambda) = 0$ and we are done.
\end{proof}


We conclude the subsection with a simple example.

\begin{ex}
\label{L_2tauex}
Let $\Lambda=\bZ$ and $\epsilon=-1$.  Then $\kappa\co \sbAut(\bZ, 1)
\ra L_2(\bZ)=\bZ/2\bZ$ is a bijection with $\kappa([{\rm Id}_{\bZ}, {\rm Id}_{\bZ},1])$ having Arf invariant $1$.   The sequence of Corollary \ref{maincor} gives
\[0 \dlra{\cong} l_3(\bZ, 1) \dlra{} \sbAut(\bZ, 1) \dlra {\cong} L_2(\bZ).\]

\end{ex}

\subsection{Linking forms}
\label{linkingsubsec}
In this section we show how to calculate boundary isomorphism sets of forms which become nonsingular after localisation.  To avoid complications with torsions we assume throughout the section that $\Wh(\Lambda) = \{ 0 \}$ (see Remark \ref{torsionrem}).  However, torsions can be interlaced with what follows using \cite{Ran81}[Chapter 3.7].  The following technical lemma applies for all the forms considered later in the section.

\begin{lem}
\label{bijbisolem}
Let $(V,\theta)$ and $(V', \theta,)$ be $\epsilon$-quadratic
forms with stably isomorphic boundaries and injective symmetrisations $\lambda=\theta+\epsilon\theta^*\co V \ras V^*$ and
$\lambda' = \theta'+\epsilon\theta'^* \co V' \ras V'^*$.  Then
\[s\co \bIso((V,\theta), (V', \theta')) \cong \sbIso((V,\theta), (V', \theta')). \]
\end{lem}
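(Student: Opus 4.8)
The plan is to work directly with the directed system defining $\sbIso(v,v')$. Recall from Definition~\ref{sbIsodefi} that $\sbIso(v,v')=\varinjlim_Q\bIso(v\oplus(Q,0),v'\oplus(Q,0))$ with structure maps the stabilisations $s_Q\co[f]\mapsto[f\oplus\del\id_Q]$. To show $s\co\bIso(v,v')\to\sbIso(v,v')$ is a bijection I would establish: (surjectivity) every stable isomorphism $g$ between $\del(v\oplus(Q,0))$ and $\del(v'\oplus(Q,0))$ is, modulo the action of $\Aut(v\oplus(Q,0))\times\Aut(v'\oplus(Q,0))$ and stable homotopy, of the form $f\oplus\del\id_Q$ with $f$ a stable isomorphism between $\del v$ and $\del v'$; and (injectivity) if $[f_0\oplus\del\id_Q]=[f_1\oplus\del\id_Q]$ in $\bIso(v\oplus(Q,0),v'\oplus(Q,0))$ for some $Q$, then $[f_0]=[f_1]$ in $\bIso(v,v')$.

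The common input is a radical computation: since $\lambda=\theta+\epsilon\theta^*$ is injective, the form $v\oplus(Q,0)=(V\oplus Q,\theta\oplus 0)$ has radical $\Rad=\ker(\lambda\oplus 0)=\{0\}\oplus Q$, and likewise for $v'\oplus(Q,0)$. Hence any isometry $h$ of $v\oplus(Q,0)$ preserves this radical, so with respect to $V\oplus Q$ it has block form $h=\svec{h_V&0\\h_{21}&h_Q}$ with $h_V$ an isometry of $v$ (automatically simple as $\Wh(\Lambda)=\{0\}$), $h_Q\in GL(Q)$ and $h_{21}\in\Hom_\Lambda(V,Q)$; conversely every such block matrix with $h_V$ an isometry of $v$ is a simple isometry of $v\oplus(Q,0)$. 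For injectivity I would then imitate the proof of Lemma~\ref{stableinjlem} with $v$ in place of $w$: writing the two isometries realising $[f_0\oplus\del\id_Q]=[f_1\oplus\del\id_Q]$ in this block form and feeding them into the homotopy criterion of Lemma~\ref{sthomlem}(2), the blocks $h_{21}$ and $h_Q$ get absorbed into the stable homotopy, leaving a stable homotopy between $f_0$ and $\del h_{1,V'}\circ f_1\circ(\del h_{0,V})^{-1}$ over $\del v$, $\del v'$, so $[f_0]=[f_1]\in\bIso(v,v')$.

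For surjectivity I would take $g=(\alpha,\beta,\nu)\co\del(v\oplus(Q,0))\oplus(P,P^*)\isora\del(v'\oplus(Q,0))\oplus(P',{P'}^*)$ and apply Lemma~\ref{sthomlem}(1) with $(V,\theta,\lambda)$ replaced by $(V\oplus Q,\theta\oplus 0,\lambda\oplus 0)$ and similarly primed, extracting the maps $a,a_1,a_3,b,b_1,s$. The identity $a^*(\lambda'\oplus 0)=(\lambda\oplus 0)b$ together with injectivity of $\lambda$ and $\lambda'$ forces the $Q\to V'$ component of $a$ and the $Q\to V$ component of $b$ to vanish; the identity $ab+(s^*+\epsilon s)(\lambda'\oplus 0)=1$ then shows that the $(Q,Q)$-block $a_{QQ}$ of $a$ has a right inverse, hence (as $\Lambda$ is weakly finite) is an isomorphism. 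Post-composing $g$ with $\del(\id_{V'}\oplus a_{QQ}^{-1})$, a simple isometry of $v'\oplus(Q,0)$ since $\Wh(\Lambda)=\{0\}$, normalises $a_{QQ}=\id_Q$, and pre-composing with the boundary of a suitable shear $\svec{\id_V&0\\c&\id_Q}$ of $v\oplus(Q,0)$, $c\in\Hom_\Lambda(V,Q)$, removes the remaining $V\to Q$ component of $a$; thus $a=a_{VV}\oplus\id_Q$ splits over $V\oplus Q$. Performing the analogous moves on the $b$-data, then choosing $s,\nu,a_1,a_3,b_1$ and the auxiliary modules compatibly with the splitting (the relations of Lemma~\ref{sthomlem} decouple the $V$- and $Q$-contributions once $a$ and $b$ split) and absorbing any residue into a stable homotopy, I would conclude $g$ is equivalent to $f\oplus\del\id_Q$ with $f$ a stable isomorphism $\del v\isora\del v'$, so $[g]=s_Q([f])$.

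The main obstacle is the bookkeeping in the surjectivity step: carrying the auxiliary modules $P,P'$ and all of $a_1,a_3,b_1,s,\nu$ through the successive block reductions and checking that the leftover data is genuinely killed by a stable homotopy, so that the splitting $g\simeq f\oplus\del\id_Q$ is honest. The injectivity step, by contrast, is essentially the computation already carried out in Lemma~\ref{stableinjlem}, with the single extra observation that the injectivity of $\lambda$ makes $Q$ the radical of $v\oplus(Q,0)$.
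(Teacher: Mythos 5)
Your strategy is the same as the paper's: reduce to showing that each $s_Q$ is a bijection, and analyse a stable isomorphism between $\del(v\oplus(Q,0))$ and $\del(v'\oplus(Q,0))$ in block form relative to $V\oplus Q$, using the relations of Lemma \ref{sthomlem} together with the injectivity of $\lambda$ and $\lambda'$ to kill the blocks mixing $Q$ into $V$ and $V'$. Your injectivity half is essentially the paper's argument: the paper organises your ``absorption'' of $h_{21}$ and $h_Q$ by defining a compression $C$ which discards the $\del(Q,0)$-blocks of $(\alpha,\beta,\nu)$, and verifies that $C$ takes homotopies to homotopies, compositions to compositions, and $C(\del h)=\del(h_V)$; these verifications are exactly what your sketch needs and do follow from the block vanishing you establish, so that half is sound.

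The genuine gap is in surjectivity, at the point you yourself flag as ``bookkeeping''. After you normalise $a=a_{VV}\oplus\id_Q$ by composing with $\del(\id_{V'}\oplus a_{QQ}^{-1})$ and a shear, it is not true that the relations of Lemma \ref{sthomlem} then ``decouple the $V$- and $Q$-contributions'': for instance the $(Q,V')$-entry of $ab+(s^*+\epsilon s)(\lambda'\oplus 0)=1$ forces the $V'\to Q$ block of $b$ to equal $-(s^*+\epsilon s)_{QV'}\lambda'$, which is generally nonzero, and the remaining blocks of $\alpha$, $\beta$ and $\nu$ involving $P$, $P'$ and the $\del(Q,0)$-summand (note $\del(Q,0)$ is the formation $(H_\epsilon(Q);Q,Q)$ with both Lagrangians equal, not a trivial formation, so nothing splits off formally) are not constrained to vanish by any choice of data. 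Removing this residue is the actual content of the lemma and requires exhibiting an explicit stable homotopy. The paper does this by forming the automorphism $g=f\circ(C(f)\oplus\del\id_Q)^{-1}$, composing with $\del\svec{\id_V&0\\-a_{33}^{-1}a_{31}&a_{33}^{-1}}\oplus\id_{(P,P^*)}$ for the isometry $\svec{\id_V&0\\-a_{33}^{-1}a_{31}&a_{33}^{-1}}$ of $(V,\theta)\oplus(Q,0)$, and writing down a concrete homotopy $\Delta$ from the result to the identity, which yields $[f]=s_Q[C(f)]$. Until you produce that homotopy (or an equivalent explicit verification), the splitting $g\simeq f\oplus\del\id_Q$ is asserted rather than proved.
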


\begin{proof}
It suffices to show that
\[s_Q \co\bIso((V,\theta), (V', \theta')) \cong \bIso((V,\theta) \oplus (Q, 0), (V', \theta') \oplus (Q, 0))\]
is an isomorphism for any based module $Q$.  So let $f=(\alpha,\beta,\nu)\co y \isora y'$ be an isomorphism with
\begin{eqnarray*}
y &=& (F,\left( \svec{\gamma\\\mu},\psi \right)G)
=\del(V,\theta) \oplus (P,P^*)  \oplus \del(Q,0)
\\
y' &=& (F',\left( \svec{\gamma'\\\mu'},\psi' \right)G')
=\del(V',\theta') \oplus (P',P^{'*}) \oplus \del(Q,0).
\end{eqnarray*}
We write $\beta=(b_{ij})_{1\le i,j\le 3} \co V \oplus P^*\oplus Q \isora V' \oplus {P'}^* \oplus Q$ and
similarly $\alpha=(a_{ij})$, $\nu=(n_{ij})$. 
The equation $\alpha^{-*}\mu=\mu'\beta$ entails that $b_{13}$ and $b_{23}$ are zero.  Hence $b_{33}$ and $\widetilde{\beta}=\svec{b_{11}&b_{12}\\b_{21}&b_{22}}$
are isomorphisms.
Evaluating $\alpha(\gamma+(\nu -\epsilon\nu^*)^*\mu)=\gamma'\beta$ yields the
vanishing of $a_{13}$ and $a_{23}$ and that $a_{22}=b_{22}$. Therefore
$\widetilde{\alpha}=\svec{a_{11}&a_{12}\\a_{21}&a_{22}}$ is an isomorphism.
Setting $\widetilde{\nu}=\svec{n_{11}&n_{12}\\n_{21}&n_{22}}$ one observes that
$C(f) := (\widetilde{\alpha},\widetilde{\beta},\widetilde{\nu})$ is an isomorphism
$\del(V, \theta) \oplus (P, P^*) \cong \del(V', \theta') \oplus (P', {P'}^*)$.

Moreover, one may also check that if
$\Delta$ is a homotopy from $f$ to another isomorphism $f' = (\alpha', \beta', \nu')$, then
$C(\Delta) :=  \svec{\Delta_{11}&\Delta_{12}\\\Delta_{21}&\Delta_{22}}$ is a homotopy from $C(f)$ to $C(f')$.
It also clear that if $g\co y\isora y$ and $g'\co y'\isora y'$ are isomorphisms,
then $C(g' \circ f \circ g) = C(g') \circ C(f) \circ C(g)$.
If $h$ is an isometry of $(V, \theta) \oplus (Q, 0)$ and ${\rm pr}_V\co V \oplus Q \ras V$ the projection then
$h_V := {\rm pr}_V \circ h|_V$ is an isometry of $(V, \theta)$ and $C(\del h) = \del (h_V)$.

We now show the injectivity of $s_Q$.  Let $f_0, f_1 \in \Iso(\del(V, \theta), \del(V', \theta'))$ be such that
$s_Q([f_0]) = s_Q([f_1]) \in \bIso((V, \theta)\oplus (Q, 0), (V', \theta') \oplus(Q, 0))$.
Then there are isometries $h \in \Aut((V, \theta) \oplus (Q, 0))$ and $h' \in \Aut((V', \theta') \oplus (Q, 0))$
such that $f_0 \oplus \del\id_Q$ is stably homotopic to $\del h' \circ (f_1 \oplus \del\id_Q) \circ \del h$.
Using the fact that $f_i = C(f_i \oplus \del\id_Q)$ and applying $C$ to the homotopic isomorphisms above we see that
$f_0$ is homotopic to $C(\del h') \circ f_1 \circ C(\del h) = \del h'_{V'} \circ f_1 \circ \del h_V$ and hence
$[f_0] = [f_1] \in \bIso((V,\theta), (V', \theta'))$.

We now turn to the surjectivity of $s_Q$.  Given $f$ as in the beginning,
consider the automorphism $g = f \circ(C(f) \oplus \del \id_Q)^{-1}$ and,
reusing notation, write $g = (\alpha, \beta, \nu)$ where $\alpha = (a_{ij})$, $\beta = (b_{ij})$ and $\nu = (n_{ij})$.
Calculation gives that
\begin{eqnarray*}
(a_{ij})&=&\svec{1&0&0\\0&1&0\\a_{31}&a_{32}&a_{33}}\\
(b_{ij})&=&\svec{1&0&0\\0&1&0\\b_{31}&a_{32}&b_{23}}\\
(n_{ij})&=&\svec{0&0&n_{13}\\0&0&0\\0&n_{32}&n_{33}}
\in Q_\epsilon(V\oplus Q\oplus P^*)
\end{eqnarray*}
Now the isomorphism $h=\svec{\id_V&0\\-a_{33}^{-1}a_{31}&a_{33}^{-1}}$ is an isometry of
$(V,\theta)\oplus (Q,0)$.  Replacing $g$ by $g \circ (\del h \oplus \id_{(P,P^*)})$ reduces to the case
the case where
$a_{33}=b_{33}=\id_Q$ and $a_{31}=0$. From the equation $\alpha(\gamma+
(\nu+\epsilon\nu^*)^*\mu)=\gamma\beta$
it follows that $n_{32}=\epsilon b_{32}$ and $b_{31}=n_{13}^*\lambda$.
Then $\Delta=\svec{
0 & 0               & n_{13} \\
0 & 0   & b_{32}^* \\
0 & -a_{32}             & -\epsilon n_{33}^*+a_{32}b_{32}^*}$ is a homotopy from
$g \circ (\del h \oplus \id_{(P,P^*)})$ to the identity.
Thus $f$ is homotopic to $(C(f) \oplus \id_Q) \circ (\del h^{-1} \oplus \id_{(P,P^*)})$ which means
that $[f] = s_Q[C(f)]$ and so $s_Q$ is surjective.
\end{proof}

%
%
%

Let $S\subset\Lambda$ be a central and multiplicative subset and denote the localisation of $\Lambda$ away
from $S$ by $S^{-1}\Lambda$.  If $P$ is a $\Lambda$-module then $S^{-1}P := S^{-1}\Lambda \otimes_{\Lambda} P$
is the induced $S^{-1}\Lambda$ module.
First we repeat some definitions from \cite{Ran81}[Chapters 3.1 and 3.4].

\begin{defi}
\label{multsetdef}
\begin{enumerate}
\item
Let $P$ and $Q$ be \fg free modules.
A homomorphism $f\in \Hom_\Lambda(P,Q)$ is called an
{\bf $S$-isomorphism} if
$
S^{-1}f := f \otimes_\Lambda \id_{S^{-1}\Lambda} \in \Hom_{S^{-1}\Lambda}(S^{-1}P,S^{-1}Q)
$
is an isomorphism.

\item
A {\bf $(\Lambda, S)$-module $M$} is a $\Lambda$-module $M$ such that
there is an exact sequence
$
0\ra P \stackrel{d}{\ra} Q \ra M \ra 0
$
where $d$ is an $S$-isomorphism.

\item
An {\bf $\epsilon$-symmetric linking form $(M,\phi)$ over $
(\Lambda,S)$} is a $(\Lambda,S)$-module $M$
together with a pairing $\phi \co M \times M \ras S^{-1}\Lambda/
\Lambda$ such that
$\phi(x,-)\co M \rightarrow S^{-1}\Lambda/\Lambda$ is $\Lambda$-linear for all $x\in M$ and
$\phi(x,y)=\epsilon\overline{\phi(y,x)}$ for all $x,y\in M$.

\item
A {\bf split $\epsilon$-quadratic linking form $(M,\phi,\nu)$ over
$(\Lambda,S)$} is an
$\epsilon$-symmetric linking form $(M,\phi)$ over $(\Lambda,S)$
together with a map
$\nu \co M \ras Q_{\epsilon}(S^{-1}\Lambda/\Lambda)$
such that for all $x,y\in M$ and $a\in\Lambda$
\begin{enumerate}
\item $\nu(ax)=a\nu(x)\bar a \in Q_\epsilon(S^{-1}\Lambda/\Lambda)$

\item $\nu(x+y)-\nu(x)-\nu(y)=\phi(x,y)\in Q_\epsilon(S^{-1}\Lambda/\Lambda)$

\item $(1+T_\epsilon)\nu(x) = \phi(x,x)\in S^{-1}\Lambda/\Lambda$
\end{enumerate}

\item An {\bf isometry} between $\epsilon$-quadratic linking forms $(M_0,\phi_0,\nu_0)$ and $(M_1,\phi_1,\nu_1)$
is a $\Lambda$-module isomorphism $f\co M_0 \isora M_1$ such that
$\phi_0(x, y) = \phi_1(f(x),f(y))$ and $\nu_0(x) = \nu_1(f(x))$ for all $x, y \in M_0$.

\item We write $\Iso_S((M_0,\phi_0,\nu_0), (M_1,\phi_1,\nu_1))$ and $\Aut_S(M,\phi,\nu)$ for, respectively, the {\bf set of isometries} between $\epsilon$-quadratic linking forms and the {\bf group of automorphisms} of an $\epsilon$-quadratic linking form.
\end{enumerate}
\end{defi}

\begin{defi}
\label{Sboundarydef}
Let $(V,\theta)$ be an $\epsilon$-quadratic form and let $\lambda=\theta+ \epsilon\theta^*$.
\begin{enumerate}
\item
The form $(V,\theta)$ is {\bf $S$-nonsingular} if $\lambda$ is an $S$-isomorphism.

\item  The {\bf $S$-boundary} of an $S$-nonsingular form $(V, \theta)$ is the 
split $\epsilon$-quadratic linking form
$\del_S (V,\theta) := (\coker\lambda,\phi,\nu)$ given by
\begin{align*}
\phi\co \coker\lambda \times \coker\lambda
&\lra S^{-1}\Lambda/\Lambda,
&(x,y)
&\mt \dfrac{x(z)}{s}
\\
\nu\co \coker\lambda
&\lra Q_\epsilon(S^{-1}\Lambda/\Lambda),
&y
&\mt \dfrac{1}{\bar s} \theta(z,z) \dfrac{1}{s}
\end{align*}
with $x,y\in V^*$, $z\in V$, $s\in S$ such that $sy=\lambda(z)$.   
We call the pair $(\coker \lambda, \phi)$ the {\bf symmetric $S$-boundary} of $(V, \theta)$.

\item
The {\bf boundary of an isometry $h\co(V,\theta)\isora (V',\theta')$ of
$S$-nonsingular $\epsilon$-quadratic forms} is the isometry
$\del_S h:= [h^{-*}]\co \del_S(V,\theta) \isora \del_S(V', \theta')$.
\end{enumerate}
\end{defi}

Let $(V, \theta)$ and $(V', \theta')$ be $S$-nonsingular forms such that $\del(V, \theta) \cong \del(V', \theta')$.
There is a group homomorphism
\[\begin{array}{ccc}
q \co \Iso(\del(V, \theta), \del(V', \theta')) & \lra & \Iso_S(\del_S(V, \theta), \del_S(V', \theta')) \\
\left[(\alpha,\beta,\nu)\right] &\mt & \left[\alpha^{-*}\right]
\end{array}\]
relating the isomorphisms of quasi-formations and the associated linking forms.
In order to investigate this map we start by considering the case where $(V, \theta) = (V', \theta')$.

\begin{prop}
\label{linkautprp}
Let $(V, \theta)$ be an $S$-nonsingular form, let $\lambda = \theta + \epsilon \theta^*$ and
let $K_\theta$ be the kernel of the map
\[
\widehat{Q}^{-\epsilon}(V^*)\lra \widehat{Q}^{-\epsilon}(V),\quad \nu \mt \lambda^*\nu\lambda.
\]
There is an exact sequence of groups
\[
0 \lra K_\theta \lra \Aut(\del (V,\theta)) \stackrel{q}{\lra} \Aut_S(\del_S (V,\theta)) \ra 0.\]
\end{prop}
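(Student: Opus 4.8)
\emph{Setup.} The map $q$ is constructed and shown to be a homomorphism just before the statement, so the plan is to identify $\ker q$ with $K_\theta$ and to prove $q$ is onto. Put $\lambda=\theta+\epsilon\theta^*$; it is injective and an $S$-isomorphism, and the one non-degeneracy fact I would use throughout is that a $\Lambda$-homomorphism out of a based module which kills $\im\lambda$ --- equivalently, one which dies after $-\otimes_\Lambda S^{-1}\Lambda$ --- must itself be $0$.

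\emph{The kernel.} I would first observe, using Lemma~\ref{sthomlem}(1), that for a representative $\nu$ the triple $(\id_V,\id_V,\nu)$ (with $P=0$ and $s=\nu$) is a stable automorphism of $\del(V,\theta)$ exactly when $\nu\in K_\theta$: the two equations of that lemma which are not automatic say that $(1+T_\epsilon)\nu=0$ and $\lambda^*\nu\lambda=0\in Q_\epsilon(V)$, which are the two defining conditions of $K_\theta$. Hence $\iota\co K_\theta\ra\Aut(\del(V,\theta))$, $\nu\mapsto[(\id,\id,\nu)]$, is well defined; it is a homomorphism because $(\id,\id,\nu')\circ(\id,\id,\nu)=(\id,\id,\nu+\nu')$ (Definition~\ref{formationdef}(vi)), and $q\circ\iota$ is trivial. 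It is injective: a stable homotopy $(\id,\id,\nu)\simeq(\id,\id,\nu')$ provides, by Lemma~\ref{sthomlem}(2), a homomorphism $\Delta_1$ with $\Delta_1\lambda^*=0$, hence $\Delta_1=0$, and the last homotopy identity then gives $\nu=\nu'$. For $\ker q\subseteq\im\iota$, take $[x]\in\ker q$ with a representative $x=(\alpha,\beta,\nu)$ on $\del(V,\theta)\oplus(P,P^*)$. Since $\alpha^{-*}$ carries $\im(\lambda\oplus 1_{P^*})$ onto itself --- this is the identity $a^*\lambda=\lambda b$ of Lemma~\ref{sthomlem}(1) --- the condition $q([x])=1$ is equivalent to $a^*\equiv\id\pmod{\im\lambda}$ for the $(V,V)$-block $a$ of $\alpha$. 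I would then apply the homotopy of Lemma~\ref{sthomlem}(2) with a suitable $\Delta_1$ to normalise to $a=\id$; this forces $b=\id$ (from $\lambda b=a^*\lambda=\lambda$ and injectivity of $\lambda$), and then the remaining equations of Lemma~\ref{sthomlem}(1), with the non-degeneracy above, force the $s$-block into $K_\theta$. Since the normalised $x$ and $(\id_V,\id_V,s)$ now have equal $a$-, $b$- and $s$-blocks, Lemma~\ref{sthomlem}(2) (with $\Delta_1=0$) gives $[x]=\iota(s)$.

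\emph{Surjectivity.} Given $g\in\Aut_S(\del_S(V,\theta))$ acting on the linking form on $M=\coker\lambda$: as $V^*$ is free it surjects onto $M$, so $g$ and $g^{-1}$ lift to $\Lambda$-endomorphisms $\tilde g,\tilde h$ of $V^*$, each preserving $\im\lambda$, with $\tilde g\tilde h=\id+\lambda k$ for some $k$. A Whitehead-type stabilisation turns $\tilde g$ into an automorphism $\alpha^{-*}$ of $V^*\oplus P^*$ ($P$ free) that preserves $\im\lambda\oplus P^*$, descends to $g$ on $M$, and has the block shape demanded by Lemma~\ref{sthomlem}(1); that lemma then supplies the partner $\beta$ and a block $s$ solving its final two equations, and $\nu$ is read off from these, giving a triple in $\Aut(\del(V,\theta))$ whose $q$-image is $g$. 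The substantive point is that ``$g$ preserves the quadratic refinement $\nu\co M\ra Q_\epsilon(S^{-1}\Lambda/\Lambda)$ of $\del_S(V,\theta)$'', as opposed to preserving only the symmetric pairing $\phi$, pulls back along the resolution $0\ra V\dlra{\lambda}V^*\ra M\ra 0$ to exactly the solvability of those two equations.

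\emph{Main obstacle.} The kernel identification is routine once Lemma~\ref{sthomlem} is in hand; the real work is the surjectivity of $q$, and within it the production of the quadratic datum $\nu$. The delicate verification there is that preserving the quadratic refinement of the linking form --- not merely $\phi$ --- is precisely what renders the $s$- and $\nu$-equations of Lemma~\ref{sthomlem}(1) solvable after transport along the above resolution; arranging the Whitehead-type completion of $\tilde g$ to respect the required block shape is the accompanying bookkeeping.
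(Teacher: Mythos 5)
Your treatment of the kernel is correct and is in substance the paper's own argument: an element of $\ker q$ is normalised by a homotopy to a triple $(\id_V,\id_V,\chi)$, and the isomorphism conditions of Lemma \ref{sthomlem} then force $\chi\in K_\theta$. (Two small remarks: your ``non-degeneracy fact'' silently assumes that based modules have no $S$-torsion, i.e.\ that $S$ consists of non-zero-divisors; in fact injectivity of $\lambda$ alone suffices, since from $(\chi^*+\epsilon\chi)\lambda=0$ one may dualise and cancel $\lambda$, which is all the paper uses. Also, when you ``apply the homotopy of Lemma \ref{sthomlem}(2) with a suitable $\Delta_1$'' you should record that the modified triple is again an isomorphism of split formations; the paper is equally brisk at the corresponding step, and this is routine.)

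The genuine gap is in surjectivity, which is the half of the proposition the paper does not prove in-house: it appeals to the proofs of \cite{Ran81}[Proposition 3.4.1] and \cite{Ran80a}[Proposition 1.5], and that is precisely the statement your sketch sets out to re-derive. Your linear-algebraic preparation is fine: lifting $g$ and $g^{-1}$ to endomorphisms of $V^*$ preserving $\im\lambda$ and completing, after stabilisation by a trivial formation, to an automorphism of $V^*\oplus P^*$ preserving $\im\lambda\oplus P^*$ and inducing $g$ is indeed a standard Whitehead-style matrix construction. But the step you yourself flag as ``the substantive point'' --- that preservation of the quadratic refinement $\nu\co M\ra Q_\epsilon(S^{-1}\Lambda/\Lambda)$, and not merely of $\phi$, is what makes the last two equations of Lemma \ref{sthomlem}(1), namely $1=ab+(s^*+\epsilon s)\lambda$ and $\theta=b^*\theta b+\lambda^* s\lambda\in Q_\epsilon(V)$, solvable for the block $s$ (equivalently for $\nu$) --- is asserted rather than proved. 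Transporting the $S^{-1}\Lambda/\Lambda$-valued data back along the resolution $0\ra V\dra{\lambda}V^*\ra M\ra 0$ and checking that the resulting element is well defined in $Q_\epsilon$ is exactly the content of the cited propositions of Ranicki; as written, your proposal identifies what must be verified but does not verify it, so the surjectivity of $q$ remains unproved. Either carry out that computation explicitly, or do what the paper does and quote \cite{Ran81} and \cite{Ran80a} at this point.
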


\begin{proof}
Define the homomorphism
\begin{align*}
\kappa\co K_\theta &\lra \Aut(\del (V,\theta)),\quad
\chi  \mt (\id_V,\id_V,\chi).
\end{align*}
The surjectivity of $q$ follows from the proof of \cite{Ran81}[Proposition 3.4.1] and
\cite{Ran80a}[Proposition 1.5]. Let $(\alpha,\beta,\nu)$ be a stable isomorphism of
$\del (V,\theta)$ which is  in the kernel of $q$.
Then $\alpha^{-*}= 1+\mu\Delta^*$ for some $\Delta\in\Hom_\Lambda
(V^*,V)$. Because of the equation
$\alpha^{-*}\mu=\mu\beta$ it follows that $\beta=1+\Delta^*\mu$.
Hence $\Delta$ is a homotopy between $(\alpha,\beta,\nu)$ and $(1,1,\nu)$ for some
$\nu\in Q_{\epsilon}(V^*\oplus P^*)$. But by the definition of homotopy \ref{formationdef}, we see that $\nu$ must have trivial symmetrisation and satisfy $\lambda^* \nu \lambda = 0$.  Thus $(1,1,\nu)\in\im(\kappa)$.  The injectivity of $\kappa$ follows quickly from Lemma \ref{sthomlem} and the fact that $\lambda$ is injective.
\end{proof}

\begin{cor}
\label{linkisocor}
Let $(V, \theta)$ and $(V', \theta')$ be $S$-nonsingular forms with $\del(V, \theta) \cong \del(V', \theta')$.  Then there is an exact sequence
\[
0 \lra K_\theta \lra \Iso(\del(V, \theta), \del(V', \theta')) \stackrel{q}{\lra} \Iso_S(\del_S(V, \theta), \del_S(V', \theta')) \lra 0\]
in the sense that there is a free action of $K_\theta$ on $\Iso(\del(V, \theta), (V', \theta'))$ with orbits the fibres of $q$.
\end{cor}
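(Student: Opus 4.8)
The plan is to derive this statement --- an ``exact sequence of a torsor'' --- from the exact sequence of groups in Proposition \ref{linkautprp} after choosing a base point. The first ingredient I would record is that $q$ respects composition and identities: from the composition formula in Definition \ref{formationdef} the first component of a composite of isomorphisms of split formations is the product of the first components, so $q(f'\circ f) = q(f')\circ q(f)$, $q(\id) = \id$, and hence $q(f^{-1}) = q(f)^{-1}$. This is precisely what allows the passage from $\Aut$ to $\Iso$.

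Next I would set up the $K_\theta$-action. Recall from the proof of Proposition \ref{linkautprp} the injective homomorphism $\kappa\co K_\theta \hookrightarrow \Aut(\del(V,\theta))$, $\chi\mapsto(\id_V,\id_V,\chi)$, whose image is the kernel of $q\co \Aut(\del(V,\theta))\to\Aut_S(\del_S(V,\theta))$. I would let $K_\theta$ act on $\Iso(\del(V,\theta),\del(V',\theta'))$ by $\chi\cdot f := f\circ\kappa(\chi)$; precomposition is the natural choice here, since $K_\theta$ is attached to the source form $(V,\theta)$. This is a genuine group action because $\kappa$ is a homomorphism of the abelian group $K_\theta$, and it is free because $f\circ\kappa(\chi)=f$ forces $\kappa(\chi)=\id$, hence $\chi=0$ by injectivity of $\kappa$.

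It then remains to prove two things. For surjectivity of $q$, I would fix an $f_0\in\Iso(\del(V,\theta),\del(V',\theta'))$ (nonempty by hypothesis); given $\sigma$ in the target, $q(f_0)^{-1}\circ\sigma$ lies in $\Aut_S(\del_S(V,\theta))$ and so lifts, by Proposition \ref{linkautprp}, to some $\tau\in\Aut(\del(V,\theta))$, whence $q(f_0\circ\tau)=q(f_0)\circ q(\tau)=\sigma$. For the identification of fibres with orbits, one inclusion is immediate since $q(\chi\cdot f)=q(f)\circ q(\kappa(\chi))=q(f)$; conversely, if $q(f_1)=q(f_2)$ then $f_2^{-1}\circ f_1\in\Aut(\del(V,\theta))$ has trivial image under $q$, so by Proposition \ref{linkautprp} it equals $\kappa(\chi)$ for a unique $\chi\in K_\theta$, and therefore $f_1=f_2\circ\kappa(\chi)=\chi\cdot f_2$.

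I do not expect a genuine obstacle: all the real content sits in Proposition \ref{linkautprp}, and what remains is bookkeeping. The only points calling for care are getting the side conventions right (the action is by precomposition, matching the fact that $K_\theta$ depends on $\theta$ and not on $\theta'$) and checking compatibility with the passage to stable homotopy classes of stable isomorphisms --- but this is already built into the definition of $q$ and of composition of isomorphisms of split formations used in Proposition \ref{linkautprp}.
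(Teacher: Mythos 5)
Your proposal is correct and follows essentially the same route as the paper: the paper also uses the free and transitive precomposition action of $\Aut(\del(V,\theta))$ on $\Iso(\del(V,\theta),\del(V',\theta'))$, restricts it to $\kappa(K_\theta)$ (explicitly $[(\alpha,\beta,\nu)]+\chi=[(\alpha,\beta,\nu+\chi)]$), and deduces the statement from the group exact sequence of Proposition \ref{linkautprp}. Your write-up merely spells out the torsor bookkeeping (functoriality of $q$, the basepoint argument for surjectivity) that the paper leaves implicit.
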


\begin{proof}
The automorphism group $\Aut(\del(V, \theta))$ acts freely and transitively on the set $\Iso(\del(V, \theta), \del(V', \theta'))$ by precomposition.  Restricting to $\kappa(K_\theta) \subset \Aut(\del(V, \theta))$ gives the required action which, is $[(\alpha, \beta, \nu)] + \chi = [(\alpha, \beta, \nu + \chi)]$.   Similarly  the group $\Aut_S(\del_S(V, \theta))$ acts freely and transitively on $\Iso_S(\del_S(V, \theta), \del_S(V', \theta'))$.  Moreover, the map $q$ maps the first action to the latter with kernel $K_\theta$ and the Corollary now follows.
\end{proof}

We now define a ``linking boundary isomorphism set" for $S$-nonsingular forms and relate it to the full boundary isomorphism set.

\begin{defi}
Let $(V, \theta)$ and $(V', \theta')$ be $S$-nonsingular forms.  The {\bf linking boundary isomorphism set $\bIso_S((V,\theta), (V', \theta'))$} is the
set of orbits of the group action
\begin{eqnarray*}
(\Aut(V,\theta) \times \Aut(V', \theta'))\times \Aut_S(\del_S(V,\theta))
&\ra& \Iso_S(\del_S(V,\theta), \del_S(V', \theta'))\\
(h,g,f) &\mt& \del_S h \circ f \circ\del_S g^{-1}.
\end{eqnarray*}
When $(V', \theta') = (V, \theta)$ we have the {\bf linking boundary automorphism set} $\bAut_S(V, \theta) : = \bIso_S((V, \theta), (V, \theta))$.  The orbit of the identity map is denoted by $1 \in \bAut_S(V,\theta)$.
\end{defi}

The map $q$ above
induces a map
\[q_{\rm b} \co \bIso((V, \theta), (V', \theta')) \ra \bIso_S((V, \theta), (V', \theta')), \quad [f] \ra [q(f)]. \]
As $(V,\theta)$ and $(V', \theta')$ are $S$-nonsingular $\epsilon$-quadratic forms the maps
$\lambda=\theta+\epsilon\theta^*$ and $\lambda' = \theta' + \epsilon \theta^{'*}$ are injective otherwise $S^{-1}\lambda$ and $S^{-1}\lambda'$ could not be isomorphisms.  Hence we may apply Lemma \ref{bijbisolem} to deduce that
\[
s\co \sbIso((V, \theta), (V', \theta')) \isora \bIso((V, \theta), (V', \theta')).
\]

\begin{prop} \label{bIsosprop}
Let $(V,\theta)$ and $(V', \theta')$ be $S$-nonsingular $\epsilon$-quadratic forms with $\del(V, \theta) \cong \del(V', \theta')$.
There is a surjection
\[ \sbIso((V,\theta), (V', \theta')) \stackrel{q_{\rm b} \circ s^{-1}}{\lra} \bIso_S((V,\theta), (V', \theta')) \]
such that $K_\theta$ is mapped onto {\em but not necessarily into} its fibres.
\end{prop}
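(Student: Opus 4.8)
The plan is to assemble the surjection from the three structural results already in hand: Lemma \ref{bijbisolem} (which identifies $\sbIso$ with $\bIso$ for forms with injective symmetrisation), Corollary \ref{linkisocor} (the exact sequence $0 \to K_\theta \to \Iso(\del(V,\theta),\del(V',\theta')) \stackrel{q}{\to} \Iso_S(\del_S(V,\theta),\del_S(V',\theta')) \to 0$), and the observation that $S$-nonsingularity forces $\lambda$ and $\lambda'$ to be injective. First I would note that the map $q_{\rm b}\circ s^{-1}$ is well-defined: this is a formal check that the $K_\theta$-action on $\Iso$ from Corollary \ref{linkisocor} is compatible with the $\Aut(V,\theta)\times\Aut(V',\theta')$-action defining $\bIso$ and $\bIso_S$, so that $q$ descends to orbit sets. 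Here I would use the explicit formula $\del_S h = [h^{-*}]$ from Definition \ref{Sboundarydef} together with $q([(\alpha,\beta,\nu)]) = [\alpha^{-*}]$, so that $q(\del h \circ f \circ \del g^{-1}) = \del_S h \circ q(f) \circ \del_S g^{-1}$; this makes the compatibility immediate.

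Second, surjectivity of $q_{\rm b}\circ s^{-1}$ follows from surjectivity of $q$ in Corollary \ref{linkisocor}: since $s$ is a bijection (Lemma \ref{bijbisolem}, applicable because $\lambda$, $\lambda'$ are injective) and $q\co \Iso(\del(V,\theta),\del(V',\theta')) \to \Iso_S(\del_S(V,\theta),\del_S(V',\theta'))$ is onto, passing to orbit sets preserves surjectivity. So every linking-boundary isomorphism class lifts to a boundary isomorphism class, hence to a stable boundary isomorphism class.

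Third, and this is where the main work lies, I would analyze the fibres. Each fibre of $q\co \Iso \to \Iso_S$ is a single $K_\theta$-orbit under the free action of Corollary \ref{linkisocor}. But the fibres of $q_{\rm b}\circ s^{-1}$ are the images of these $K_\theta$-orbits under the quotient map $\Iso \to \bIso$, and this quotient map identifies points lying in the same $\Aut(V,\theta)\times\Aut(V',\theta')$-orbit. Thus a fibre of $q_{\rm b}\circ s^{-1}$ is the image of $q^{-1}(\text{point})$, which is a $K_\theta$-coset; concretely, fixing a representative $[f_0]$, the fibre over $q_{\rm b}([f_0])$ is $\{[f_0 + \chi] : \chi \in K_\theta\}$ as a subset of $\bIso$. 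This shows $K_\theta$ maps \emph{onto} each fibre. The reason the action need not be \emph{into} the fibres — i.e.\ the map $K_\theta \to \bIso$, $\chi \mapsto [f_0+\chi]$, need not land in a single fibre in a way that is injective or even well-defined as a $K_\theta$-action on $\bIso$ — is that distinct elements $\chi \ne \chi'$ of $K_\theta$ can produce boundary isomorphisms $f_0+\chi$ and $f_0+\chi'$ which lie in the same $\Aut(V,\theta)\times\Aut(V',\theta')$-orbit without $\chi,\chi'$ being related by any automorphism-induced transformation; equivalently, the stabiliser subgroups $\Aut(V,\theta)\times\Aut(V',\theta')$ can act nontrivially on the $K_\theta$-coset but not by the conjugation action that would make $K_\theta$ act on $\bIso_S$. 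I would record this as the precise sense of the statement: $K_\theta$ surjects onto, but the induced map $K_\theta \to \bIso$ is neither necessarily injective nor an action-theoretic embedding.

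The main obstacle I anticipate is purely bookkeeping: tracking which group acts where through the three layers ($\Iso_S$, $\Iso$, $\bIso$, $\sbIso$) and verifying that the $K_\theta$-action from Corollary \ref{linkisocor} descends correctly to orbit sets rather than getting entangled with the isometry actions. Once the compatibility $q(\del h \circ f \circ \del g^{-1}) = \del_S h \circ q(f) \circ \del_S g^{-1}$ is established — which is a one-line matrix computation from the definition of composition of split formation isomorphisms in Definition \ref{formationdef} — the rest is diagram-chasing through a commutative square of group actions and quotients.
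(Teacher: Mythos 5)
Your proposal is correct and takes essentially the same route as the paper: surjectivity via Corollary \ref{linkisocor} together with Lemma \ref{bijbisolem}, and the fibre statement via the equivariance $q(\del h \circ f \circ \del g^{-1}) = \del_S h \circ q(f) \circ \del_S g^{-1}$ (equivalently $q(\del h)=\del_S h$), which lets one move any class in a fibre of $q_{\rm b}$ into a single fibre of $q$ and then invoke the free, transitive $K_\theta$-action of Corollary \ref{linkisocor} there, exactly as in the paper's argument with $f_0\circ\kappa(\chi)=\del h'\circ f_1\circ\del h$. One small slip in your closing remark: since $q(f_0\circ\kappa(\chi))=q(f_0)$, the set $\{[f_0\circ\kappa(\chi)]:\chi\in K_\theta\}$ \emph{does} lie in a single fibre of $q_{\rm b}\circ s^{-1}$; the caveat ``onto but not necessarily into'' only records the possible failure of injectivity of $\chi\mapsto[f_0\circ\kappa(\chi)]$ and of a well-defined $K_\theta$-action on $\bIso$, which is precisely what you say in your final sentence.
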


\begin{proof}
The surjectivity of $q_{\rm b} \circ s^{-1}$ follows from the surjectivity of $q_{\rm b}$.
Let $[f_0], [f_1] \in \bIso((V, \theta), (V', \theta'))$ be such that $q_b([f_0]) = q_b([f_1])$.
Then there are isometries $h$ of $(V, \theta)$ and $h'$ of $(V', \theta')$ such that
$q(f_0) = \del_S h' \circ q(f_1) \circ \del_S h$.  But since $q(\del h) = \del _S h$ and similarly for $h'$,
we have that $q(f_0) = q(\del h' \circ f_1 \circ \del h)$ and now by Corollary \ref{linkisocor} it follows
that there is a $\chi \in K_\theta$ such that $f_0 \circ \kappa(\chi) = \del h' \circ f_1 \circ \del h$,
completing the proof.
\qedhere
\end{proof}

\begin{rem}
We remind the reader that $K_\theta \subset \hQ^{-\epsilon}(V^*)$, that the group $\hQ^{-\epsilon}(V^*)$ has exponent $2$ in general and that $\hQ^{-\epsilon}(V^*)$ may vanish, for example when $\Lambda = \bZ[\pi]$, $w = 0$ and $\epsilon = 1$.
\end{rem}

\subsection{On $l_{2q+1}(\bZ)$}
\label{lZsec}


We begin with the $+$-quadratic case and $l_1(\bZ)$ which is in
general very complex but stably very simple.  Every $+$-quadratic
form over $\bZ$, $w = (W, \sigma)$, determines and is determined by
its symmetrisation, $\bar w = (W, \sigma + \sigma^*)$, which is an
even symmetric bilinear form.  Moreover each $w$ splits uniquely
up to isomorphism as $(W, \sigma) \cong (V, \theta) \oplus (U, 0)$
where $v = (V, \theta)$ is nondegenerate: that is, if $\bar v =
(V, \lambda)$ then $\lambda \co  V \ra V^*$ is injective.  It follows
that $\mathcal{F}_{0}^{\rm zs}(\bZ)$ can be identified with the
set of isomorphism classes of nondegenerate symmetric bilinear
forms.

Following \cite{Nik79}, we call nondegenerate even
symmetric bilinear forms lattices.  A lattice $(V, \lambda)$ is
called indefinite if $\lambda(x, x) = 0$ for some $x \in V - \{ 0
\}$.  Of course, the classification of lattices, and in particular
definite lattices, is an extremely rich and complicated subject.
Classically, two lattices are said to belong to the same genus if
they become isomorphic when tensored with the $p$-adic integers for
each prime $p$.  We record here just two basic facts.

\begin{prop} \label{latticeprop}
\begin{enumerate}
\item The set of isomorphism classes of lattices of a fixed rank is finite but may be arbitrarily large.
\item If $\bar v$ and $\bar v'$ are stably isometric, then they belong to the same genus.
\end{enumerate}
\end{prop}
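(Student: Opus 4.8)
The plan is to handle the two parts separately; both are classical facts about integral lattices, so the real task is to isolate the right references and to run one short local computation for part (ii).

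For part (i), the finiteness statement is the finiteness of the class number of a genus: by Hermite's theorem there are only finitely many $\mathrm{GL}_n(\bZ)$-isometry classes of integral symmetric bilinear forms of a given rank $n$ and a given nonzero determinant, and \emph{a fortiori} only finitely many within a single genus. (For positive definite forms this is classical Minkowski reduction theory; the general case is standard, and the genus version also follows from finiteness of the mass in the Minkowski--Siegel mass formula; cf.\ \cite{Nik79}.) For the assertion that this finite number can be arbitrarily large I would exhibit an explicit family of rank-$2$ even lattices: the Gram matrices $\svec{2a & b \\ b & 2c}$ realise the binary quadratic forms of discriminant $b^2-4ac$, and for $D$ a prime with $D\equiv 3 \pmod 4$ the form class group of discriminant $-D$ coincides with its principal genus and has order $h(-D)$, which tends to infinity with $D$ (Gauss genus theory together with Siegel's lower bound $h(-D)\gg_\varepsilon D^{1/2-\varepsilon}$). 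Alternatively one may quote the growth of the number of even unimodular lattices in ranks $8,16,24,32,\dots$.

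For part (ii), suppose $\bar v$ and $\bar v'$ are stably isometric, so that $\bar v\oplus M \cong \bar v'\oplus M$ over $\bZ$ for some even unimodular lattice $M$ (in the situation at hand $M=H^+(\bZ^k)$). Fix a prime $p$ and tensor with $\bZ_p$: this yields an isometry $\bar v\otimes\bZ_p \oplus (M\otimes\bZ_p) \cong \bar v'\otimes\bZ_p\oplus (M\otimes\bZ_p)$ in which the cancelled summand $M\otimes\bZ_p$ is a unimodular $\bZ_p$-lattice. Over the local ring $\bZ_p$ one may cancel such a summand: for $p$ odd this is immediate after diagonalising, and for $p=2$ it is a standard cancellation theorem for lattices containing a hyperbolic summand (see e.g.\ \cite{Nik79}). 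Hence $\bar v\otimes\bZ_p\cong\bar v'\otimes\bZ_p$ for every prime $p$, which is precisely the statement that $\bar v$ and $\bar v'$ lie in the same genus.

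The only subtle point is the prime $2$ in part (ii): integral cancellation of even forms genuinely fails --- already $E_8\oplus E_8$ and $D_{16}^+$ are stably but not actually isometric over $\bZ$ --- so it is essential that $M$ is unimodular. I would make the $2$-adic cancellation airtight as follows: every even unimodular $\bZ_2$-lattice is an orthogonal sum of copies of the hyperbolic plane $U=\svec{0&1\\1&0}$ together with at most one copy of $V=\svec{2&1\\1&2}$, and $V\oplus V\cong U\oplus U$ over $\bZ_2$ (both are even unimodular of rank $4$ with the same discriminant, and even unimodular $\bZ_2$-lattices are classified by rank and discriminant); cancellation of $M$ then reduces to cancellation of the hyperbolic plane $U$ over $\bZ_2$, which is standard (e.g.\ \cite{Nik79}). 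Apart from this, the proof is merely an assembly of classical results.
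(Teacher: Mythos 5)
Your proposal is correct in substance but follows a genuinely different route from the paper, especially in part (ii). The paper disposes of (i) by citing Milnor--Husemoller II.1.6 and of (ii) in one line via Nikulin's Corollary 1.9.4: the genus of a lattice is determined by rank, signature and the boundary (discriminant) linking form, and these invariants are unchanged by adding $H_+(\bZ^k)$, so stably isometric lattices have the same genus. You instead argue locally: tensor the stable isometry with $\bZ_p$ and cancel the unimodular summand prime by prime, with a separate treatment at $p=2$ via the classification of even unimodular $\bZ_2$-lattices ($U$, $V$, and $V\oplus V\cong U\oplus U$). Your route is more self-contained in spirit but puts all the weight on the $2$-adic cancellation of $U$, which you only assert is ``standard''; it is true (and your caution is well placed, since cancellation of $\langle 1\rangle$ over $\bZ_2$ genuinely fails), and it can be extracted from Nikulin's local results (even $2$-adic lattices are determined by rank, determinant and discriminant quadratic form) or from O'Meara's $2$-adic classification, but as written this is the one step you are citing rather than proving, and it is exactly the step the paper's appeal to Corollary 1.9.4 is designed to bypass. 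Two minor points: at odd $p$ the cancellation is not quite ``immediate after diagonalising'' --- one uses uniqueness of the Jordan invariants, since the lattices need not be unimodular --- and note that the paper's definition of genus uses only the finite primes, so your conclusion matches it verbatim (the signature condition of the classical definition is in any case automatic, signatures being additive and hyperbolics having signature zero). For (i), your Hermite-reduction finiteness plus the class-number growth of definite binary forms of prime discriminant (one genus, $h(-D)\to\infty$) is a perfectly good elaboration of what the paper simply cites, provided one reads ``fixed rank'' as the intended ``fixed rank and determinant (or genus)'', which is also how the paper uses the statement.
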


\begin{proof}
Part (i) can be found in \cite{HuMi73}[II.1.6].  The second part follows from \cite{Nik79}[ Corollary 1.9.4] which states that the rank, signature and induced quadratic linking form on the boundary determine the genus of a lattice.  But these invariants are agree for $\bar v$ and $\bar v'$ if and only if they agree for $\bar v \oplus H_+(\bZ^k)$ and $\bar v' \oplus H_+(\bZ^k)$.
\end{proof}

Turning to $l_{1}(\bZ)$, we fix a nondegenerate quadratic form $v$ and consider
\[
l_{1}(v) = \bigcup_{v' \sim v} l_{1}(v, v') \subset  l_{1}(\bZ).
\]
Proposition \ref{latticeprop} (i), implies that the above union may be taken over a finite set of $v'$ and by Theorem \ref{generalmainthm} each $l_{2q+1}(v, v')$
sits in the exact sequence
\[
L_1(\bZ)=0 \dlra{\rho} l_{1}(v, v') \dlra{\delta} {\rm sbIso}(v, v') \dlra{\kappa} L_0(\bZ)=\bZ.
\]
Since the signature of twisted doubles $w \cup_g -w$ of quadratic forms is zero (see \cite{Ran98} §42), $\kappa$ is the zero map.

Setting $S:=\bZ - \{0\}$ we see that every nondegenerate form is $S$-nonsingular and applying Lemma \ref{bijbisolem} we may conclude that
\[
l_{1}(v, v') = \sbIso(v, v') = \bIso_S(v, v').
\]
This set is finite because it is a quotient of the finite group of isomorphisms between the boundary quadratic linking forms: $\Iso(\del_S v, \del_S v')$.  However even for very simple forms $v$ the set $\sbAut(v)$ can be arbitrarily large.

\begin{ex} \label{sbautex}
Let $(\bZ,n)$ be the quadratic form where
$n = p_1^{m_1} \dots p_k^{m_k}  \in \bZ$ is a product of $k$ distinct odd primes powers $p_i^{m_i}$ with $m_i \geq 1$.  Then $\Aut_{S}(\del(\bZ,n))\cong\Aut(\bZ/2n\bZ, 1)$ and
$(\bZ/2n\bZ, 1) \cong (\bZ/2\bZ, 1) \bigoplus_{i=1}^k (\bZ/p_i^{m_i}\bZ, 1)$ has an automorphism group isomorphic
to $(\bZ/2\bZ)^k$ containing $\Aut(\bZ, n) = \{ \pm 1 \}$ as a normal subgroup.  Therefore $\sbAut(\bZ, n) \cong (\bZ/2\bZ)^{k-1}$.
\end{ex}

%

Summarizing our discussion we have

\begin{prop} \label{l_1(Z)finprop}
For each $+$-quadratic form $v$ over $\bZ$ the set $l_1(v)$ is finite but there are $v$ for which $\{ [v'] \,| \, [v'] \sim [v] \}$ or ${\rm sbAut}(v)$ is arbitrarily large.
\end{prop}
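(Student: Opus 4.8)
The plan is to assemble the proposition from three ingredients already developed in the excerpt: the identification $l_1(v) = \bigcup_{v' \sim v} l_1(v,v')$ with each $l_1(v,v')$ fitting in the exact sequence of Theorem~\ref{generalmainthm}; the finiteness of lattices of a fixed rank in a fixed genus; and the computation of $\bIso_S$ via Proposition~\ref{bIsosprop} together with Example~\ref{sbautex}. First I would recall that over $\Lambda = \bZ$ with $\epsilon = +1$ every nondegenerate $+$-quadratic form $v = (V,\theta)$ is $S$-nonsingular for $S = \bZ - \{0\}$, since the symmetrisation $\lambda = \theta + \theta^*$ is injective and hence becomes an isomorphism after inverting $S$. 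Thus Lemma~\ref{bijbisolem} applies and gives $s\co \sbIso(v,v') \isora \bIso(v,v')$, while Theorem~\ref{generalmainthm} provides the exact sequence
\[
L_1(\bZ) = 0 \dlra{\rho} l_1(v,v') \dlra{\delta} \sbIso(v,v') \dlra{\kappa} L_0(\bZ) = \bZ.
\]
Since $L_1(\bZ) = 0$, the map $\delta$ is injective; since twisted doubles $w \cup_f (-w)$ (and more generally $v \cup_f (-v')$ for stably equivalent $v,v'$) have zero signature by \cite{Ran98}, the map $\kappa$ is zero, so $\delta$ is a bijection onto $\sbIso(v,v')$. Hence $l_1(v,v') \cong \sbIso(v,v') \cong \bIso(v,v')$, which is a quotient of the set $\Iso(\del_S v, \del_S v')$ of isometries of finite quadratic linking forms on finite groups, hence finite.

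Next I would establish that the union $\bigcup_{v' \sim v}$ is over finitely many isomorphism classes $[v']$. If $v \sim v'$ then by Proposition~\ref{latticeprop}(ii) the symmetrisations $\bar v$ and $\bar v'$ lie in the same genus, and by Proposition~\ref{latticeprop}(i) there are only finitely many isomorphism classes of lattices of the given rank, a fortiori finitely many in a fixed genus. Since a nondegenerate $+$-quadratic form over $\bZ$ is determined by its (even) symmetrisation, this bounds the number of classes $[v']$ with $v' \sim v$. Combining this with the finiteness of each $l_1(v,v')$ from the previous paragraph yields that $l_1(v)$ is finite.

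For the two "arbitrarily large" assertions I would argue as follows. For $\{[v'] \mid v' \sim v\}$: Proposition~\ref{latticeprop}(i) already asserts that genera of lattices (of a fixed rank) can be arbitrarily large, and — appealing to \cite{Nik79} via Proposition~\ref{latticeprop}(ii), which characterises the genus by rank, signature, and boundary linking form — any two lattices in the same genus become stably isometric, so there exist $v$ with arbitrarily many stable-equivalence partners. For $\sbAut(v)$: take $v = (\bZ, n)$ with $n = p_1 \cdots p_k$ a product of $k$ distinct odd primes; Example~\ref{sbautex} computes $\sbAut(\bZ,n) \cong (\bZ/2\bZ)^{k-1}$, which is arbitrarily large as $k \to \infty$. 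The main obstacle is marshalling the precise statements from \cite{Nik79} that (a) stably isometric lattices are exactly cofinal-in-genus and (b) the genus can be arbitrarily large, and checking that these are genuinely covered by Proposition~\ref{latticeprop}; the surgery-theoretic and linking-form bookkeeping (injectivity of $\delta$, vanishing of $\kappa$, the $\bIso_S$ identification) is then formal given the results already proved.
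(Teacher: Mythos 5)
Your proposal is correct and follows essentially the same route as the paper, whose ``proof'' is just a summary of the preceding discussion: the exact sequence of Theorem \ref{generalmainthm} with $L_1(\bZ)=0$ and $\kappa=0$ by the signature argument, Lemma \ref{bijbisolem} and the linking-form identification to get finiteness of each $l_1(v,v')$, Proposition \ref{latticeprop} to bound the set of $[v']$ with $v'\sim v$, and Example \ref{sbautex} for the arbitrarily large $\sbAut(v)$. Your extra care about ``same genus implies stably isometric'' (via Nikulin) only makes explicit a point the paper leaves implicit, so no genuine deviation or gap.
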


We now consider the question for which nondegenerate quadratic forms $v$ does strict cancellation hold?  That is, for which forms $v$ is  $l_{4k+1}(v)=\{ e([v]) \}$?  From the discussion of Theorem \ref{generalmainthm} above this is equivalent to asking whether $v \sim v'$ implies that $v \cong v'$ and, if so, whether ${\rm sbAut}(v) = \{1 \}$.

In \cite{Nik79} Nikulin explicitly raised very similar questions concerning symmetric bilinear forms and we report translations of his results in parts (i) and (ii) of the following proposition.  Here ${\rm rk}(V)$ denotes the rank of a free abelian group $V$, $l(G)$ denotes the minimal number of generators of a finite abelian group $G$ and $l_p(G) = l(G_p)$ where $G_p$ is the $p$-primary component of $G$ for a prime $p$.

\begin{prop}
\label{cancellationforVZ+1prop}
Let $v=(V, \theta)$ be a nondegenerate quadratic form and let $(G,\phi)$ be the associated symmetric boundary (Definition \ref{Sboundarydef}). Then strict cancellation holds for $v$ if
any of the following conditions hold.

\begin{enumerate}
\item
The symmetric form $(V, \theta + \theta^*)$ is indefinite and satisfies

\begin{enumerate}
\item
$\rk(V) \geq l_p(G) +2$ for all primes $p \neq 2$,

\item
if $\rk(V) = l_2(G)$ then the symmetric boundary associated to
$\left( \bZ^2, \svec{0 & 2 \\ 0 & 0} \right)$
is a summand of the $2$-primary component of $(G, \phi)$.
\end{enumerate}

\item
The symmetric form $(V, \theta + \theta^*)$ is isomorphic to one of the classical lattices $E_8$, $E_7$, $E_6$, $D_5$ or $A_4$.
\item
The quadratic form $v$ is isomorphic to $(\bZ, p)$ for any prime $p$.

\end{enumerate}
\end{prop}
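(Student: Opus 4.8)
The plan is to translate the statement into Nikulin's language of discriminant quadratic forms, after which cases (i) and (ii) become direct appeals to \cite{Nik79} and case (iii) a short hands-on check. Throughout, set $\Lambda=\bZ$, $\epsilon=+1$ and $S=\bZ-\{0\}$, and write $\bar v=(V,\theta+\theta^*)$ for the (automatically even) symmetrisation of $v$, so that, as recalled before the proposition, $v$ is determined by $\bar v$ and $\mathcal{F}^{\rm zs}_0(\bZ)$ is the set of isomorphism classes of lattices.

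First I would collect the reductions already assembled in the discussion preceding the proposition: strict cancellation holds for $v$ precisely when \textbf{(A)} every $v'\sim v$ satisfies $v'\cong v$ and \textbf{(B)} $\bAut_S(v)=1$. Since $\hQ^{-1}(\bZ^n)=0$, the quadratic refinement of an even lattice is unique, so $\Aut(V,\theta)=O(\bar v)$, an isometry $\bar v'\cong\bar v$ lifts to $v'\cong v$, and the $S$-boundary $\del_S(V,\theta)$ of Definition \ref{Sboundarydef} is exactly the discriminant quadratic form $q_{\bar v}\colon G\to\bQ/2\bZ$ of $\bar v$ on $G=\coker(\theta+\theta^*)$, with $\Aut_S(\del_S(V,\theta))=O(q_{\bar v})$. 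Inspecting the orbit of $1$ under the action defining $\bAut_S(v)$ identifies it with the image of the canonical homomorphism $O(\bar v)\to O(q_{\bar v})$, so \textbf{(B)} amounts to the surjectivity of that homomorphism; and by Proposition \ref{latticeprop}(ii), $v\sim v'$ forces $\bar v$ and $\bar v'$ into the same genus, so \textbf{(A)} holds once $\bar v$ is alone in its genus. Thus the proposition reduces to: under (i), (ii) or (iii), \textbf{(A$'$)} $\bar v$ is the unique class in its genus, and \textbf{(B$'$)} $O(\bar v)\to O(q_{\bar v})$ is onto.

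For case (i) both \textbf{(A$'$)} and \textbf{(B$'$)} are, under this dictionary, the two conclusions of \cite{Nik79}[Theorem 1.14.2] for even indefinite lattices, and conditions (a)--(b) are exactly that theorem's hypotheses: the rank bound $\rk(V)\ge l_p(G)+2$ at odd primes $p$, together with the requirement that, when $\rk(V)=l_2(G)$, the form $q_{\bar v}$ contains as an orthogonal summand the discriminant form of $\svec{0&2\\2&0}$. I would invoke Nikulin here after checking the hypotheses match verbatim. For case (ii) the plan is to argue directly: each of $E_8,E_7,E_6,D_5,A_4$ is alone in its genus (classical, and also recorded in \cite{Nik79}), giving \textbf{(A$'$)}; and its discriminant group $G$ is $0$, $\bZ/2$, $\bZ/3$, $\bZ/4$, $\bZ/5$ respectively, so $O(q_{\bar v})$ is trivial for $E_8$ and $E_7$ and equals $\{\pm1\}$ for $E_6,D_5,A_4$; since $-\id_V\in O(\bar v)$ for all five lattices and maps to the nontrivial element in the latter three, \textbf{(B$'$)} holds throughout. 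For case (iii), $v=(\bZ,p)$: every $v'\sim v$ is a nondegenerate rank-one form $(\bZ,m)$, positive definite because signatures agree, hence $m>0$, and sameness of genus forces $|G|=2m=2p$, so $m=p$ and $v'\cong v$ --- this is \textbf{(A)}; for \textbf{(B)}, when $p$ is odd this is the $k=1$ instance of Example \ref{sbautex}, which gives $\sbAut(\bZ,p)=\bAut_S(\bZ,p)=1$, and when $p=2$ one checks directly that $\Aut_S(\del_S(\bZ,2))$ is the automorphism group $\{\pm1\}$ of the quadratic linking form on $\bZ/4$, which is the image of $-\id\in\Aut(\bZ,2)$.

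The main obstacle is the bookkeeping of the second paragraph: identifying the split $\epsilon$-quadratic linking form $\del_S(V,\theta)$ and its automorphism group with Nikulin's $q_{\bar v}$ and $O(q_{\bar v})$, and confirming that conditions (a)--(b) are word-for-word the hypotheses of \cite{Nik79}[Theorem 1.14.2]. Once that is pinned down, (i)--(iii) follow from \cite{Nik79} and from the explicit structure of the five listed root lattices with essentially no further calculation.
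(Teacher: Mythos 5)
Your proposal is correct and takes essentially the same route as the paper: reduce strict cancellation, via the discussion preceding the proposition, to uniqueness of $\bar v$ in its genus together with surjectivity of $O(\bar v)\to O(q_{\bar v})$ (i.e.\ $\bAut_S(v)=1$), then invoke \cite{Nik79}[Theorem 1.14.2] for case (i) and settle case (iii) by the elementary rank-one argument. The only (harmless) deviation is in case (ii), where the paper simply cites \cite{Nik79}[Remark 1.14.6] while you verify uniqueness in the genus and surjectivity onto $O(q_{\bar v})$ directly for the five root lattices; note also that Nikulin's hypothesis in (i)(b) allows either of two summands, so the paper's condition is one sufficient disjunct rather than a verbatim match, but this does not affect the argument.
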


\begin{proof}
The proof of parts (i) and (ii) requires only that we translate some results of Nikulin concerning lattices and the isomorphisms of their boundaries.  Specifically Theorem 1.14.2 and Remark 1.14.6. \cite{Nik79} assert for any lattice $(V, \theta + \theta^*)$ in part (i) or (ii) respectively that all lattices in the same genus as $(V, \theta + \theta^*)$ are isometric to $(V, \theta + \theta^*)$ and that ${\rm sbAut}(v) = \{ 1 \}$.  Applying Proposition \ref{latticeprop} (ii) we are done.

For part (iii), note that any form which is stably isometric to $(\bZ, p)$ is a rank $1$ form $v'$ with $\del_S(v') \cong (\bZ/2p, 1)$ and so $v'$ must be isomorphic to $(\bZ, p)$.  Hence $l_{5}(v) = l_5(v, v)$.  Moreover, $\Aut(v) \cong \bZ/2 \cong \Aut(\del_S(v))$ and so $\bAut(v) = \{1 \}$.


\end{proof}

\begin{rem} \label{Zcancelrem}
The case (i) includes the case when $v = w \oplus H_+(\bZ)$ splits off a single hyperbolic plane.
\end{rem}


In the skew-quadratic case the monoid $l_{3}(\bZ)$ is as simple as one can hope.

\begin{prop} \label{leml_3Zelem}
All elements of $l_{3}(\bZ)$ are elementary.
\end{prop}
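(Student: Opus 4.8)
The plan is to apply Corollary~\ref{maincor}, exploiting that here $\epsilon=-1$, that $L^s_3(\bZ)=0$, and that $L^s_2(\bZ)=\bZt$ is detected by the Arf invariant. First I would reduce to the diagonal case $l_3(v,v)$. Over the PID $\bZ$ a skew\nobreakdash-quadratic form $v=(V,\theta)$ splits orthogonally as $v\cong w\oplus v_R$ with $v_R=(\Rad(v),\theta|_{\Rad(v)})$ linear — the cross terms between $\Rad(v)$ and a complement are symmetric and hence vanish in $Q_{-1}$ — and $w$ $S$\nobreakdash-nonsingular for $S=\bZ\setminus\{0\}$; moreover $v_R\cong(\bZ,1)^{0\text{ or }1}\oplus(\bZ,0)^c$. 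A complete isometry invariant of $v$ is given by the elementary divisors of its symmetrisation, the isomorphism type of $v_R$, the split quadratic linking form $\del_S w$ of Definition~\ref{Sboundarydef}, and the Arf invariant of any unimodular summand. None of these changes upon adding copies of $H_{-1}(\bZ)$, so $v\sim v'$ forces $v\cong v'$; by Proposition~\ref{prepprp}\,(\ref{prepprphyperit}) it therefore suffices to show that $l_3(v,v)$ consists of elementary elements for every such $v$. Since $L^s_3(\bZ)=0$, Corollary~\ref{maincor} identifies $\delta\co l_3(v,v)\to\kappa^{-1}(0)\subseteq\sbAut(v)$ as a bijection under which $[x]$ is elementary precisely when $\delta([x])=1$, so it is enough to prove $\kappa^{-1}(0)=\{1\}$, and for this it suffices that $\sbAut(v)$ be trivial, or else $\cong\bZt$ with $\kappa\co\sbAut(v)\to L^s_2(\bZ)$ injective.

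Next I would treat the summands. Decompose $w\cong w_{ns}\oplus w_\partial$ with $w_{ns}$ unimodular — hence simple, as $\Wh(\bZ)=0$ — and $w_\partial$ $S$\nobreakdash-nonsingular of trivial radical but nowhere unimodular. By Lemma~\ref{splitisolem}\,(i) the boundary $\del w_{ns}$ is a trivial split formation, so Lemma~\ref{splitisolem}\,(ii) shows every stable automorphism of $\del w_{ns}$ is null\nobreakdash-homotopic and $w_{ns}$ contributes nothing to $\sbAut(v)$. For the linear part, Lemma~\ref{bzerolem} gives $\sbAut(\bZ^c,0)=Z^1(\Wh(\bZ))=0$, while a direct application of Lemma~\ref{sthomlem} to $\del(\bZ,1)$ shows $\sbAut(\bZ,1)\cong\bZt$ with $\kappa$ an isomorphism onto $L^s_2(\bZ)$: the nontrivial class is realised by the union whose extra diagonal entry is $1$, which has Arf invariant $1$. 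Combining these — a routine extension of Lemma~\ref{surnonslem}, once $\sbAut(w_\partial)=\{1\}$ is known — one gets that $\sbAut(v)$ is trivial if $v_R$ has no $(\bZ,1)$ summand and is $\cong\bZt$ with $\kappa$ injective otherwise.

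The heart of the argument is thus $\sbAut(w_\partial)=\{1\}$ for an $S$\nobreakdash-nonsingular form $u=(U,\sigma)$ of trivial radical; this is where the skew case over $\bZ$ parts company with $l_1(\bZ)$ (contrast Example~\ref{sbautex} and Proposition~\ref{l_1(Z)finprop}). By Lemma~\ref{bijbisolem}, $\sbAut(u)=\bIso(\del u,\del u)$, and Proposition~\ref{bIsosprop} (with Proposition~\ref{linkautprp} and Corollary~\ref{linkisocor}) presents this as built from $\bAut_S(u)=\Aut_S(\del_S u)/(\del_S\Aut(u)\times\del_S\Aut(u))$ together with the image of $K_\sigma\subseteq\hQ^{-\epsilon}(U^*)$ under $\chi\mapsto[(1,1,\chi)]$. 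The crucial input is that $\Aut(u)$ is a large arithmetic group of symplectic type — for instance $\Aut\bigl((\bZ^2,\svec{0&d\\0&0})\bigr)$ contains the congruence subgroup $\Gamma(2)\subseteq SL_2(\bZ)$ — so that strong approximation makes $\del_S\co\Aut(u)\to\Aut_S(\del_S u)$ surjective and hence $\bAut_S(u)=\{1\}$; and a computation with Lemma~\ref{sthomlem} of the homotopy class of $\del h$ for $h\in\Aut_0(u):=\ker(\del_S)$ (the parametrisation $h=1+\Delta(\sigma-\sigma^*)$ exhausts $\Aut_0(u)$ and sends $h$ to the class of the diagonal of $(1+\Delta(\sigma-\sigma^*))\Delta^*$ modulo $2$) shows $\del\co\Aut_0(u)\to K_\sigma$ is also surjective. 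Hence the $K_\sigma$\nobreakdash-fibres collapse and $\sbAut(u)=\{1\}$.

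The main obstacle is precisely this last step: establishing, uniformly over all skew\nobreakdash-quadratic forms over $\bZ$, the two surjectivity statements $\Aut(u)\twoheadrightarrow\Aut_S(\del_S u)$ and $\ker(\del_S)\twoheadrightarrow K_\sigma$. These rest on strong approximation for the symplectic\nobreakdash-type arithmetic groups occurring as $\Aut(u)$ — the very phenomenon that can fail in the $\epsilon=+1$ setting for small or definite forms, and which is responsible for the contrast between Propositions~\ref{l_1(Z)finprop} and~\ref{cancellationforVZ+1prop} on the one hand and the unconditional statement for $l_3(\bZ)$ on the other.
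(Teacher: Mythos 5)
Your strategy --- reduce to the diagonal sets $l_3(v,v)$ and then show $\kappa^{-1}(0)\subset\sbAut(v)$ is trivial via Corollary \ref{maincor} --- is genuinely different from the paper's proof, which works directly with a quasi-formation $(M,\psi;L,V)$: it stabilises by $\delta(M,\rho)$ as in Example \ref{delex}, uses Lemma \ref{unionlem} together with the cancellation argument from the proof of \cite{Kre99}[Corollary 4] to produce a complement of $V$ on which the symmetrisation vanishes, and then repairs the quadratic refinement by an explicit Arf-invariant change of basis. As written, however, your argument has two genuine gaps. The first is the classification claim underlying the reduction to the diagonal. Since $Q_{-1}(\bQ/\bZ)=0$, the split quadratic linking form of Definition \ref{Sboundarydef} carries no quadratic information in the skew case, so your list of invariants cannot detect the mod $2$ behaviour of the refinement on even elementary divisors. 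Concretely, on $\bZ^4$ with symmetrisation $\svec{0&2\\-2&0}\oplus\svec{0&2\\-2&0}$, the refinement with $\mu\equiv 0$ and the one whose reduction to $V/2V$ is a nonzero linear functional give non-isometric forms (that reduction is an isometry invariant) with identical invariants on your list: same elementary divisors, same radical, same symmetric linking form, no unimodular summand. So ``a complete isometry invariant is given by \dots'' is false as stated, and the assertion that $v\sim v'$ forces $v\cong v'$ is not established by it; that cancellation statement is essentially the same difficulty the paper confronts via \cite{Kre99}[Proposition 9] plus its Arf correction, i.e.\ it is part of what must be proved rather than an available input.

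The second gap is the one you flag yourself: the triviality of $\sbAut(u)$ for the non-unimodular $S$-nonsingular part rests on the two surjectivity claims $\Aut(u)\twoheadrightarrow\Aut_S(\del_S u)$ and $\ker(\del_S)\twoheadrightarrow K_\sigma$, for which ``strong approximation'' is only a slogan here. The group $\Aut(u)$ is cut out of the symplectic-type isometry group of the symmetrisation by the mod $2$ condition of preserving $\mu$; the prime $2$ (even elementary divisors) is exactly where a congruence-subgroup argument needs care; and $K_\sigma\subset\hQ^{+1}(U^*)$ is genuinely nonzero over $\bZ$, so the second surjectivity is a real computation, not a formality. In addition, assembling the summands is not ``a routine extension of Lemma \ref{surnonslem}'': that lemma uses crucially that the boundary of the simple summand is a trivial split formation, whereas the boundary of the non-unimodular summand is not trivial, and stable boundary automorphisms of a direct sum need not respect the splitting. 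Until these points are supplied your proposal is a plausible programme rather than a proof, and one that requires substantially more machinery than the paper's short direct argument.
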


\begin{proof}
Let $x = (M, \psi; L, V)$ be a skew-symmetric quasi-formation representing $[x] \in l_{4k+3}(\bZ)$.  
As $L_{3}(\bZ) = 0$ it suffices to show that $V$ has a Lagrangian complement in $M$.  As in Example \ref{delex}[iii], we set $y = x \oplus \delta(M, \rho)$ where $\rho$ represents $\psi$ and obtain that $\delta(y) = 1 \in {\rm bAut}(V \oplus M, \theta \oplus \psi)$.  Now by Lemma \ref{unionlem}[iii] it follows that $V \oplus M$ has a Lagrangian complement in $(M,\psi ; L, V) \oplus \delta(M, \rho)$.   We now wish to apply the penultimate section (pp. 742-3) of the proof of Corollary 4 in \cite{Kre99} to conclude that $V$ has a Lagrangian complement in $M$: observe that $\delta(M, \rho)$ corresponds to Kreck's $(H_\epsilon(\Lambda^{2k}), H_\epsilon(\Lambda^k))$ under the isomorphism of Proposition \ref{newlprp}.
However we note that the argument there relies on \cite{Kre99}[Proposition 9] and for the case $\epsilon = -1$, $\Lambda = \bZ$ the proof of Proposition 9 ignores the quadratic refinement.  We may therefore conclude only that $V$ has complement $W$ in $M$ on which $\phi = \psi - \psi^*$ vanishes.  Let $\mu\co M \ras Q_{-1}(\bZ)=\bZ/2\bZ$ be the quadratic refinement corresponding to $\psi$ as in Remark \ref{splitformrem}. The restriction $\mu|_W$ is a homomorphism $W \ras \bZ/2\bZ$.  Thus we may choose bases of
$\{v_1, \dots , v_k\}$ and $\{w_1, \dots , w_k\}$ of $V$ and $W$
respectively so that
\[
\phi(v_i, w_j) = \delta_{ij},
\qquad
\phi(w_i, w_j) = 0,
\quad
\mu(w_i) = \delta_{1i}
\]
where $\delta_{ij} = 0$ if  $i \neq j$ and $1$ if $i=j$.  As
$\{v_1, \dots, v_k, w_1, \dots, w_k\}$ forms a basis for the
hyperbolic form $(M, \psi)$ we see that $\mu(v_1) = 0$.  This is
because $\{w_1, v_1\}$ spans a nonsingular sub-form of $(M,
\psi)$ which we call $(M_0, \psi_0)$, and thus $(M, \psi) \cong
(M_0, \psi_0)\oplus (M_0^{\perp}, \psi_0^{\perp})$. But $\{w_2,
\dots w_k\}$ spans a Lagrangian in $(M_0^{\perp}, \psi_0^{\perp})$
and so the Arf invariant of this form vanishes.  Hence the Arf
invariant of $(M_0, \psi_0)$ vanishes but this is $\mu(w_1) \cdot
\mu(v_1)$ and so $\mu(v_1) = 0$.  We therefore let $K$ be the
Lagrangian with basis $\{w_1 + v_1, w_2, \dots, w_k\}$ and observe
that $K$ is a Lagrangian complement for $V$.
\end{proof}

\begin{cor} \label{corl_3Z}
There is a sequence of monoid isomorphisms
\[
\mathcal{E}l_{3}(\bZ) = l_{3}(\bZ)
\dlra{b}
\Delta(\mathcal{F}_{2}^{zs}(\bZ))
\dlra{\cong}
\mathcal{F}_{2}^{\rm zs}(\bZ).
\]
\end{cor}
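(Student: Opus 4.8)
The plan is to deduce the corollary formally from Proposition \ref{leml_3Zelem} and Corollary \ref{bcor}, since all the substantive work has already been done. First I would note that Proposition \ref{leml_3Zelem} says exactly that the inclusion of submonoids $\mathcal{E}l_3(\bZ) \subseteq l_3(\bZ)$ is an equality; there is nothing further to verify, because $\mathcal{E}l_3(\bZ)$ is a submonoid of $l_3(\bZ)$ by construction (subsection \ref{origolsec}), so the equality of underlying sets is automatically an equality of monoids.

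Next I would specialise Corollary \ref{bcor}(ii) to $\Lambda = \bZ$ and $q = 1$ (so $\epsilon = -1$ and $2q = 2$): the restriction $b_{\mathcal{E}} = b|_{\mathcal{E}\ol{\bZ}}$ is then a monoid isomorphism onto the diagonal $\Delta(\mathcal{F}_2^{\rm zs}(\bZ)) = \{\, ([w],[w]) \mid [w] \in \mathcal{F}_2^{\rm zs}(\bZ) \,\}$. Combining this with the identification $\mathcal{E}l_3(\bZ) = l_3(\bZ)$ from the previous step gives the first map in the claimed chain, a monoid isomorphism $b\co l_3(\bZ) \isora \Delta(\mathcal{F}_2^{\rm zs}(\bZ))$.

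Finally, the diagonal map $\mathcal{F}_2^{\rm zs}(\bZ) \ra \Delta(\mathcal{F}_2^{\rm zs}(\bZ))$, $[w] \mapsto ([w],[w])$, is plainly a bijective homomorphism of abelian monoids, so its inverse supplies the second isomorphism $\Delta(\mathcal{F}_2^{\rm zs}(\bZ)) \isora \mathcal{F}_2^{\rm zs}(\bZ)$; composing the two yields the asserted sequence. I do not expect any genuine obstacle here: the real content is Proposition \ref{leml_3Zelem} (whose proof, via the analysis of Lagrangian complements and the Arf-invariant computation, is the hard part), together with the general structural statement of Corollary \ref{bcor}, and the present corollary is simply the combination of the two.
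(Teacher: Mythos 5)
Your proposal is correct and matches the paper's (implicit) argument: the corollary is stated without separate proof precisely because it is the immediate combination of Proposition \ref{leml_3Zelem} with Corollary \ref{bcor}(ii) specialised to $\Lambda=\bZ$, $q=1$, together with the obvious identification of the diagonal with $\mathcal{F}_2^{\rm zs}(\bZ)$. Nothing further is needed.
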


\section{Strict cancellation and absolute stable rank}
Recall that strict cancellation holds for a $0$-stabilised form $[v]$ if for any quasi-formation $x = (M, \psi;L,V)$ with $[V, \theta] = [v]$, $[x] \in \ol{\Lambda}$ is elementary.  Below we define the absolute stable rank of a ring $\Lambda$, ${\rm asr}(\Lambda)$, and in this section $\Lambda$ is always a ring with finite absolute stable rank: e.g. $\Lambda = \bZ[\pi]$ for $\pi$ polycyclic-by-finite.  We shall prove that strict cancellation holds for any $0$-stabilised form $[v]$ over $\Lambda$ if $[v] = [w] \oplus [H_\epsilon(\Lambda^{k})]$ and $k \geq {\rm asr}(\Lambda) + 1$.  

The proof follows from Corollary \ref{maincor} and the cancellation and transitivity theorems of \cite{MKV88} which hold for rings with finite absolute stable rank: if $(V,\theta)$ contains a hyperbolic of rank ${\rm asr}(\Lambda) +1$ then any stable isometry from $(V,\theta)$ to another form can be replaced by an isometry that induces the same boundary isomorphism. In addition, $\oLs{\Lambda}$ acts trivially on $[x]$.

Under these circumstances we show that $[x]\in\ol{\Lambda}$ is
elementary as follows.  By Proposition \ref{prepprp} $(V,\theta)$ and
$(V^\perp,\theta^\perp)$ are stably isometric and the cancellation theorem mentioned above states that
they are already isometric.  Therefore $[x] \in \ol{(V,\theta), (V,\theta)}$.

The second obstruction to $[x]$ being elementary, $\delta([x])$, is represented by the stable
isomorphism $f_j$ of Proposition \ref{orthoglueprp} which is the boundary of an isometry
$h\co (V,\theta)\oplus H_\epsilon(K) \isora (V^\perp,-\theta^\perp)\oplus H_\epsilon(K)$
by Proposition \ref{bdryextprp}. Again, the cancellation theorem allows us to replace
$h$ by an isometry $(V,\theta)\isora (V^\perp,-\theta^\perp)$ without changing the boundary isomorphism.
Hence $\delta([x]) = 1$ and $[x]$ is elementary modulo the action of $\oLs{\Lambda}$.
But the action of $\oLs{\Lambda}$ on $[x]$ is trivial and therefore $[x]$ is indeed elementary.

The topological consequence of this algebra is Theorem \ref{thm1} which is a cancellation result for 
stably diffeomorphic manifolds with polycyclic-by-finite fundamental group which split off enough $S^q \times S^q$ connected summands.


We now introduce the absolute stable rank of Magurn, Van der Kallen and Vaserstein which is
a generalisation of concepts of Bass and Stein.

\begin{defi}[\cite{MKV88}, \cite{ReGa67}]
\begin{enumerate}
\item
Let $S\subset\Lambda$. Let $J(S)$ denote the intersection of all maximal left ideals of $\Lambda$ which contain $S$. The ideal $J(0)$ is the
{\bf Jacobson radical of $\Lambda$}.

\item
The {\bf absolute stable rank of $\Lambda$, $\asr(\Lambda)$,} is the
minimum of
all integers $n$ with the following property:
for all $(n+1)$-pairs $
(a_i)_{0\leq i\leq n}$
in $\Lambda$ there is an $n$-pair $(t_i)_{0\leq i\leq n-1}$ in $
\Lambda$ such that
$a_n\in J(a_0+t_0a_n,\dots, a_{n-1}+t_{n-1}a_n)$. If no such $n$
exists we set $\asr(\Lambda)=\infty$.

\end{enumerate}
\end{defi}

%
%
%

Important examples of rings with finite absolute stable rank are the group rings
of polycyclic-by-finite groups.
\begin{defi} [\cite{Sco64}{[\S7.1]}] \label{polydefi}
A group $\pi$ is {\bf polycyclic-by-finite} if
there is a subnormal series
\footnote{A series is subnormal if
all $\pi_{i-1}\subset \pi_i$ are normal subgroups. It is not
necessary that the $\pi_i$ are normal subgroups of $\pi$.}
$1=\pi_0 \triangleleft \pi_1 \triangleleft \cdots \triangleleft \pi_s=
\pi$
such that $\pi_k/\pi_{k-1}$ is either cyclic or finite. The number of
infinite cyclic factors
is an invariant of $\pi$ called the {\bf Hirsch number $h(\pi)$}.
\end{defi}

\begin{thm} \label{asrthm}
Let $\pi$ be a polycyclic-by-finite group. Then $\asr(\bZ[\pi])\leq 2
+h(\pi)$.
\end{thm}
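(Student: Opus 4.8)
The plan is to induct on the length of a subnormal series for $\pi$ with cyclic or finite quotients, using three properties of the absolute stable rank which follow from \cite{MKV88}: \textbf{(a)} $\asr(\bZ) = 2$; \textbf{(b)} if a Noetherian ring $A$ is finitely generated as a module over a subring $B$, then $\asr(A) \leq \asr(B)$; and \textbf{(c)} for a ring automorphism $\sigma$ of a Noetherian ring $A$, the skew Laurent extension $A[t,t^{-1};\sigma]$ satisfies $\asr(A[t,t^{-1};\sigma]) \leq \asr(A) + 1$. Throughout one uses Hall's theorem that $\bZ[\pi]$ is Noetherian for every polycyclic-by-finite $\pi$, so that (b) and (c) apply at each stage.

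First I would treat the base case: if $\pi$ is trivial then $\bZ[\pi] = \bZ$ and $\asr(\bZ) = 2 = 2 + h(\pi)$ by (a). For the inductive step, pick a subnormal series $1 = \pi_0 \triangleleft \pi_1 \triangleleft \cdots \triangleleft \pi_s = \pi$ with each $\pi_k/\pi_{k-1}$ cyclic or finite, set $N := \pi_{s-1}$, and note that $N$ is polycyclic-by-finite with a shorter such series while $N \triangleleft \pi$ and $\pi/N$ is cyclic or finite. If $\pi/N$ is finite, then a group-theoretic transversal exhibits $\bZ[\pi]$ as a free $\bZ[N]$-module of finite rank; since $h(N) = h(\pi)$, property (b) and the inductive hypothesis give $\asr(\bZ[\pi]) \leq \asr(\bZ[N]) \leq 2 + h(N) = 2 + h(\pi)$. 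If instead $\pi/N \cong \bZ$, then the extension $1 \to N \to \pi \to \bZ \to 1$ splits since $\bZ$ is free, so $\pi \cong N \rtimes \bZ$ and hence $\bZ[\pi] \cong \bZ[N][t,t^{-1};\sigma]$, where $\sigma$ is the automorphism of $\bZ[N]$ induced by conjugation by a generator of the $\bZ$-factor; since now $h(N) = h(\pi) - 1$, property (c) and the inductive hypothesis give $\asr(\bZ[\pi]) \leq \asr(\bZ[N]) + 1 \leq (2 + h(N)) + 1 = 2 + h(\pi)$. This closes the induction.

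The step requiring genuine care is property (c): the behaviour of $\asr$ under a \emph{skew} Laurent extension over a possibly noncommutative Noetherian base. The commutative untwisted estimate $\asr(A[t,t^{-1}]) \leq \asr(A) + 1$ follows from the maximal-spectrum dimension bounds in \cite{MKV88}, and I would check that the relevant reduction --- in the spirit of the Bass--Stein argument for the ordinary stable rank of (skew) polynomial rings --- carries over to $A[t,t^{-1};\sigma]$; this is internal to \cite{MKV88}. As an alternative one can bypass (b) and (c) by showing directly that $\dim\maxspec\bZ[\pi] \leq h(\pi)+1$, using that every maximal ideal of $\bZ[\pi]$ has finite residue ring and that any chain of primes in $\bZ[\pi]$ has length at most $h(\pi)+1$, and then invoking $\asr(\Lambda) \leq \dim\maxspec(\Lambda)+1$ from \cite{MKV88}; along this route I expect the prime-chain-length bound to be the main obstacle. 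Everything else is routine bookkeeping with the subnormal series and the Hirsch number.
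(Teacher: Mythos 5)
Your inductive scheme is plausible in outline, but both of its load-bearing inputs, (b) and (c), are unproven and are not in \cite{MKV88}, so the proposal has a genuine gap exactly where you yourself flag "genuine care". The dimension bounds of \cite{MKV88} concern algebras that are module-finite over a \emph{commutative} Noetherian ring, bounding $\asr$ by $1+\dim\maxspec$ of that commutative base; the paper contains no descent statement $\asr(A)\le\asr(B)$ for a noncommutative subring $B$ over which $A$ is module-finite (your (b) with $B=\bZ[N]$), and no bound of the form $\asr(A[t,t^{-1};\sigma])\le\asr(A)+1$ for (skew) Laurent extensions of a noncommutative Noetherian ring (your (c)); even for ordinary stable rank of commutative rings such a Laurent/polynomial inequality is not a formal consequence of the Bass--Stein arguments. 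Your fallback route fails for the same reason: the inequality $\asr(\Lambda)\le\dim\maxspec(\Lambda)+1$ is established in \cite{MKV88} only under the module-finiteness hypothesis over a commutative Noetherian ring, and $\bZ[\pi]$ satisfies this only when $\pi$ is abelian-by-finite; for a general infinite polycyclic-by-finite group it does not, which is precisely why Stafford's extension to noncommutative Noetherian rings \cite{Sta90} is needed.

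The repair, and the route the paper actually takes, is to run your induction one level down, at the level of the Gabriel--Rentschler Krull dimension, where the analogues of (b) and (c) \emph{are} standard for Noetherian rings; this is Smith's computation $\Kdim(\bZ[\pi])=1+h(\pi)$ \cite{Smi73}. Since $\bZ[\pi]$ is right Noetherian \cite{McRo87}, Stafford's theorem \cite{Sta90} gives $\asr(\bZ[\pi])\le 1+\Kdim(\bZ[\pi]/J(0))$, and $\Kdim(\bZ[\pi]/J(0))\le\Kdim(\bZ[\pi])$ by \cite{ReGa67}, whence $\asr(\bZ[\pi])\le 2+h(\pi)$. In short: keep the bookkeeping with the subnormal series and the Hirsch number, but apply it to $\Kdim$ rather than directly to $\asr$, and invoke the noncommutative bound $\asr\le\Kdim+1$ once at the end.
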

\begin{proof}
Let $\Lambda=\bZ[\pi]$. The ring $\Lambda$ is right-Noetherian
due to \cite{McRo87}[\S 1.5].
Consecutively, we apply \cite{Sta90}[Theorem A(i)], \cite{ReGa67}[(e)]
and \cite{Smi73} in order to show that
$
\asr(\Lambda)\leq 1+\Kdim(\Lambda/J(0))\leq 1+\Kdim(\Lambda) = 2
+h(\pi)
$ where $\Kdim$ denotes the Krull-dimension.
\end{proof}

We now introduce some important concepts related to cancellation of hyperbolic forms.

\begin{defi}[\cite{MKV88}, \cite{Bas73}{[I.5.1]}]
Let $(V,\theta)$ be an $\epsilon$-quadratic form and $\lambda=\theta+
\epsilon\theta^*$.
\begin{enumerate}
\item
The {\bf Witt-index $\ind(V,\theta)$} is the largest integer $k$
such that there exists a sub form of $(V,\theta)$ isometric to $H_
\epsilon(\Lambda^k)$.
(Then there is a decomposition $(V,\theta)\cong(V',\theta')\oplus H_
\epsilon(\Lambda^k)$ by \cite{Bas73}[I.3.2].)

\item
A vector $v\in V$ is {\bf $(V,\theta)$-unimodular} if there is a $w
\in V$ with $\lambda(v,w)=1$.

\item
A {\bf symplectic basis $S=(e_i, f_i)_i$ of an $\epsilon$-quadratic
form $H_\epsilon(P)$}
is an ordered basis of $P\oplus P^*$ with $\theta(u,v)=0$ for all $u,v
\in S$ except for $\theta(e_i,f_i)=1$
for all $i$.

\item
A {\bf hyperbolic pair $(e,f)$ of $(V,\theta)$} is a
symplectic basis of some hyperbolic subform of $(V,\theta)$.

\item
Let $u,v \in V$ and $a\in\Lambda$ be such that $u$ is 
$(V,\theta)$-unimodular, $\lambda(u,v)=0\in\Lambda$,
$\theta(u,u)=0\in Q_\epsilon(\Lambda)$ and
$\theta(v,v)=[a]\in Q_\epsilon(\Lambda)$. Then the {\bf (orthogonal)
transvection $\tau_{u,a,v}$}
is the homomorphism
\begin{eqnarray*}
V \lra V,\quad x \mt x+u\lambda(v,x)-\epsilon v \lambda(u,x)-\epsilon
ua\lambda(u,x).
\end{eqnarray*}
\end{enumerate}
\end{defi}

\begin{prop}
\label{transvprp}
We use the notation of the preceding definitions.
\begin{enumerate}
\item
All transvections are isometries of $(V,\theta)$.

\item
$\tau_{u,a',v'}\circ\tau_{u,a,v}=\tau_{u,a'+\lambda(v',v)+a,v}$.

\item
$\tau_{u,a,v}^{-1}=\tau_{u,\lambda(v,v)-a,-v}$.

\item
$[\partial\tau_{u,a,v}] = 1 \in \Aut(\partial(V,\theta))$.

\item
Let $(V,\theta)=H_\epsilon(\Lambda^k)$ and let $S:=\{e_i, f_i\}_{1\leq i\leq k}$
be the canonical symplectic basis.  Then $\tau_{u,a,v}\in RU_k(\Lambda,\epsilon)$ for all $u\in S$.

\end{enumerate}
\end{prop}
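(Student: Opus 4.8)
The plan is to prove the five parts in the order (ii), (iii), (i), (iv), (v), since the invertibility of $\tau_{u,a,v}$ needed for (i) comes out of (iii). For (ii) I would substitute $y=\tau_{u,a,v}(x)$ into the formula for $\tau_{u,a',v'}(y)$ and collect terms. The computation collapses once one records that $\lambda(u,u)=0$ — this follows from $\theta(u,u)=0\in Q_\epsilon(\Lambda)$, since then $\theta(u)(u)=c-\epsilon\bar c$ for some $c$ and so symmetrises to $0$ — together with $\lambda(u,v)=0$, and with $\lambda(u,v')=0$ (which in turn gives $\lambda(v',u)=\epsilon\overline{\lambda(u,v')}=0$). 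Under these one has $\lambda(u,y)=\lambda(u,x)$ and $\lambda(v',y)=\lambda(v',x)-\epsilon\lambda(v',v)\lambda(u,x)$, and reassembling the $u(\cdot)$, $v(\cdot)$, $v'(\cdot)$ and $ua(\cdot)$ terms produces exactly the stated transvection. Part (iii) is then the specialisation $v'=-v$, $a'=\lambda(v,v)-a$ of (ii), which yields $\tau_{u,0,0}=\mathrm{id}_V$, so $\tau_{u,a,v}$ has the asserted two-sided inverse. For (i) it then only remains to verify $\tau^{*}\theta\tau=\theta\in Q_\epsilon(V)$, bijectivity being supplied by (iii): expanding $\theta(\tau x)(\tau x)$, every term carrying a factor $\lambda(u,u)$ or $\lambda(u,v)$ drops out, the terms carrying $\theta(u)(u)$ or $\theta(v)(v)$ contribute only elements of $\{c-\epsilon\bar c\}$ (using $\theta(u)(u)\equiv0$ and $\theta(v)(v)\equiv a$), and the remaining cross terms cancel in pairs, leaving $\theta(x)(x)$ modulo $\{c-\epsilon\bar c\}$. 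Unimodularity of $u$ is not needed here; it enters only in (v).

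For (iv) I would exhibit an explicit homotopy via Lemma~\ref{sthomlem}(ii): taking $f=\partial\tau_{u,a,v}=(\tau,\tau,0)$ and $\widetilde f=(1,1,0)$ as stable automorphisms of $\partial(V,\theta)$, one needs a $\Delta_1\in\Hom_\Lambda(V^{*},V)$ with $\mathrm{id}_V-\tau=\Delta_1\lambda^{*}$, $\mathrm{id}_V-\tau^{-1}=\Delta_1^{*}\lambda^{*}$ and $(-\epsilon\,\mathrm{id}_V+\Delta_1\theta)\Delta_1^{*}=0\in Q_\epsilon(V^{*})$. The key observation is that each of the three summands of $\tau_{u,a,v}-\mathrm{id}_V$ — namely $x\mapsto u\lambda(v,x)$, $x\mapsto v\lambda(u,x)$ and $x\mapsto ua\lambda(u,x)$ — factors through $\lambda=\theta+\epsilon\theta^{*}$ on the right, because $\lambda(v,x)$ and $\lambda(u,x)$ are, up to the involution and the sign $\epsilon$, the evaluations at $v$ and $u$ of $\lambda(x)\in V^{*}$. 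This produces a candidate $\Delta_1$ built from $u$, $v$ and $a$; the first identity then holds because $\lambda^{*}=\epsilon\lambda$, the second reduces to the first applied to $\tau^{-1}=\tau_{u,\lambda(v,v)-a,-v}$, and the third is a short computation using $\theta(u)(u)\equiv0$, $\theta(v)(v)\equiv a$ and $\lambda(u,v)=0$. I expect this last verification, with the split-formation bookkeeping of Definition~\ref{formationdef}, to be the main technical obstacle of the proposition.

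For (v) I would first reduce to $u=e_1$: permutations of the hyperbolic basis and the flip $\sigma_k$ lie in $RU_k(\Lambda,\epsilon)$, which is a group, and conjugating by a suitable product $P$ of such elements gives $P\,\tau_{u,a,v}\,P^{-1}=\tau_{Pu,a,Pv}$ with $Pu=e_1$, so it suffices to treat $u=e_1$ (the hypotheses on the transvection are preserved since $P$ is an isometry). For $u=e_1$, since $\lambda(e_i,e_j)=0$ for all $i,j$ one computes $\tau_{e_1,a,v}(e_j)=e_j+\lambda(v,e_j)\,e_1$, and moreover $\lambda(v,e_1)=\epsilon\overline{\lambda(e_1,v)}=0$; hence $\tau_{e_1,a,v}$ preserves the Lagrangian $\Lambda^{k}\times\{0\}$ and induces on it a map $\mathrm{id}_{\Lambda^k}+\rho$ with $\mathrm{im}\,\rho\subseteq\Lambda e_1$ and $\rho(e_1)=0$, i.e. a product of elementary matrices $\prod_{j\geq2}E_{1j}(\lambda(v,e_j))$, which is a simple automorphism. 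Therefore $\tau_{e_1,a,v}\in TU_k(\Lambda,\epsilon)\subseteq RU_k(\Lambda,\epsilon)$, and applying the reduction gives $\tau_{u,a,v}\in RU_k(\Lambda,\epsilon)$ for every $u\in S$.
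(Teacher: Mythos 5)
Your proposal is correct in substance and, in places, genuinely more self-contained than the paper, which disposes of (i)--(iii) by citing \cite{Bas73}[I.5]. Two small caveats on that part. First, the composition you describe actually yields $\tau_{u,a',v'}\circ\tau_{u,a,v}=\tau_{u,\,a+a'+\lambda(v',v),\,v+v'}$; the printed statement (ii), with $v$ alone in the last slot, is evidently a misprint, and your own deduction of (iii) (specialising $v'=-v$, $a'=\lambda(v,v)-a$ to get $\tau_{u,0,0}=\id$) silently uses the corrected form, so you should prove and state that version rather than claim your computation ``produces exactly the stated transvection''. Second, in (i) checking the diagonal values $\theta(\tau x)(\tau x)$ only controls the quadratic refinement; since $\theta\in Q_\epsilon(V)$ is determined by the refinement \emph{together with} the symmetrisation (Remark \ref{splitformrem}), you must also record the one-line verification that $\tau$ preserves $\lambda$ exactly, which follows from the same identities $\lambda(u,u)=0=\lambda(u,v)$. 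For (iv) your route is the paper's: a homotopy exhibited through Lemma \ref{sthomlem}(ii); the ``main technical obstacle'' you anticipate is precisely the computation the paper records, namely that $\Delta=-\epsilon uv^*+vu^*+uau^*$ works, and your observation that each summand of $\tau_{u,a,v}-\id$ factors through $\lambda$ reconstructs exactly this $\Delta$, so no real obstacle remains. For (v) you genuinely depart from the paper: there, $v$ is expanded in the symplectic basis and $\tau_{u,a,v}$ is factored as $\prod_i\tau_{u,0,a_ie_i}\circ\prod_i\tau_{u,0,b_if_i}\circ\tau_{u,x,0}$, with the membership of each factor in $RU_k(\Lambda,\epsilon)$ asserted without proof; you instead conjugate by permutations of the hyperbolic planes and the flip (all in $RU_k(\Lambda,\epsilon)$) to reduce to $u=e_1$ (the possible sign $Pu=\pm e_1$ is absorbed via $\tau_{-u,a,v}=\tau_{u,a,-v}$), and then show directly that $\tau_{e_1,a,v}$ preserves $\Lambda^k\times\{0\}$ and induces a product of elementary matrices on it, hence lies in $TU_k(\Lambda,\epsilon)$. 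This is a valid alternative and, if anything, more complete than the paper's argument, since it supplies the justification the paper omits.
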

\begin{proof}
The first three statements are proved in \cite{Bas73}[I.5].
With the help of Lemma \ref{sthomlem} one computes that
$\Delta=-\epsilon u v^* + vu^*+uau^*$ defines a homotopy
$\del \tau_{u,a,v} \simeq \del \id_{(V,\theta)}$. For the last claim we write
$v=\sum_{i=1}^k a_i e_i +\sum_{i=1}^k b_i f_i$.
There is a decomposition
\[
\tau_{u,a,v}=
\prod_{i=1}^k\tau_{u,0,a_i e_i}\circ\prod_{i=1}^k\tau_{u,0,b_i f_i}
\circ \tau_{u,x,0}
\]
and each of the factors is in $RU_k(\Lambda,\epsilon)$.
\end{proof}

\begin{thm}[\cite{MKV88}{[Theorem 8.1]}]
\label{mkvmainthm}
Let $(V,\theta)$ be an $\epsilon$-quadratic form with $k:=\ind(V, \theta)\geq \asr(\Lambda)+2$.
Let $S:=\{e_i, f_i\}_{1\leq i \leq k}$ be a symplectic basis of a
hyperbolic sub form of $(V,\theta)$. Let $v,v'\in V$ be $(V,\theta)$-unimodular
with $\theta(v,v)=\theta(v',v')\in Q_\epsilon(\Lambda)$. Then there
exists an $f\in \Aut(V,\theta)$ mapping $v$ to $v'$ 
which is a product of transvections of
the form $\tau_{u,a,v}$ with $u\in S$.
\end{thm}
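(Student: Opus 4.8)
The plan is to deduce this from the general stable-range philosophy for quadratic forms, as developed by Bass and, in the form needed here, by Magurn--Van der Kallen--Vaserstein. Let $EO(V,\theta) \leq \Aut(V,\theta)$ denote the subgroup generated by all transvections $\tau_{u,a,v}$ with $u$ in the symplectic basis $S$; by Proposition \ref{transvprp} (i) this is a genuine subgroup, and by Proposition \ref{transvprp} (iv) every one of its elements acts as the identity on $\del(V,\theta)$. What has to be shown is that $EO(V,\theta)$ acts transitively on the set of $(V,\theta)$-unimodular vectors with a prescribed value $\theta(v,v) = [a] \in Q_\epsilon(\Lambda)$; applying this to the given $v$ and $v'$ yields the theorem. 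The idea is to reduce an arbitrary such vector, by a product of transvections in $EO(V,\theta)$, to the single normal form $e_1 + a f_1$ (note $\theta(e_1 + af_1, e_1 + af_1) = [a]$), which depends only on $[a]$ and not on $v$.

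First I would prove the reduction lemma: \emph{every $(V,\theta)$-unimodular $v$ is carried by an element of $EO(V,\theta)$ to a vector with $e_1$-coordinate equal to $1$ and no component orthogonal to $\mathrm{span}_\Lambda(S)$.} Fix a decomposition $(V,\theta) \cong H_\epsilon(\Lambda^k) \perp (V_0,\theta_0)$ compatible with $S$ and write $v = \sum_{i=1}^k (a_i e_i + b_i f_i) + w$ with $w \in V_0$. Unimodularity of $v$ gives $x$ with $\lambda(v,x) = 1$, which controls the left ideal generated by the tuple $(a_1,\dots,a_k,\,\xi(w))$ for a suitable coordinate functional $\xi$ on $V_0$; the hypothesis $k-1 > \asr(\Lambda)$ then lets one choose, via the defining property of $\asr(\Lambda)$, an $n$-pair realizable by transvections $\tau_{f_j,0,\cdot}$ and $\tau_{e_j,0,\cdot}$ (all with first argument in $S$) that modify the $a_i$ until one of them, say $a_1$, becomes a unit. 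A transvection $\tau_{f_1,0,\cdot}$ then rescales the $e_1$-coordinate to $1$, and transvections $\tau_{f_1,\ast,w'}$ with $w' \in V_0$ (these are legal since $\lambda(f_1,V_0) = 0$ and $\theta(f_1,f_1)=0$) absorb $w$ into the $f_1$-coordinate. Throughout one uses Proposition \ref{transvprp} (i),(ii) to confirm that unimodularity and the class $\theta(v,v) \in Q_\epsilon(\Lambda)$ are preserved.

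Once $v$ has $e_1$-coordinate $1$ and lies in $\mathrm{span}_\Lambda(S)$, the transvections $\tau_{e_1,0,e_j}$, $\tau_{e_1,0,f_j}$, $\tau_{f_1,0,e_j}$, $\tau_{f_1,0,f_j}$ for $j \geq 2$ --- all with first argument in $S$ and all legitimate because $\lambda(e_1,\cdot)$ and $\lambda(f_1,\cdot)$ annihilate $e_j,f_j$ --- successively clear the coordinates $a_j,b_j$ for $j \geq 2$, leaving $v = e_1 + c f_1$ with necessarily $[c] = \theta(v,v) = [a] \in Q_\epsilon(\Lambda)$. It remains to pass from $e_1 + c f_1$ to $e_1 + a f_1$ when $c - a = d - \epsilon \bar d$ for some $d \in \Lambda$; here the second hyperbolic plane $(e_2,f_2)$ is available since $k \geq \asr(\Lambda) + 2 \geq 2$, and a short computation with the composition and inverse rules of Proposition \ref{transvprp} (ii),(iii) shows that a suitable product of $\tau_{e_2,\ast,f_1}$, $\tau_{f_2,\ast,e_1}$ and their inverses effects the substitution $c f_1 \mapsto a f_1$ while fixing $e_1, e_2, f_2$. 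Doing the whole reduction for $v$ and for $v'$ and composing gives the desired $f \in EO(V,\theta) \subset \Aut(V,\theta)$.

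The delicate part is the reduction lemma of the second paragraph: translating the absolute stable rank condition, which is stated for $(k+1)$-tuples in $\Lambda$ in terms of Jacobson radicals $J(a_0 + t_0 a_k, \dots)$, into a statement about when a coordinate of $v$ can be made a unit using only transvections with first argument in $S$. One must select the correct $(k+1)$-tuple built from the coordinates of $v$ and from the action of $\lambda$ on $V_0$, verify that the auxiliary $n$-pair produced by the definition of $\asr(\Lambda)$ corresponds to an honest product of such transvections, and check that the quadratic refinement $\theta(v,v)$ stays fixed at every step; this is exactly the content of \cite{MKV88}[Theorem 8.1] and of the classical precursors in \cite{Bas73}[I.5]. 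Everything else --- transitivity on the normal forms, remaining inside $EO(V,\theta)$, and triviality on $\del(V,\theta)$ --- is then formal.
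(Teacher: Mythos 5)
The paper gives no proof of this statement at all: it is imported verbatim as a quoted result, \cite{MKV88}[Theorem 8.1], and all the paper does with it is record the consequences (Corollary \ref{rutranscor}, Corollary \ref{wittcanccor}, Theorem \ref{cancpolthm}). So there is no internal argument of the paper for you to match, and the realistic options were either to cite the result, as the paper does, or to give a genuinely self-contained proof.

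Your proposal does neither. The overall strategy is the right one (it is essentially the strategy of Magurn--van der Kallen--Vaserstein and of Bass before them: show the subgroup generated by transvections $\tau_{u,a,v}$ with $u\in S$ acts transitively on unimodular vectors of fixed quadratic value by reducing to a normal form such as $e_1+af_1$), but at the decisive point you write that the translation of the $\asr(\Lambda)$ hypothesis into the required reduction ``is exactly the content of \cite{MKV88}[Theorem 8.1]'' --- that is, you invoke the theorem you are proving, so the argument is circular rather than a proof. There is also a concrete technical overreach in the reduction lemma: the absolute stable rank condition does not let you make a single coordinate $a_1$ a \emph{unit} in a noncommutative ring; what it provides is that, after elementary modifications by the allowed transvections, a shorter tuple of coordinates avoids all maximal left ideals (is unimodular in the sense of $J(-)$), and the MKV argument has to be run in that weaker form, with additional care to produce a hyperbolic pair containing the modified vector. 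The final normalisation $e_1+cf_1\mapsto e_1+af_1$ when $c-a=d-\epsilon\bar d$, using the second hyperbolic plane, is fine in outline, and preservation of $\theta(v,v)$ is automatic since transvections are isometries; but without an honest execution of the stable-range step the proposal does not establish the theorem independently of the citation.
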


An analysis of \cite{MKV88}[Corollary 8.2] shows
that cancellation holds for forms with high enough Witt-index.

\begin{cor}
\label{wittcanccor}
Let $f\co(V,\theta)\oplus H_\epsilon(L) \isora (V',\theta')\oplus H_\epsilon(L)$
be an isometry of $\epsilon$-quadratic forms where $\ind(V,\theta)\geq \asr(\Lambda)+1$.
Then there is an isometry $f'\co(V,\theta) \isora (V',\theta')$ such that
$\partial f$ and $\partial f'$ are stably homotopic.
\end{cor}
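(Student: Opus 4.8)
The plan is to reduce the statement to cancelling a single hyperbolic plane and then to run the cancellation argument underlying \cite{MKV88}[Theorem 8.1 and Corollary 8.2], keeping careful track of the effect of the cancelling automorphisms on boundaries. The only new ingredient beyond \cite{MKV88}[Corollary 8.2] is the observation that those automorphisms can be taken to be products of transvections, which by Proposition \ref{transvprp}(iv) have trivial boundary.

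First I would reduce to the case $L=\Lambda$. Replacing $f$ by $f\oplus\id_{H_\epsilon(P)}$ for a suitable free module $P$ we may assume $L=\Lambda^n$ is free, and since $H_\epsilon(\Lambda^n)=H_\epsilon(\Lambda)^{\oplus n}$ it suffices to cancel one plane $H:=H_\epsilon(\Lambda)$ at a time: at every intermediate stage the form being kept still contains $(V,\theta)$, hence has Witt index at least $\asr(\Lambda)+1$, so the single-plane statement may be applied repeatedly. Stable homotopies compose, and since each $\partial H_\epsilon(\Lambda^m)$ is a trivial split formation (Lemma \ref{splitisolem}(i)), throughout this process the boundary isomorphisms in play may, after absorbing trivial formations, be regarded as stable isomorphisms $\partial(V,\theta)\to\partial(V',\theta')$. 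Thus it is enough to produce, from an isometry $f\co(V,\theta)\oplus H\cong(V',\theta')\oplus H$ with $\ind(V,\theta)\geq\asr(\Lambda)+1$, an isometry $f'\co(V,\theta)\cong(V',\theta')$ with $\partial f'$ stably homotopic to $\partial f$ after absorbing $\partial H$.

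For the single plane I would fix a hyperbolic pair $(e,f_0)$ spanning $H$, so $\theta(e,e)=\theta(f_0,f_0)=0$ in $Q_\epsilon(\Lambda)$ and $\lambda(e,f_0)=1$. Since $\ind((V',\theta')\oplus H)\geq\asr(\Lambda)+2$, Theorem \ref{mkvmainthm} applies to the unimodular vectors $f(e)$ and $e$ of $(V',\theta')\oplus H$ (they satisfy $\theta'(f(e),f(e))=0=\theta'(e,e)$): there is $g_1\in\Aut((V',\theta')\oplus H)$, a product of transvections, with $g_1(f(e))=e$. Replacing $f$ by $g_1 f$ I may assume $f(e)=e$. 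Then $w:=f(f_0)$ satisfies $\lambda'(e,w)=1$ and $\theta'(w,w)=0$; I would set $v:=w-f_0$, so that $\lambda'(e,v)=0$, choose a representative $a$ of $\theta'(v,v)\in Q_\epsilon(\Lambda)$, and check that the transvection $\tau:=\tau_{e,a,\epsilon v}$ fixes $e$ and sends $w$ into $H=\spann(e,f_0)$. It then follows that $\tau f$ carries $H$ onto itself, hence carries $(V,\theta)=H^{\perp}$ isometrically onto $(V',\theta')=H^{\perp}$; that is, $\tau f=f'\oplus u$ for an isometry $f'\co(V,\theta)\cong(V',\theta')$ and some $u\in\Aut(H)$.

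Finally I would read off the boundaries. As $g_1$ and $\tau$ are products of transvections, Proposition \ref{transvprp}(iv) together with the functoriality of $\partial$ gives $\partial g_1=\partial\tau=1$, so $\partial(\tau f)$ is stably homotopic to $\partial f$. On the other hand $\partial(\tau f)=\partial f'\oplus\partial u$, and $\partial H\cong(\Lambda,\Lambda^{*})$ is a trivial split formation (Lemma \ref{splitisolem}(i)) on which every automorphism is homotopic to the identity (Lemma \ref{splitisolem}(ii)); absorbing $\partial H$ therefore identifies $\partial(\tau f)$ with $\partial f'$ up to stable homotopy, whence $\partial f'$ is stably homotopic to $\partial f$, as required. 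The step I expect to be the main obstacle is the middle paragraph: re-deriving \cite{MKV88}[Corollary 8.2] in the sharpened form that exhibits the cancelling isometry as a product of transvections composed with $f$, and in particular checking that the full $\epsilon$-quadratic structure — not merely its symmetrisation — is respected throughout, precisely the kind of subtlety that caused difficulty in the proof of Proposition \ref{leml_3Zelem}.
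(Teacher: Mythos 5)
Your argument is correct in outline and does prove the cancellation with control of the boundaries, but it takes a genuinely different route from the paper and, as written, omits the one point the paper's proof is mostly about. The paper quotes the proof of \cite{MKV88}[Corollary 8.2] wholesale: products of transvections $\sigma,\sigma'$ turn $f$ into a block sum $\svec{g&0\\0&h}$, and since transvections have trivial boundary (Proposition \ref{transvprp} (iv)) one would like to take $f'=g$. The difficulty is that $g$ and $h$ need not individually be \emph{simple} (only their direct sum is), while by the standing convention of Definition \ref{formdefi} the ``isometry'' $f'$ in the statement of Corollary \ref{wittcanccor} must be simple --- this matters downstream, e.g.\ in Theorem \ref{cancpolthm}, where $f'$ enters $\bAut$-sets built from simple isometries, and $\del f'$ is only a legitimate isomorphism of split formations when $f'$ is simple. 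The entire second half of the paper's proof repairs this: it constructs a possibly non-simple isometry $\widetilde h$ of a hyperbolic summand $H_\epsilon(K)\subset(V,\theta)$ with $\tau(\widetilde h)=\tau(h)$, so that $f'=g\circ(\id\oplus\widetilde h)$ is simple and the boundary is unchanged because $\del H_\epsilon(K)$ is trivial.

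Your plane-by-plane argument in fact sidesteps this repair, but you never address it: you assert that $\tau f=f'\oplus u$ with $f'$ an ``isometry'' and $u\in\Aut(H)$, and in the paper's conventions both assertions include simplicity. Fortunately your explicit construction makes this a one-line check: by your own computation $\tau g_1f(e)=e$ and $\tau(w)=f_0+ce$ for some $c\in\Lambda$, so the block $u$ fixes $e$ and sends $f_0$ to $f_0+ce$, i.e.\ it is represented by an elementary matrix with respect to the hyperbolic basis and is in particular simple; since $g_1$, $\tau$ and $f$ are simple, so is $f'$. You should add this, and note that in the iteration each cancelled plane contributes only such a unipotent block, so no torsion defect accumulates. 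Conversely, the step you flag as the main obstacle --- respecting the quadratic refinement rather than merely the symmetrisation --- is not actually an issue: Theorem \ref{mkvmainthm} is already a statement about $\epsilon$-quadratic forms and transvections are isometries of the quadratic form (Proposition \ref{transvprp} (i)); the genuine subtlety in this corollary is the Whitehead torsion just described. With the simplicity check added, your proof is complete, and is arguably cleaner than the paper's in that the explicit one-plane cancellation avoids the separate iterative construction of $\widetilde h$ with prescribed torsion.
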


\begin{proof}
We can assume that $(V,\theta)=(W,\sigma)\oplus H_\epsilon(K)$ where
$\rk(K) > \asr(\Lambda)$.
By the proof of \cite{MKV88}[Corollary 8.2], there are products
of transvections, $\sigma$ and $\sigma'$, such that
\begin{eqnarray*}
\sigma'\circ f \circ \sigma = \svec{g & 0 \\ 0 & h} \co
(V,\theta)\oplus H_\epsilon(L) \isora (V',\theta')\oplus H_\epsilon(L).
\end{eqnarray*}
This isometry has the same boundary as $f$ by Proposition \ref{transvprp}[iii].  Assume for a moment that there is a possibly non-simple isometry $\widetilde{h} \co H_\epsilon(K) \isora H_\epsilon(K)$
with $\tau(\widetilde{h}) =\tau(h)$. Then
$g \circ (\id_W \oplus \widetilde{h}^{})$ is a simple isometry
with the same boundary as $f$.

It remains to find $\widetilde{h}$. If $\rk(L) \leq \rk(K)$ we simply use
$h$ plus the identity on $H_\epsilon(L)/H_\epsilon(K)$.
Otherwise, we can again compose $h$ with transvections such that the result is
\begin{eqnarray*}
\svec{h' & 0 \\ 0 & h_2} \co H_\epsilon(L) = H_\epsilon(L') \oplus H_\epsilon(K) \isora
H_\epsilon(L') \oplus H_\epsilon(K).
\end{eqnarray*}
If $\rk(L') \leq \rk(K)$ then take $\widetilde{h} = h_2 \circ (\id\oplus {h'})$.
However, if $\rk(L') > \rk(K)$ then $h'_1 = h' \circ (\id\oplus h_2) \co H_\epsilon(L') \isora H_\epsilon(L')$
is a possibly non-simple isometry with $\tau(h'_1)=\tau(h)$. We repeat this
process for $h'$ until we arrive at the desired non-simple isometry
$\widetilde{h}$.
\end{proof}

A second consequence of Theorem \ref{mkvmainthm} is the following


\begin{cor}
\label{rutranscor}
Let $(V,\theta) = H_\epsilon(\Lambda^k)$ with $k\geq \asr(\Lambda)+2$.
Then $RU_\epsilon(\Lambda^k)$ acts transitively
on all bases of all hyperbolic planes in $(V,\theta)$.
\end{cor}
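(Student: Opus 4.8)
The plan is to reduce the transitivity claim to a single explicit transvection, after first normalising one vector of the hyperbolic pair by means of the cancellation/transitivity input of \cite{MKV88} recorded above. Throughout write $H := H_\epsilon(\Lambda^k)$ and fix the canonical symplectic basis $S = \{e_i, f_i\}_{1\le i\le k}$. A basis of a hyperbolic plane in $H$ is exactly a hyperbolic pair $(e,f)$, i.e. an ordered pair with $\theta(e,e)=\theta(f,f)=\theta(f,e)=0$ and $\theta(e,f)=1$; since every element of $RU_k(\Lambda,\epsilon)$ is an isometry, it permutes these pairs, and the task is to show the induced action is transitive.

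Given two hyperbolic pairs $(e,f)$ and $(e',f')$, I would first move $e$ and $e'$ onto the canonical vector $e_1$. Both $e$ and $e'$ are $(H,\theta)$-unimodular (with $\lambda(e,f)=\lambda(e',f')=1$) and satisfy $\theta(e,e)=\theta(e',e')=0$. Because $\ind(H)=k\ge\asr(\Lambda)+2$, Theorem \ref{mkvmainthm} provides $g,g'\in\Aut(H)$, each a product of transvections $\tau_{u,a,v}$ with $u\in S$, such that $g(e)=e_1=g'(e')$; by Proposition \ref{transvprp}[v] both $g$ and $g'$ lie in $RU_k(\Lambda,\epsilon)$. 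Replacing $(e,f)$ by $(e_1,g(f))$ and $(e',f')$ by $(e_1,g'(f'))$ — again hyperbolic pairs, now sharing the first vector $e_1$ — reduces everything to the case $e=e'=e_1$.

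In that reduced case I would exhibit the connecting isometry directly. Put $w:=f'-f$, $v:=-\epsilon w$, and $a:=-\lambda(w,f)\in\Lambda$. Then $\lambda(e_1,w)=\lambda(e_1,f')-\lambda(e_1,f)=0$, hence $\lambda(e_1,v)=0$; moreover $\theta(e_1,e_1)=0$, $e_1$ is unimodular, and using $\nu(f)=\nu(f')=0$ together with the identity $\nu(x+y)=\nu(x)+\nu(y)+[\lambda(x,y)]$ in $Q_\epsilon(\Lambda)$ one gets $\theta(v,v)=\nu(w)=-[\lambda(f',f)]=-[\lambda(w,f)]=[a]$. So the transvection $\tau_{e_1,a,v}$ is defined; it fixes $e_1$ (since $\lambda(e_1,e_1)=\lambda(v,e_1)=0$), and substituting into the defining formula of a transvection gives $\tau_{e_1,a,v}(f)=f'$ — a short computation of the kind controlled by Lemma \ref{sthomlem}. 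By Proposition \ref{transvprp}[v], $\tau_{e_1,a,v}\in RU_k(\Lambda,\epsilon)$. Composing the three steps, $(g')^{-1}\circ\tau_{e_1,a,v}\circ g\in RU_k(\Lambda,\epsilon)$ carries $(e,f)$ to $(e',f')$.

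The only genuinely delicate point — and the reason the hypothesis $k\ge\asr(\Lambda)+2$ enters — is the normalisation step: Proposition \ref{transvprp}[v] guarantees $\tau_{u,a,v}\in RU_k(\Lambda,\epsilon)$ only when $u$ lies in the \emph{canonical} symplectic basis $S$, so one cannot simply write down a transvection taking an arbitrary $e$ to $e'$; one must first invoke Theorem \ref{mkvmainthm} to bring both onto $e_1$. Everything else is a mechanical verification from the transvection formula.
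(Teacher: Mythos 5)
Your proof is correct and follows essentially the same route as the paper: normalise the first vector of the pair to $e_1$ via Theorem \ref{mkvmainthm} together with Proposition \ref{transvprp}(v), then fix up the second vector by one explicit transvection based at $e_1$, which again lies in $RU_k(\Lambda,\epsilon)$ by Proposition \ref{transvprp}(v). The only difference is cosmetic — the paper moves a single pair onto the canonical pair $(e_1,f_1)$ by writing $\sigma(f)=ae_1+bf_1+v$, whereas you connect two pairs sharing $e_1$ directly; taking $(e',f')=(e_1,f_1)$ in your argument recovers the paper's proof.
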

\begin{proof}
Let $\lambda=\theta+\epsilon\theta^*$ the underlying $\epsilon$-
symmetric form of $(V,\theta)$.
Let $\{e,f\}$ be a hyperbolic pair and let $\{e_i,f_i\}_{1\leq i\leq
k}$ be the standard symplectic bases of $(V,\theta)$.
By Theorem \ref{mkvmainthm} and Lemma \ref{transvprp} there is a $
\sigma\in RU_\epsilon(\Lambda^k)$ with
$\sigma(e)=e_1$. Write $\sigma(f)=a e_1+b f_1 + v$ where $a,b\in
\Lambda$ and $v$ is in the span of $e_2,\dots,e_k,f_2\dots f_k$.  If follows from $\lambda(\sigma(e),\sigma(f))=\lambda(e,f)=1$ and $\theta(f,f)=0
$ that $b=1$ and $\theta(v,v)=[-a]\in Q_\epsilon(\Lambda)$.
One easily computes that $\rho=\tau_{e_1,-\epsilon \overline{a}, -
\epsilon v}$ sends $f_1$ to $\sigma(f_1)$ and fixes $e_1$.
Hence $\rho^{-1}\sigma\in RU_\epsilon(\Lambda^k)$ maps $e$ to $e_1$
and $f$ to $f_1$.
\end{proof}

\begin{thm}
\label{cancpolthm}
Let $x=(H_\epsilon(L); L, V)$ be an \eq\ and let $v = (V,\theta)$ be the
induced form.
If $\ind(V,\theta)\geq \asr(\Lambda)+1$ then $[x]\in\ol{\Lambda}$ is elementary.
\end{thm}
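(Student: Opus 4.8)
The plan is to run the argument sketched at the start of this section, using the cancellation theorem Corollary~\ref{wittcanccor} repeatedly to replace the stable isometries that occur by honest isometries. First I would pin down the vertices of the edge $[x]$. Writing $(V^\perp,-\theta^\perp)$ for the second boundary form of $x$, Proposition~\ref{prepprp}(i) gives an isometry $(V,\theta)\oplus H_\epsilon(L)\cong(V^\perp,-\theta^\perp)\oplus H_\epsilon(L)$, and since $\ind(V,\theta)\ge\asr(\Lambda)+1$ Corollary~\ref{wittcanccor} produces an isometry $(V,\theta)\cong(V^\perp,-\theta^\perp)$; hence $[V,\theta]=[V^\perp,-\theta^\perp]$, so $b([x])=([v],[v])$ and $[x]\in l_{2q+1}(v,v)$ for $v=(V,\theta)$. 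Next I would compute $\delta([x])\in\sbAut(v)$, which by Definition~\ref{deltadefi} is the class of the stable isomorphism $f_j\co\del(V,\theta)\cong\del(V^\perp,-\theta^\perp)$ of Proposition~\ref{orthoglueprp}. Applying Proposition~\ref{bdryextprp} with $f=f_j$, and identifying $(V,\theta)\cup_{f_j}(V^\perp,\theta^\perp)$ with $(M,\psi)=H_\epsilon(L)$ via $r_j$, yields an isometry $h\co(V,\theta)\oplus H_\epsilon(V^\perp)\cong(V^\perp,-\theta^\perp)\oplus H_\epsilon(L)$ with $\del h$ stably homotopic to $f_j$; a further application of Corollary~\ref{wittcanccor} replaces $h$ by an isometry $h'\co(V,\theta)\cong(V^\perp,-\theta^\perp)$ with $\del h'$ stably homotopic to $f_j$. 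Identifying $(V^\perp,-\theta^\perp)$ with $v$ by means of $h'$ (Remark~\ref{bIsorem}) then exhibits $\delta(x)$ as the class of $\del((h')^{-1})\circ\del h'=\del\id_V$, so $\delta([x])=1\in\sbAut(v)$.

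Now Corollary~\ref{maincor} applies: $\delta([x])=1$ forces $[x]$ to be elementary modulo the action of $\oLs{\Lambda}=L^s_{2q+1}(\Lambda)$, and since $e([v])$ is the unique elementary element with $b(e([v]))=([v],[v])$ there is a class $[z]\in L^s_{2q+1}(\Lambda)$ with $[x]+[z]=e([v])$. It remains to show that $[z]$ fixes $[x]$, and this is where the hypothesis on $\ind(V,\theta)$ re-enters: it means $(V,\theta)\cong(W,\sigma)\oplus H_\epsilon(\Lambda^{k})$ with $k=\ind(V,\theta)\ge\asr(\Lambda)+1$, and the stability results of \cite{MKV88} show that every class of $L^s_{2q+1}(\Lambda)$ is carried by a formation of the shape $(H_\epsilon(\Lambda^{k});\Lambda^{k},G)$. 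By Corollary~\ref{L-actioncor} every such formation acts trivially on $[x]$; in particular $[z]$ does, whence $[x]=[x]+[z]=e([v])$ is elementary.

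I expect the last step --- that $L^s_{2q+1}(\Lambda)$ acts trivially on $[x]$ --- to be the crux, since it is precisely the point at which the absolute stable rank is used: one needs to know that a hyperbolic summand of rank $\asr(\Lambda)+1$ inside $(V,\theta)$ already suffices to realise every element of $L^s_{2q+1}(\Lambda)$ by a formation supported on that summand (a destabilisation statement of the type proved in \cite{MKV88}, cf.\ Corollary~\ref{rutranscor}). The other mildly delicate point is the bookkeeping in the computation of $\delta([x])$: one has to track the several identifications of boundary formations so that the naturality in Proposition~\ref{orthoglueprp} and the homotopy of Proposition~\ref{bdryextprp} genuinely combine to show $\delta(x)$ is the boundary of an isometry. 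Everything else is a direct application of the gluing and cancellation results of Sections~\ref{gluesec} and \ref{structuresec} and of this section.
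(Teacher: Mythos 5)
Your first two steps coincide with the paper's proof: Proposition~\ref{prepprp}(i) plus Corollary~\ref{wittcanccor} give an isometry $(V,\theta)\cong(V^\perp,-\theta^\perp)$, and Proposition~\ref{bdryextprp} plus a second application of Corollary~\ref{wittcanccor} replace the stable isomorphism $f_j$ by the boundary of an honest isometry, so that $\delta([x])=1$ and Corollary~\ref{maincor} yields $[z]\in \oLs{\Lambda}$ with $[x]+[z]=e([v])$. That part is fine and is essentially the published argument.

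The divergence, and the gap, is in the final step. You dispose of $[z]$ by asserting that ``the stability results of \cite{MKV88}'' show every class in $\oLs{\Lambda}$ is represented by a formation $(H_\epsilon(\Lambda^k);\Lambda^k,G)$ with $k\geq\asr(\Lambda)+1$, and then invoke Corollary~\ref{L-actioncor}. That destabilisation statement is \emph{not} in \cite{MKV88}: the results quoted in this section (Theorem~\ref{mkvmainthm}, Corollary~8.2) are transitivity on unimodular vectors and Witt cancellation, not surjective stability for $SU_k(\Lambda,\epsilon)\ra \oLs{\Lambda}$, so as written this is an unproved claim, and you yourself flag it as the crux. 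It can in fact be proved with the tools at hand: given $\alpha\in SU_m(\Lambda,\epsilon)$ with $m>k\geq\asr(\Lambda)+1$, Corollary~\ref{rutranscor} (applicable since $m\geq\asr(\Lambda)+2$) produces $\beta\in RU_m(\Lambda,\epsilon)$ carrying the hyperbolic pair $(\alpha(e_m),\alpha(f_m))$ to $(e_m,f_m)$; then $\beta\alpha$ preserves the orthogonal complement $H_\epsilon(\Lambda^{m-1})$ and represents the same class, and induction descends to rank $k$. With that lemma supplied, your route through Corollary~\ref{L-actioncor} does close the proof, and it is a legitimate variant. The paper handles this step differently, following Kreck's proof of his Theorem~5: after stabilising it writes $[x]+[z]=[(H_\epsilon(L);L,\alpha(V))]$ for a single $\alpha\in\Aut(H_\epsilon(L))$, arranges that $V$ contains the hyperbolic subform spanned by the last $\asr(\Lambda)+1$ standard symplectic pairs, and applies Corollary~\ref{rutranscor} inductively to find $\beta\in RU_\epsilon(L)$ with $\beta\alpha$ the identity on the remaining pairs; then $\beta\alpha(V)=V$, so $[x]+[z]=[x]$ directly, with no destabilisation of $L$-classes needed. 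Either way, the missing induction from Corollary~\ref{rutranscor} must be written out; citing \cite{MKV88} for it does not suffice.
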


\begin{proof}
Due to Corollary \ref{wittcanccor} and Proposition \ref{prepprp}, there is an
isometry 
\[
k'\co (V^\perp,-\theta^\perp)\isora (V,\theta).
\]
Therefore $\delta(x)=[\partial k'\circ f_j] \in \bAut(V, \theta)$ 
where $f_j$ is the stable isomorphism obtained by
applying Proposition \ref{orthoglueprp}
to the inclusion $j\co (V,\theta)\hookrightarrow H_\epsilon(L)$.
By Proposition \ref{bdryextprp} and Corollary \ref{wittcanccor}
there is an isometry $h\co (V,\theta) \isora (V^\perp, -\theta^\perp)$
such that $\del h\simeq f$. Hence $\delta(x)=\del(k'h)=1 \in
\bAut(V,\theta)$ and $[x]$ is elementary modulo the action of $L_{2q+1}^s(\Lambda)$ by Corollary \ref{maincor}.

Now we show that $L_{2q+1}^s(\Lambda)$ acts trivially on $[x]$. Let $k$ be the rank of $L$ and let $j=k-\asr (\Lambda)-1$.  Let $(e_i,f_i)_{1 \leq i \leq k}$ be
the standard symplectic basis of $H_\epsilon(L)$.  W.l.o.g the hyperbolic form spanned by
$(e_i,f_i)_{j \leq i \leq k}$, call it $H$, lies in $V$.  We denote by $H^\perp$ the orthogonal complement of $H$ in $H_\epsilon(L)$
(\ie the span of $(e_i,f_i)_{1\leq i < j}$)
and by $V'$ the orthogonal complement of $H$ in $V$.
Clearly $V=V'\oplus H$ and $V'\subset H^\perp$ by \cite{Bas73}[I.3.2].

Finally, we use an argument from the proof of \cite{Kre99}[Theorem 5]
to complete the proof.  Let $[z]\in L_{2q+1}^{s}(\Lambda)$. Then $[x+z]=[(H_\epsilon(L);L,\alpha(V)]$ for some $
\alpha\in \Aut(H_\epsilon(L))$.
Inductive application of Corollary \ref{rutranscor} shows that there
is a $\beta\in RU_\epsilon(L)$
with $\beta\alpha(e_i)=e_i$ and $\beta\alpha(f_i)=f_i$ for $1\leq i<j$.
Hence $\beta\alpha$ is the identity on $H^\perp$ and therefore on $V'$.
Moreover, $\beta\alpha \in \Aut(H_\epsilon(\Lambda))$ must map $H$ to
itself and therefore
$\beta\alpha(V)=V$. Therefore $[H_\epsilon(L);L,\alpha(V)]=[H_\epsilon
(L);L,V]\in\ol{\Lambda}$.
\end{proof}

Finally recall the defintion of $h'(\pi, q)$ from Theorem \ref{thm1}.

\begin{cor}
\label{polycyccor}
Let $\Lambda = \bZ[\pi]$ be the group ring of a polycyclic-by-finite group $\pi$ and let $[v]$ be a $0$-stabilised form.  If $[v] = [w] \oplus [H_\epsilon(\Lambda^k)]$ for $k \geq h'(\pi,q)$, then strict cancellation holds for $[v]$.
\end{cor}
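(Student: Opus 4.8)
The plan is to unwind the definition of strict cancellation recalled at the start of this section: I must show that every quasi-formation $x=(M,\psi;L,V)$ whose induced form $(V,\theta)$ satisfies $[V,\theta]=[v]$ represents an elementary class $[x]\in\ol{\Lambda}$. The first step is to \emph{normalise} $x$. Writing $v=w\oplus H_\epsilon(\Lambda^k)$, the hypothesis $[V,\theta]=[v]$ gives based modules $P,Q$ with $(V,\theta)\oplus(P,0)\cong w\oplus H_\epsilon(\Lambda^k)\oplus(Q,0)$; adding the trivial formation $(H_\epsilon(P);P,P^*)$ of Definition \ref{quasiformdef} to $x$ leaves $[x]$ unchanged in $\ol{\Lambda}$ while replacing the induced form by $(V,\theta)\oplus(P,0)$, and the latter contains $H_\epsilon(\Lambda^k)$ as an orthogonal summand. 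After rewriting the result in the shape $(H_\epsilon(F);F,V)$ via Remark \ref{eqrem}(i), I may thus assume $x=(H_\epsilon(F);F,V)$ with $\ind(V,\theta)\ge k\ge h'(\pi,q)$. The remainder is a case analysis following the definition of $h'(\pi,q)$ in Theorem \ref{thm1}.

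When $\pi$ is infinite, $h'(\pi,q)=h(\pi)+3$, and Theorem \ref{asrthm} gives $\asr(\Lambda)\le h(\pi)+2$, so $\ind(V,\theta)\ge\asr(\Lambda)+1$ and Theorem \ref{cancpolthm} shows at once that $[x]$ is elementary. When $\pi$ is trivial, $\Lambda=\bZ$ and I would invoke the explicit computations of subsection \ref{lZsec}: for $q$ odd ($\epsilon=-1$) one has $h'(\pi,q)=0$, every element of $l_{2q+1}(\bZ)$ — which for odd $q$ is the monoid $l_3(\bZ)$ — is elementary by Proposition \ref{leml_3Zelem}, and hence $l_{2q+1}(v)=\{e([v])\}$ by Corollary \ref{bcor}(ii) and Definition \ref{e(v)defi}; for $q$ even ($\epsilon=+1$) one has $h'(\pi,q)=1$, the unique nondegenerate representative of $v$ splits off $H_+(\bZ)$, and Proposition \ref{cancellationforVZ+1prop}(i) applies by way of Remark \ref{Zcancelrem}.

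The remaining case, $\pi$ finite and non-trivial, is the one I expect to be the main obstacle. Here $h'(\pi,q)=2$, but Theorem \ref{asrthm} only yields $\asr(\bZ[\pi])\le 2$: the module-finite $\bZ$-algebra $\bZ[\pi]$ is reduced, so its Jacobson radical $J(0)$ vanishes and $\Kdim(\bZ[\pi]/J(0))=\Kdim(\bZ[\pi])=1$. This is one short of the hypothesis $\ind(V,\theta)\ge\asr(\Lambda)+1=3$ of Theorem \ref{cancpolthm}, since the absolute stable rank estimates of \cite{MKV88} lose a dimension over one-dimensional ground rings. To close the gap I would replace, inside the argument of Theorem \ref{cancpolthm}, the appeals to Corollaries \ref{wittcanccor} and \ref{rutranscor} by Bass's sharper cancellation and transitivity theorems for forms over module-finite $\bZ$-algebras \cite{Bas73}, which already give cancellation for $\epsilon$-quadratic forms over $\bZ[\pi]$ of Witt index $\ge 2$; this is precisely the algebra underlying \cite{Kre99}[Corollary 4]. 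Assembling the three cases and recalling the definition of $h'(\pi,q)$ then completes the proof.
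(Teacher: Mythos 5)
Your handling of the infinite and trivial cases coincides with the paper's: for infinite $\pi$ the bound $h(\pi)+3\geq\asr(\bZ[\pi])+1$ from Theorem \ref{asrthm} feeds into Theorem \ref{cancpolthm}, and for trivial $\pi$ the paper likewise invokes Proposition \ref{leml_3Zelem} when $\epsilon=-1$ and Remark \ref{Zcancelrem} with Proposition \ref{cancellationforVZ+1prop} when $\epsilon=+1$ (your aside deducing $J(0)=0$ from reducedness is unnecessary -- the paper's bound $\asr\leq 2+h(\pi)$ already gives $\asr\leq 2$ for finite $\pi$). Where you genuinely diverge is the finite non-trivial case. The paper does not sharpen the section~7 machinery there; it reduces to the mechanism of the proof of Proposition \ref{leml_3Zelem} (stabilise by $\delta(M,\rho)$, obtain a stable Lagrangian complement, then descend via the transitivity argument on pp.~742--3 of \cite{Kre99}), noting that for non-trivial finite $\pi$ Kreck's Proposition 9 applies without the quadratic-refinement gap that had to be patched over $\bZ$. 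You instead propose to rerun Theorem \ref{cancpolthm} with Bass's stability theorems \cite{Bas73} substituted for those of \cite{MKV88} to reach the Witt-index-$2$ range. Since Kreck's Proposition 9 is itself based on Bass, the two routes rest on the same external input, and yours is viable; but be aware of what the substitution must deliver and what you have only asserted: Bass's cancellation and transitivity are needed in the possibly \emph{degenerate} setting of the forms $(V,\theta)$ occurring here, and you need the refined conclusions of Corollaries \ref{wittcanccor} and \ref{rutranscor} -- an isometry whose \emph{boundary} is stably homotopic to the given one, and transitivity by elements of $RU$ -- which in the paper come from the correcting isometries being products of transvections (Proposition \ref{transvprp}). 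This can be arranged because Bass's arguments, like those of \cite{MKV88}, proceed by transvections, but verifying it is essentially re-proving the algebra behind \cite{Kre99}[Corollary 4]; as written your finite case is a sketch at roughly the same level of detail as the paper's own one-sentence deferral, whereas the paper's formulation at least pins the burden on a specific published statement.
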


\begin{proof}
If $\pi$ is infinite then $h'(\pi, q) = h(\pi) + 3 \geq {\rm asr}(\bZ[\pi]) + 1$ by Theorem \ref{asrthm} and we apply Theorem \ref{cancpolthm}.  If $\pi$ is trivial then we apply Remark
\ref{Zcancelrem} and Proposition \ref{leml_3Zelem}.  If $\pi$ is
non-trivial but finite, then the proof follows along the same
lines as the proof of Proposition \ref{leml_3Zelem} except that now
there is no gap to be filled in the proof of \cite{Kre99}[Proposition 9].
\end{proof}

\noindent
\textsc{School of Mathematical Sciences, University of Adelaide, Australia, 5005.}\\ \\
{\it E-mail addresses:} \texttt{diarmuidc23@gmail.com,  sixt@mathi.uni-heidelberg.de}

\end{document}